\newtheorem{theorem}{Theorem}[section]
\newtheorem{lemma}[theorem]{Lemma}
\newtheorem{corollary}[theorem]{Corollary}
\newtheorem{proposition}[theorem]{Proposition}
\theoremstyle{definition}
\newtheorem{definition}[theorem]{Definition}
\theoremstyle{remark}
\numberwithin{equation}{section}
\begin{document}
\title{Triangular decomposition of right coideal subalgebras}
\author{V.K. Kharchenko}
\address{FES-Cuautitl\'an, Universidad Nacional Aut\'onoma de M\'exico, 
Centro de Investigaciones Te\'oricas, 
Primero de Mayo s/n, Campo 1, Cuautitl\'an Izcalli, 
Edstado de M\'exico, 54768, M\'EXICO}
\email{vlad@servidor.unam.mx}
\dedicatory{To  Susan Montgomery --- famous mathematician and beautiful person}
\thanks{The author was supported by  PACIVE CONS-304, FES-C UNAM, M\'exico.}

\subjclass{Primary 16W30, 16W35; Secondary 17B37.}

\date{}

\keywords{Hopf algebra, coideal subalgebra, PBW-basis.}

\begin{abstract} 
\small
Let $\mathfrak g$ be a Kac-Moody algebra.
We show that every homogeneous right coideal subalgebra $U$ of the multiparameter version of the quantized universal 
enveloping algebra  $U_q(\mathfrak{g}),$ $q^m\neq 1$ containing  all group-like elements has a triangular decomposition
$U=U^-\otimes _{{\bf k}[F]} {\bf k}[H] \otimes _{{\bf k}[G]} U^+$, where $U^-$ and $ U^+$
are right coideal subalgebras of negative and positive quantum Borel subalgebras.
However if $ U_1$ and $ U_2$
are arbitrary right coideal subalgebras of respectively positive and negative  quantum Borel subalgebras, 
then the  triangular composition
$ U_2\otimes _{{\bf k}[F]} {\bf k}[H]\otimes _{{\bf k}[G]} U_1$
 is a right coideal but not necessary a subalgebra.
Using a recent combinatorial classification of right coideal subalgebras of the quantum Borel algebra
$U_q^+(\mathfrak{so}_{2n+1}),$
we find a  necessary condition  for the triangular composition
to be a right coideal subalgebra of $U_q(\mathfrak{so}_{2n+1}).$

If $q$ has a finite multiplicative order $t>4,$ similar results remain valid for 
homogeneous right coideal subalgebras of the multiparameter version 
of the small Lusztig quantum groups $u_q({\frak g}),$ $u_q(\frak{ so}_{2n+1}).$
\end{abstract}

\maketitle

\section{Introduction}
It is well-known that the quantized universal enveloping algebras $U_q({\mathfrak g})$ of the
Kac-Moody algebras have so called triangular decomposition.
In this paper we  are studying when a
right coideal subalgebra of $U_q({\mathfrak g})$ 
also has the triangular decomposition. 
In fact the triangular decomposition holds not only
for $U_q({\mathfrak g}),$
but also for a large class of character Hopf algebras $\mathfrak A$ having positive and negative skew-primitive
generators connected by  relations of the type $x_ix_j^--p_{ji}x_j^-x_i=\delta _i^j(1-g_if_i),$
see \cite[Proposition 3.4]{KL}. In Theorem \ref{raz2} we show that a
right coideal subalgebra $U$ of $\mathfrak A$ containing all group-like elements 
has a required triangular decomposition provided that 
$U$ is homogeneous with respect to the degree function $D$ under the identification $D(x^-_i)=-D(x_i).$
Interestingly, if ${\mathfrak A}=U_q({\mathfrak g}),$ $q^t\neq 1$ with $\mathfrak g$ defined by a Cartan matrix of finite type
then each subalgebra containing  all group-like elements is homogeneous with respect to the above degree function,
\cite[Corollary 3.3]{KL}. Hence in Corollary \ref{fin2},  applying a recent   Heckenberger---Schneider theorem, \cite[Theorem 7.3]{HS},
 we see that for a semisimple complex Lie algebra $\mathfrak g$ the quantized universal enveloping algebra 
$U_q(\frak{g}),$ $q^t\neq 1$ has not more then  $|W|^2$ different right coideal subalgebras containing the coradical.
Here $W$ is the Weyl group of $\mathfrak g.$

We should stress that when $U^{\pm }$ run through the sets of right coideal 
subalgebras of the quantum Borel subalgebras,
the triangular composition $ U^-\otimes _{{\bf k}[F]} {\bf k}[H]\otimes _{{\bf k}[G]} U^+$ is a right coideal
but not always a subalgebra. For example, in \cite{KL} there are given the numbers $C_n$ of pairs 
that define right coideal subalgebras of $U_q({\mathfrak g})$ when ${\mathfrak g}={\mathfrak sl}_{n+1}$
is the simple Lie algebra of type $A_n.$ Using these numbers we can find the probabilities $p_n$ for a
pair $U^-, U^+$ to define a right coideal subalgebra of $U_q({\mathfrak g}),$ ${\mathfrak g}={\mathfrak sl}_{n+1}$:
$$
p_2=72.3\% ; \ p_3=43.8\% ;\  p_4=23.4\% ;\  p_5=11.4\% ; \ p_6=5.1\% ;\  p_7=2.2\% .
$$
If $\mathfrak g$ is the simple Lie algebra of type $G_2$ then the probability 
equals $60/144=41.7\% ,$ see B. Pogorelsky \cite{Pog, Pog1}. 

The next goal of the paper is to prove a necessary condition  for two  right coideal subalgebras 
of the quantum Borel subalgebras to define by means of the triangular composition 
a right coideal subalgebra of $U_q({\mathfrak g})$  
(respectively of $u_q(\mathfrak{g})$)
 when ${\mathfrak g }={\mathfrak so}_{2n+1}$ is the simple Lie algebra of type $B_n.$
In the fourth and fifth sections we follow the classification given in \cite{Kh08} 
to  recall  the basic properties of right coideal subalgebras of 
quantum Borel algebras $U_q^{\pm }({\mathfrak so}_{2n+1}).$
In particular we lead out the following ``integrability" condition: if all  partial derivatives of a homogeneous polynomial $f$
in positive generators of an admissible degree belong to a right coideal subalgebra $U\supseteq G$
of  $U_q^{+}({\mathfrak so}_{2n+1})$ then $f$ itself belongs to $U,$ see Corollary \ref{lat2}.

In Section 6 we introduce  the elements $\Phi^{S}(k,m)$ 
defined by the sets $S\subseteq [1,2n]$ and the ordered pairs of indices $1\leq k\leq m\leq 2n,$
see (\ref{dhs}). We display the element  $\Phi ^{S}(k,m)$
schematically as a sequence of black and white points labeled by the numbers
$k-1,$ $k,$ $k+1, \ldots $ $m-1,$ $m,$ where the first point is always white, and
the last one is always black, while an intermediate point labeled by $i$ is black if and only if 
$i\in S:$  
\begin{equation}
 \stackrel{k-1}{\circ } \ \ \stackrel{k}{\circ } \ \ \stackrel{k+1}{\circ } 
\ \ \stackrel{k+2}{\bullet }\ \ \ \stackrel{k+3}{\circ }\ \cdots
\ \ \stackrel{m-2}{\bullet } \ \ \stackrel{m-1}{\circ }\ \ \stackrel{m}{\bullet }\ .
\label{grbi}
\end{equation}
These elements  are very important since every right coideal subalgebra 
$U\supseteq G$ of the  quantum Borel algebra is generated as an algebra 
by $G$ and the elements of that form, see \cite[Corollary 5.7]{Kh08}. 
Moreover $U$ is uniquely defined by its {\it root sequence}
$\theta =(\theta _1,\theta_2,\ldots ,\theta _n).$ The root sequence satisfies 
$0\leq \theta_i\leq 2n-2i+1,$ and each sequence satisfying these conditions is a root sequence
for some $U$. There exists a constructive algorithm that allows one to find the generators
$\Phi ^{S}(k,m)$ if the sequence $\theta $ is given, see \cite[Definition 10.1 and Eq. (10.6)]{Kh08}.
In particular one may construct all schemes (\ref{grbi}) for the generators.

The minimal generators $\Phi ^{S}(k,m)$ (the generators that do not belong to the subalgebra 
generated by the other generators of that form) satisfy important duality relation 
$\Phi ^{S}(k,m)=\alpha \,  \Phi ^{R}(\psi (m),\psi (k)),$ $\alpha \neq 0,$ where by definition $\psi (i)=2n-i+1,$ while
$R$ is the complement of $\{ \psi (s)-1\, |\, s\in S\} $ with respect to the interval  $[\psi (m),\psi (k)),$ see Proposition \ref{xn0}.
In particular to every minimal generator $\Phi ^{S}(k,m)$ correspond two essentially different 
schemes (\ref{grbi}). Respectively, if $\Phi ^{S}(k,m)$ and $\Phi ^{T}_-(i,j)$
are minimal generators for given right coideal subalgebras $U_1\subseteq U_q^+({\mathfrak so}_{2n+1})$ 
and $U_2\subseteq U_q^-({\mathfrak so}_{2n+1})$ then we have 
four different diagrams of the form
\begin{equation}
\begin{matrix}
S:\stackrel{k-1}{\circ } \ & \cdots \ & \stackrel{i-1}{\bullet } 
\ & \stackrel{i}{\bullet }\ \ & \stackrel{i+1}{\circ }\ & \cdots &
\ & \stackrel{m}{\bullet } \ & \ & \stackrel{j}{\cdot } \cr
T:\ \ \ \ \ & \  \ & \circ  
\ & \circ \ \ & \bullet \ & \cdots &
\ & \bullet  \ & \cdots  \ & \bullet
\end{matrix} \ \ .
\label{gt}
\end{equation}
In Theorem \ref{bale} we prove the main result of the paper:  
If the triangular composition $ U_2\otimes _{{\bf k}[F]} {\bf k}[H]\otimes _{{\bf k}[G]} U_1$
is a subalgebra then for each pair of minimal generators one of the following two options is fulfilled:

\noindent
a) no one of the possible four diagrams (\ref{gt}) has fragments of the form 
$$
\begin{matrix}
\stackrel{t}{\circ } \ & \cdots & \stackrel{s}{\bullet } \cr
\circ  
\ & \cdots  & \bullet 
\end{matrix}\ \ ;
$$

\noindent
b) one of the possible four diagrams (\ref{gt}) has the form 
$$
\begin{matrix}
\stackrel{k-1}{\circ } \ & \cdots & \circ & \cdots & \bullet & \cdots & \stackrel{m}{\bullet } \cr
\circ  
\ & \cdots  &  \bullet & \cdots & \circ & \cdots &  \bullet 
\end{matrix}\ \ ,
$$
\noindent
where no one of the intermediate columns has points of the same color.

Certainly $U_q({\mathfrak sl}_n)$ is a Hopf subalgebra of $U_q({\mathfrak so}_{2n+1}).$
If we apply the found condition to right coideal subalgebras of $U_q({\mathfrak sl}_{n}),$
we get precisely the necessary and sufficient condition given in \cite[Theorem 11.1]{KL}.
Hence we have a reason to believe that the found necessary condition is also sufficient for the
triangular composition to define a right coideal subalgebra of $U_q({\mathfrak so}_{2n+1}).$

\smallskip
\smallskip

Finally we would like to stress that right coideal subalgebras that do not admit the triangular 
decomposition (inhomogeneous or not including the coradical) are also of interest due to their relations with  
quantum symmetric pairs, quantum Harish-Chandra modules, and quantum symmetric spaces.
Many of the (left) coideal subalgebras studied by M. Noumi and G. Letzter, see the survey  \cite{Let},
do not admit a triangular decomposition.

\section{Bracket technique}

Let $X=$ $\{ x_1, x_2,\ldots, x_n\} $ be {\it quantum variables}; that is, associated with each letter
$x_i$ are an element $g_i$ of a fixed Abelian group $G$ and a 
character $\chi ^i:G\rightarrow {\bf k}^*.$ 
For every word $w$ in $X$ let $g_w$ or gr$(w)$ denote
an element of $G$ that appears from $w$ by replacing each $x_i$ with $g_i.$
In the same way  $\chi ^w$ denotes a character that appears from $w$
by replacing each $x_i$ with $\chi ^i.$

Let $G\langle X\rangle $ denote the skew group algebra generated by $G$
and {\bf k}$\langle X\rangle $ with the commutation rules $x_ig=\chi ^i(g)gx_i,$
or equivalently $wg=\chi ^w(g)gw,$ where $w$ is an arbitrary word in $X.$
Certainly $G\langle X\rangle $ is spanned by the products $gw,$ where $g\in G$
and $w$ runs through the set of  words in $X.$

The algebra $G\langle X\rangle $ has natural gradings by the group $G$ and by 
the group $G^*$ of characters. More precisely the basis element $gw$ belongs to the 
$g {\rm gr}(w)$-homogeneous component with repect to the grading by $G$ and 
it belongs to the $\chi ^w$-homogeneous component with respect to the grading by $G^*.$ 

Let $u$ be a homogeneous element with respect to the grading by $G^*,$ and 
$v$ be a homogeneous element with respect to the grading by $G.$
We define a  skew commutator  by the formula 
\begin{equation}
[u,v]=uv-\chi ^u(g_v) vu,
\label{sqo}
\end{equation}
where $u$ belongs to the $\chi ^u$-homogeneous component, while
$v$ belongs to the $g_v$-homogeneous component.  
Sometimes for short we use the notation  $\chi ^u(g_v)=p_{uv}=p(u,v).$
Of course $p(u,v)$ is a bimultiplicative map: 
\begin{equation}
p(u,v)p(u,t)=p(u, vt), \ \ p(u,v)p(t,v)=p(ut,v).
\label{sqot}
\end{equation}
In particular the form $p(-,-)$ is completely defined by the {\it quantification matrix} $||p_{ij}||,$
where $p_{ij}=\chi ^{i}(g_{j}).$

The brackets satisfy the following Jacobi identities for homogeneous 
(with respect to the both gradings) elements:
\begin{equation}
[[u, v],w]=[u,[v,w]]+p_{wv}^{-1}[[u,w],v]+(p_{vw}-p_{wv}^{-1})[u,w]\cdot v.
\label{jak1}
\end{equation}
\begin{equation}
[[u, v],w]=[u,[v,w]]-p_{vu}^{-1}[v,[u,w]]+(p_{vu}^{-1}-p_{uv})v\cdot [u,w].
\label{ja}
\end{equation}
These identities can be easily verified by direct computations using (\ref{sqo}), (\ref{sqot}).
In particular the following conditional  identities are valid (both in $G \langle X\rangle $ and in all
of its homomorphic images)
\begin{equation}
[[u, v],w]=[u,[v,w]],\hbox{ provided that } [u,w]=0.
\label{jak3}
\end{equation}
\begin{equation}
[u,[v,w]]=p_{uv}[v,[u,w]],\hbox{ provided that } [u,v]=0 \hbox{ and }p_{uv}p_{vu}=1.
\label{jak4}
\end{equation}
By an evident induction on the length  these conditional identities admit  the following  generalization,
see \cite[Lemma 2.2]{KL}.
\begin{lemma}
Let $y_1,$ $y_2,$ $\ldots ,$ $y_m$ be linear combinations of words homogeneous in each $x_k\in X.$ If
$[y_i,y_j]=0,$ $1\leq i<j-1<m,$ then  the bracketed polynomial $[y_1y_2\ldots y_m]$ is independent 
of the precise alignment of brackets:
\begin{equation}
[y_1y_2\ldots y_m]=[[y_1y_2\ldots y_s],[y_{s+1}y_{s+2}\ldots y_m]], \ 1\leq s<m.
\label{ind}
\end{equation}
\label{indle}
\end{lemma}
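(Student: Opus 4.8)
The plan is to induct on $m$, with the cases $m=1,2$ immediate. For the inductive step I first reduce the statement about arbitrary bracketings to a statement about the split forms appearing in (\ref{ind}). Any complete bracketing of $y_1y_2\ldots y_m$ decomposes at its outermost bracket as $[A,B]$, where $A$ is a bracketing of $y_1\ldots y_s$ and $B$ a bracketing of $y_{s+1}\ldots y_m$ for some $1\le s<m$. Both blocks have length strictly less than $m$ and inherit the hypothesis $[y_i,y_j]=0$ for non-adjacent indices, so by the induction hypothesis $A=[y_1\ldots y_s]$ and $B=[y_{s+1}\ldots y_m]$ are unambiguously defined. Hence every bracketing equals the right-hand side of (\ref{ind}) for some $s$, and it remains to show that this split value is independent of $s$; by transitivity it suffices to compare the split at $s$ with the split at $s+1$.

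To move the split point by one, I write $u=[y_1\ldots y_s]$, $v=y_{s+1}$, and $w=[y_{s+2}\ldots y_m]$. Applying the induction hypothesis to the right block gives $[y_{s+1}\ldots y_m]=[v,w]$, so the split at $s$ is $[u,[v,w]]$; likewise, since $[u,v]=[y_1\ldots y_{s+1}]$ by the induction hypothesis, the split at $s+1$ is $[[u,v],w]$. These two expressions coincide precisely by the conditional Jacobi identity (\ref{jak3}), provided that $[u,w]=0$. Thus the rebracketing step itself is a single direct application of (\ref{jak3}).

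The crux is therefore to establish $[u,w]=0$. Here $u$ is assembled from $y_1,\ldots,y_s$ and $w$ from $y_{s+2},\ldots,y_m$, and every cross pair $(y_i,y_j)$ with $i\le s<s+1<s+2\le j$ satisfies $1\le i<j-1<m$, whence $[y_i,y_j]=0$ by hypothesis. I would promote this pairwise vanishing to $[u,w]=0$ using the skew-Leibniz rules $[a,bc]=[a,b]\,c+p_{ab}\,b\,[a,c]$ and $[ab,c]=a\,[b,c]+p_{bc}\,[a,c]\,b$, both of which follow immediately from the definition (\ref{sqo}) together with the bimultiplicativity (\ref{sqot}): expanding $[u,w]$ by these rules, via a nested induction on the way $u$ and $w$ are built up, reduces $[u,w]$ to a linear combination of the cross brackets $[y_i,y_j]$, each of which vanishes. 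This structural reduction is the main technical obstacle, whereas everything else is bookkeeping with the induction hypothesis. Combining the three parts completes the inductive step and hence the proof.
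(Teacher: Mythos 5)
Your proof is correct and is essentially the paper's argument made explicit: the paper dispenses with the lemma as an ``evident induction on the length'' based on the conditional Jacobi identity (\ref{jak3}), which is exactly your rebracketing step, with the required vanishing $[u,w]=0$ obtained from the ad-identities (\ref{br1f}), (\ref{br1}) just as you do.
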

The brackets are related to the product by the following ad-identities
\begin{equation}
[u\cdot v,w]=p_{vw}[u,w]\cdot v+u\cdot [v,w], 
\label{br1f}
\end{equation}
\begin{equation}
[u,v\cdot w]=[u,v]\cdot w+p_{uv}v\cdot [u,w].
\label{br1}
\end{equation}
In particular, if $[u,w]=0,$ we have
\begin{equation}
[u\cdot v,w]=u\cdot [v,w].
\label{br2}
\end{equation}
The antisymmetry identity takes the form
\begin{equation}
[u,v]=-p_{uv}[v,u] \ \ \hbox{ provided that } \ \ p_{uv}p_{vu}=1.
\label{bri}
\end{equation}
Further we have
\begin{equation} 
[u, gv]=u\cdot gv-\chi ^u(gg_v)gv\cdot u=\chi ^u(g)\, g[u,v], \ \ \ g\in G;
\label{cuq1}
\end{equation}

\begin{equation} 
[gu, v]=gu\cdot v-\chi ^u(g_v)v\cdot gu=g(uv-p_{uv}\chi ^v(g)\, vu),
\label{cuq}
\end{equation}
or in a bracket form
\begin{equation} 
[gu,v]=g[u,v]+p_{uv}(1-\chi^v(g))\, g\, v\cdot u,  \ \ \ g\in G.
\label{cuq2}
\end{equation}
\begin{equation} 
[gu,v]=\chi^v(g)\, g[u,v]+(1-\chi^v(g))\, g\, u\cdot v,  \ \ \ g\in G.
\label{cuq21}
\end{equation}

\smallskip
\noindent 
{\bf Quantization of variables.} Let $p_{ij},$ $1\leq i,j\leq n$ be a set of 
parameters, $0\neq p_{ij}\in {\bf k}.$ Let $g_j$ be the linear transformation 
$g_j:x_i\rightarrow p_{ij}x_i$ of the linear space 
spanned by a set of variables $X=\{ x_1, x_2, \ldots , x_n\} .$
Let $\chi ^i$ denote a character $\chi^i :g_j\rightarrow p_{ij}$ of the group $G$ generated by 
$g_i,$ $1\leq i\leq n.$ We may consider each $x_i$ as a quantum variable with parameters $g_i,$ $\chi ^i.$

\smallskip
\noindent 
{\bf Algebra ${\mathfrak F}_n.$} Let $X^-=$ $\{ x^-_1, x^-_2,\ldots , x^-_n\} $
be a new set of variables.  
We consider $X^-$ as a set of quantum variables quantized by the parameters 
$p_{ji}^{-1},$ $1\leq i,j\leq n.$ More precisely  we have an Abelian group $F$ generated 
by elements $f_1, f_2, \ldots ,f_n$ acting on the linear space spanned by 
$X^-$ so that $(x_i^-)^{f_j}=p_{ji}^{-1}x_i^-,$ where $p_{ij}$ are the same parameters 
that define the quantization of the variables $X.$ In this case gr$(x_i^-)=f_i,$ 
$\chi ^{x_i^-}(f_j)=p_{ji}^{-1}.$

We may extend the characters $\chi ^i $ on $G\times F$ in the following way
\begin{equation}
\chi ^i(f_j)\stackrel{df}{=}p_{ji}=\chi ^j(g_i).
\label{shar1}
\end{equation}
Indeed, if $\prod_k f_k^{m_k}=1$ in $F,$ then application to $x_i^-$
implies $\prod_k p_{ki}^{-m_k}=1,$ hence $\chi ^i(\prod _k f_k^{m_k})=\prod p_{ki}^{m_k}=1.$

In the same way we may extend the characters $\chi ^{x_i^-}$ on $G\times F$
so that 
\begin{equation}
\chi ^{x_i^-}=(\chi ^i)^{-1} \ \ \hbox{as characters of } G\times F.
\label{shar2}
\end{equation}

In what follows  $H$ denotes a quotient group $(G\times F)/N,$ where 
$N$ is an arbitrary subgroup with $\chi ^{i}(N)=1,$ $1\leq i\leq n.$ For example, if the quantification parameters satisfy additional symmetry conditions $p_{ij}=p_{ji},$  $1\leq i,j\leq n,$
(as this is a case for the original Drinfeld-Jimbo and Lusztig quantifications) then 
$\chi ^i(g_k^{-1}f_k)=p_{ik}^{-1}p_{ki}=1,$ and we may take $N$ to be the subgroup generated by 
$g_k^{-1}f_k,$  $1\leq k\leq n. $ In this particular case the groups 
$H,$ $G,$ $F$ may be identified. 

In the general case without loss of generality we may suppose that $G,F\subseteq H.$
Certainly $\chi ^i, 1\leq i\leq n$ are characters of $H$ and $H$ still 
acts on the  space spanned by $X\cup X^-$ by means of these characters and their inverses.

We define the algebra ${\mathfrak F}_n$ as a quotient of $H\langle X\cup X^-\rangle $
by the following relations
\begin{equation}
[x_i, x_j^-]=\delta_i^j(1-g_if_i), \ \ \ \ \ 1\leq i,j\leq n,
\label{rela3}
\end{equation}
where the brackets are defined on $H\langle X\cup X^-\rangle $ 
by the above quantization of the  variables $X\cup X^-$; 
that is, $[x_i,x_j^-]=x_ix_j^--p_{ji}x_j^-x_i,$ for $\chi ^{i}(f_j)=p_{ji}.$ 

We go ahead with a number of useful notes for calculation of the skew commutators in 
${\mathfrak F}_n.$
 If $u$ is a word in $X,$ then $u^-$ denotes
a word in $X^-$ that appears from $u$ under the substitution $x_i\leftarrow x_i^-.$
We have $p(v,w^-)=\chi ^v(f_w)=p(w,v),$ while $p(w^-,v)=(\chi ^w)^{-1}(g_v)=p(w,v)^{-1}.$
Thus  $p(v,w^-)p(w^-,v)=1.$ Therefore the Jacobi and antisymmetry identities
(see, (\ref{jak1}), (\ref{bri})) take up their original ``colored" form:
\begin{equation} 
[[u,v],w^-]=[u,[v,w^-]]+p_{wv}[[u,w^-],v];
\label{uno}
\end{equation}
\begin{equation} 
[u^-,w]=-p_{uw}^{-1}[w,u^-].
\label{dos}
\end{equation}
In the same way
\begin{equation} 
[[u^-,v^-],w]=[u^-,[v^-,w]]+p_{vw}^{-1}[[u^-,w],v^-].
\label{tres}
\end{equation}
Using  (\ref{ja}) we have also
\begin{equation} 
[u,[v^-,w^-]]=[[u,v^-],w^-]+p_{vu}[v^-,[u,w^-]].
\label{cua}
\end{equation}
If we put $w^-\leftarrow [w^-,t^-]$ in (\ref{uno}) we have
$$
[[u,v],[w^-,t^-]]=[u,[v, [w^-,t^-]]]+p_{wt,v}[[u,[w^-,t^-]],v].
$$
Using (\ref{cua}) we get
$$
[[u,v],[w^-,t^-]]= \hbox{\big [}u,[[v,w^-],t^-]\hbox{\big ]}+p_{w,v}\hbox{\big [}u,[w^-,[v,t^-]]\hbox{\big ]}
$$
\begin{equation} 
+p_{wt,v}[[u,[w^-,t^-]],v].
\label{fo}
\end{equation}
Using once more (\ref{cua}) we get
$$
[[u,v],[w^-,t^-]]= \hbox{\big [}u,[[v,w^-],t^-]\hbox{\big ]}+p_{w,v}\hbox{\big [}u,[w^-,[v,t^-]]\hbox{\big ]}
$$
\begin{equation} 
+p_{wt,v}\hbox{\big [}[[u,w^-],t^-],v\hbox{\big ]}+p_{wt,v}p_{w,u}\hbox{\big [}[w^-,[u,t^-]],v\hbox{\big ]}.
\label{fo1}
\end{equation}

We must stress that relations (\ref{rela3}) are homogeneous with respect to the grading by the
character group $H^*,$ but they are not homogeneous with respect to the grading by $H.$
Therefore once we apply relations (\ref{rela3}), or other ``inhomogeneous in $H$" relations,
we have to develop the bracket to its explicit form as soon as
the inhomogeneous substitution applies to the right factor of the bracket.
For example we have
\begin{equation}
[u,[x_i,x_i^-]]=u(1-g_if_i)-\chi ^u(g_if_i)(1-g_if_i)u=(1-\chi^u(g_if_i))u,
\label{cuq3}
\end{equation}
but not $[u,[x_i,x_i^-]]=[u,1-g_if_i]=[u,1]-[u,g_if_i]=0.$ 
In fact here the bracket $[u,1-g_if_i]$ is undefined since the right factor
$1-g_if_i$ is inhomogeneous in $H$ (unless $g_if_i=1$).
At the same time
\begin{equation}
[[x_i,x_i^-],u]=(1-g_if_i)u-u(1-g_if_i)=(\chi^u(g_if_i)-1)\, g_if_i\cdot u,
\label{cuq4}
\end{equation}
and $[[x_i,x_i^-],u]=[1-g_if_i, u]$ $=[1,u]-[g_if_i,u]$ is valid since the inhomogeneous 
substitution has been applied to the left factor in the brackets. 
\begin{lemma} Let $X_1,$ $X_2$ be subsets of $X.$ Suppose that $u$ is a word
in $X_1$ and $v$ is a word in $X_2.$ If $X_1\cap X_2=\emptyset ,$ then
in the algebra ${\mathfrak F}_n$ we have $[u,v^-]=0.$
\label{suu}
\end{lemma}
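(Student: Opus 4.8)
The plan is to reduce the assertion to the single-letter relation (\ref{rela3}) and then propagate it through both words by repeated application of the ad-identities (\ref{br1f}) and (\ref{br1}). The key observation is that the hypothesis $X_1\cap X_2=\emptyset$ forces every elementary bracket we ever produce to be of the form $[x_i,x_j^-]$ with $i\neq j$; by (\ref{rela3}) this equals $\delta_i^j(1-g_if_i)=0$. In particular we never meet the inhomogeneous right-hand side $1-g_if_i$, so the delicacies about expanding brackets against an $H$-inhomogeneous factor (discussed around (\ref{cuq3})) never arise, and every bracket appearing in the argument is legitimately defined.

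First I would fix a single letter $x_i\in X_1$ and prove $[x_i,v^-]=0$ for an arbitrary word $v$ in $X_2$ by induction on the length of $v$. If $v=x_j$ is a single letter, then $[x_i,x_j^-]=0$ because $i\neq j$. For the inductive step write $v=v_1v_2$, so that $v^-=v_1^-v_2^-$, and apply the right ad-identity (\ref{br1}):
$$
[x_i,v_1^-v_2^-]=[x_i,v_1^-]\cdot v_2^-+p(x_i,v_1^-)\, v_1^-\cdot [x_i,v_2^-].
$$
Both terms vanish by the inductive hypothesis, since $v_1$ and $v_2$ are again words in $X_2$.

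Next I would lift the restriction on the first argument by a second induction, now on the length of $u$, using the left ad-identity (\ref{br1f}). The base case, $u$ of length one, is exactly the statement just established. For $u=u_1u_2$ I apply
$$
[u_1u_2,v^-]=p(u_2,v^-)\, [u_1,v^-]\cdot u_2+u_1\cdot [u_2,v^-],
$$
and both terms vanish by the inductive hypothesis, since $u_1$ and $u_2$ are words in $X_1$.

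I do not expect a genuine obstacle here, as the whole argument is a routine double induction. The only point requiring attention is homogeneity bookkeeping: the ad-identities (\ref{br1f}) and (\ref{br1}) hold for elements homogeneous with respect to both gradings in every homomorphic image of $H\langle X\cup X^-\rangle$, and each word in $X$ or $X^-$ is indeed homogeneous, so all brackets are well defined. The reason the induction closes so cleanly is precisely the disjointness hypothesis, which guarantees that every base-case bracket is the homogeneous element $0$ rather than the inhomogeneous element $1-g_if_i$.
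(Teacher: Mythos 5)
Your proof is correct and follows exactly the paper's argument: the disjointness hypothesis reduces the base case to the defining relations (\ref{rela3}), and the ad-identities (\ref{br1f}) and (\ref{br1}) carry the vanishing through both words by induction on length. The paper compresses this into the phrase ``evident induction''; you have simply written that induction out, including the (correct) observation that no inhomogeneous right-hand factors ever arise.
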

\begin{proof}
Defining relations (\ref{rela3}) imply $[x_i,x_j^-]=0,$ $x_i\in X_1,$ $x_j\in X_2.$
Ad-identities (\ref{br1f}) and (\ref{br1}) with evident induction prove the statement.
\end{proof}
\begin{lemma}
In the algebra ${\mathfrak F}_n$ for any pair $(i,j)$ with $1\leq i,j\leq n,$ $i\neq j$ we have
$$
\hbox{\rm \Large [}[x_i,x_j],[x_j^-,x_i^-]\hbox{\rm \Large ]}=(1-p_{ij}p_{ji}) (1-g_ig_jf_if_j).
$$
\label{suu1}
\end{lemma}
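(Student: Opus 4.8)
The plan is to reduce the double commutator to the elementary relations (\ref{rela3}) by means of the expansion identity (\ref{fo1}), which was derived precisely for brackets of the shape $[[u,v],[w^-,t^-]]$. Setting $u=x_i$, $v=x_j$, $w=x_j$, $t=x_i$ turns the left-hand side into exactly $[[x_i,x_j],[x_j^-,x_i^-]]$ and produces on the right a sum of four iterated brackets. My first observation is that two of these four terms die immediately: the second term contains the inner factor $[x_j,x_i^-]$ and the third contains the inner factor $[x_i,x_j^-]$, and both of these vanish by Lemma \ref{suu} because $i\neq j$. Thus only the first term $[x_i,[[x_j,x_j^-],x_i^-]]$ and the last term (carrying the scalar $p_{jj}p_{ij}p_{ji}$) $[[x_j^-,[x_i,x_i^-]],x_j]$ need to be computed.

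Next I would evaluate these two survivors separately. In the first term the innermost bracket is $[x_j,x_j^-]=1-g_jf_j$ by (\ref{rela3}); since this inhomogeneous element occupies the left slot of the outer bracket, I am allowed to expand it, and applying the rule behind (\ref{cuq4}), followed by the scalar pullout formula (\ref{cuq1}) and the relation $[x_i,x_i^-]=1-g_if_i$, reduces this term to a scalar multiple of $g_jf_j(1-g_if_i)$. In the last term the inner bracket $[x_i,x_i^-]=1-g_if_i$ instead occupies the right slot of $[x_j^-,\,\cdot\,]$, so here I must carry it as a whole through the rule behind (\ref{cuq3}); what remains is $[x_j^-,x_j]$, which I rewrite with the antisymmetry identity (\ref{dos}) as a scalar multiple of $[x_j,x_j^-]=1-g_jf_j$. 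Tracking the quantification factors via (\ref{shar1}), (\ref{shar2}) and the bimultiplicativity (\ref{sqot}), both surviving terms collapse to scalar multiples of $(1-p_{ij}p_{ji})$ times a group element.

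Finally I would add the two contributions. The first term yields $(1-p_{ij}p_{ji})(g_jf_j-g_ig_jf_if_j)$ and the last yields $(1-p_{ij}p_{ji})(1-g_jf_j)$; the two $g_jf_j$ summands cancel, leaving exactly $(1-p_{ij}p_{ji})(1-g_ig_jf_if_j)$, as claimed. The whole argument is a chain of bookkeeping steps, and the only real danger --- the main obstacle --- is keeping the quantification scalars straight and, above all, correctly deciding when a factor $1-g_kf_k$ may be split across a bracket: as the discussion around (\ref{cuq3})--(\ref{cuq4}) stresses, this inhomogeneous element may be expanded freely when it occupies the left argument of a bracket, but must be treated as an indivisible block (via (\ref{cuq3})) when it occupies the right argument, and a single slip here corrupts the surviving scalars.
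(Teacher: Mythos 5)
Your proposal is correct and follows essentially the same route as the paper's own proof: both apply identity (\ref{fo1}) with $u=x_i$, $v=x_j$, $w^-=x_j^-$, $t^-=x_i^-$, discard the two terms containing the vanishing inner brackets $[x_j,x_i^-]$ and $[x_i,x_j^-]$, and evaluate the two survivors via (\ref{cuq4}) followed by (\ref{cuq1}), respectively (\ref{cuq3}) followed by (\ref{dos}), arriving at the same scalars $(1-p_{ij}p_{ji})g_jf_j(1-g_if_i)$ and $(1-p_{ij}p_{ji})(1-g_jf_j)$ whose sum telescopes to the claimed right-hand side. The only cosmetic difference is that the paper normalizes $i=1$, $j=2$ at the outset, whereas you keep general indices throughout.
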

\begin{proof} Without loss of generality we may assume $i=1,$ $j=2.$
Since $[x_1,x_2^-]=[x_2,x_1^-]=0,$ identity (\ref{fo1}) implies
\begin{equation}
[[x_1,x_2],[x_2^-,x_1^-]]=[x_1,[[x_2,x_2^-],x_1^-]]]+p(x_2x_1,x_2)p(x_2, x_1)[[x_2^-,[x_1,x_1^-]],x_2].
\label{sh1}
\end{equation}
Using (\ref{cuq4}) and then (\ref{cuq1}) we get
$$
[x_1,[[x_2,x_2^-],x_1^-]]]=((\chi^1)^{-1}(g_2f_2)-1)\chi^1(g_2f_2)g_2f_2[x_1,x_1^-]=
(1-p_{12}p_{21})g_2f_2 (1-g_1f_1).
$$
Taking into account  (\ref{cuq3}), we have
$$
[x_2^-,[x_1,x_1^-]]=(1-(\chi^2)^{-1}(g_1f_1))x^-_2=(1-p_{21}^{-1}p_{12}^{-1})x_2^-.
$$ 
Antisymmetry relation (\ref{dos}) implies $[x_2^-,x_2]=-p_{22}^{-1}[x_2,x_2^-].$ Hence
$$
[[x_2^-,[x_1,x_1^-]],x_2]=(1-p_{21}^{-1}p_{12}^{-1})(-p_{22}^{-1})(1-g_2f_2).
$$
In (\ref{sh1}) we have  $p(x_2x_1,x_2)p(x_2, x_1)=p_{22}p_{12}p_{21}, $
hence
$$
[[x_1,x_2],[x_2^-,x_1^-]]=(1-p_{12}p_{21})(g_2f_2-g_1g_2f_1f_2+1-g_2f_2),
$$
which is required.
\end{proof}
\begin{lemma} In the algebra ${\mathfrak F}_n$ for any pair $(i,j)$ with $1\leq i,j\leq n,$ $i\neq j$ we have
$$
\hbox{\bf \Large [}[[x_i,x_j],x_j],[x_j^-,[x_j^-,x_i^-]]\hbox{\bf \Large ]}=
\varepsilon \, (1-g_ig_j^2f_if_j^2),
$$
where $\varepsilon =(1+p_{jj})(1-p_{ij}p_{ji})(1-p_{ij}p_{ji}p_{jj}).$
\label{suu2}
\end{lemma}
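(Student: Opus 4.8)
The plan is to follow exactly the strategy that worked for Lemma \ref{suu1}: reduce everything to repeated applications of the colored Jacobi identities together with the explicit inhomogeneous substitutions (\ref{cuq3}) and (\ref{cuq4}), and finally collect the group-like terms. As before, I would set $i=1$, $j=2$ without loss of generality, abbreviate $u=[[x_1,x_2],x_2]$ and $w^-=[x_2^-,[x_2^-,x_1^-]]$, and use Lemma \ref{suu} to record at the outset that $[x_1,x_2^-]=[x_2,x_1^-]=0$ while $[x_1,x_1^-]$ and $[x_2,x_2^-]$ are the only nonzero ``diagonal'' brackets. The key structural fact is that the only way to produce a group-like element from $\hbox{\bf \Large [}u,w^-\hbox{\bf \Large ]}$ is to pair each positive letter $x_k$ with the matching negative letter $x_k^-$; since $u$ contains $x_1,x_2,x_2$ and $w^-$ contains $x_2^-,x_2^-,x_1^-$, the degrees match and the outcome must be a scalar multiple of $(1-g_1g_2^2f_1f_2^2)$, which already pins down the group-like part. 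The entire content of the lemma is therefore the computation of the scalar $\varepsilon$.

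First I would expand the outer bracket by the inhomogeneous-in-$H$ analogue of the Jacobi identities, repeatedly moving negative letters inward to meet their positive partners. Concretely, I expect to apply a doubly-iterated version of (\ref{fo1}): the first application peels off the innermost diagonal pairing $[x_2,x_2^-]$ and $[x_1,x_1^-]$, exactly as equation (\ref{sh1}) did in the previous lemma, and the second handles the remaining extra pair of $x_2,x_2^-$ letters. Each time a diagonal bracket $[x_k,x_k^-]$ is created and then a further bracket is taken against it, I switch to the explicit forms (\ref{cuq3})–(\ref{cuq4}) rather than treating $1-g_kf_k$ as homogeneous, and I use (\ref{cuq1}) to pull the resulting group elements $g_kf_k$ out through the brackets, picking up character values $\chi^u(g_kf_k)$ along the way. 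The antisymmetry relation (\ref{dos}) will be needed, as in the previous proof, to reorient brackets of the form $[x_2^-,x_2]$ into $[x_2,x_2^-]$ at the cost of a factor $-p_{22}^{-1}$.

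The main obstacle, and the place where care is essential, is the bookkeeping of the quantification scalars. The extra squared letters $x_2,x_2^-$ mean that the recursion is one level deeper than in Lemma \ref{suu1}, so instead of a single factor $(1-p_{12}p_{21})$ I expect two such factors together with the $(1+p_{22})$ prefactor: the $(1+p_{22})$ arises from the antisymmetrization of the two identical $x_2$'s (equivalently, from a $q$-integer $[2]_{p_{22}}=1+p_{22}$ appearing when $[[x_1,x_2],x_2]$ is braided past the repeated $x_2^-$), while the two remaining factors $(1-p_{12}p_{21})$ and $(1-p_{12}p_{21}p_{22})$ come from the two successive diagonal pairings, the second of which is shifted by $p_{22}$ because by the time it is formed one $x_2$ has already been commuted past an $x_2^-$. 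The delicate point is that the bimultiplicativity (\ref{sqot}) forces every such commutation to contribute a definite power of $p_{22}$ and of $p_{12}p_{21}$, and one must verify that these powers assemble into precisely the stated product and that all the spurious non-group-like intermediate terms cancel. I would organize this by tracking, at each step, the character $\chi^u$ evaluated on the accumulated group element, and confirm at the end that the coefficient collapses to $\varepsilon=(1+p_{22})(1-p_{12}p_{21})(1-p_{12}p_{21}p_{22})$, matching the symmetric form expected from the $B_n$-type root $\alpha_i+2\alpha_j$ whose group datum is $g_1g_2^2f_1f_2^2$.
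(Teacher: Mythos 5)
Your overall strategy is the paper's: specialize to $i=1$, $j=2$, expand with identity (\ref{fo1}), handle the inhomogeneous diagonal brackets via (\ref{cuq3})--(\ref{cuq4}) and (\ref{cuq1}), reorient with (\ref{dos}), and reduce the inner pairings to Lemma \ref{suu1}. But the proposal has a genuine gap, and it is located exactly where you try to shortcut the work. Your ``key structural fact'' --- that degree matching forces the answer to be a scalar multiple of $1-g_1g_2^2f_1f_2^2$ --- is unjustified and, as a deduction from degrees, false: every element of ${\bf k}[H]$ has $\Gamma$-degree zero and trivial $H^*$-degree, so degree bookkeeping cannot distinguish $1-g_1g_2^2f_1f_2^2$ from, say, $1-g_2f_2$ or $g_2f_2-g_1g_2^2f_1f_2^2$. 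Indeed, in the actual computation the four terms produced by (\ref{fo1}) are individually equal to $\varepsilon_1\,g_2f_2(1-g_1g_2f_1f_2)$, $\varepsilon_2\,g_2f_2(1-g_1g_2f_1f_2)$, $\varepsilon_3(1-g_2f_2)$ and $\varepsilon_4(1-g_2f_2)$ --- none of them proportional to $1-g_1g_2^2f_1f_2^2$. The intermediate group-like $g_2f_2$ cancels only because of the exact scalar coincidences $\varepsilon_4=\varepsilon_1$ and $\varepsilon_3=\varepsilon_2$, which is a nontrivial outcome of the computation, not something pinned down a priori. So identifying which group-likes survive is itself part of the lemma's content, and cannot be waved away before computing the scalar.

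The second problem is that the computation is never executed: you defer precisely the verification in which the lemma lives (``one must verify that these powers assemble\dots''), and your heuristic for the factors does not match how they actually arise. No single term produces a factor $(1-p_{12}p_{21}p_{22})$, and $(1+p_{22})$ is not a $q$-integer coming from antisymmetrizing the two $x_2$'s; rather one finds $\varepsilon_1=(1-p_{12}p_{21}p_{22}^2)(1-p_{12}p_{21})$ (the $p_{22}^2$ coming from $\chi^{t^-}(g_2f_2)$ with $t^-=[x_2^-,x_1^-]$) and $\varepsilon_2=p_{22}(1-p_{12}p_{21})^2$, and the stated factorization of $\varepsilon$ only emerges from the algebraic identity $\varepsilon_1+\varepsilon_2=(1+p_{22})(1-p_{12}p_{21})(1-p_{12}p_{21}p_{22})$. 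To close the gap: apply (\ref{fo1}) once with $u\leftarrow[x_1,x_2]$, $v\leftarrow x_2$, $w^-\leftarrow x_2^-$, $t^-\leftarrow[x_2^-,x_1^-]$; evaluate each of the four terms explicitly, using Lemma \ref{suu1} for $[u,t^-]$ and, after (\ref{jak3}) and (\ref{cua}), for the other inner brackets; and then sum, checking the two cancellations above. That is the paper's proof, and nothing shorter than it is supplied by your structural argument.
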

\begin{proof} Again, without loss of generality we may assume $i=1,$ $j=2.$
Let us put $u\leftarrow [x_1,x_2],$ $v\leftarrow x_2,$ $w^-\leftarrow x_2^-, $ 
$t^-\leftarrow [x_2^-,x_1^-]$ in (\ref{fo1}). 
We have $[v,w^-]=[x_2,x_2^-]=1-g_2f_2.$ By means of (\ref{cuq4}) we get
$[[v,w^-],t^-]=(\chi ^{t^-}(g_2f_2)-1)g_2f_2\cdot t^-.$ Here  
$\chi ^{t^-}(g_2f_2)=p_{22}^{-2}p_{12}^{-1}p_{21}^{-1}.$ Using first (\ref{cuq1}) and then 
Lemma \ref{suu1} we get 
\begin{equation}
[u,[[v,w^-],t^-]]=\varepsilon _1\, g_2f_2(1-g_1g_2f_1f_2),
\label{ssh1}
\end{equation}
where 
$$
\varepsilon _1=(p_{22}^{-2}p_{12}^{-1}p_{21}^{-1}-1)\chi ^u(g_2f_2)(1-p_{12}p_{21})=
(1-p_{12}p_{21}p_{22}^2)(1-p_{12}p_{21}).
$$

Further, $[v,t^-]=[x_2,[x_2^-,x_1^-]].$ By (\ref{jak3}) we have 
$[x_2,[x_2^-,x_1^-]]=[[x_2,x_2^-],x_1^-].$ Hence (\ref{cuq4}) implies
$[v,t^-]=((\chi ^1)^{-1}(g_2f_2)-1)g_2f_2\cdot x_1^-.$ By (\ref{cuq1}) we get
$[w^-,[v,t^-]]=p_{22}^{-2}(p_{12}^{-1}p_{21}^{-1}-1)g_2f_2\cdot [x_2^-,x_1^-].$
Using first (\ref{cuq1}) and then Lemma \ref{suu1} we get
\begin{equation}
p_{w,v}[u,[w^-,[v,t^-]]]=\varepsilon _2\, g_2f_2(1-g_1g_2f_1f_2),
\label{ssh2}
\end{equation}
where 
$$
\varepsilon _2=
p_{22}\cdot p_{22}^{-2}(p_{12}^{-1}p_{21}^{-1}-1)\cdot \chi ^u(g_2f_2)\cdot (1-p_{12}p_{21})=
p_{22}(1-p_{12}p_{21})^2.
$$

In the same way $[u,w^-]=[[x_1,x_2],x_2^-]=(1-\chi ^1(g_2f_2))\cdot x_1$ due to (\ref{jak3})
and (\ref{cuq3}). Further $[[u,w^-],t^-]=(1-p_{12}p_{21})[x_1, [x_2^-,x_1^-]].$
Using (\ref{cua}) we have $[x_1, [x_2^-,x_1^-]]=p_{21}[x_2^-,[x_1,x_1^-]].$
Hence (\ref{cuq3}) allows us to find 
$[[u,w^-],t^-]=(1-p_{12}p_{21})p_{21}(1-p_{21}^{-1}p_{12}^{-1})\cdot x_2^-.$
This implies 
$$
[[[u,w^-],t^-],v]=(1-p_{12}p_{21})(p_{21}-p_{12}^{-1})[x_2^-,x_2].
$$
Since $[x_2^-,x_2]=-p_{22}^{-1}[x_2,x_2^-],$ and $[x_2,x_2^-]=1-g_2f_2,$ we get
\begin{equation}
p_{wt,v}[[[u,w^-],t^-],v]=\varepsilon _3\, (1-g_2f_2),
\label{ssh3}
\end{equation}
where 
$$
\varepsilon _3=p_{22}^2p_{12}\cdot (1-p_{12}p_{21})(p_{21}-p_{12}^{-1})\cdot (-p_{22}^{-1})
=\varepsilon _2.
$$

Finally, by Lemma \ref{suu1} we have $[u,t^-]=(1-p_{12}p_{21})(1-g_1g_2f_1f_2).$
If we apply (\ref{cuq3}) with $x_i\leftarrow u,$ $x_i^-\leftarrow t^-,$
then 
$
[w^-,[u,t^-]] =(1-p_{12}p_{21})(1-(\chi ^2)^{-1}(g_1g_2f_1f_2))\cdot x_2^-. 
$
Hence 
$$[[w^-,[u,t^-]],v^-]=(1-p_{12}p_{21})(1-p_{22}^{-2}p_{12}^{-1}p_{21}^{-1})[x_2^-,x_2].$$
Here $[x_2^-,x_2]=-p_{22}^{-1}(1-g_2f_2).$ Since $p_{wt,v}p_{w,u}=p_{22}^2p_{12}p_{21}p_{22},$
we may write
\begin{equation}
p_{wt,v}p_{w,u}[[w^-,[u,t^-]],v^-]=\varepsilon _4\, (1-g_2f_2),
\label{ssh4}
\end{equation}
where 
$$
\varepsilon _4=p_{22}^2p_{12}p_{21}p_{22}\cdot (1-p_{12}p_{21})(1-p_{22}^{-2}p_{12}^{-1}p_{21}^{-1})
\cdot (-p_{22}^{-1})
=\varepsilon _1.
$$
Now we see that the sum of (\ref{ssh1}) and (\ref{ssh4}) equals
$\varepsilon _1\, (1-g_1g_2^2f_1f_2^2),$ while the sum of (\ref{ssh2}) and (\ref{ssh3}) equals
$\varepsilon _2\, (1-g_1g_2^2f_1f_2^2).$ 
It remains to check that $\varepsilon _1+\varepsilon _2=\varepsilon .$
\end{proof}

\smallskip
The algebra ${\mathfrak F}_n$ has a structure of Hopf algebra with the following coproduct:
\begin{equation}
\Delta (x_i)=x_i\otimes 1+g_i\otimes x_i,\ \ \ \Delta (x_i^-)=x_i^-\otimes 1+f_i\otimes x_i^-.
\label{AIcm}
\end{equation}
\begin{equation}
\Delta (g_i)=g_i\otimes g_i,\ \ \ \Delta (f_i)=f_i\otimes f_i.
\label{AIcm1}
\end{equation}
In this case $G\langle X\rangle $ and $F\langle X^-\rangle $ are Hopf subalgebras of 
${\mathfrak F}_n.$

The free algebra ${\bf k}\langle X\rangle $ has a coordinate differential calculus
\begin{equation}
\partial_i(x_j)=\delta _i^j,\ \ \partial _i (uv)=\partial _i(u)\cdot v+\chi ^u(g_i)u\cdot \partial _i(v).
\label{defdif}
\end{equation}
The partial derivatives connect the calculus with the coproduct on $G\langle X\rangle$ via
\begin{equation}
\Delta (u)\equiv u\otimes 1+\sum _ig_i\partial_i(u)\otimes x_i\ \ \ 
(\hbox{mod }G\langle X\rangle \otimes \hbox{\bf k}\langle X\rangle ^{(2)}),
\label{calc}
\end{equation}
for all $u\in {\bf k}\langle X\rangle .$ Here ${\bf k}\langle X\rangle ^{(2)}$ is the ideal
of  ${\bf k}\langle X\rangle $ generated by $x_ix_j,$ $1\leq i,j\leq n.$
Symmetrically the equation
\begin{equation}
\Delta (u)\equiv g_u\otimes u+\sum _ig_ug_i^{-1}x_i\otimes\partial _i^*(u)\ \ \ 
(\hbox{mod }G\langle X\rangle ^{(2)}\otimes \hbox{\bf k}\langle X\rangle )
\label{calcdu}
\end{equation}
defines a dual differential calculus on ${\bf k}\langle X\rangle $
where the partial derivatives satisfy
\begin{equation}
\partial _j^*( x_i)=\delta _i^j,\ \ \partial _i^*(uv)=
\chi ^{i}(g_v) \partial _i^*(u)\cdot v+u\cdot \partial _i^*(v).
\label{difdu}
\end{equation}
Here $G\langle X\rangle ^{(2)}$ is the ideal of $G\langle X\rangle $
generated by $x_ix_j,$ $1\leq i,j\leq n.$

Similarly the algebra {\bf k}$\langle X^-\rangle$ has a pair of differential calculi:
\begin{equation}
\partial_{-i}(x_j^-)=\delta _i^j,\ \ \partial _{-i} (u^-v^-)
=\partial _{-i}(u^-)\cdot v^-+\chi ^{u^-}(f_i)u^-\cdot \partial _{-i}(v^-),
\label{dem}
\end{equation}
\begin{equation}
\partial _{-j}^*( x_i^-)=\delta _i^j,\ \ \partial _{-i}^*(u^-v^-)=
(\chi ^{i}(f_v))^{-1} \partial _{-i}^*(u^-)\cdot v^-+u^-\cdot \partial _{-i}^*(v^-).
\label{dem1}
\end{equation}
These calculi are related to the coproduct by the similar formulae
\begin{equation}
\Delta (u^-)\equiv u^-\otimes 1+\sum _if_i\partial_{-i}(u^-)\otimes x_i^-\ \ \ 
(\hbox{mod }F\langle X^-\rangle \otimes \hbox{\bf k}\langle X^-\rangle ^{(2)}),
\label{calc1}
\end{equation}
\begin{equation}
\Delta (u^-)\equiv f_u\otimes u^-+\sum _if_uf_i^{-1}x_i^-\otimes\partial _{-i}^*(u^-)\ \ \ 
(\hbox{mod }F\langle X^-\rangle ^{(2)}\otimes \hbox{\bf k}\langle X^-\rangle ).
\label{dum2}
\end{equation}
It will be important for us that operators $[x_i,-]$ and $[-,x^-_i]$ defined respectively 
on ${\bf k}\langle X^-\rangle$ and ${\bf k}\langle X\rangle$ have a nice differential form
(see \cite[Remark, page 2586]{KL}):
\begin{equation}
[x_i,u^-]=\partial_{-i}^*(u^-)p(x_i,u^-)p_{ii}^{-1}-g_if_i \partial_{-i}(u^-),\ \ \ u^-\in{\bf k}\langle X^-\rangle ,
\label{sqi3}
\end{equation}
\begin{equation}
[u, x_i^-]= \partial_ i^*(u)-p_{ii}^{-1}p(u,x_i)\partial_i(u)g_if_i, \ \ \ u\in{\bf k}\langle X\rangle .
\label{sqi4}
\end{equation}
These relations are clear if $u=x_j,$ or $u^-=x_j^-$ while ad-identities 
(\ref{br1f}) and (\ref{br1}) with Leibniz rules 
(\ref{defdif}, \ref{difdu}, \ref{dem}, \ref{dem1}) allow one to perform  evident induction.

\smallskip
\noindent
{\bf Quantification of Kac-Moody algebras}.
Let $C=||a_{ij}||$ be a symmetrizable by $D={\rm diag }(d_1, \ldots d_n)$ generalized Cartan matrix, $d_ia_{ij}=d_ja_{ji}.$
Let $\mathfrak g$ be a Kac-Moody algebra defined by $C,$ see \cite{Kac}.
Suppose that the quantification parameters $p_{ij}=p(x_i,x_j)=\chi ^i(g_j)$ are related by
\begin{equation}
p_{ii}=q^{d_i}, \ \ p_{ij}p_{ji}=q^{d_ia_{ij}},\ \ \ 1\leq i,j\leq n. 
\label{KM1}
\end{equation}
As above  $g_j$ denotes a linear transformation $g_j:x_i\rightarrow p_{ij}x_i$ of the linear space 
spanned by a set of variables $X=\{ x_1, x_2, \ldots , x_n\} .$
Let $\chi ^i$ denote a character $\chi^i :g_j\rightarrow p_{ij}$ of the group $G$ generated by 
$g_i,$ $1\leq i\leq n.$ We consider each $x_i$ as a quantum variable with parameters $g_i,$ $\chi ^i.$
Respectively ${\mathfrak F}_n$ is the above defined algebra related to quantum variables $X,$ and
 $X^-=\{ x_1^-, x_2^-, \ldots , x_n^- \} ,$ where by definition gr$({x_i^-})=f_i,$ $\chi ^{x_i^-}=(\chi ^i)^{-1},$
see (\ref{shar2}), (\ref{rela3}). 

In this case the multiparameter quantization $U_q ({\mathfrak g})$ of ${\mathfrak g}$ 
is a quotient of $H\langle X\cup X^-\rangle$ defined by Serre relations 
with the skew brackets in place of the Lie operation:
\begin{equation}
[\ldots [[x_i,\underbrace{x_j],x_j], \ldots ,x_j]}_{1-a_{ji} \hbox{ times}}=0, \ \ 1\leq i\neq j\leq n;
\label{rela1}
\end{equation}
\begin{equation}
[\ldots [[x_i^-,\underbrace{x_j^-],x_j^-], \ldots ,x_j^-]}_{1-a_{ji} \hbox{ times}}=0, \ \ 1\leq i\neq j\leq n;
\label{rela2}
\end{equation}
\begin{equation}
[x_i, x_j^-]=\delta_i^j(1-g_if_i), \ \ \ \ \ 1\leq i,j\leq n,
\label{rela31}
\end{equation}
where the brackets are defined on $H\langle X\cup X^-\rangle $ by (\ref{sqo}). 
Certainly relations (\ref{rela31}) coincide with (\ref{rela3}). Hence  $U_q ({\mathfrak g})$
is a homomorphic image of ${\mathfrak F}_n.$ 
The algebra $U_q ({\mathfrak g})$ has
a structure of Hopf algebra with the coproduct (\ref{AIcm}), (\ref{AIcm1}); that is, the above
homomorphism is a homomorphism of Hopf algebras.

If the multiplicative order $t$ of $q$ is finite then the multiparameter version 
of the small  Lusztig quantum group is defined as the homomorphic image 
of   $U_q(\frak{g})$ subject to additional relations $u=0, u\in {\bf \Lambda },$ $u^-=0, u^-\in {\bf \Lambda }^-,$
where ${\bf \Lambda }$ is the biggest Hopf ideal of $G\langle X\rangle $ that is contained in the ideal
$G\langle X\rangle ^{(2)}$ generated by $x_ix_j,$ $1\leq i,j\leq n. $ Respectively 
${\bf \Lambda }^-$ is the biggest Hopf ideal of $F\langle X^-\rangle $ that is contained in the ideal
$F\langle X^-\rangle ^{(2)}$ generated by $x_i^-x_j^-,$ $1\leq i,j\leq n.$ 

\smallskip
\noindent
{\bf Mirror generators}. Of course there is no essential difference between positive and negative
quantum Borel subalgebras. More precisely, let us put $y_i=p_{ii}^{-1}x_i^-,$ $y_i^-=-x_i.$ Consider
$y_i$ as a quantum variable with parameters $f_i,$ $(\chi ^i)^{-1},$ while $y_i^-$ as a quantum
variable with parameters $g_i,$ $\chi ^i.$  Relations (\ref{rela1} -- \ref{rela31}) are invariant under 
the substitution $x_i\leftarrow p_{ii}^{-1}x_i^-,$ 
$x_i^-\leftarrow -x_i.$ Hence $y_i,$ $y_i^-$ with $H$ generate
 a subalgebra which can be identified with the quantification $U_{q^{-1}}(\frak{g}).$
At the same time this subalgebra coincides with $U_q(\frak{g}).$ In this way one may 
replace positive and negative quantum Borel subalgebras. We shall call the generators
$y_i=p_{ii}^{-1}x_i^-,$ $y_i^-=-x_i$ as {\it mirror generators}.

\smallskip
\noindent
{\bf Antipode}. Recall that the antipode $\sigma $ by definition satisfies
$\sum a^{(1)}\cdot \sigma (a^{(2)}) $ $=\sum \sigma (a^{(1)})\cdot a^{(2)}=\varepsilon (a).$
Hence (\ref{AIcm}) implies $\sigma (x_i)=-g_i^{-1}x_i,$ $\sigma (x_i^-)=-f_i^{-1}x_i^-.$
In particular if $u$ is a word in $X\cup X^-,$ then $g_u\sigma (u)$ is proportional 
to a word in  $X\cup X^-,$ for $\sigma $ is an antiautomorphism: $\sigma (ab)=\sigma (b)\sigma (a).$
Moreover, if $u,v$ are  linear combinations of words homogeneous in each  $y\in X\cup X^- ,$
then we have 
\begin{equation}
g_ug_v\sigma ([u,v])=p_{vu}^{-1}[g_v\sigma (v),g_u\sigma (u)].
\label{ant1}
\end{equation} 
Indeed, the left hand side equals $g_ug_v(\sigma (v)\sigma (u)-p_{uv}\sigma (u)\sigma (v)),$
while the right hand side is $p_{vu}^{-1}g_v\sigma (v)g_u\sigma (u)-g_u\sigma (u)g_v\sigma (v).$
We have $g_v\sigma (v)\cdot g_u=p_{vu}g_u\cdot g_v\sigma (v),$ and 
$g_u\sigma (u)\cdot g_v=p_{uv}g_v\cdot g_u\sigma (u).$
This implies (\ref{ant1}). 

\smallskip
\noindent
{\bf $\Gamma $-Grading and $\Gamma ^+\oplus \Gamma ^-$-filtration}. We are reminded that  {\it constitution} of a word 
$u$ in $H\cup X\cup X^-$ is a family of nonnegative integers $\{ m_y, y\in X\cup X^-\} $ such that $u$
has $m_y$ occurrences of $y.$ 
Let $\Gamma ^+$ denote the free additive  (commutative) monoid generated by $X,$
while by $\Gamma ^-$  the free additive  monoid generated by $X^-.$ Respectively 
$\Gamma ^+\oplus \Gamma ^-$ is the free additive  monoid generated by $X\cup X^-,$
while $\Gamma $ by definition is the free commutative  group generated by 
$X\cup X^-$ with identification $x^-_i=-x_i,$ $1\leq i\leq n.$ 
We fix the following order on $X\cup X^-:$
\begin{equation} 
x_1>x_2>\ldots >x_n>x_1^->x_2^->\ldots >x_n^-.
\label{orr}
\end{equation}
The monoid $\Gamma ^+\oplus \Gamma ^-$ is a completely ordered monoid 
with respect to the order
\begin{equation}
m_1y_{i_1}+m_2y_{i_2}+\ldots +m_ky_{i_k}>
m_1^{\prime }y_{i_1}+m_2^{\prime }y_{i_2}+\ldots +m_k^{\prime }y_{i_k}
\label{ord}
\end{equation}
if the first from the left nonzero number in
$(m_1-m_1^{\prime}, m_2-m_2^{\prime}, \ldots , m_k-m_k^{\prime})$
is positive, where $y_{i_1}>y_{i_2}>\ldots >y_{i_k}$ in $X\cup X^-.$ 

 We associate 
a formal degree $D(u)=\sum _{y\in X\cup X^-}m_yy\in \Gamma ^+\oplus \Gamma ^-$
to  a word $u$ in $X\cup X^-,$ where $\{ m_y, y\in X\cup X^-\} $ is the constitution 
of $u.$ Respectively, if $f=\sum \alpha _iu_i\in H\langle X\cup X^-\rangle ,$ $0\neq \alpha _i\in \, {\bf k}[H]$
is a linear combination of different words,
then $D(f)=\max _i\{ D(u_i)\} .$ This degree function defines a grading by 
$\Gamma ^+\oplus \Gamma ^-$ on $H\langle X\cup X^-\rangle .$ However relations 
(\ref{rela31}), (\ref{rela3}) are not homogeneous with respect to this grading.  Hence neither ${\mathfrak F}_n$  nor $U_q({\mathfrak g}),$
 $u_q({\mathfrak g}),$ are graded by $\Gamma ^+\oplus \Gamma ^-,$ but certainly they have a filtration defined by the induced 
degree function.

Relations (\ref{rela31}), (\ref{rela3}) became homogeneous if we consider 
the degree $D(u)$ as an element of the group $\Gamma $ with identifications
$x_i^-=-x_i.$ Hence ${\mathfrak F}_n,$  $U_q({\mathfrak g}),$ and $u_q({\mathfrak g})$ have
grading by $\Gamma $ (are $\Gamma $-homogeneous).

\section{Triangular decomposition}

It is well-known that there is so called triangular decomposition
\begin{equation}
U_q(\frak{g})= U_q^-(\frak{g})\otimes _{{\bf k}[F]} {\bf k}[H]
\otimes _{{\bf k}[G]}U_q^+(\frak{g}),
\label{tr}
\end{equation}
where $U_q^+ (\frak{g})$ is the positive quantum Borel subalgebra,
the subalgebra generated by $G$ and values of $x_i,$ $1\leq i\leq n, $
while $U_q^- (\frak{g})$ is the negative quantum Borel subalgebra,
 the subalgebra generated by $F$ and values of $x_i^-,$  $1\leq i\leq n.$

The small Lusztig quantum group
has the triangular decomposition also
\begin{equation}
u_q(\frak{g})= u_q^-(\frak{g})\otimes _{{\bf k}[F]} {\bf k}[H]
\otimes _{{\bf k}[G]}u_q^+(\frak{g}).
\label{trs}
\end{equation}

In fact the triangular decomposition holds not only
for the quantizations defined by the quantum Serre relations but also for arbitrary Hopf 
homomorphic images of ${\mathfrak F}_n.$ More precisely we have the following statement.
\begin{theorem} {\rm (\cite[Proposition 3.4]{KL}).} The algebra 
${\frak A}=\langle {\frak{F}_n\, ||\, u_l=0,\, w_t^-=0}\rangle $ has the triangular decomposition
\begin{equation}
{\frak A}=\langle {\frak{F}_n^-\, ||\, w_t^-=0}\rangle \otimes _{{\bf k}[F]} {\bf k}[H] 
\otimes _{{\bf k}[G]} \langle {\frak{F}_n^+\, ||\, u_l=0}\rangle 
\label{trug}
\end{equation}
provided that $\langle {\frak{F}_n^-\, ||\, w_t^-=0}\rangle $ and 
$\langle {\frak{F}_n^+\, ||\, u_l=0}\rangle$ are Hopf algebras, and $u_l,$ $l\in L,$ $w_t^-,$ $t\in T$ are homogeneous
polynomials respectively  in $x_i,$ $1\leq i\leq n$ and $x_i^-,$ $1\leq i\leq n$  of total degree $>1.$
\label{34}
\end{theorem}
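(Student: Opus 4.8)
The plan is to present $\mathfrak A$ as a quotient of $\mathfrak F_n$ and to match its defining ideal with the ``expected'' triangular one. Write $B^+$ and $B^-$ for the positive and negative Borel subalgebras of $\mathfrak F_n$ (generated by $H$ together with $X$, resp. $X^-$), and let $I^+\subseteq B^+$, $I^-\subseteq B^-$ be the two-sided ideals generated by the $u_l$ and the $w_t^-$, so that $\mathfrak A^\pm=B^\pm/I^\pm$. The first step is the free triangular decomposition of $\mathfrak F_n$ itself (the case $L=T=\emptyset$): the multiplication map $\mu\colon B^-\otimes_{{\bf k}[F]}{\bf k}[H]\otimes_{{\bf k}[G]}B^+\to\mathfrak F_n$ is bijective. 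Surjectivity I would get by reordering: relation (\ref{rela3}) in the form $x_ix_j^-=p_{ji}x_j^-x_i+\delta_i^j(1-g_if_i)$ moves positive generators to the right past negative ones, the correction $1-g_if_i$ being of strictly lower $\Gamma^+\oplus\Gamma^-$-degree, so induction on the filtration puts every element into the normal form ${\bf k}\langle X^-\rangle\cdot{\bf k}[H]\cdot{\bf k}\langle X\rangle$. Injectivity is the corresponding PBW/linear-independence statement, obtained from confluence of this single rewriting rule (or simply quoted for the free case).

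Granting this, I identify the right-hand side of (\ref{trug}) with $\mathfrak F_n/\tilde I$. Since ${\bf k}[H]$ is free, hence flat, both as a right ${\bf k}[F]$-module and as a left ${\bf k}[G]$-module, tensoring the exact sequences $0\to I^\pm\to B^\pm\to\mathfrak A^\pm\to0$ with ${\bf k}[H]$ stays exact, and $\mu$ carries the kernel of the natural surjection onto $\mathfrak A^-\otimes_{{\bf k}[F]}{\bf k}[H]\otimes_{{\bf k}[G]}\mathfrak A^+$ isomorphically onto the subspace
$$
\tilde I:=I^-\cdot{\bf k}[H]\cdot B^+ + B^-\cdot{\bf k}[H]\cdot I^+\subseteq\mathfrak F_n.
$$
Thus the theorem becomes the assertion that $\tilde I$ equals the two-sided ideal $I$ of $\mathfrak F_n$ generated by $\{u_l\}\cup\{w_t^-\}$. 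The inclusion $\tilde I\subseteq I$ is immediate; the content is $I\subseteq\tilde I$, for which it suffices to show that $\tilde I$ is itself a two-sided ideal of $\mathfrak F_n$, since it visibly contains the generators.

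To verify that $\tilde I$ is an ideal I would test multiplication by the algebra generators $x_i$, $x_i^-$ and by ${\bf k}[H]$. Multiplication by ${\bf k}[H]$ only permutes factors up to the scalars $\chi^i$ and preserves $\tilde I$ because the $u_l$, $w_t^-$ are homogeneous, so $I^\pm$ are $H$-stable. Right multiplication by $x_i$ and left multiplication by $x_i^-$ are immediate, as the new generator stays on its own side and $I^\pm$ are two-sided in $B^\pm$. The substantial cases are left multiplication by $x_i$ and, by the mirror symmetry $y_i=p_{ii}^{-1}x_i^-$, $y_i^-=-x_i$, right multiplication by $x_i^-$. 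For the former I write $x_ia^-=p(x_i,a^-)\,a^-x_i+[x_i,a^-]$ and expand the bracket by the differential formula (\ref{sqi3}), $[x_i,a^-]=\partial_{-i}^*(a^-)p(x_i,a^-)p_{ii}^{-1}-g_if_i\,\partial_{-i}(a^-)$. The decisive point is that for $a^-\in I^-$ both derivatives again lie in $I^-$: since $\mathfrak A^-$ is a Hopf algebra, $I^-$ is a Hopf ideal, $\Delta(I^-)\subseteq I^-\otimes B^-+B^-\otimes I^-$, and comparison with the coproduct expansions (\ref{calc1}) and (\ref{dum2}) isolates $\partial_{-i}(a^-)$ and $\partial_{-i}^*(a^-)$ as the coefficients of the degree-one terms $x_i^-$ in the two tensor slots. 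Here the hypothesis $\deg>1$ is essential: it forces $I^-$ to have no degree-one part, so that the $x_i^-$ stay linearly independent modulo $I^-$ and those coefficients can be read off. Hence $[x_i,a^-]\in I^-{\bf k}[H]$ when $a^-\in I^-$, while for general $a^-\in B^-$ one only needs $[x_i,a^-]\in B^-{\bf k}[H]$; feeding each into the appropriate summand of $\tilde I$ gives $x_i\cdot\tilde I\subseteq\tilde I$, and the mirror computation with (\ref{sqi4}) gives $\tilde I\cdot x_i^-\subseteq\tilde I$.

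I expect the main obstacle to be precisely this last step, the closure of $\tilde I$ under left multiplication by positive generators, since it is the only place where the commutation relations (\ref{rela3}) genuinely couple the two Borel ideals, and it is where both hypotheses are consumed: ``$\mathfrak A^\pm$ are Hopf algebras'' delivers the containment of the partial derivatives, and ``total degree $>1$'' is what permits their extraction from the coproduct. The subsidiary points requiring care beforehand are the free decomposition of $\mathfrak F_n$ and the flatness bookkeeping for the group-algebra tensor products; these are routine but must be secured before the ideal-matching argument can even be stated.
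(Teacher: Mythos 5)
You cannot really be compared against the paper on this one: the paper contains no proof of Theorem \ref{34} at all — it is imported verbatim, with the citation \cite[Proposition 3.4]{KL}. Judged on its own merits, your argument is correct, and it is in essence the argument native to this circle of papers: the skeleton (free triangular decomposition of $\mathfrak F_n$; flatness of ${\bf k}[H]$ over ${\bf k}[F]$ and ${\bf k}[G]$ to identify the right-hand side of (\ref{trug}) with $\mathfrak F_n/\tilde I$; reduction of the theorem to the equality of $\tilde I$ with the ideal generated by the $u_l,w_t^-$, for which it suffices that $\tilde I$ be two-sided) is the right one, and your key lemma is exactly the decisive point. Indeed, for a graded ideal $I^-$ with no components in degrees $0,1$ (this is where ``total degree $>1$'' enters) the Hopf-ideal condition $\Delta(I^-)\subseteq I^-\otimes B^-+B^-\otimes I^-$, read off in the bidegree-$(d-1,1)$ and $(1,d-1)$ components against the congruences (\ref{calc1}) and (\ref{dum2}), gives $\partial_{-i}(I^-)\subseteq I^-$ and $\partial_{-i}^*(I^-)\subseteq I^-$; then (\ref{sqi3}) places $[x_i,I^-]$ inside $I^-{\bf k}[H]$, which is precisely what closure of $\tilde I$ under left multiplication by $x_i$ requires, and (\ref{sqi4}) handles right multiplication by $x_i^-$ symmetrically. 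It is no accident that the tools you invoke — (\ref{sqi3}), (\ref{sqi4}), (\ref{calc1}), (\ref{dum2}) — are the very ones the present paper imports from \cite{KL}.

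Three points should be tightened, none of which is a genuine gap. (i) Formula (\ref{sqi3}) is stated only for $u^-\in{\bf k}\langle X^-\rangle$, not for all of $B^-$; before applying it, write $I^-={\bf k}[F]\,J^-$ with $J^-=I^-\cap{\bf k}\langle X^-\rangle$ (possible because homogeneity makes $F$-conjugation act by scalars on the $w_t^-$) and pull the group factors out with (\ref{cuq1}). (ii) ``Homogeneous'' must be taken in the constitution sense ($\Gamma^\pm$-graded, i.e.\ homogeneous in each variable), not merely in total degree: this is what makes $I^\pm$ stable under conjugation by $H$, which your ${\bf k}[H]$-multiplication step (and the very well-definedness of the $H$-action on $\mathfrak A^\pm$ inside (\ref{trug})) silently uses. (iii) The injectivity half of the free decomposition of $\mathfrak F_n$ does need securing — either by checking that the single rewriting rule $x_ix_j^-\to p_{ji}x_j^-x_i+\delta_i^j(1-g_if_i)$ coming from (\ref{rela3}) is confluent (diamond lemma), or by quoting it from \cite{KL}, where it is established before Proposition 3.4. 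With these details filled in, your proof stands.
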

Our goal in this section is to find conditions when a
right coideal subalgebra of $\mathfrak A$ has a triangular decomposition. 
\begin{theorem} Let $\mathfrak A$ be the Hopf algebra defined in the above theorem. 
Every $\Gamma $-homogeneous 
right coideal subalgebra $U\supset H$ of $\mathfrak A$ has a decomposition
\begin{equation}
 U=U^-\otimes _{{\bf k}[F]} {\bf k}[H] \otimes _{{\bf k}[G]}U^+,
\label{tru}
\end{equation}
 where $U^-\supset F$ and  $U^+\supset G$ are homogeneous  right coideal subalgebras respectively of
$\langle {\frak{F}_n^-\, ||\, w_t^-=0}\rangle$ and  $\langle {\frak{F}_n^+\, ||\, u_l=0}\rangle .$
\label{raz2}
\end{theorem}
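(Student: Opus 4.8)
The plan is to produce the decomposition from the two candidate factors
$$
U^{+}=U\cap \mathfrak A^{+},\qquad U^{-}=U\cap \mathfrak A^{-},
$$
where $\mathfrak A^{+}$ and $\mathfrak A^{-}$ denote the positive and negative Borel subalgebras appearing on the right of (\ref{trug}). Since $U\supseteq H\supseteq G,F$ while $G\subseteq\mathfrak A^{+}$ and $F\subseteq\mathfrak A^{-}$, we get at once $G\subseteq U^{+}$ and $F\subseteq U^{-}$, so the tensor products over ${\bf k}[G]$ and ${\bf k}[F]$ are meaningful. Each $U^{\pm}$ is visibly a subalgebra. To see that $U^{+}$ is a right coideal of the Hopf subalgebra $\mathfrak A^{+}$ I would combine the coideal property $\Delta(U)\subseteq U\otimes\mathfrak A$ with $\Delta(\mathfrak A^{+})\subseteq\mathfrak A^{+}\otimes\mathfrak A^{+}$ and use the elementary identity $(A\otimes W)\cap(V\otimes C)=A\otimes C$ valid for subspaces $A\subseteq V$, $C\subseteq W$. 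Applying it twice gives $(U\otimes\mathfrak A)\cap(\mathfrak A^{+}\otimes\mathfrak A^{+})=U^{+}\otimes\mathfrak A^{+}$, and since $\Delta(U^{+})$ lies in the left-hand side we conclude $\Delta(U^{+})\subseteq U^{+}\otimes\mathfrak A^{+}$; the argument for $U^{-}$ is symmetric.

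Next consider the multiplication map
\[
\mu\colon\ U^{-}\otimes _{{\bf k}[F]}{\bf k}[H]\otimes _{{\bf k}[G]}U^{+}\ \longrightarrow\ U .
\]
Its image lies in $U$ because $U$ is a subalgebra containing $U^{-}$, ${\bf k}[H]$ and $U^{+}$, and $\mu$ is injective because it is the restriction of the ambient isomorphism (\ref{trug}) along the inclusion of $U^{-}\otimes\cdots\otimes U^{+}$ into $\mathfrak A^{-}\otimes\cdots\otimes\mathfrak A^{+}$, which stays injective since ${\bf k}[H]$ is free over ${\bf k}[F]$ and ${\bf k}[G]$ and $\mathfrak A^{\pm}$ are free over these group algebras. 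Thus the whole statement reduces to the surjectivity of $\mu$, i.e.\ to the inclusion $U\subseteq U^{-}\,{\bf k}[H]\,U^{+}$, which is the genuine content of the theorem.

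For surjectivity, $\Gamma$-homogeneity of $U$ lets me treat a single $\Gamma$-homogeneous $u\in U$. The main tool is that, being a right coideal, $U$ is stable under every operator $R_{\phi}=(\mathrm{id}\otimes\phi)\Delta$, $\phi\in\mathfrak A^{*}$, because $R_{\phi}(u)=\sum\phi(u_{(2)})u_{(1)}$ with all $u_{(1)}\in U$. Expanding $u$ in the basis furnished by (\ref{trug}) and grouping terms by the $\Gamma^{-}$-degree $\beta$ of their $\mathfrak A^{-}$-factor, write $u=\sum_{\beta}u_{\beta}$ and let $\beta^{*}$ be the top degree occurring. Using the coproduct formulas (\ref{calc}) and (\ref{calcdu}) one checks that on a triangular monomial $w^{-}hw^{+}$ the unique contribution to $\Delta(w^{-}hw^{+})$ whose right tensor-leg has $\Gamma^{-}$-degree exactly $\deg_{-}(w^{-})$ and $\Gamma^{+}$-degree $0$ is $f_{w^{-}}hw^{+}\otimes w^{-}h$. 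Hence terms of degree $\beta<\beta^{*}$ cannot reach the slot $\mathfrak A\otimes(\mathfrak A^{-}_{\beta^{*}}{\bf k}[H])$, and choosing $\phi$ dual to the top negative monomials extracts, into $U$, the ``positive coefficients'' $f_{w^{-}}hw^{+}$ of $u_{\beta^{*}}$. These are $\Gamma$-homogeneous elements of $U\cap\mathfrak A^{+}_{H}$, so the positive part of the extreme component is controlled.

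It remains to place the negative monomials into $U^{-}$; here I would invoke the mirror symmetry $x_i\mapsto p_{ii}^{-1}x_i^{-}$, $x_i^{-}\mapsto -x_i$ together with the antipode identity (\ref{ant1}), which interchanges $\mathfrak A^{+}$ and $\mathfrak A^{-}$ and yields the symmetric extraction placing the negative parts of the extreme positive component into $U^{-}$. Subtracting the extracted corner term lowers the $\Gamma^{+}\oplus\Gamma^{-}$-filtration degree, so an induction on this degree (with base case $u\in{\bf k}[H]$) finishes the argument. I expect two points to be the real obstacle. First, the defining relation (\ref{rela31}) is homogeneous only for the coarser $\Gamma$-grading, not for $\Gamma^{+}\oplus\Gamma^{-}$; consequently $\Delta$ of a triangular monomial spawns lower-filtration ``mixed'' correction terms, and controlling these (which is exactly where $\Gamma$-homogeneity of $U$ is indispensable) is delicate. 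Second, one must verify that the positive and negative extractions cohere and that the extracted pieces genuinely descend to ${\bf k}[G]$- and ${\bf k}[F]$-combinations, i.e.\ establish the module identifications $U\cap\mathfrak A^{+}_{H}={\bf k}[H]\otimes_{{\bf k}[G]}U^{+}$ and $U\cap\mathfrak A^{-}_{H}={\bf k}[H]\otimes_{{\bf k}[F]}U^{-}$; this bookkeeping with the ${\bf k}[H]$-coefficients, forced by the coideal property, is the technical heart of the proof.
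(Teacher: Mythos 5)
Your setup is essentially correct, and it is a genuinely different route from the paper's (which invokes the PBW-basis theorem for coideal subalgebras from the reference [KhT]: every right coideal subalgebra containing ${\bf k}[H]$ has a PBW-basis over ${\bf k}[H]$ with generators of the form (\ref{vad10}), which can be chosen $\Gamma$-homogeneous, after which a short comparison of $\Gamma$-degree against $\Gamma^{+}\oplus\Gamma^{-}$-degree kills the lower terms $V_j$ and forces each PBW-generator into $\mathfrak{A}^{+}$ or $\mathfrak{A}^{-}$). Your definition $U^{\pm}=U\cap\mathfrak{A}^{\pm}$, the coideal property via $(U\otimes\mathfrak{A})\cap(\mathfrak{A}^{+}\otimes\mathfrak{A}^{+})=U^{+}\otimes\mathfrak{A}^{+}$, the reduction to surjectivity, and the extraction lemma are all sound. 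In fact your first worry is vacuous at this point: since $\Delta(w^{-}hw^{+})=\Delta(w^{-})(h\otimes h)\Delta(w^{+})$ and $\mathfrak{A}^{\pm}$ are Hopf subalgebras, every term of this coproduct already has both tensor legs in triangular form, so no rewriting through the inhomogeneous relation (\ref{rela31}) is needed; maximality of $\beta^{*}$ in the componentwise order then gives exactly your claim, and a symmetric extraction (projecting right legs onto bidegree $(0,\alpha^{*})$, using the terms $w^{-}hg_{w^{+}}\otimes hw^{+}$) puts the negative coefficients of the same extreme component into $U^{-}$ without any appeal to mirror symmetry.

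The genuine gap is the inductive step, which is precisely the point you defer as ``bookkeeping.'' To subtract ``the extracted corner term'' and stay inside $U$, you must exhibit an element of $U$ equal to the whole extreme component $u_{\beta^{*}}=\sum_{(w^{-},h)}w^{-}h\,P(w^{-},h)$; but your extractions only show that the rows $P(w^{-},h)$ of the top coefficient matrix lie in $U^{+}$ and its columns $N(w^{+},h)$ lie in $U^{-}$, while the individual monomials $w^{-}h$ are not known to lie in $U$, so nothing subtractable has been produced and the induction does not close as written. This recombination requires an actual argument: for instance, a rank factorization of the top coefficient matrix $C=(c_{ij})$, written as $C=\sum_{l}a_{l}b_{l}^{T}$ with each $a_{l}$ in the column space of $C$ (hence representing an element of $U^{-}$) and each $b_{l}$ in the row space (an element of $U^{+}$), which yields $u_{\beta^{*}}\in U^{-}{\bf k}[H]U^{+}\subseteq U$. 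One must also check that the coefficient matrices read off from the two extractions (equivalently, from the $w^{-}hw^{+}$ and $w^{+}hw^{-}$ expansions) are proportional by a single nonzero scalar; this is true, but only because the bicharacter $p(\cdot,\cdot)$ depends solely on constitutions and all monomials of a fixed bidegree share the same constitution. Without these two points the proof is incomplete exactly where the difficulty sits, so as submitted the argument is a plausible program rather than a proof.
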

\begin{proof} 
By \cite[Theorem 1.1]{KhT} the algebra $U$ has a PBW-basis 
over the coradical {\bf k}$[H].$  We shall prove that the PBW-basis 
can be constructed in such a way that each PBW-generator for $U$ 
belongs to either positive or negative component of (\ref{trug}). By definition of 
PBW-basis (see, for example \cite[Section 2]{KL}) this  implies the required decomposition 
of $U$.

Recall that the PBW-basis of $U$ is constructed in the following way, see \cite[Section 4]{KhT}.
 First, we fix a PBW-basis of ${\mathfrak A}$ defined by the {\it hard super-letters} \cite{Kh3}.
Due to the triangular decomposition (\ref{trug}) the  PBW-generators for $\mathfrak A$
belong to either ${\mathfrak A}^+=\langle {\frak{F}_n^+\, ||\, u_l=0}\rangle$
or ${\mathfrak A}^-=\langle {\frak{F}_n^-\, ||\, w_t^-=0}\rangle .$ Then, for each 
PBW-generator (hard super-letter) $[u]$ we fix an arbitrary element $c_u\in U$ with minimal possible $s,$ if any,
such that 
\begin{equation}
c_u=[u]^s+\sum \alpha _iW_iR_i+\sum_j\beta_jV_j \in U, \ \ \alpha _i\in {\bf k}, \ \ \beta _j\in {\bf k}[H],
\label{vad10}
\end{equation}
where $W_i$ are basis words in less than $[u]$ super-letters,  $R_i$ are basis words in
greater than or equal to $[u]$ super-letters, $D(W_iR_i)=sD(u),$ $D(V_j)<sD(u).$
Next, Proposition 4.4 \cite{KhT} implies that the set of all chosen $c_u$ form a set of PBW-generators for $U.$
Since $U$ is $\Gamma $-homogeneous, we may choose $c_u$ to be $\Gamma $-homogeneous as well.

We stress that  the leading terms here are defined by the degree function with values in the
additive monoid $\Gamma ^+\oplus \Gamma ^-$ freely generated by $X\cup X^-,$
but not in the group $\Gamma ,$ see the last subsection of  Section 2.
Equality  $D(W_iR_i)=sD(u)$ implies that all $W_iR_i$ in (\ref{vad10}) have the same constitution in 
$X\cup X^-$ as the leading term $[u]^s$ does. Thus all $W_iR_i$'s and 
the leading term $[u]^s$ belong to the same component of the triangular decomposition. Hence
it remains to show that if $c_u$ is $\Gamma $-homogeneous then there are no  terms $V_j.$
In this case all terms $V_j$ have the same $\Gamma $-degree and smaller 
$\Gamma ^+\oplus \Gamma ^-$-degree. We shall prove that this is impossible.

If $[u]\in {\mathfrak A}^-$
then $sD(u)=m_1x_1^-+m_2x_{2}^-+\ldots +m_nx_n^-,$ while
 the $\Gamma ^+\oplus \Gamma ^-$-degree
of $V_j$ should be less than $m_1x_1^-+m_2x_{2}^-+\ldots +m_nx_n^-.$ Hence due to
definitions (\ref{orr}) and (\ref{ord}) we have $V_j\in {\mathfrak A}^-.$
In particular the  $\Gamma $-degree of $V_j$ coincides with the $\Gamma ^+\oplus \Gamma ^-$-degree,
a contradiction.

Suppose that $[u]\in {\mathfrak A}^+.$ In this case $sD(u)=m_1x_1+m_2x_{2}+\ldots +m_nx_n.$
 Let $d=\sum_{i\leq n} s_ix_i+\sum_{i\leq n} r_ix_i^-$
be the $\Gamma ^+\oplus \Gamma ^-$-degree of $V_j.$ Since
$\Gamma $-degree of $V_j$ coincides with $\Gamma $-degree of $[u]^s,$
we have $s_i-r_i=m_i,$ $1\leq i\leq n.$ This implies $s_i=m_i+r_i\geq m_i.$
At the same time definition (\ref{ord}) and the condition $d<sD(u)$ imply 
$s_k<m_k,$ where $k$ is the smallest index such that $s_k\neq m_k.$
Thus $s_k=m_k$ for all $1\leq k\leq n.$ This yields $r_k=0,$ $1\leq k\leq n.$
In particular $\Gamma $-degree of $V_j$ coincides with the $\Gamma ^+\oplus \Gamma ^-$-degree,
again a contradiction.
\end{proof}  
\begin{corollary} Let $\mathfrak g$ be a semisimple complex Lie algebra. 
If $q$ is not a root of 1, then $U_q(\frak{g})$ has at most  
$|W|^2$ different right coideal subalgebras containing the coradical,
where $W$ is the Weyl group of $\mathfrak g$.
\label{fin2}
\end{corollary}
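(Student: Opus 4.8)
The plan is to combine the triangular-decomposition theorem just proved (Theorem \ref{raz2}) with the Heckenberger--Schneider classification cited in the introduction. The key structural input is that when $\mathfrak g$ is semisimple (finite-type Cartan matrix) and $q$ is not a root of unity, \cite[Corollary 3.3]{KL} guarantees that \emph{every} right coideal subalgebra $U$ containing the coradical ${\bf k}[H]$ is automatically $\Gamma$-homogeneous. Thus the hypothesis of Theorem \ref{raz2} is satisfied for free, and every such $U$ admits a decomposition $U=U^-\otimes_{{\bf k}[F]}{\bf k}[H]\otimes_{{\bf k}[G]}U^+$ with $U^+\supseteq G$ and $U^-\supseteq F$ right coideal subalgebras of the positive and negative quantum Borel subalgebras $U_q^+(\mathfrak g)$ and $U_q^-(\mathfrak g)$ respectively.

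First I would observe that this decomposition makes the map $U\mapsto (U^-,U^+)$ injective: since $U$ is recovered from the pair $(U^-,U^+)$ by the triangular product, two right coideal subalgebras with the same positive and negative parts must coincide. Hence the number of right coideal subalgebras of $U_q(\mathfrak g)$ containing ${\bf k}[H]$ is bounded above by the product of the number of right coideal subalgebras of $U_q^+(\mathfrak g)$ containing $G$ and the number of those of $U_q^-(\mathfrak g)$ containing $F$.

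Next I would invoke the Heckenberger--Schneider theorem \cite[Theorem 7.3]{HS}, which for the finite-type semisimple case puts the right coideal subalgebras of the quantum Borel part $U_q^+(\mathfrak g)$ containing the group-likes into bijection with the Weyl group $W$ of $\mathfrak g$; in particular their number is exactly $|W|$. By the mirror-generator symmetry discussed in Section 2 (the substitution $x_i\leftarrow p_{ii}^{-1}x_i^-$, $x_i^-\leftarrow -x_i$ identifying the negative Borel subalgebra with a copy of the positive one), the same count $|W|$ holds for the right coideal subalgebras of $U_q^-(\mathfrak g)$ containing $F$. Multiplying the two bounds gives at most $|W|\cdot|W|=|W|^2$ pairs, and hence at most $|W|^2$ right coideal subalgebras containing the coradical.

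The step I expect to require the most care is \emph{not} the counting arithmetic but the verification that the homogeneity hypothesis of Theorem \ref{raz2} genuinely applies, i.e.\ that one is entitled to cite \cite[Corollary 3.3]{KL} for \emph{all} coradical-containing right coideal subalgebras in the finite-type, non-root-of-unity setting; this is what upgrades the conditional decomposition of Theorem \ref{raz2} into an unconditional structural fact here. Once that is secured, the injectivity of $U\mapsto(U^-,U^+)$ is immediate from the uniqueness built into the triangular product $\otimes_{{\bf k}[F]}{\bf k}[H]\otimes_{{\bf k}[G]}$, and the bound $|W|^2$ follows by combining the two factor counts. One should note the inequality is genuinely an upper bound rather than an equality, precisely because, as stressed in the introduction, an arbitrary pair $(U^-,U^+)$ need not recombine into a subalgebra --- so not every pair in the $|W|^2$ possibilities arises from an actual right coideal subalgebra of $U_q(\mathfrak g)$.
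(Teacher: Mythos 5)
Your proposal is correct and follows essentially the same route as the paper: cite \cite[Corollary 3.3]{KL} to get $\Gamma$-homogeneity for free in the finite-type, non-root-of-unity case, apply Theorem \ref{raz2} to obtain the triangular decomposition, and then count pairs $(U^-,U^+)$ using the Heckenberger--Schneider count of $|W|$ right coideal subalgebras in each quantum Borel subalgebra. The only cosmetic difference is that you justify the $|W|$ count for the negative Borel part via the mirror-generator symmetry, whereas the paper cites \cite[Theorem 7.3]{HS} for both $U_q^{\pm}(\mathfrak{g})$ directly; your explicit remarks on injectivity of $U\mapsto(U^-,U^+)$ and on why the bound need not be attained are both implicit in the paper's shorter argument.
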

\begin{proof} 
Due to  Heckenberger---Schneider theorem, \cite[Theorem 7.3]{HS},
 each of the quantum Borel subalgebras $U_q^{\pm }(\frak{g})$  
 has exactly $|W|$ different right coideal subalgebras containing the coradical. 
At the same time by \cite[Corollary 3.3]{KL} every subalgebra of $U_q(\frak{g})$
containing $H$ is $\Gamma $-homogeneous. Hence by Theorem \ref{raz2} we have a decomposition (\ref{tru}).
We see that there are just $|W|^2$ options to form the right hand side of  (\ref{tru}).
 \end{proof}

We should stress that when $U^{\pm }$ run through the sets of right coideal 
subalgebras of the quantum Borel subalgebras
the tensor product in the right hand side of  (\ref{tru}) is a right coideal
but not always a subalgebra. 

Our next goal is to state and prove a necessary condition  for two  right coideal subalgebras $U^{+},$ $U^{-}$
of the quantum Borel algebras to define in (\ref{tru}) a right coideal subalgebra of $U_q(\frak{so}_{2n+1})$  
(respectively of $u_q(\frak{so}_{2n+1})$).

\section{Structure of quantum Borel subalgebras of $U_q({\mathfrak so}_{2n+1})$}
In this section we follow \cite{Kh08} to  recall  the basic properties of quantum Borel subalgebras $U_q^{\pm }({\mathfrak so}_{2n+1}).$
In what follows we fix a parameter $q$ such that $q^2\not= \pm 1,$ $q^3\not= 1.$
Let $\sim $ denote the projective equality:
$a\sim b$ if and only if $a=\alpha b,$ where $0\neq \alpha \in {\bf k}.$ 

If $C$ is a Cartan matrix of type $B_n,$ relations (\ref{KM1}) take up the form
\begin{equation}
p_{nn}=q,\,  p_{ii}=q^2, \ \ p_{i\, i+1}p_{i+1\, i}=q^{-2}, \ 1\leq i<n; 
\label{b1rel}
\end{equation}
\begin{equation}
p_{ij}p_{ji}=1,\  j>i+1. 
\label{b1rell}
\end{equation}
Starting with parameters $p_{ij}$ satisfying these relations, we define 
the group $G$ and the character Hopf algebra  $G\langle X\rangle $
as in the above section. In this case
the quantum Borel algebra $U^+_q ({\frak so}_{2n+1})$ is defined as a quotient of 
$G\langle X\rangle $ by the following relations
\begin{equation}
[x_i,[x_i,x_{i+1}]]=0, \ 1\leq i<n; \ \ [x_i,x_j]=0, \ \ j>i+1;
\label{relb}
\end{equation}
\begin{equation}
[[x_i,x_{i+1}],x_{i+1}]=[[[x_{n-1},x_n],x_n],x_n]=0, \ 1\leq i<n-1.
\label{relbl}
\end{equation}
Here we slightly modify  Serre relations (\ref{rela1}) so that the left hand side of each 
relation is a bracketed Lyndon-Shirshov word. It is possible to do due to the following
general relation in ${\bf k}\langle X\rangle ,$ see \cite[Corollary 4.10]{Kh4}:
\begin{equation}
[\ldots [[x_i,
\underbrace{x_j],x_j],\ldots x_j]}_a\sim \underbrace{[x_j,[x_j,\ldots [x_j}_a,x_i]\ldots ]],
\label{rsc}
\end{equation}
provided that $p_{ij}p_{ji}=p_{jj}^{1-a}.$
\begin{definition} \rm
The elements $u,v$ are said to be 
{\it separated} if there exists an index $j,$ $1\leq j\leq n,$
such that either $u\in {\bf k}\langle x_i\ |\ i<j\rangle ,$
$v\in {\bf k}\langle x_i\ |\ i>j\rangle $ or vice versa 
$u\in {\bf k}\langle x_i\ |\ i>j\rangle ,$
$v\in {\bf k}\langle x_i\ |\ i<j\rangle .$
\label{sep}
\end{definition}
\begin{lemma}
In the algebra $U^+_q ({\frak so}_{2n+1})$ every two separated
homogeneous in each $x_i\in X$ elements $u,v$  $($skew$)$commute, $[u,v]=0,$
in particular $u\cdot v\sim v\cdot u.$
\label{sepp}
\end{lemma}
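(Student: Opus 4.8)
The plan is to reduce the claim to the defining relations and then propagate the vanishing of brackets through products using the ad-identities. The statement asserts that two \emph{separated} homogeneous elements skew-commute in $U^+_q({\mathfrak so}_{2n+1})$. By Definition \ref{sep}, separation means there is an index $j$ splitting the variables: one of $u,v$ lives in $\langle x_i\mid i<j\rangle$ and the other in $\langle x_i\mid i>j\rangle$ (in either order). The essential arithmetic input is the quantification data (\ref{b1rel})--(\ref{b1rell}): for indices $s<j<t$ we have $t>s+1$, so $p_{st}p_{ts}=1$ by (\ref{b1rell}), and moreover the relations (\ref{relb}) give $[x_s,x_t]=0$ whenever $t>s+1$. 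Thus the generators on opposite sides of the gap already commute at the level of single letters.

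\textbf{First I would} establish the base case on generators. Fix the separating index $j$ and suppose without loss of generality $u\in\langle x_i\mid i<j\rangle$ and $v\in\langle x_i\mid i>j\rangle$; the opposite case follows by the antisymmetry identity (\ref{bri}), which applies because $p_{uv}p_{vu}=1$ whenever $u,v$ have separated constitutions (this product is a product of factors $p_{st}p_{ts}=1$ by bimultiplicativity (\ref{sqot}) and (\ref{b1rell})). For single letters $x_s$ with $s<j$ and $x_t$ with $t>j$ we have $t\geq j+1>s+1$, hence $[x_s,x_t]=0$ directly from (\ref{relb}).

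\textbf{Then I would} lift this from letters to words by induction on the total length of $u$ and $v$, using the ad-identities (\ref{br1f}) and (\ref{br1}). Concretely, write $v=x_t\cdot v'$ where $x_t$ is the first letter of $v$ (so $t>j$) and $v'$ is a shorter word in the same variable set; by (\ref{br1}),
\begin{equation}
[u,x_t\cdot v']=[u,x_t]\cdot v'+p_{u x_t}\, x_t\cdot [u,v'],
\label{eq:sepstep}
\end{equation}
and both $[u,x_t]$ and $[u,v']$ vanish by the induction hypothesis, since $u,x_t$ and $u,v'$ are again separated by $j$. Symmetrically one may peel letters off $u$ using (\ref{br1f}). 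This is exactly the ``evident induction'' invoked in the proof of Lemma \ref{suu}, and the same mechanism applies here because separation is inherited by subwords. The conclusion $[u,v]=0$ then gives $uv=p_{uv}\, vu\sim vu$ by the definition (\ref{sqo}) of the bracket.

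\textbf{The main subtlety} is not the induction but confirming that the quantification parameters cooperate: the brackets used in the defining relations are skew brackets, so $[x_s,x_t]=0$ encodes $x_sx_t=p_{st}x_tx_s$ rather than literal commutativity, and one must verify that the coefficients $p_{u x_t}$ appearing in (\ref{eq:sepstep}) never obstruct the vanishing. Because every such coefficient factors through (\ref{sqot}) into products of $p_{st}$ with $s,t$ on opposite sides of the gap, and these enter only as nonzero scalar multipliers of terms already known to be zero, no obstruction arises. I expect the only genuine care needed is the bookkeeping that ensures each residual bracket produced by (\ref{br1f})--(\ref{br1}) is itself between separated elements, which holds because removing a letter from $u$ or $v$ preserves the containment $u\in\langle x_i\mid i<j\rangle$, $v\in\langle x_i\mid i>j\rangle$.
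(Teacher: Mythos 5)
Your proposal is correct and follows essentially the same route as the paper: the paper's proof likewise reduces the claim to the second group of defining relations (\ref{relb}) (which give $[x_s,x_t]=0$ across the gap) and then extends to arbitrary separated words by induction via the ad-identities (\ref{br1f}) and (\ref{br1}). Your additional observations --- that $p_{uv}p_{vu}=1$ for separated elements by (\ref{b1rell}) and bimultiplicativity, so antisymmetry (\ref{bri}) handles the reversed order, and that the resulting coefficients are harmless nonzero scalars --- are exactly the details the paper leaves implicit.
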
  
\begin{proof}
The statement follows from the second group of defining relations (\ref{relb})
 due to (\ref{br1f}), (\ref{br1}).
\end{proof}
\begin{definition} \rm 
In what follows  $x_i,$ $n<i\leq 2n$ denotes the generator $x_{2n-i+1}.$
Respectively, $u(k,m),$ $1\leq k\leq m\leq 2n$ is the word
$x_kx_{k+1}\cdots x_{m-1}x_m.$  If $1\leq i\leq 2n,$ then  $\psi (i)$
is the number $2n-i+1,$ so that $x_i=x_{\psi (i)}.$ 
We shall frequently use the following properties of $\psi :$
if $i<j,$ then $\psi (i)>\psi (j);$ $\psi (\psi (i))=i;$ $\psi (i+1)=\psi (i)-1,$ $\psi (i-1)=\psi (i)+1.$
\label{fis}
\end{definition}
\begin{definition} \rm 
If $k\leq i<m\leq 2n,$ then we define
\begin{equation}
\sigma _k^m\stackrel{df}{=}p(u(k,m),u(k,m)),
\label{mu11}
\end{equation}
\begin{equation}
\mu _k^{m,i}\stackrel{df}{=}p(u(k,i),u(i+1,m))\cdot p(u(i+1,m),u(k,i)).
\label{mu1}
\end{equation}
\label{slo}
\end{definition}
Of course, one can easily find the $\sigma $'s  and the $\mu$'s by means of (\ref{b1rel}), (\ref{b1rell}).
More precisely, by \cite[Eq. (3.10)]{Kh08} we have 
\begin{equation}
\sigma_k^m =\left\{ \begin{matrix}
q,\hfill &\hbox{if } m=n, \hbox{ or } k=n+1;\hfill \cr
q^4,\hfill &\hbox{if }  m=\psi (k);\hfill \cr
q^{2},\hfill &\hbox{otherwise}.\hfill 
                             \end{matrix}
                    \right.
\label{mu21}
\end{equation}
If $m<\psi (k),$  then by \cite[Eq. (3.13)]{Kh08} we have
\begin{equation}
\mu_k^{m,i} =\left\{ \begin{matrix}
q^{-4},\hfill &\hbox{if } m>n, \, i=\psi (m)-1;\hfill \cr
1,\hfill &\hbox{if }  i=n;\hfill \cr
q^{-2},\hfill &\hbox{otherwise}.\hfill 
                             \end{matrix}
                    \right.
\label{mu2}
\end{equation}
If $m=\psi (k),$ then by \cite[Eq. (3.14)]{Kh08} we have
\begin{equation}
\mu_k^{m,i} =\left\{ \begin{matrix}
q^2,\hfill &\hbox{if }  i=n;\hfill \cr
1,\hfill &\hbox{otherwise}.\hfill 
                             \end{matrix}
                    \right.
\label{mu3}
\end{equation}
If $m>\psi (k),$ then then by \cite[Eq. (3.15)]{Kh08} we have
\begin{equation}
\mu_k^{m,i} =\left\{ \begin{matrix}
q^{-4},\hfill &\hbox{if } k\leq n,\, i=\psi (k);\hfill \cr
1,\hfill &\hbox{if }  i=n;\hfill \cr
q^{-2},\hfill &\hbox{otherwise}.\hfill 
                             \end{matrix}
                    \right.
\label{mu4}
\end{equation}

We define the bracketing of $u(k,m),$ $k\leq m$ as follows.
\begin{equation}
u[k,m]=\left\{ \begin{matrix} [[[\ldots [x_k,x_{k+1}], \ldots ],x_{m-1}], x_m],\hfill 
&\hbox{if } m<\psi (k);\hfill \cr
 [x_k,[x_{k+1},[\ldots ,[x_{m-1},x_m]\ldots ]]],\hfill &\hbox{if } m>\psi (k);\hfill \cr 
\beta [u[n+1,m],u[k,n]],\hfill &\hbox{if } m=\psi (k),\hfill 
\end{matrix}\right.
\label{ww}
\end{equation}
where $\beta =-p(u(n+1,m),u(k,n))^{-1}$ normalizes the coefficient at $u(k,m).$
Conditional identity (\ref{ind}) and the second group of defining relations (\ref{relb}) 
show that the value of  $u[k,m]$ in $U_q^+({\mathfrak so}_{2n+1})$
is independent of the precise alignment of brackets provided that $m\leq n$ or $k>n.$
Formula (\ref{ant1}) and evident induction show that 
\begin{equation}
g_kg_{k+1}\cdots g_m\sigma (u[k,m])\sim u[\psi (m),\psi (k)],
\label{ant2}
\end{equation}
where $\sigma $ is the antipode.
\begin{lemma} {\rm (\cite[Corollary 3.13]{Kh08}).} If $m\neq \psi (k),$ $k\leq n<m,$ 
then in $U_q^+({\mathfrak so}_{2n+1})$ we have
\begin{equation}
u[k,m]=[u[k,n],u[n+1,m]]=\beta[u[n+1,m],u[k,n]],
\label{ww1}
\end{equation}
where $\beta =-p(u(n+1,m),u(k,n))^{-1}.$
\label{rww}
\end{lemma}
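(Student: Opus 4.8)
The plan is to treat the two asserted equalities separately, the second being essentially formal and the first carrying all the content. Throughout I write $A=u[k,n]$ and abbreviate the generators as in Definition~\ref{fis}, so that $x_m=x_{\psi(m)}$.

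First I would dispose of the second equality $[u[k,n],u[n+1,m]]=\beta[u[n+1,m],u[k,n]]$. By definition (\ref{mu1}) the product $p(u(k,n),u(n+1,m))\cdot p(u(n+1,m),u(k,n))$ is exactly $\mu_k^{m,i}$ evaluated at $i=n$. Reading off (\ref{mu2}) and (\ref{mu4}) at $i=n$ gives the value $1$ in both regimes $m<\psi(k)$ and $m>\psi(k)$; the hypothesis $m\neq\psi(k)$ is precisely what excludes the value $q^2$ of (\ref{mu3}). Hence $p(u[k,n],u[n+1,m])\,p(u[n+1,m],u[k,n])=1$, the antisymmetry identity (\ref{bri}) applies, and $[u[k,n],u[n+1,m]]=-p(u[k,n],u[n+1,m])\,[u[n+1,m],u[k,n]]$. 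Since $\mu_k^{m,n}=1$ forces $p(u[k,n],u[n+1,m])=p(u(n+1,m),u(k,n))^{-1}$, the scalar is exactly $\beta$.

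Next I would reduce the first equality in the case $m>\psi(k)$ to the case $m<\psi(k)$ by the antipode symmetry. Applying $\psi$ sends $(k,m)$ to $(\psi(m),\psi(k))$; using $\psi(n)=n+1$, $\psi(n+1)=n$ one checks $\psi(m)\le n<\psi(k)$ and $\psi(k)<\psi(\psi(m))=m$, so the new pair falls in the regime ``$<$''. Thus once the decomposition $u[\psi(m),\psi(k)]=[u[\psi(m),n],u[n+1,\psi(k)]]$ is known, formula (\ref{ant2}) applied termwise, together with the multiplicativity (\ref{ant1}) of $g_?\sigma(-)$ over brackets, transports it into the claimed decomposition of $u[k,m]$ (and, pleasantly, produces the $\beta[u[n+1,m],u[k,n]]$ form directly). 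So it suffices to establish the first equality for $m<\psi(k)$, $k\le n<m$.

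For $m<\psi(k)$ the element $u[k,m]$ is the fully left-normed bracket of (\ref{ww}), so $u[k,m]=[u[k,m-1],x_m]$, and I would induct on the suffix length $m-n$, the base case $m=n+1$ being the tautology $u[k,n+1]=[u[k,n],x_{n+1}]=[u[k,n],u[n+1,n+1]]$. Two auxiliary facts feed the step. (i) Each pure block is alignment-free: the letters of $u[n+1,m]$ carry distinct consecutive indices $n,n-1,\dots,\psi(m)$, so non-adjacent ones commute by (\ref{relb}) and Lemma~\ref{indle} applies; in particular $[u[n+1,m-1],x_m]=u[n+1,m]$. (ii) $[u[k,j],x_j]=0$ for $k+1\le j\le n-1$, obtained by two applications of the conditional identity (\ref{jak3}) — legal because $[u[k,j-2],x_j]=0$ by separation (Lemma~\ref{sepp}) — reducing matters to the short-root Serre relation $[[x_{j-1},x_j],x_j]=0$ of (\ref{relbl}). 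Inserting the inductive hypothesis $u[k,m-1]=[A,B]$ with $B=u[n+1,m-1]$ and expanding $[[A,B],x_m]$ by the colored Jacobi identity (\ref{jak1}), the leading term $[A,[B,x_m]]=[u[k,n],u[n+1,m]]$ is already the target by (i).

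The hard part is the fold. Because $x_m=x_{\psi(m)}$ with $\psi(m)$ interior to the interval $[k,n]$, the corner bracket $[A,x_m]=[u[k,n],x_{\psi(m)}]$ need not vanish (already $[u[n-1,n],x_n]=u[n-1,n+1]\neq0$ shows brackets of this shape survive), so (\ref{jak3}) is unavailable and the two correction terms $p_{x_m,B}^{-1}[[A,x_m],B]$ and $(p_{B,x_m}-p_{x_m,B}^{-1})[A,x_m]\cdot B$ of (\ref{jak1}) are individually nonzero. The whole proof therefore hinges on showing that these two terms cancel. My plan is to evaluate $[A,x_m]$ as a single lower root vector via the reduction behind (ii), and then to force the cancellation from the explicit quantification constants $\sigma_k^m$ and $\mu_k^{m,i}$ of (\ref{mu21}) and (\ref{mu2})--(\ref{mu4}) — that is, from the precise $B_n$ values of $p_{B,x_m}$ and $p_{x_m,B}$ — in the same spirit as the coefficient computations of Lemmas~\ref{suu1} and~\ref{suu2}. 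This bookkeeping at the fold is the step I expect to be genuinely laborious, and it is the only real obstacle.
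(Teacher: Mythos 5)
Your proposal cannot be compared with an internal proof, because the paper does not prove this lemma: it is imported verbatim from \cite[Corollary 3.13]{Kh08}. Measured against the toolkit the paper itself assembles for exactly such computations (the relations derived inside the proof of Proposition \ref{NU}), your treatment of the second equality is correct ($\mu_k^{m,n}=1$ whenever $m\neq\psi(k)$, then antisymmetry (\ref{bri})), and your antipode reduction of the regime $m>\psi(k)$ to $m<\psi(k)$ is sound in outline, with the caveat that (\ref{ant2}) is only a proportionality, so as written it yields $u[k,m]\sim[u[k,n],u[n+1,m]]$ and you still owe an argument (a scalar-exact version of (\ref{ant2}), or a leading-coefficient comparison in a PBW basis) pinning the constant to $1$.

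The genuine gap is at the fold, and it is conceptual: your premise that the corner bracket $[A,x_m]=[u[k,n],x_{\psi(m)}]$ ``need not vanish'', so that (\ref{jak3}) is unavailable and two nonzero correction terms of (\ref{jak1}) must be made to cancel, is false --- and the cancellation argument you defer as ``genuinely laborious'' is never carried out. In the inductive step $m\geq n+2$, so $j:=\psi(m)$ satisfies $k<j\leq n-1$: the corner letter is strictly interior to the block $[k,n]$, with both neighbours $j\pm 1$ inside it. Hence $\prod_{i=k}^{n}p_{ij}p_{ji}=q^{4}\cdot q^{-2}\cdot q^{-2}=1$ by (\ref{b1rel}), (\ref{b1rell}), so antisymmetry (\ref{bri}) applies to the pair $(u[k,n],x_j)$; combined with $[x_j,u[k,n]]=0$, which is relation (\ref{too1}) (established in the proof of Proposition \ref{NU} from $[[x_{j-1}x_jx_{j+1}],x_j]=[x_j,[x_{j-1}x_jx_{j+1}]]=0$, i.e.\ \cite[Eq.~(3.7)]{Kh08}, itself a consequence of the Serre relations), this gives $[u[k,n],x_j]=0$. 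So (\ref{jak3}) does apply, both correction terms vanish identically, and the inductive step collapses to one line: $[[A,B],x_m]=[A,[B,x_m]]=[u[k,n],u[n+1,m]]$. Your proposed evidence $[u[n-1,n],x_n]=u[n-1,n+1]\neq 0$ does not witness the shape that actually occurs: there the corner letter is $x_n$ (i.e.\ $m=n+1$), which is your base case, and the survival is due to the long-root Serre relation being cubic; for $j\leq n-1$ the quadratic relation $[[x_{j-1},x_j],x_j]=0$ of (\ref{relbl}) is precisely what kills the bracket. In short, the one step you flag as the only real obstacle is left unproven, and your plan for it chases a cancellation that does not exist; the correct and short route is the vanishing above, which is also how the paper handles identical folds (proof of Proposition \ref{NU}; see Proposition \ref{ins2} for the general splitting point).
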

\begin{proposition} {\rm (\cite[Proposition 3.14]{Kh08}).}
If $m\neq \psi (k),$ $k\leq i<m,$ then in $U_q^+({\mathfrak so}_{2n+1})$ for each $i,$ $k\leq i<m$  we have
$$
[u[k,i],u[i+1,m]]=u[k,m]
$$
with only two possible exceptions being  $i=\psi (m)-1,$  and $i=\psi (k).$
In particular this decomposition holds for arbitrary $i$ if $m\leq n$ or $k>n.$
\label{ins2}
\end{proposition}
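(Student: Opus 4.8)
The plan is to prove everything by reassociating the skew-bracket and to pin down the precise indices where reassociation is obstructed. I distinguish three regimes according to the position of $n$ relative to $k$ and $m$, and dispose of the two ``unwrapped'' regimes first. Suppose $m\le n$ or $k>n$. Then every letter of $u(k,m)$ is a genuine simple-root variable, and two letters $x_a,x_b$ of the word commute whenever their genuine indices differ by at least $2$, by the second group of relations (\ref{relb}). Listing the letters of $u(k,m)$ as $y_1,y_2,\ldots$, the hypothesis $[y_i,y_j]=0$ for $j>i+1$ of Lemma \ref{indle} is satisfied, so $u[k,m]$ is independent of the alignment of brackets; in particular $[u[k,i],u[i+1,m]]=u[k,m]$ for \emph{every} split $i$, with no exceptions. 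This already settles the final assertion of the proposition.

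Now assume $k\le n<m$ and $m\ne\psi(k)$, the case in which the word wraps, so that positions $n$ and $n+1$ both carry the variable $x_n$. The split $i=n$ is exactly Lemma \ref{rww}. To treat only one half I use the antipode: by (\ref{ant1}) and (\ref{ant2}), applying $\sigma$ carries the asserted identity at split $i$ of $u[k,m]$ to the same identity at split $\psi(i)-1$ of the mirror word $u[\psi(m),\psi(k)]$ (both sides normalized so that the leading coefficient is $1$), and $i\ge n$ corresponds to $\psi(i)-1<n$. Hence it suffices to establish all splits with $i\ge n$ for every wrapped word, those with $i<n$ following by duality. For $n<i<m$ I pass to the unwrapped pieces: since $u[n+1,m]$ lives in variables of index $>n$, Lemma \ref{indle} gives $u[n+1,m]=[u[n+1,i],u[i+1,m]]$, while Lemma \ref{rww} applied to the shorter word $u[k,i]$ (legitimate as $i\ne\psi(k)$) gives $u[k,i]=[u[k,n],u[n+1,i]]$. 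The target identity then becomes the associativity
$$
[[u[k,n],u[n+1,i]],u[i+1,m]]=[u[k,n],[u[n+1,i],u[i+1,m]]],
$$
which by the conditional Jacobi identity (\ref{jak3}) holds as soon as the obstruction $[u[k,n],u[i+1,m]]$ vanishes.

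This obstruction is controlled by Lemma \ref{sepp}: $u[k,n]$ occupies the genuine indices $[k,n]$ and $u[i+1,m]$ occupies $[\psi(m),\psi(i+1)]$ with $\psi(i+1)=2n-i$, so the two factors are separated precisely when $i\ge\psi(k)+1$, in which case the split is immediate; dually, through the antipode, the splits with $i\le\psi(m)-2$ are clean. This isolates the central range $\psi(m)\le i\le\psi(k)-1$, anchored by the base case $i=n$, as the heart of the matter. Here the two factors genuinely share the cluster of variables around $x_n$, the obstruction is nonzero, and the clean collapse of (\ref{jak3}) is unavailable. The plan is instead to move the comma one step at a time outward from $i=n$, expanding by the full colored Jacobi identities (\ref{jak1}) and (\ref{ja}) and retaining all correction terms. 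I expect this to be the main obstacle: each correction term is itself nonzero, and one must show that the surviving terms cancel, which I intend to carry out by induction on the length $m-k$ together with the ad-identities (\ref{br1f}), (\ref{br1}), checking that the accompanying scalars—read off from the explicit structure constants $\sigma_k^m$ and $\mu_k^{m,i}$ of (\ref{mu21})–(\ref{mu4})—combine to zero. It is precisely the repeated letter $x_n$, and the consequent failure of separation, that make this step computational rather than formal.

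Finally, the two exceptional indices sit exactly at the ends of the central range, where a structure constant degenerates. At $i=\psi(k)$ the left factor $u[k,\psi(k)]$ becomes the palindromic long-root element defined by the third branch of (\ref{ww}), and the scalar that would carry the shift past this value is one of the unit entries in (\ref{mu3})–(\ref{mu4}); the clean identity fails, producing a genuine exception. The index $i=\psi(m)-1$ is its image under the antipode symmetry used above and fails for the same reason. Since $i=\psi(k)$ falls outside $[k,m-1]$ when $m<\psi(k)$ and $i=\psi(m)-1$ falls outside it when $m>\psi(k)$, at most one of the two candidate exceptions is active for a given word, in agreement with the statement.
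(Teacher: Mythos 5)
Your proposal is not a complete proof, and the gap sits exactly where the content of the proposition lies. The parts you do carry out are sound: the unwrapped case ($m\le n$ or $k>n$) via Lemma \ref{indle}, the antipode reduction to splits $i\ge n$, and the reduction of a wrapped split $n<i<m$ to the single obstruction $[u[k,n],u[i+1,m]]=0$ (via Lemma \ref{rww}, Lemma \ref{indle} and the conditional Jacobi identity (\ref{jak3})). But on the central range $\psi(m)\le i\le\psi(k)-1$, $i\ne n$ --- which is nonempty whenever $k<n$ and $m>n+1$ (e.g.\ $n=3$, $k=1$, $m=5$, $i=4$) --- you prove nothing: ``the plan is,'' ``I intend to carry out,'' ``I expect'' are declarations of intent, and the cancellation computation they promise is never performed. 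Note that this paper itself gives no proof either: the proposition is imported as \cite[Proposition 3.14]{Kh08}, so in a blind proof the burden of that computation is the whole point.

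Worse, your diagnosis of the central range is wrong, so the promised computation would head in an unprofitable direction. You assert that there ``the obstruction is nonzero, and the clean collapse of (\ref{jak3}) is unavailable.'' In fact the obstruction $[u[k,n],u[i+1,m]]$ \emph{does} vanish for every non-exceptional split in that range: this is precisely the first statement of Proposition \ref{NU} of the paper, applied with $i\leftarrow n$, $j\leftarrow i$ (its hypotheses $m\ne\psi(n)-1=n$, $i\ne\psi(k)$, $m\ne\psi(k)$ are exactly your standing assumptions). What fails there is only your \emph{method} of proving the vanishing: Lemma \ref{sepp} cannot apply because the two factors share letters near $x_n$, and the vanishing is instead a consequence of the quantum Serre relations --- in the paper's toolkit, identities of the type (\ref{too1}) and (\ref{too2}), resting on \cite[Proposition 3.15]{Kh08}. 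So the single missing idea is a Serre-relation argument for the commutation $[u[k,n],u[i+1,m]]=0$ in the overlap region; once that is supplied, your own reduction closes the proof cleanly, with no correction terms to cancel at all.
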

\begin{proposition} Let $k\leq i<j<m.$
If  $m\neq \psi (i)-1,$  $j\neq \psi (k),$ and $m\neq \psi (k)$ then 
$[u[k,i],u[j+1,m]]=0.$ If $m\neq \psi (i)-1,$  $j\neq \psi (k),$ and  $i\neq \psi (j)-1,$ then $[u[j+1,m],u[k,i]]=0.$
\label{NU} 
\end{proposition}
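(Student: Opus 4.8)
The plan is to reduce everything to the separation Lemma \ref{sepp}. First I would record, for each factor, the set of \emph{real} generators $x_s$, $1\le s\le n$, that actually occur, using the identification $x_\ell=x_{\psi(\ell)}$. A direct computation shows that $u[k,i]$ involves the real generators forming the interval $[k,i]$ when $i\le n$, the interval $[k,n]$ when $k\le n<i\le\psi(k)$, and the interval $[\psi(i),n]$ when $i>\psi(k)$ (and the mirror interval $[\psi(i),\psi(k)]$ when $k>n$); symmetrically for $u[j+1,m]$ with $k,i$ replaced by $j+1,m$. In the generic situation these two supports are disjoint intervals separated by a gap of at least one real index, hence they are separated in the sense of Definition \ref{sep} and Lemma \ref{sepp} gives $[u[k,i],u[j+1,m]]=0$ at once. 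The role of the hypotheses $m\ne\psi(i)-1$, $j\ne\psi(k)$, $m\ne\psi(k)$ is to remove precisely the degenerate index values at which the two supports become adjacent or overlap; so the bulk of the work in the non-crossing case is the bookkeeping showing that, once these values are excluded, separation indeed holds.

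The delicate situation is when one or both words cross the mirror, i.e. $i\ge n$ or $m\ge n$, so that a support is forced to contain $x_n$ and separation may fail. Here I would not manipulate the words directly but split them at the mirror via Lemma \ref{rww} and Proposition \ref{ins2}, writing $u[j+1,m]=[u[j+1,n],u[n+1,m]]$ (and likewise $u[k,i]=[u[k,n],u[n+1,i]]$ when $i>n$); the excluded values are exactly the degenerate configurations where either the supports overlap or the splitting meets an exceptional index of Proposition \ref{ins2}. I would then distribute the outer bracket in $[u[k,i],[u[j+1,n],u[n+1,m]]]$ by the ad-identity (\ref{br1}) and the conditional Jacobi identities (\ref{jak3}), (\ref{jak4}), reducing to commutators in which one factor has a strictly shorter real support. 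An induction on the total length $m-k$, whose base cases are the separated instances above together with the specific $B_n$ relations (\ref{relb}), (\ref{relbl}), should then drive the commutator to zero.

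For the second statement I would first note that wherever separation applies, the two real supports are disjoint intervals with a gap, so every cross parameter has the shape $p_{st}p_{ts}$ with $\lvert s-t\rvert\ge 2$, which equals $1$ by (\ref{b1rell}); hence $p(u[k,i],u[j+1,m])\,p(u[j+1,m],u[k,i])=1$, and the reversed bracket $[u[j+1,m],u[k,i]]=0$ follows immediately from the first statement by the antisymmetry relation (\ref{bri}). The only place this shortcut is unavailable is the overlapping mirror configuration, and this is exactly where the hypotheses differ: the first statement excludes $m=\psi(k)$, the second $i=\psi(j)-1$. In that residual case I would argue directly after splitting at the mirror, using the antipode identities (\ref{ant1}) and (\ref{ant2}) throughout to halve the number of configurations via the symmetry $(k,i,j,m)\mapsto(\psi(m),\psi(j)-1,\psi(i)-1,\psi(k))$, which preserves the ordering $k\le i<j<m$ and maps the class of brackets under consideration to itself.

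I expect the main obstacle to be the overlapping mirror case, where both supports contain $x_n$ and the two elements genuinely share real generators, so that Lemma \ref{sepp} is simply unavailable. There one must expand the distributed brackets while keeping careful track of the $\sigma$- and $\mu$-parameters of (\ref{mu21})--(\ref{mu4}) to verify that all surviving terms cancel. Arranging this so that the induction on $m-k$ closes, and confirming that the listed excluded $\psi$-values are the only obstructions actually encountered, is the heart of the proof.
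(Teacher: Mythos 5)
There is a genuine gap, and it lies exactly where the real work of the proposition is. Your organizing claim --- that once the values $m=\psi(i)-1$, $j=\psi(k)$, $m=\psi(k)$ (resp.\ $i=\psi(j)-1$) are excluded, the two words are either separated or can be split at the mirror --- is false. Take $n=3$ and $(k,i,j,m)=(1,2,3,5)$: all hypotheses of both statements hold ($\psi(2)-1=4\neq 5$, $\psi(1)=6\neq 3,5$, $\psi(3)-1=3\neq 2$), neither word crosses the mirror ($i<n$, $j+1>n$), yet $u[k,i]=u[1,2]$ has real support $\{x_1,x_2\}$ while $u[j+1,m]=u[4,5]=[x_3,x_2]$ has real support $\{x_2,x_3\}$. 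The supports overlap, so Lemma \ref{sepp} is unavailable; and your ``delicate branch'' (triggered by $m\geq n$) proposes the splitting $u[j+1,m]=[u[j+1,n],u[n+1,m]]$, which requires $j+1\leq n<m$ and is therefore also unavailable. Such configurations --- one word real, the other reflected, neither containing the mirror, supports genuinely overlapping --- are permitted by the hypotheses and are the actual substance of the proposition; your closing paragraph, which locates all overlap in the case where ``both supports contain $x_n$'', does not account for them (here only one support contains $x_n$).

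What is missing is the mechanism that kills these overlapping brackets: no bookkeeping with the $\sigma$- and $\mu$-parameters of (\ref{mu21})--(\ref{mu4}) can do it, because the vanishing is a consequence of the defining Serre relations, not of the quantification form. Concretely one needs the derived relations $[x_{\lambda},[x_{\lambda-1}x_{\lambda}x_{\lambda+1}]]=[[x_{\lambda-1}x_{\lambda}x_{\lambda+1}],x_{\lambda}]=0$ for $\lambda\neq n,n+1$ (see \cite[Eq.\ (3.7)]{Kh08}), from which follow $[x_\lambda,u[k,a]]=0$ for $k\leq\lambda<a\leq n$ and $[u[k,a],x_\lambda]=0$ for $n<k<\lambda\leq a$, i.e.\ relations (\ref{too1}), (\ref{too2}) of the paper. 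In the example above these give the whole computation: $[u[1,2],x_5]=[[x_1,x_2],x_2]=0$ by (\ref{relbl}), hence by (\ref{jak3}) one has $[u[1,2],[x_4,x_5]]=[[u[1,2],x_4],x_5]=[u[1,3],x_2]=0$, the last equality being precisely the derived relation with $\lambda=2$. Your sketch never isolates these identities, so the proposed induction has no way to close on the overlapping cases. Note also that the paper itself does not reprove this material: the first statement, and the second when $m\neq\psi(k)$, are quoted from \cite[Proposition 3.15]{Kh08}, and only the remaining case $m=\psi(k)$ is proved in-text --- by an argument built exactly on (\ref{too1}), (\ref{too2}) together with Lemma \ref{rww} and Lemma \ref{sepp}. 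To repair your proof you would need to establish (or quote) those relations and rebuild the overlapping, non-mirror-crossing cases around them.
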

\begin{proof}
The former statement follows from \cite[Proposition 3.15]{Kh08}. 
Let $m\neq \psi (i)-1,$  $j\neq \psi (k),$ and  $i\neq \psi (j)-1.$ If additionally $m\neq \psi (k)$
then still  \cite[Proposition 3.15]{Kh08} applies. Assume $m=\psi (k).$
We shall use the following two relations
\begin{equation}
[x_{\lambda },[x_{\lambda -1}x_{\lambda }x_{\lambda +1}]]
=[[x_{\lambda -1}x_{\lambda }x_{\lambda +1}],x_{\lambda }]=0, 
l* {too}
\end{equation} 
where $1<\lambda <2n,$ $\lambda \neq n,n+1.$
The latter one is precisely \cite[Eq. (3.7)]{Kh08} with $k\leftarrow \lambda $ if 
$\lambda <n,$ and with $k \leftarrow \psi (\lambda )$ if $\lambda >n+1.$
The former one follows from antisymmetry identity (\ref{bri}), for
$$
p(x_{\lambda }, x_{\lambda -1}x_{\lambda }x_{\lambda +1})p(x_{\lambda -1}x_{\lambda }x_{\lambda +1},x_{\lambda })
=q^{-2}q^4q^{-2}=1.
$$
That equalities imply the following two ones 
\begin{equation}
[x_{\lambda },u[k,a]]=0, \ \ k\leq \lambda <a\leq n;
\label{too1}
\end{equation}
 \begin{equation}
[u[k,a],x_{\lambda }]=0, \ \ n<k<\lambda \leq a.
\label{too2}
\end{equation} 
Indeed, if in (\ref{too1}) we have $\lambda =k$ then $[x_k,u[k,a]]=[[x_k,[x_k,x_{k+1}]], u[k+2,a]]=0,$
for in this case $u[k,a]$ is independent of the precise alignment of brackets, see Lemma \ref{indle}, 
and of course $[x_k, u[k+2,a]]=0$ due to Lemma \ref{sepp}.
If $\lambda >k$ then 
$$
[x_{\lambda },u[k,a]]
\sim [u[k,\lambda -2], [x_{\lambda },[x_{\lambda -1}x_{\lambda }x_{\lambda +1}]],u[\lambda +2,a] ]]=0,
$$
for $[x_{\lambda },u[k,\lambda -2]]=[x_{\lambda },u[\lambda +2,a]]=0.$ The proof of (\ref{too2})
is quite similar.

Let $i\leq n<j.$ In this case the equality $[u[1+j,m],u[k,i]]=0$ follows from (\ref{too1})
 with $a\leftarrow i$ if $1+j>\psi (i).$ If $1+j<\psi (i)$ this follows from
(\ref{too2}) with $k\leftarrow 1+j,$ $a\leftarrow m.$  
We have $1+j\neq \psi (i),$ for $i\neq \psi (j)-1.$

Let $i<j\leq n.$ By Lemma \ref{rww} we have $u[1+j,m]$ $=[u[1+j,n],$ $u[n+1,m]].$
At the same time $[u[n+1,m],u[k,i]]=0$ due to (\ref{too2}) with $k\leftarrow n+1,$ $a\leftarrow m,$
while $[u[1+j,n],u[k,i]]=0$ since $u[k,i]$ and $u[1+j,n]$ are separated, see Lemma \ref{sepp}.

Similarly, if $n<i<j$ then by  Lemma \ref{rww} we have $u[k,i]=[u[k,n],u[n+1,i]].$
At the same time $[u[1+j,m],u[k,n]]=0$ due to (\ref{too1}) with $a\leftarrow n,$
while $[u[1+j,m],u[n+1,i]]=0$ since $u[1+j,m]$ and $u[n+1,i]$ are separated. 
\end{proof}

The elements $u[k,m]$ are important due to the following statements. 
\begin{proposition} {\rm \cite[Proposition 4.1]{Kh08}.} If $q^3\neq 1,$ $q^4\neq 1,$ then
values of the elements  $u[k,m],$ $k\leq m<\psi (k)$ form a set of PBW-generators
for the algebra $U_q^+({\mathfrak so}_{2n+1})$ over {\bf k}$[G].$ All heights are infinite.
\label{strB}
\end{proposition}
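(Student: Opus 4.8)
The plan is to establish the PBW-generator claim by combining the general PBW-basis machinery for character Hopf algebras (the hard super-letter construction of \cite{Kh3} and \cite{KhT}) with the explicit structure of the defining relations (\ref{relb}), (\ref{relbl}). Since $U_q^+({\mathfrak so}_{2n+1})$ is a homogeneous quotient of ${\bf k}\langle X\rangle$ extended by $G$, it automatically inherits a PBW-basis over ${\bf k}[G]$ whose generators are the hard super-letters $[w]$ for Lyndon–Shirshov words $w$ that are not expressible (modulo the relations and lower terms) in terms of smaller super-letters. Thus the real content is twofold: first, that the words $u(k,m)=x_kx_{k+1}\cdots x_m$ with $k\le m<\psi(k)$ are exactly the hard Lyndon–Shirshov words surviving the relations, so their bracketings $u[k,m]$ are the PBW-generators; and second, that every height is infinite, i.e. no power $u[k,m]^N$ becomes expressible in terms of lower words.

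First I would identify the standard Lyndon–Shirshov words in the alphabet $x_1>x_2>\cdots>x_n$. Because the relations (\ref{relb}) force $[x_i,x_j]=0$ for $j>i+1$ and impose the two Serre-type vanishings, the only Lyndon words that can remain hard are the ``connected intervals'' $x_kx_{k+1}\cdots x_m$; any Lyndon word with a gap in its support factors through a separated commuting pair by Lemma \ref{sepp} and hence is not hard. The constraint $m<\psi(k)$ should emerge precisely from the relation (\ref{relbl}) together with the quadratic Serre relations: once $m$ reaches $\psi(k)$ the corresponding bracketing becomes reducible (this is consistent with the third case of (\ref{ww}), where $u[k,\psi(k)]$ is defined as a bracket of two shorter $u[\,\cdot\,]$'s, signalling that it is \emph{not} a new generator). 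So the counting step is to show the hard super-letters are in bijection with the pairs $(k,m)$, $1\le k\le m<\psi(k)\le 2n$.

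Next I would verify the infinite-height claim, which requires the hypotheses $q^3\neq 1$, $q^4\neq 1$. The height of a hard super-letter $[w]$ is finite only when the self-bracketing parameter $\sigma_k^m=p(u(k,m),u(k,m))$ is a nontrivial root of unity of small order, forcing some power of $[w]$ to collapse. Using the explicit evaluation (\ref{mu21}), the value of $\sigma_k^m$ is $q$, $q^2$, or $q^4$; none of these equals $1$ precisely because $q^2\neq\pm1$, $q^3\neq1$, and $q^4\neq1$ are assumed. Hence no truncation relation $u[k,m]^N=(\text{lower})$ can arise, and all heights are infinite. This is the step where the arithmetic conditions on $q$ are genuinely used, and it is where I expect the main obstacle: one must rule out \emph{every} possible quantum-Serre-type truncation, not merely the obvious self-power, which means controlling the leading terms of all brackets $[[w]^{s},\,\cdot\,]$ and invoking the crystallization/triangularity of the PBW-basis rather than a naive degree count.

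The hard part will be the reducibility analysis at $m=\psi(k)$ and the accompanying completeness argument: showing that the intervals with $m<\psi(k)$ not only are hard but also \emph{generate} everything, so that no additional generators are hidden among the $m\ge\psi(k)$ range. I would handle this by leaning on the decomposition results already recorded—Lemma \ref{rww} and Proposition \ref{ins2}, which express $u[k,m]$ for $m>n$ as brackets $[u[k,n],u[n+1,m]]$—to rewrite any long interval in terms of intervals lying in the admissible range, thereby confirming both hardness of the claimed generators and redundancy of the rest. Since this proposition is cited as \cite[Proposition 4.1]{Kh08}, the cleanest route is to defer these two technical points to that reference and present here only the reduction showing they suffice.
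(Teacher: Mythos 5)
The paper gives no proof of this proposition at all: it is imported verbatim from \cite[Proposition 4.1]{Kh08}, so your closing decision to defer the hardness and completeness analysis to that reference is consistent with the paper's own treatment, and your outline of the hard-super-letter framework (the intervals $u(k,m)$ as the only candidate Lyndon--Shirshov words, with gapped words killed by Lemma \ref{sepp} and the range $m\geq\psi(k)$ shown redundant via Lemma \ref{rww} and Proposition \ref{ins2}) is a reasonable sketch of what that reference carries out.

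However, the one step you argue in detail --- the infinite-height claim --- contains a genuine error. In Kharchenko's PBW theory a hard super-letter $[u]$ may have finite height whenever $p_{uu}$ is a primitive $t$-th root of unity for \emph{some} $t$, not only a root of unity ``of small order''; and the hypotheses $q^2\neq\pm 1$, $q^3\neq 1$, $q^4\neq 1$ do not prevent $q$ from being a primitive $t$-th root of unity with $t\geq 5$. In that case $\sigma_k^m\in\{q,q^2\}$ (the value $q^4$ occurs only at $m=\psi(k)$, outside the generating range) is a nontrivial root of unity, so your criterion ``$\sigma_k^m\neq 1$'' excludes nothing: one must actually prove that no power $u[k,m]^N$ collapses into a combination of super-words in smaller super-letters inside $U_q^+(\mathfrak{so}_{2n+1})$, which is a statement about this particular algebra (its defining relations are only the Serre relations, with no truncations), not about the numerics of $\sigma_k^m$. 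That your inference proves too much is visible inside this very paper: by Proposition \ref{strBu}, for $q$ of finite multiplicative order $t>4$ the identical elements $u[k,m]$, with the identical nonunital values of $\sigma_k^m$, have \emph{finite} heights ($t$ or $t/2$) in the small quantum group $u_q^+(\mathfrak{so}_{2n+1})$, where indeed $u[k,m]^h=0$. Hence ``$\sigma_k^m\neq 1$, therefore no truncation relation can arise'' is a non sequitur; the infinite-height assertion for $U_q^+$ requires a separate linear-independence (basis) argument distinguishing $U_q^+$ from its quotient $u_q^+$, which your proposal does not supply and which is precisely the content delegated to \cite[Proposition 4.1]{Kh08}.
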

\begin{proposition} {\rm (\cite[Proposition 4.5]{Kh08}).} 
If the multiplicative order $t$ of $q$ is finite, $t>4,$ then the values of $u[k,m],$ 
$k\leq m<\psi (k)$ form a set of PBW-generators for $u_q^+({\mathfrak so}_{2n+1})$ 
over {\bf k}$[G].$ The height $h$ of $u[k,m]$ equals $t$ if $m=n$ or $t$ is odd.
If $m\neq n$ and $t$ is even, then $h=t/2.$ In all cases $u[k,m]^h=0$
in $u_q^+({\mathfrak so}_{2n+1}).$
\label{strBu}
\end{proposition}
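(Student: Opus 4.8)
The plan is to deduce everything from Proposition~\ref{strB} by analysing the Hopf quotient $U_q^+(\mathfrak{so}_{2n+1})\to u_q^+(\mathfrak{so}_{2n+1})$. Recall that $u_q^+(\mathfrak{so}_{2n+1})$ is the quotient of $G\langle X\rangle$, equivalently of $U_q^+(\mathfrak{so}_{2n+1})$, by the biggest Hopf ideal $\Lambda$ contained in $G\langle X\rangle^{(2)}$. By Proposition~\ref{strB} the values $u[k,m]$, $k\le m<\psi(k)$, already form a PBW-basis of $U_q^+(\mathfrak{so}_{2n+1})$ over ${\bf k}[G]$ with all heights infinite. So the whole task is to decide which powers $u[k,m]^s$ land in $\Lambda$: these powers are exactly the new relations introduced by the quotient, and they determine the heights.

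First I would read off the self-pairing $\sigma_k^m=p(u(k,m),u(k,m))$ from (\ref{mu21}). The inequalities $k\le m<\psi(k)$ force $k\le n$, so the value $q$ (listed for $m=n$ or $k=n+1$) occurs exactly when $m=n$, the value $q^4$ is excluded since it requires $m=\psi(k)$, and in every remaining case $\sigma_k^m=q^2$. Writing $h$ for the multiplicative order of $\sigma_k^m$, one obtains $h=t$ when $m=n$; and for $m\ne n$, the order of $q^2$ is $t$ when $t$ is odd and $t/2$ when $t$ is even. These are precisely the heights in the statement, so it remains to show that the height of each generator does drop to the order of its self-pairing and that the corresponding top power vanishes.

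The crux is the identity $c^h=0$ in $u_q^+(\mathfrak{so}_{2n+1})$, where $c:=u[k,m]$. I would show that the two-sided ideal $J$ generated by $G$ together with all such top powers is a Hopf ideal contained in $G\langle X\rangle^{(2)}$; then $J\subseteq\Lambda$ and hence $c^h=0$ in the quotient. Containment in $G\langle X\rangle^{(2)}$ is clear, since $t>4$ gives $h\ge 3$ and $c$ has positive total degree in the $x_i$, so $c^h$ has total degree $\ge 2$. The Hopf property amounts to computing $\Delta(c^h)$ modulo $J$. Since $\sigma_k^m$ is a primitive $h$-th root of unity, all intermediate Gaussian binomials $\binom{h}{s}_{\sigma_k^m}$, $0<s<h$, vanish; then, proceeding by induction on the super-letter with the genuinely skew primitive simple roots $x_i=u[i,i]$ as the base case, one checks that $\Delta(c^h)\equiv c^h\otimes 1+g_{u(k,m)}^{\,h}\otimes c^h$ modulo $J\otimes U_q^++U_q^+\otimes J$. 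The intermediate summands of $\Delta(c)$ are products of strictly lower root vectors, whose relevant powers already lie in $J$ by induction, and this is what lets the binomial collapse go through.

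The main obstacle is exactly this coproduct computation: $c=u[k,m]$ is not skew primitive, so one must control the intermediate terms of $\Delta(c)$ carefully enough to push the quantum-binomial vanishing through the $h$-th power. Once $c^h=0$ is in hand, the general PBW machinery of \cite{Kh3}, together with the maximality of $\Lambda$, finishes the argument: the top powers are the only relations added in passing to $u_q^+(\mathfrak{so}_{2n+1})$, so the $u[k,m]$ persist as hard super-letters and the restricted powers $c^s$, $0\le s<h$, remain linearly independent over ${\bf k}[G]$, giving both the PBW-generation and the exact value $h$ of each height.
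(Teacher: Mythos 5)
First, a point of context: the paper does not prove this proposition at all --- it is imported verbatim from \cite[Proposition 4.5]{Kh08} --- so your attempt can only be measured against what such a proof actually requires. The easy part of your proposal is correct: for $k\le m<\psi(k)$ one necessarily has $k\le n$ and $m\ne\psi(k)$, so (\ref{mu21}) gives $\sigma_k^m=q$ exactly when $m=n$ and $\sigma_k^m=q^2$ otherwise, and the claimed heights are precisely the multiplicative orders of these self-pairings (the paper itself points this out right after the statement). The overall strategy --- show the top powers generate a Hopf ideal inside $G\langle X\rangle ^{(2)}$, conclude by maximality that it lies in ${\bf \Lambda}$, then invoke the PBW machinery of \cite{Kh3} --- is also the right shape.

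The genuine gap is the coproduct computation, which you correctly identify as the crux but then dispatch with an argument that fails. Write $\Delta(c)=A+B+E$ with $A=c\otimes 1$, $B=g_{k\rightarrow m}\otimes c$, and $E=\sum_{i=k}^{m-1}\tau_i(1-q^{-2})g_{k\rightarrow i}\,u[i+1,m]\otimes u[k,i]$ as in (\ref{co}). In $(A+B+E)^h$ the terms containing exactly one factor from $E$, for instance $A^{h-1}E_i\sim c^{h-1}g_{k\rightarrow i}u[i+1,m]\otimes u[k,i]$, contain $u[i+1,m]$ and $u[k,i]$ only to the first power and $c$ only to the power $h-1$; neither tensor factor lies in $J$, so these terms are emphatically not killed by ``powers of lower root vectors already in $J$ by induction''. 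They can only disappear by cancelling against one another, and that cancellation is not formal: to collect them one must commute the $E_i$ past $A$ and $B$, which requires the $q$-commutation rules between $c=u[k,m]$ and its subwords, and in type $B_n$ these are genuinely nontrivial. Already in the smallest case $c=u[n-1,n]$ one has $[c,x_n]=[[x_{n-1},x_n],x_n]=u[n-1,n+1]\neq 0$, which is itself one of the PBW-generators of Proposition \ref{strB} (only its further bracket with $x_n$ vanishes, by (\ref{relbl})). Note moreover that $u[n-1,n+1]$ is a \emph{longer} root vector than $c$, so the obstruction terms live above $c$, not below it, and your proposed induction ``from the skew-primitive simple roots upward'' is ordered the wrong way. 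A correct argument has to organize Levendorskii--Soibelman-type commutation relations and the resulting vanishing geometric sums of roots of unity (equivalently, one can verify $\partial_j(c^h)=0$ for all $j$ via (\ref{pdee}), but this needs exactly the same relations); none of this is supplied.

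There is a second, smaller gap at the end. The claim that ``the top powers are the only relations added'', so that the $u[k,m]$ remain hard super-letters and each height is exactly $h$, is not a consequence of the maximality of ${\bf \Lambda}$: maximality yields the inclusion of $J$ (pulled back) into ${\bf \Lambda}$, i.e.\ a surjection from $U_q^+(\mathfrak{so}_{2n+1})/J$ onto $u_q^+(\mathfrak{so}_{2n+1})$, which bounds the heights from \emph{above}. The lower bound --- linear independence of the restricted monomials in $u_q^+$, equivalently that ${\bf \Lambda}$ adds nothing beyond $J$ and the Serre ideal --- is a separate substantive statement, needing either the restriction on admissible heights from \cite{Kh3} together with a proof that each $u[k,m]$ stays hard (nonvanishing alone, which does follow from the derivations (\ref{pdee}), is not enough), or a dimension count. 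As written, both halves of the proposition --- the vanishing $c^h=0$ and the exactness of the heights --- rest on asserted rather than proved steps.
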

We stress that due to (\ref{mu21}) the height $h$ here equals the multiplicative 
order of $p_{uu},$ where $u=u[k,m].$ The coproduct on $u[k,m],$ $k\leq m\leq 2n$ is given by the following 
elegant formula, see \cite[Theorem 4.3]{Kh08}:
\begin{equation}
\Delta (u[k,m])=u[k,m]\otimes 1+g_{k\rightarrow m}\otimes u[k,m]
\label{co}
\end{equation}
$$
+\sum _{i=k}^{m-1}\tau _i(1-q^{-2})g_{k\rightarrow i}\, u[i+1,m]\otimes u[k,i],
$$
where by definition $g_{k\rightarrow i}=g_kg_{k+1}\cdots g_i=g(u[k,i]),$ and 
\begin{equation}
\tau_i=q^{\delta _i^n} =\left\{ \begin{matrix}
q,\hfill &\hbox{if } i=n;\hfill \cr
1,\hfill &\hbox{otherwise}.\hfill 
                             \end{matrix}
                    \right.
\label{tau}
\end{equation}

Formula (\ref{co}) with (\ref{calc}) and (\ref{calcdu}) allows one to find  the differentiation formulae
\begin{equation}
\partial _i(u[k,m])=\left\{ \begin{matrix}
(1-q^{-2})\tau _ku[k+1,m],\hfill & \hbox{ if } i\in \{ k,\psi (k)\} ,k<m;\hfill \cr
0, \hfill & \hbox{ if } i\notin \{ k,\psi (k)\} ;\hfill \cr
1, \hfill & \hbox{ if } i\in \{ k,\psi (k)\} ,k=m.\hfill
\end{matrix} \right.
\label{pdee}
\end{equation}
\begin{equation}
\partial _i^*(u[k,m])=\left\{ \begin{matrix}
(1-q^{-2})\tau _{m-1 }u[k,m-1],\hfill & \hbox{ if } i\in \{ m, \psi (m)\} , m>k;\hfill \cr
0, \hfill & \hbox{ if } i\notin \{ m,\psi (m)\};\hfill \cr
1, \hfill & \hbox{ if } i\in \{ m, \psi (m)\} , m=k.\hfill
\end{matrix} \right.
\label{pdu}
\end{equation}

These differentiation formulae with differential representation of the simplest 
adjoint operators (\ref{sqi3}), (\ref{sqi4}) allows one to find the (skew)
bracket of basis elements $u[k,m]^{\mp }$ with the main generators $x_i^{\pm }.$
\begin{lemma} If $k<m,$ then in $U_q(\mathfrak{so}_{2n+1})$ we have
\begin{equation}
[u[k,m],x_i^-]\sim \left\{ \begin{matrix} 0, \hfill & \hbox{ if } i\notin \{ k,m,\psi (k), \psi (m)\} ;\hfill \cr
g_kf_ku[k+1,m], \hfill & \hbox{ if } i\in \{ k, \psi (k)\} ,\  m\neq \psi (k);\hfill \cr
u[k,m-1], \hfill & \hbox{ if } i\in \{ m, \psi(m)\} , \  m\neq \psi (k).\hfill
\end{matrix}
\right.
\label{kom1}
\end{equation}
\label{ruk1}
\end{lemma}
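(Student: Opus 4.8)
The plan is to reduce the skew bracket to ordinary partial derivatives via the differential representation (\ref{sqi4}) of the operator $[-,x_i^-]$, and then to read off the three cases from the explicit differentiation formulae (\ref{pdee}) and (\ref{pdu}). Since $u[k,m]$ is a bracketing of the word $u(k,m)$ and hence lies in ${\bf k}\langle X\rangle$, formula (\ref{sqi4}), projected to $U_q(\mathfrak{so}_{2n+1})$, gives
$$
[u[k,m],x_i^-]=\partial_i^*(u[k,m])-p_{ii}^{-1}p(u[k,m],x_i)\,\partial_i(u[k,m])\,g_if_i.
$$
Thus the whole computation is controlled by $\partial_i(u[k,m])$ and $\partial_i^*(u[k,m])$, which by (\ref{pdee}) and (\ref{pdu}) are supported respectively on $i\in\{k,\psi(k)\}$ and $i\in\{m,\psi(m)\}$; because $k<m$, the ``$k=m$'' lines of those formulae never occur.

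The key observation is that under $k<m$ and $m\neq\psi(k)$ the index sets $\{k,\psi(k)\}$ and $\{m,\psi(m)\}$ are disjoint: $k=m$ is ruled out by $k<m$; both $k=\psi(m)$ and $\psi(k)=m$ amount to $m=\psi(k)$, which is excluded; and $\psi(k)=\psi(m)$ would force $k=m$. Hence for each fixed $i$ at most one of the two derivatives is nonzero, so no cancellation or merging of terms can occur, and the three lines of the statement correspond exactly to the three possibilities for $i$. If $i\notin\{k,m,\psi(k),\psi(m)\}$ both derivatives vanish and the bracket is $0$ (no restriction on $m$ is needed here). If $i\in\{k,\psi(k)\}$ with $m\neq\psi(k)$, then $\partial_i^*(u[k,m])=0$ while (\ref{pdee}) gives $\partial_i(u[k,m])=(1-q^{-2})\tau_k\,u[k+1,m]$, so only the second summand survives. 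If $i\in\{m,\psi(m)\}$ with $m\neq\psi(k)$, then $\partial_i(u[k,m])=0$ and (\ref{pdu}) leaves the first summand $\partial_i^*(u[k,m])=(1-q^{-2})\tau_{m-1}\,u[k,m-1]$.

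It then remains to match the surviving terms with the stated right-hand sides up to a nonzero scalar. The third case is immediate, since $1-q^{-2}\neq0$ (as $q^2\neq1$) and $\tau_{m-1}\in\{1,q\}$ is nonzero, giving $[u[k,m],x_i^-]\sim u[k,m-1]$. For the second case I would invoke the mirror convention $x_{\psi(i)}=x_i$ for $i>n$, which yields $g_{\psi(k)}=g_k$ and $f_{\psi(k)}=f_k$, so that $g_if_i=g_kf_k$ whether $i=k$ or $i=\psi(k)$; commuting this group element to the left past the homogeneous factor $u[k+1,m]$ costs only the nonzero scalar $\chi^{u[k+1,m]}(g_kf_k)$, and together with the nonzero factors $-p_{ii}^{-1}p(u[k,m],x_i)(1-q^{-2})\tau_k$ this gives $[u[k,m],x_i^-]\sim g_kf_k\,u[k+1,m]$.

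Once (\ref{sqi4}), (\ref{pdee}) and (\ref{pdu}) are in hand the argument is essentially mechanical, so I do not anticipate a serious analytic obstacle. The one genuinely delicate point is the disjointness argument: it is precisely what forces the hypothesis $m\neq\psi(k)$ into the last two cases and guarantees that a single derivative survives rather than an uncontrolled combination of both. The secondary place demanding care is the bookkeeping of the group-like factor through the mirror identification $g_{\psi(k)}=g_k$, $f_{\psi(k)}=f_k$, though this affects only the proportionality constant and is absorbed by the relation $\sim$.
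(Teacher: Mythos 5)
Your proof is correct and is essentially the paper's own argument: the paper proves Lemma \ref{ruk1} precisely by citing the differential representation (\ref{sqi4}) together with the differentiation formulae (\ref{pdee}) and (\ref{pdu}). Your write-up merely makes explicit the disjointness of $\{k,\psi(k)\}$ and $\{m,\psi(m)\}$ under $k<m$, $m\neq\psi(k)$, and the bookkeeping of the group-like factor $g_kf_k$, both of which the paper leaves implicit.
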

\begin{proof}  
The statement follows from (\ref{sqi4}), (\ref{pdu}), and (\ref{pdee}).
\end{proof}
\begin{lemma} If $i<j,$ then in $U_q(\mathfrak{so}_{2n+1})$ we have
\begin{equation}
[x_k,u[i,j]^-]\sim \left\{ \begin{matrix} 0, \hfill & \hbox{ if } k\notin \{ i,j,\psi (i), \psi (j)\} ; \hfill \cr
g_if_iu[i+1,j], \hfill & \hbox{ if } k\in \{ i, \psi (i)\} ,\ j\neq \psi (i);\hfill \cr
u[i,j-1], \hfill & \hbox{ if } k\in \{ j, \psi(j)\} , \ j\neq \psi (i). \hfill 
\end{matrix}
\right.
\label{kom2}
\end{equation}
\label{ruk2}
\end{lemma}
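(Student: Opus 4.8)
The plan is to mirror the proof of Lemma \ref{ruk1}. There the operator $[\,\cdot\,,x_i^-]$ was put in differential form (\ref{sqi4}) and evaluated through (\ref{pdee}), (\ref{pdu}); here the operator $[x_k,\,\cdot\,]$ acting on ${\bf k}\langle X^-\rangle$ has the dual differential form (\ref{sqi3}),
$$
[x_k,u[i,j]^-]=\partial_{-k}^*(u[i,j]^-)\,p(x_k,u[i,j]^-)\,p_{kk}^{-1}-g_kf_k\,\partial_{-k}(u[i,j]^-).
$$
So first I would reduce the whole statement to computing the two negative skew derivatives $\partial_{-k}$ and $\partial_{-k}^*$ of the PBW element $u[i,j]^-$; note that these derivatives of a word in $X^-$ are again words in $X^-$, which is why the right-hand sides below carry the superscript ${}^-$.

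Next I would record the negative analogues of (\ref{pdee}), (\ref{pdu}). The quickest route is the Mirror generators subsection: the assignment $\bar x_s\mapsto p_{ss}^{-1}x_s^-$ extends to a Hopf isomorphism which intertwines $\Delta$ (since $\Delta(p_{ss}^{-1}x_s^-)=p_{ss}^{-1}x_s^-\otimes 1+f_s\otimes p_{ss}^{-1}x_s^-$) and carries $\bar u[k,m]$ to a scalar multiple of $u[k,m]^-$; transporting (\ref{pdee}), (\ref{pdu}) through it yields, for $i<j$,
$$
\partial_{-k}(u[i,j]^-)\sim u[i+1,j]^-\ (k\in\{i,\psi(i)\}),\qquad \partial_{-k}^*(u[i,j]^-)\sim u[i,j-1]^-\ (k\in\{j,\psi(j)\}),
$$
both derivatives vanishing for every other $k$. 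Equivalently one reads these off directly from the mirror of the coproduct formula (\ref{co}) via (\ref{calc1}) and (\ref{dum2}), the relevant single-letter tensor components being the right factor $u[i,i]^-=x_i^-$ (for $\partial_{-k}$, $k\in\{i,\psi(i)\}$) and the left factor $u[j,j]^-=x_j^-$ (for $\partial_{-k}^*$, $k\in\{j,\psi(j)\}$).

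It then remains to substitute into (\ref{sqi3}) and sort out which term survives, using the properties of $\psi$ from Definition \ref{fis}. If $k\notin\{i,j,\psi(i),\psi(j)\}$ both derivatives vanish, giving $0$. If $k\in\{i,\psi(i)\}$ and $j\neq\psi(i)$, then $k\notin\{j,\psi(j)\}$, so only the second summand of (\ref{sqi3}) remains and, since $g_kf_k=g_if_i$, the bracket is $\sim g_if_i\,u[i+1,j]^-$. Symmetrically, if $k\in\{j,\psi(j)\}$ and $j\neq\psi(i)$, only the first summand remains and the bracket is $\sim u[i,j-1]^-$, the scalar $p(x_k,u[i,j]^-)p_{kk}^{-1}$ being absorbed into $\sim$.

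The one point requiring care --- and the reason the hypothesis $j\neq\psi(i)$ is imposed --- is that the two derivative terms must not be simultaneously nonzero. Exactly when $j=\psi(i)$ one has $\{i,\psi(i)\}=\{j,\psi(j)\}=\{i,j\}$ (this is the degenerate $m=\psi(k)$ configuration of (\ref{ww})), so for such $k$ both $\partial_{-k}$ and $\partial_{-k}^*$ contribute and the bracket becomes a genuine linear combination rather than a single monomial; excluding this case is what makes the clean three-way answer valid. Everything else is scalar and group-element bookkeeping that projective equality renders harmless, so the substantive step is really the clean derivation of the mirror derivative formulae above (which can also be obtained, together with the whole lemma, by applying the mirror Hopf isomorphism to Lemma \ref{ruk1} written over $q^{-1}$ and then invoking antisymmetry (\ref{dos})).
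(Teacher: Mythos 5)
Your proof is correct and takes essentially the same route as the paper: the paper's entire proof of this lemma is the single sentence that it ``follows from (\ref{sqi3}), (\ref{pdu}), and (\ref{pdee}),'' i.e.\ the differential form of the operator $[x_k,-]$ combined with the derivative formulas, exactly as you argue. Your additional work --- transporting (\ref{pdee}), (\ref{pdu}) to the negative calculus via the mirror generators (or the mirror of (\ref{co}) through (\ref{calc1}), (\ref{dum2})), observing $g_kf_k=g_if_i$ for $k\in\{i,\psi(i)\}$, and noting that the hypothesis $j\neq\psi(i)$ is precisely what prevents the two summands of (\ref{sqi3}) from contributing simultaneously --- only makes explicit what the paper leaves implicit, including that the right-hand sides are properly the negative words $u[i+1,j]^-$ and $u[i,j-1]^-$.
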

\begin{proof} 
The statement follows from (\ref{sqi3}), (\ref{pdu}), and (\ref{pdee}). 
\end{proof}
\begin{corollary} If either $k,m, \psi (k), \psi (m)\notin [i,j]$ or
$i,j, \psi (i), \psi (j)\notin [k,m],$ then
$$
[u[k,m],u[i,j]^-]=0.
$$
\label{ruk3}
\end{corollary}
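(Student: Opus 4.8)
The plan is to reduce both hypotheses to a single one by the mirror symmetry of Section 2, and then prove the remaining case by induction on the length $m-k$ of the positive word. Assume first the second hypothesis, $i,j,\psi(i),\psi(j)\notin[k,m]$. The structural observation that drives everything is that every sub-interval occurring in a bracket decomposition of $u[k,m]$ is contained in $[k,m]$, so this hypothesis is automatically inherited by $u[k,m-1]$, $u[k+1,m]$, $u[k,n]$ and $u[n+1,m]$. Moreover, since $k,m\in[k,m]$, the hypothesis forces $k,m\notin\{i,j,\psi(i),\psi(j)\}$, whence Lemma \ref{ruk2} (or the defining relation (\ref{rela31}) when $i=j$) gives $[x_k,u[i,j]^-]=[x_m,u[i,j]^-]=0$. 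This is the base case $k=m$ and also makes the single-generator cross terms disappear below.

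For the inductive step I would split $u[k,m]$ according to its bracketing (\ref{ww}): if $m<\psi(k)$ write $u[k,m]=[u[k,m-1],x_m]$; if $m>\psi(k)$ write $u[k,m]=[x_k,u[k+1,m]]$; and if $m=\psi(k)$ write $u[k,m]=\beta[u[n+1,m],u[k,n]]$. In each case I apply the Jacobi identity (\ref{uno}), $[[u,v],w^-]=[u,[v,w^-]]+p_{wv}[[u,w^-],v]$, with $w^-=u[i,j]^-$. The two resulting summands each contain a vanishing factor: either a single-generator cross bracket killed by the base case above, or a bracket of a strictly shorter positive word with $u[i,j]^-$ that vanishes by the induction hypothesis (the shorter interval still avoids $i,j,\psi(i),\psi(j)$). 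Hence $[u[k,m],u[i,j]^-]=0$ in all three cases.

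It remains to treat the first hypothesis $k,m,\psi(k),\psi(m)\notin[i,j]$. Here I would invoke the mirror generators $y_i=p_{ii}^{-1}x_i^-$, $y_i^-=-x_i$: the substitution carries $u[k,m]$ to a scalar multiple of the negative word $u[k,m]^-$ and $u[i,j]^-$ to a scalar multiple of the positive word $u[i,j]$, so by the antisymmetry relation (\ref{dos}) it suffices to show $[u[i,j],u[k,m]^-]=0$. Under this exchange the first hypothesis becomes exactly the second hypothesis for the pair $u[i,j]$, $u[k,m]^-$, and the case already proved applies verbatim. Equivalently, one may run the symmetric induction on $j-i$, peeling generators off $u[i,j]^-$ through the negative Jacobi identity (\ref{tres}) and using Lemma \ref{ruk1} for the base case.

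The main obstacle I anticipate is the bookkeeping of the three bracketings of $u[k,m]$, and in particular the split case $m=\psi(k)$, where (\ref{ww}) routes the decomposition through the middle index $n$ rather than through an endpoint; one must verify that both pieces $u[k,n]$ and $u[n+1,m]$ inherit the hypothesis and are strictly shorter before the induction closes. Once this is checked the three cases collapse uniformly, and the genuinely new input is concentrated entirely in the base-case vanishing supplied by Lemmas \ref{ruk1} and \ref{ruk2}.
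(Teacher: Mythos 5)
Your proof is correct, and it rests on the same essential input as the paper's --- the single-generator vanishing supplied by Lemmas \ref{ruk1} and \ref{ruk2} --- but the induction is driven by different machinery. The paper never touches the bracketed structure (\ref{ww}) at all: under the first hypothesis it notes that $[u[k,m],x_t^-]=0$ for every $t\in[i,j]$, and since $u[i,j]^-$ is a polynomial in the letters $x_t^-,$ $t\in[i,j]$, the ad-identity (\ref{br1}), $[u,v\cdot w]=[u,v]\cdot w+p_{uv}v\cdot[u,w]$, with an evident induction on word length shows that $[u[k,m],-]$ kills the whole subalgebra generated by those letters; the second hypothesis is handled symmetrically with (\ref{br1f}). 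This Leibniz observation makes the proof two lines, treats the two hypotheses on an equal footing, and renders unnecessary both your three-case analysis of the bracketing (\ref{ww}) via the Jacobi identity (\ref{uno}) --- including the split case $m=\psi(k)$ you flagged as the main obstacle --- and your mirror-generator reduction via antisymmetry (\ref{dos}). Your route is sound (the inner brackets you substitute are zero, so no inhomogeneous-substitution issue arises, and the subintervals do inherit the hypothesis), and the mirror device you invoke is precisely the one the paper itself uses in the very next statement, Proposition \ref{ruk5}; but what the paper's argument buys is the realization that one only needs $u[i,j]^-$ (resp.\ $u[k,m]$) to lie in the subalgebra generated by its letters, not any particular bracketed decomposition of it.
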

\begin{proof}  
If $k,m, \psi (k), \psi (m)\notin [i,j],$ then due to Lemma \ref{ruk2} we have  $[u[k,m],x_t^-]=0$
for every $t\in [i,j].$ Hence ad-identity (\ref{br1}) and evident induction 
imply the required equality, for $u[i,j]^-$ belongs to the subalgebra generated by
$x_t^-, t\in [i,j].$  If $i,j, \psi (i), \psi (j)\notin [k,m],$ then in perfect analogy 
we use ad-identity (\ref{br1f}) and Lemma \ref{ruk1}.  
\end{proof}

\section{Roots and related properties of quantum Borel subalgebras}

Recall  that a {\it root} of a homogeneous right coideal subalgebra $U$ is  degree  of a 
PBW-generator of $U$, see \cite[Definition 2.9]{KL}. Due to \cite[Corollary 5.7]{Kh08}
all roots of a homogeneous right coideal subalgebra $U \supset G$ 
of positive  quantum Borel subalgebra have the form 
$[k:m]\stackrel{df}{=}x_k+x_{k+1}+\cdots +x_{m-1}+x_m=D(u[k,m]),$
where $1\leq k\leq m \leq 2n.$ Here  $x_{2n-i+1}=x_i,$ see Definition \ref{fis}.
An $U$-root is {\it simple} if it is not a sum of two or more other $U$-roots. 

In what follows  
$\Sigma (U)$ denotes the submonoid of $\Gamma ^+$ generated by all $U$-roots. Certainly 
degree of any nonzero homogeneous element from $U$ belongs to $\Sigma (U).$ 
Moreover if $q$ is not a root of 1, then all PBW-generators have infinite heights.
 Hence in this case $\Sigma (U)$ is precisely the set of all degrees of nonzero 
homogeneous elements from $U$.
Simple $U$-roots are nothing more than indecomposable elements from $\Sigma (U).$ In particular 
\cite[Lemma 8.9]{Kh08} shows that $U$ is uniquely defined by $\Sigma (U):$
if $\Sigma (U)=\Sigma (U_1),$ then $U=U_1.$ The following statement shows  that 
the lattice of right coideal subalgebras that contain the coradical is isomorphic to some
lattice of submonoids of $\Gamma ^+.$

\begin{proposition} Let $U, U_1\supseteq G$ 
be $($homogeneous$)$ right coideal subalgebras 
of $U_q^+(\mathfrak{so}_{2n+1}),$ $q^t\neq 1$  $($respectively of $u_q^+(\mathfrak{so}_{2n+1}),$
if $q^t=1,$ $t>4).$ Then $U\subseteq U_1$ if and only if 
$\Sigma (U)\subseteq \Sigma (U_1).$
\label{lat}
\end{proposition}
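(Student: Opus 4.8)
The forward implication is immediate. If $U\subseteq U_1$, then each $U$-root, being the degree of a PBW-generator of $U$ and hence the degree of a nonzero homogeneous element of $U_1$, lies in $\Sigma(U_1)$ by the remark preceding the statement. Since $\Sigma(U_1)$ is a submonoid of $\Gamma^+$ containing every $U$-root, it contains the submonoid they generate, so $\Sigma(U)\subseteq\Sigma(U_1)$.

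For the converse my plan is to introduce the subalgebra $U_2$ generated by $U\cup U_1$ and to prove that $U_2=U_1$. First, $U_2$ is a right coideal subalgebra containing $G$: writing $A=U_q^+(\mathfrak{so}_{2n+1})$, from $\Delta(U)\subseteq U\otimes A$ and $\Delta(U_1)\subseteq U_1\otimes A$ one gets $\Delta(ab)=\Delta(a)\Delta(b)\in U_2\otimes A$ for generators $a,b$, and $U_2$ is spanned by such products, so $\Delta(U_2)\subseteq U_2\otimes A$; moreover $G\subseteq U\subseteq U_2$ and $U_2$ is $\Gamma^+$-homogeneous, being generated by homogeneous elements. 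Since $U_1\subseteq U_2$, the forward implication already gives $\Sigma(U_1)\subseteq\Sigma(U_2)$.

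The crux is the reverse inclusion $\Sigma(U_2)\subseteq\Sigma(U_1)$. Every $\Gamma^+$-homogeneous element of $U_2$ is a linear combination of products of homogeneous elements drawn alternately from $U$ and $U_1$, all of the same total degree; such a degree is a sum of degrees of homogeneous elements of $U$ and of $U_1$, each of which lies in $\Sigma(U)\cup\Sigma(U_1)$, hence in $\Sigma(U_1)$, because $\Sigma(U)\subseteq\Sigma(U_1)$ and $\Sigma(U_1)$ is a monoid. Thus the degree of every nonzero homogeneous element of $U_2$ belongs to $\Sigma(U_1)$. In particular each $U_2$-root, being the degree of a PBW-generator of $U_2$, lies in $\Sigma(U_1)$, and so the monoid $\Sigma(U_2)$ generated by these roots satisfies $\Sigma(U_2)\subseteq\Sigma(U_1)$. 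Combining the two inclusions yields $\Sigma(U_2)=\Sigma(U_1)$.

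Finally I would invoke the uniqueness result \cite[Lemma 8.9]{Kh08} recalled above: a homogeneous right coideal subalgebra containing $G$ is determined by its monoid of roots. Applied to $U_1$ and $U_2$, the equality $\Sigma(U_2)=\Sigma(U_1)$ forces $U_2=U_1$, whence $U\subseteq U_2=U_1$, as required. The argument is uniform in the two cases, since it uses only that the degree of any nonzero homogeneous element lies in the corresponding root monoid and the uniqueness of a subalgebra given its roots; both hold for $U_q^+(\mathfrak{so}_{2n+1})$ with $q^t\neq1$ and for $u_q^+(\mathfrak{so}_{2n+1})$ with $t>4$. The step I expect to require the most care is precisely the stability of the root monoid under passage from $U_1$ to $U_2$, that is, the inclusion $\Sigma(U_2)\subseteq\Sigma(U_1)$, where one must be attentive to the distinction (in the finite-order case) between $\Sigma(U_2)$ and the full set of degrees of homogeneous elements of $U_2$.
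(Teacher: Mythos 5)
Your proof is correct and follows essentially the same route as the paper's: the forward direction via degrees of PBW-generators, and the converse by forming the subalgebra $U_2$ generated by $U$ and $U_1$, establishing $\Sigma(U_1)\subseteq\Sigma(U_2)\subseteq\Sigma(U)+\Sigma(U_1)=\Sigma(U_1)$, and invoking the uniqueness result \cite[Lemma 8.9]{Kh08} to conclude $U_2=U_1$. Your write-up merely makes explicit some steps the paper leaves implicit (that $U_2$ is a homogeneous right coideal subalgebra, and the careful handling of degrees in the root-of-unity case).
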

\begin{proof}
If ${ U}\subseteq { U}_1,$ then every PBW-generator $a$ of { U} belongs to ${ U}_1.$
In particular $a$ is a (noncommutative) polynomial in $G$ and PBW-generators of ${ U}_1.$
Hence every $U$-root, being a degree of some $a,$ is a sum of 
$U_1$-roots (degrees of PBW-generators of $U_1);$
that is, $\Sigma ({ U})\subseteq \Sigma ({ U}_1).$

Let $\Sigma (U)\subseteq \Sigma (U_1).$ Consider the subalgebra $U_2$
generated by $U$ and $U_1.$ Certainly this is a right coideal subalgebra.
At the same time  
$$\Sigma (U_1)\subseteq \Sigma (U_2)\subseteq \Sigma (U)+\Sigma (U_1)=\Sigma (U_1),$$
which implies $\Sigma (U_1)=\Sigma (U_2),$ and $U_1=U_2\supseteq U.$
\end{proof}

The proved statement implies the following nice characterization of elements from $U$
 in terms of degrees of its partial derivatives. Recall that the subalgebra $A$ of 
$U_q^+(\mathfrak{so}_{2n+1})$ or $u_q^+(\mathfrak{so}_{2n+1})$ generated over {\bf k}
by $x_1,x_2,\ldots ,x_n$ has a noncommutative differential calculus (\ref{defdif}).
Due to (\ref{calc}) the subalgebra $U_A\stackrel{df}{=}U\cap A$ is differential:
$\partial _i(U_A)\subseteq U_A,$ $1\leq i\leq n.$ Conversely, 
if $U_A$ is any  differential subalgebra of $A$ homogeneous in each $x_i,$
then the subalgebra $U$ generated by $U_A$ and $G$ is a right coideal subalgebra of 
$U_q^+(\mathfrak{so}_{2n+1})$ or $u_q^+(\mathfrak{so}_{2n+1}),$ see \cite[Lemma 2.10]{KL}.
Let $\partial _u,$ $u=x_{i_1}x_{i_2}\cdots x_{i_m}$ denote the differential operator
$\partial _{i_1}\partial _{i_2}\cdots \partial _{i_m}.$ Certainly if $f\in U_A,$ $\partial _u(f)\neq 0,$
 then degree of $\partial _u(f)$ belongs to $\Sigma (U),$ for $\partial _u(f)\in U_A\subset U.$
Interestingly the converse statement is true as well.
\begin{proposition} Let $U\supseteq G$ 
be a $($homogeneous$)$ right coideal subalgebra 
of $U_q^+(\mathfrak{so}_{2n+1}),$ $q^t\neq 1$  $($respectively of $u_q^+(\mathfrak{so}_{2n+1}),$
if $q^t=1,$ $t>4).$  If $f\in A$ is a homogeneous 
element such that for each differential operator 
$\partial _u$ we have  $D(\partial _u(f))\in \Sigma (U)$ or $\partial _u(f)=0,$ then $f\in U.$
\label{lat1}
\end{proposition}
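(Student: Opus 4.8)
The plan is to exhibit a right coideal subalgebra $U'$ with $G\subseteq U'\subseteq U$ and $f\in U'$, and to produce it as the subalgebra generated by $G$ together with $f$ and all of its iterated partial derivatives. The inclusion $U'\subseteq U$ will then come from the lattice isomorphism of Proposition \ref{lat}, once I verify that the root monoid of $U'$ sits inside $\Sigma(U)$. The creative step is precisely the choice of this ``differential closure'' of $f$; everything else is an application of the differential criterion \cite[Lemma 2.10]{KL} and of Proposition \ref{lat}.

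First I would set $V$ to be the subalgebra of $A$ generated by all elements $\partial_u(f)$, where $u$ ranges over all words in $x_1,\ldots ,x_n$, including the empty word, so that $f\in V$. Since $f$ is homogeneous and each $\partial_i$ lowers the degree by $x_i$, every $\partial_u(f)$ is homogeneous in each $x_i$, hence $V$ is homogeneous in each $x_i$. The key point is that $V$ is \emph{differential}, that is $\partial_i(V)\subseteq V$. On the generators this is immediate, because $\partial_i(\partial_u(f))=\partial_{x_iu}(f)$ is again a generator (or zero); on products one propagates the property through the twisted Leibniz rule (\ref{defdif}), whose coefficients $\chi^a(g_i)$ are scalars on homogeneous $a$, by an easy induction on the number of factors. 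By \cite[Lemma 2.10]{KL} the subalgebra $U'=\langle V,G\rangle$ is then a homogeneous right coideal subalgebra of $U_q^+(\mathfrak{so}_{2n+1})$ (respectively $u_q^+(\mathfrak{so}_{2n+1})$) containing $G$, and $f\in V\subseteq U'$.

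Next I would show $\Sigma(U')\subseteq\Sigma(U)$. Since $U'$ is generated as an algebra by $G$ and the homogeneous elements $\partial_u(f)$, and since group-like elements carry $\Gamma^+$-degree zero, every nonzero monomial in such a generating product has, after moving group elements to the left, $\Gamma^+$-degree of the form $\sum_j D(\partial_{u_j}(f))$. Consequently any nonzero homogeneous element of $U'$ has degree of this shape, and in particular so do the $U'$-roots, i.e.\ the degrees of the PBW-generators of $U'$. By hypothesis each $D(\partial_{u_j}(f))$ lies in $\Sigma(U)$ (this includes $D(f)\in\Sigma(U)$, taking $u$ trivial), and $\Sigma(U)$ is a submonoid of $\Gamma^+$; hence every $U'$-root lies in $\Sigma(U)$, so $\Sigma(U')\subseteq\Sigma(U)$. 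Applying Proposition \ref{lat} gives $U'\subseteq U$, whence $f\in U$.

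I expect the main obstacle to lie in this last step rather than in the construction of $V$: one must establish $\Sigma(U')\subseteq\Sigma(U)$ directly at the level of root monoids, because in the root-of-unity case $\Sigma(U)$ need not coincide with the full set of degrees of homogeneous elements of $U$, so the clean characterization available for $q$ not a root of unity is unavailable. Arguing through the algebra generators $\partial_u(f)$ of $U'$ and the additivity of the $\Gamma^+$-degree, together with the degree-zero contribution of $G$, lets me bypass this and treat both cases uniformly. The only remaining point requiring care is that the homogeneity of each $\partial_u(f)$ is exactly what makes the Leibniz coefficients scalar and keeps $V$ inside the class to which \cite[Lemma 2.10]{KL} applies.
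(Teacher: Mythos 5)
Your proposal is correct and follows essentially the same route as the paper: form the differential subalgebra containing $f$ and all $\partial_u(f)$, invoke \cite[Lemma 2.10]{KL} to see that adjoining $G$ yields a right coideal subalgebra, and then conclude via the root-monoid comparison of Proposition \ref{lat}. The only (immaterial) difference is that the paper generates its differential subalgebra by $U_A=U\cap A$ together with $f$, whereas you use only $f$ and its derivatives, which works just as well.
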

\begin{proof}
Consider the differential subalgebra $B$ generated by $U_A$ and $f.$ As an algebra 
$B$ is generated by $U_A$ and all $\partial _u(f).$ Hence degrees of all nonzero homogeneous 
elements from $B$ belong to $\Sigma (U)$ (in particular  $D(f)=D(\partial _{\emptyset }(f))\in \Sigma (U)$).
Proposition \ref{lat} applied to the pair $U,$ $BG$ implies $BG\subseteq U,$ and $f\in U.$
\end{proof}
We stress that the condition $D(\partial _u(f))\in \Sigma (U)$ is equivalent to 
$D(f)\in \Sigma (U)+D(u).$ Hence we may restate the proved statement:
$f\in U$ if and only if $\partial _u(f)=0$ for all words $u$ such that $D(f)\notin \Sigma (U)+D(u).$
To put it another way, we have a representation of homogeneous components 
$U_A^{(\gamma )},$ $\gamma \in \Gamma ^+$  of $U_A$ in the form of kernel
of a set of differential operators: 
\begin{equation} 
U_A^{(\gamma )}=\bigcap _{\gamma \notin \Sigma (U)+D(u)} {\rm Ker}\, \partial _u.
\label{late1}
\end{equation}
Moreover Proposition \ref{lat1} shows that right coideal subalgebras are differentially 
closed in the following sense. 
\begin{corollary} If under the conditions of the above proposition $D(f)\in \Sigma (U)$
and $\partial _i (f)\in U,$ $1\leq i\leq n,$ then $f\in U.$
\label{lat2}
\end{corollary}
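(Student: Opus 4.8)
The plan is to deduce the statement from Proposition \ref{lat1} by checking its hypothesis for $f$. That is, I must verify that for every word $u$ in the letters $x_1,\ldots,x_n$, either $\partial_u(f)=0$ or $D(\partial_u(f))\in\Sigma(U)$. The empty word is immediate: $\partial_\emptyset(f)=f$, and $D(f)\in\Sigma(U)$ is given as part of the hypothesis, so that case contributes nothing new.

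For a nonempty word $u=x_{i_1}x_{i_2}\cdots x_{i_m}$ with $m\geq 1$, I would write $\partial_u=\partial_{i_1}\partial_{i_2}\cdots\partial_{i_m}$ and peel off the operator that is applied to $f$ first, namely the innermost $\partial_{i_m}$. By hypothesis $\partial_{i_m}(f)\in U$; since $f\in A$ and the partial derivatives preserve $A$, in fact $\partial_{i_m}(f)\in U\cap A=U_A$. Recall from the discussion preceding Proposition \ref{lat1} that $U_A$ is a differential subalgebra, $\partial_i(U_A)\subseteq U_A$. Hence applying the remaining operators $\partial_{i_{m-1}},\ldots,\partial_{i_1}$ one after another keeps us inside $U_A$, and therefore $\partial_u(f)\in U_A\subseteq U$. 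As $f$ is homogeneous, $\partial_u(f)$ is homogeneous as well.

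Now I would invoke the fact (recorded at the start of Section 5) that the degree of every nonzero homogeneous element of $U$ lies in $\Sigma(U)$. Applied to the homogeneous element $\partial_u(f)\in U$, this gives exactly the dichotomy required: either $\partial_u(f)=0$ or $D(\partial_u(f))\in\Sigma(U)$. Together with the empty-word case, the hypothesis of Proposition \ref{lat1} is fully verified, and that proposition yields $f\in U$, as desired.

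I do not expect any substantial obstacle; the argument is a short reduction to Proposition \ref{lat1}. The only point requiring care is to peel the derivative from the correct end, namely the innermost, first-applied operator $\partial_{i_m}$, so that the single-derivative information $\partial_{i_m}(f)\in U_A$ furnished by the hypothesis can be propagated outward using the differential closure of $U_A$; the possibility $\partial_u(f)=0$ is precisely the alternative already allowed in the statement of Proposition \ref{lat1}.
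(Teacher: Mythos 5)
Your proposal is correct and is essentially the paper's own argument: the paper likewise observes that $\partial_i(f)\in U$ for all $i$ forces $\partial_u(f)\in U$ for every nonempty word $u$ (via the differential closure of $U_A$), so that each $\partial_u(f)$ is either zero or has degree in $\Sigma(U)$, and then invokes Proposition \ref{lat1}. Your write-up merely makes explicit the points the paper leaves implicit (the empty-word case, peeling off the innermost derivative, and the fact that degrees of nonzero homogeneous elements of $U$ lie in $\Sigma(U)$).
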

\begin{proof}
Indeed, if $\partial _i (f)\in U,$ $1\leq i\leq n,$ then of course $\partial _u (f)\in U$
for all nonempty words $u.$ In particular either $D(\partial _u (f))\in \Sigma (U)$
or $\partial _u (f)=0.$ Proposition \ref{lat1} applies.
\end{proof}

Needless to say that all statements of this and the above sections remain valid for negative quantum Borel subalgebra too.
In particular  all roots of a homogeneous right coideal subalgebra $U^- \supset F$ 
of negative  quantum Borel subalgebra have the form 
$[i:j]^-\stackrel{df}{=}x_i^-+x_{i+1}^-+\cdots +x_{j-1}^-+x_j^-,$
where $1\leq i\leq j \leq 2n$.

\section{Minimal generators for right coideal subalgebras\\  of the quantum Borel algebra}
Let $S$ be a set of integer numbers from the interval $[1,2n].$
A (noncommutative) polynomial $\Phi^{S}(k,m),$ $1\leq k\leq m\leq 2n$ is defined
by induction on the number $r$ of elements
in the set $S \cap [k,m)=\{ s_1,s_2,\ldots ,s_r\} ,$ $k\leq s_1<s_2<\ldots <s_r<m$
as follows:
\begin{equation}
\Phi^{S}(k,m)=u[k,m]-(1-q^{-2})\sum_{i=1}^r \alpha _{km}^{s_i} \, 
\Phi^{S}(1+s_i,m)u[k,s_i],
\label{dhs}
\end{equation}
where $\alpha _{km}^{s}=\tau _{s}p(u(1+s,m),u(k,s))^{-1},$ 
while the $\tau $'s was defined in (\ref{tau}).

\smallskip

We display the element  $\Phi ^{S}(k,m)$
schematically as a sequence of black and white points labeled by the numbers
$k-1,$ $k,$ $k+1, \ldots $ $m-1,$ $m,$ where the first point is always white, and
the last one is always black, while an intermediate point labeled by $i$ is black if and only if 
$i\in S:$  
\begin{equation}
 \stackrel{k-1}{\circ } \ \ \stackrel{k}{\circ } \ \ \stackrel{k+1}{\circ } 
\ \ \stackrel{k+2}{\bullet }\ \ \ \stackrel{k+3}{\circ }\ \cdots
\ \ \stackrel{m-2}{\bullet } \ \ \stackrel{m-1}{\circ }\ \ \stackrel{m}{\bullet }
\label{grb}
\end{equation}
Sometimes, if $k\leq n<m,$ it is more convenient  to display the element
$\Phi ^{S}(k,m)$ in two lines putting the points labeled by indices 
$i,\psi (i)$ that define the same variable $x_i=x_{\psi (i)}$ in one column:
\begin{equation} 
\begin{matrix}
 \ \ \ \ \ \ \ & \stackrel{m}{\bullet } \ \cdots & \bullet  & \stackrel{\psi (i)}{\circ }   \ \cdots & \stackrel{n+1}{\bullet }  \cr 
 \stackrel{k-1}\circ \ \circ  \ \cdots \ & \stackrel{\psi (m)}{\circ } \ \cdots & \bullet  & \stackrel{i}{\bullet } \ \cdots & \stackrel{n}{\circ }   
\end{matrix}
\label{grb1}
\end{equation}
The elements $\Phi ^{S}(k,m)$ are very important since every right coideal subalgebra 
$U\supseteq G$ of the  quantum Borel subalgebra is generated as an algebra 
by $G$ and the elements of this form, see \cite[Corollary 5.7]{Kh08}. 
Moreover $U$ is uniquely defined by its {\it root sequence}
$\theta =(\theta _1,\theta_2,\ldots ,\theta _n).$ The root sequence satisfies 
$0\leq \theta_i\leq 2n-2i+1,$ and each sequence satisfying these conditions is a root sequence
for some $U$. There exists a constructive algorithm that allows one to find the generators
$\Phi ^{S}(k,m)$ if the sequence $\theta $ is given, see \cite[Definition 10.1 and Eq. (10.6)]{Kh08}.
More precisely the algorithm allows one to find all possible values of the numbers $k,m$
 and the sets $S.$ In particular one may construct all schemes (\ref{grb}) for the generators.
However the explicit form of $\Phi ^{S}(k,m)$ needs  complicated inductive procedure (\ref{dhs}).
These generators satisfy two additional important properties. First, their degrees,
$D(\Phi ^{S}(k,m))=x_k+x_{k+1}+\cdots +x_m,$ are simple $U$-roots; that is, 
$D(\Phi ^{S}(k,m))$ is not a sum of nonzero degrees of other elements from $U$, see
\cite[Claims 7,8]{Kh08}. Next, the set $S$ is always $(k,m)$-regular in the sense
of the following definition, see \cite[Claim 5]{Kh08}. 

\begin{definition} \rm Let $1\leq k\leq n<m\leq 2n.$
A set $S$ is said to be {\it white $(k,m)$-regular} if for every
$i,$ $k-1\leq i<m,$ such that $k\leq \psi (i)\leq m+1$ either $i$ or $\psi (i)-1$ does not belong 
to $S \cup \{ k-1, m\} .$

A set $S$ is said to be {\it black $(k,m)$-regular} if for every
$i,$ $k\leq i\leq m,$ such that $k\leq \psi (i)\leq m+1$ either $i$ or $\psi (i)-1$ belongs 
to $S \setminus \{ k-1,m\} .$

A set $S$ is said to be $(k,m)$-{\it regular} if it is either black or white $(k,m)$-regular.

 If $m\leq n,$ or $k>n$ (or, equivalently, if $u[k,m]$ is of degree $\leq 1$ in $x_n$),
 then by definition each set $S$ is both white and black $(k,m)$-regular.
\label{reg1}
\end{definition}

To illustrate the notion of a regular set, we shall need a {\it shifted representation} 
that appears from (\ref{grb1}) by shifting the upper line to the left by one step and putting
the colored point  labeled by $n,$ if any, to the vacant position (so that this point appears twice 
in the shifted scheme):
\begin{equation} 
\begin{matrix}
 \ \ \ \ \ \ \ & \stackrel{m}{\bullet } \ \cdots & \circ  
& \stackrel{n+i}{\circ }   \ \cdots & \stackrel{n+1}{\bullet } & \stackrel{n}{\circ } \Leftarrow \cr 
 \stackrel{k-1}\circ \ \circ  \ \cdots \ & \stackrel{\psi (m)-1}{\bullet } \ \cdots &
 \bullet  & \stackrel{n-i}{\bullet } \ \cdots & \stackrel{n-1}{\circ }& \stackrel{n}{\circ } \hfill   
\end{matrix}
\label{grb2}
\end{equation}

If $k\leq n<m$ and $S$ 
is white $(k,m)$-regular, then $n\notin S$,  for $\psi (n)-1=n.$ 
If additionally $m<\psi (k),$ then taking $i=\psi (m)-1$ we get $\psi (i)-1=m,$
hence the definition implies $\psi (m)-1\notin S$. We see that if $m<\psi (k),$
$k\leq n<m,$ then $S$ is white $(k,m)$-regular if and only if the shifted scheme
of $\Phi ^{S}(k,m)$ given in (\ref{grb2}) has no black columns:
\begin{equation}  
\begin{matrix}
 \ \ \ \ \ \ \ & \stackrel{m}{\bullet }&\cdots & \bullet  
& \stackrel{n+i}{\circ }  & \circ  & \cdots & \stackrel{n}{\circ }   \Leftarrow \cr 
 \stackrel{k-1}\circ \ \cdots & \stackrel{\psi (m)-1}{\circ }& \cdots & \circ  
& \stackrel{n-i}{\bullet } & \circ & \cdots & \stackrel{n}{\circ }   \hfill
\end{matrix}
\label{grab}
\end{equation}
In the same way, if $m>\psi (k),$
then for $i=\psi (k)$ we get $\psi (i)-1=k-1,$ hence $\psi (k)\notin S$.
That is, if $m>\psi (k),$
$k\leq n<m,$ then $S$ is white $(k,m)$-regular if and only if the shifted scheme
(\ref{grb2}) has no black columns and the first from the left complete column
is a white one.
\begin{equation} 
\begin{matrix}
\stackrel{m}{\bullet } \ \cdots & \stackrel{\psi (k)}{\circ } & \cdots &  \bullet & \stackrel{n+i}{\circ } & \circ & \cdots & \stackrel{n}{\circ }  \Leftarrow
\cr
 \ \ \ \ \ \hfill & \stackrel{k-1}{\circ }& \cdots & \circ &
\stackrel{n-i}{\bullet } & \circ &\cdots &  \stackrel{n}{\circ }  \hfill
\end{matrix}
\label{grab1}
\end{equation}

{\it All in all, a set $S$ is white $(k,m)$-regular, where $1\leq k\leq n<m\leq 2n,$ 
if the shifted scheme obtained by painting $k-1$ black does not contain columns 
with two black points.}

Similarly, if $k\leq n<m$ and $S$ 
is black $(k,m)$-regular, then $n\in S$. 
If additionally $m<\psi (k),$ then taking $i=\psi (m)-1$ we get $\psi (i)-1=m,$
hence  $\psi (m)-1\in S$. 
We see that if $m<\psi (k),$
$k\leq n<m,$ then $S$ is black $(k,m)$-regular if and only if the shifted scheme
(\ref{grb2}) has no white columns and the first from the left complete column 
is a black one.
\begin{equation}  
\begin{matrix}
 \ \ \ \ \ \ \ &\stackrel{m}{\bullet } & \cdots & \bullet  
& \stackrel{n+i}{\circ }  & \bullet  & \cdots & \stackrel{n}{\bullet }  \Leftarrow \cr 
 \stackrel{k-1}{\circ }\ \cdots & \stackrel{\psi (m)-1}{\bullet } & \cdots & \bullet  
& \stackrel{n-i}{\bullet } & \circ & \cdots & \stackrel{n}{\bullet }  \hfill  
\end{matrix}
\label{grab2}
\end{equation}
If $m>\psi (k),$
then for $i=\psi (k)$ we get $\psi (i)-1=k-1,$ hence $\psi (k)\in S$.
That is, if $m>\psi (k),$
$k\leq n<m,$ then $S$ is black $(k,m)$-regular if and only if the shifted scheme
(\ref{grb2}) has no white columns:
\begin{equation} 
\begin{matrix}
\stackrel{m}{\bullet } \ \cdots & \stackrel{\psi (k)}{\bullet } & \cdots &  
\bullet & \stackrel{n+i}{\circ } & \bullet & \cdots & \stackrel{n}{\bullet }  \Leftarrow 
\cr
 \ \ \ \ \ \hfill & \stackrel{k-1}{\circ } & \cdots & \circ &
\stackrel{n-i}{\bullet } & \bullet  & \cdots &  \stackrel{n}{\bullet }  \hfill 
\end{matrix}
\label{grab3}
\end{equation}

{\it All in all, a set $S$ is black $(k,m)$-regular, where $1\leq k\leq n<m\leq 2n,$ 
if the shifted scheme obtained by painting $m$ white does not contain columns 
with two white points.}

At the same time we should stress that {\it if $m=\psi (k),$
then no one set is $(k,m)$-regular}. Indeed, for $i=k-1$ we have
$\psi (i)-1=m.$ Hence both of the elements $i, \psi (i)-1$ belong to $S \cup \{ k-1, m\} ,$
and therefore $S$ is not white $(k,\psi (k))$-regular. If we take $i=m,$ then 
$\psi (i)-1=k-1,$ and no one of the elements $i, \psi (i)-1$
belongs to $S \setminus \{ k-1,m\} .$ Thus $S$ is neither black $(k,\psi (k))$-regular.
\begin{lemma}
A set $S$ is white $($black$)$ $(k,m)$-regular if and only if its complement $\overline{S}$ with respect to 
$[k,m)$ is black $($white$)$ $(k,m)$-regular.
\label{dop}
\end{lemma}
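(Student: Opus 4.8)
The plan is to encode both halves of Definition~\ref{reg1} through a single involution and then observe that complementation simply interchanges the two colors. Set $T(x)=\psi(x)-1$. By the properties of $\psi$ recorded in Definition~\ref{fis} we have $\psi(x+1)=\psi(x)-1$, so $T(x)=\psi(x+1)$, and $T(T(x))=\psi(\psi(x)-1)-1=(\psi(\psi(x))+1)-1=x$; thus $T$ is an order-reversing involution whose only fixed point is $x=n$ (since $T(x)=x\iff\psi(x)=x+1\iff x=n$). The pair $\{i,\psi(i)-1\}$ tested in Definition~\ref{reg1} is exactly a $T$-orbit $\{i,T(i)\}$, and the membership constraint $k\le\psi(i)\le m+1$ rewrites as $k-1\le T(i)\le m$. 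Throughout I would assume $1\le k\le n<m\le 2n$; the remaining case $m\le n$ or $k>n$ is covered by the convention in Definition~\ref{reg1}, where both sides hold. It suffices to prove the white--black equivalence, since the black--white one follows on replacing $S$ by $\overline S$ and using $\overline{\overline S}=S$.

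First I would record the translation effected by complementation. Introduce the white index set $P=\{i:\,k-1\le i\le m-1,\ k-1\le T(i)\le m\}$ and the black index set $Q=\{i:\,k\le i\le m,\ k-1\le T(i)\le m\}$; every index that gets tested (either $i$ or $T(i)$, for $i\in P$ or $i\in Q$) then lies in $[k-1,m]$. For $x\in[k-1,m]$ one checks the single equivalence $x\in S\cup\{k-1,m\}\iff x\notin\overline S$: it is clear on $[k,m)$, where $S$ and $\overline S$ are genuine complements, and both sides are automatically true at the endpoints $x=k-1,m$ (which lie outside $[k,m)$). Feeding this into Definition~\ref{reg1}, white $(k,m)$-regularity of $S$, namely ``not both of $i,T(i)$ lie in $S\cup\{k-1,m\}$'', becomes ``at least one of $i,T(i)$ lies in $\overline S$'', which is precisely the defining clause of black $(k,m)$-regularity of $\overline S$ (note $\overline S\setminus\{k-1,m\}=\overline S$, as $k-1,m\notin[k,m)$). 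So the two conditions carry the \emph{same} per-orbit clause, and everything reduces to comparing the families of $T$-orbits over which they quantify.

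The hard part will be exactly this reconciliation of the two quantifier ranges, which differ ($i\in[k-1,m-1]$ for white, $i\in[k,m]$ for black). The fixed orbit $\{n\}$ is active for both, since $k\le n<m$ gives $n\in P$ and $n\in Q$. For a non-fixed orbit I would write it as $\{a,b\}$ with $a<n<b$ (forced because $T$ reverses order about its fixed point $n$) and $b=T(a)$. Using $b>n\ge k$, so that $k\le b$ holds automatically, and $a<b\le m$, so that $a\le m-1$ holds automatically, a short case check reduces the white activation $\{a,b\}\cap P\neq\emptyset$ and the black activation $\{a,b\}\cap Q\neq\emptyset$ \emph{both} to the identical condition ``$k-1\le a$ and $b\le m$''. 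Hence the active orbits coincide, the two universally quantified statements are literally identical, and the equivalence follows. The genuine obstacle is this last step: one must verify that the boundary discrepancies ($i=k-1$ is allowed for white but not for black, while $i=m$ is allowed for black but not for white) are absorbed because in each such case the partner index $T(i)$ falls inside the common admissible range, so that no orbit is activated by one condition and missed by the other.
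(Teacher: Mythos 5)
Your proof is correct: the involution $T(x)=\psi(x)-1=2n-x$ has the properties you claim, all tested indices do land in $[k-1,m]$ where membership in $S\cup\{k-1,m\}$ is exactly the negation of membership in $\overline S$ (and $\overline S\setminus\{k-1,m\}=\overline S$), and the orbit-activation computation checks out — for a non-fixed orbit $\{a,b\}$ with $a<n<b$ both $\{a,b\}\cap P\neq\emptyset$ and $\{a,b\}\cap Q\neq\emptyset$ reduce to ``$k-1\le a$ and $b\le m$'', while the fixed orbit $\{n\}$ is active for both quantifiers.

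The paper reaches the same conclusion by a different, pictorial route. In the prose just before the lemma it reformulates Definition \ref{reg1}: $S$ is white $(k,m)$-regular iff the shifted scheme with $k-1$ painted black has no column with two black points (see (\ref{grab}), (\ref{grab1})), and black $(k,m)$-regular iff the shifted scheme with $m$ painted white has no column with two white points (see (\ref{grab2}), (\ref{grab3})). The lemma's proof is then two sentences: passing from $S$ to $\overline S$ flips the color of every point except $k-1$ and $m$, so schemes of the white type are carried to schemes of the black type and vice versa. Your $T$-orbits are precisely the columns of that shifted scheme, and your endpoint observations are what the painting conventions encode, so the underlying idea is the same; what your version buys is that it is self-contained and makes explicit the one genuinely delicate point — that the white and black conditions quantify over different index ranges ($i\in[k-1,m-1]$ versus $i\in[k,m]$) yet activate the identical family of orbits — which the paper absorbs silently into its pictures and their painting rules. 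One cosmetic remark: $\overline{\overline S}=S$ holds literally only if $S\subseteq[k,m)$; in general $\overline{\overline S}=S\cap[k,m)$. This is harmless, because regularity only tests indices in $[k-1,m]$ and hence depends only on $S\cap[k,m)$, but it is worth a sentence when you deduce the black--white half from the white--black half.
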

\begin{proof} The shifted scheme for $\Phi^{\overline{S}}(k,m)$ appears from
that for $\Phi^{S}(k,m)$ by changing the color of all points except the first one, $k-1,$ and the last one, $m.$ 
Under this re-coloring a scheme of type (\ref{grb2}) is
transformed to (\ref{grab2}), while  a scheme of type (\ref{grab}) is transformed to (\ref{grab3})
and vice versa.
\end{proof}
\begin{lemma}
A set $S$ is white $($black$)$ $(k,m)$-regular if and only if $\psi (S)-1$
is white $($black$)$ $(\psi (m), \psi (k))$-regular. Here $\psi (S)-1=\{ \psi (s)-1\, |\, s\in S\} .$
\label{dop1}
\end{lemma}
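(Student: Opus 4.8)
The plan is to realize the transformation $S\mapsto\psi(S)-1$ as the elementwise action of the order-reversing involution $\iota\colon i\mapsto\psi(i)-1$, and to show that $\iota$ carries the entire regularity datum for $(k,m)$ onto that for $(\psi(m),\psi(k))$. First I would record that $\iota$ really is an involution: using $\psi(i-1)=\psi(i)+1$ from Definition \ref{fis} one computes $\iota(\iota(i))=\psi(\psi(i)-1)-1=(\psi(\psi(i))+1)-1=i$. Writing $k'=\psi(m)$ and $m'=\psi(k)$, the target interval is $[k',m')$; since $k\le n<m$ forces $\psi(k)>n\ge\psi(m)$, we have $k'\le n<m'$, so the image parameters again lie in the main range of Definition \ref{reg1}. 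I would then check the boundary behaviour $\iota(k-1)=\psi(k)=m'$ and $\iota(m)=\psi(m)-1=k'-1$, so that, setting $S^{+}=S\cup\{k-1,m\}$ and $T^{+}=(\psi(S)-1)\cup\{k'-1,m'\}$, one gets $T^{+}=\iota(S^{+})$.

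Next I would rephrase white regularity purely in terms of the $\iota$-pairs $\{a,\iota(a)\}$. The quantifier in Definition \ref{reg1} runs over $i\in[k-1,m)$ with $k\le\psi(i)\le m+1$; since $\psi(i)\ge k\iff i\le m'$ and $\psi(i)\le m+1\iff i\ge k'-1$, the active indices are exactly those $i$ with $i\in[k-1,m-1]$ and $\iota(i)\in[k-1,m]$. The key reformulation I would prove is that a pair $\{a,\iota(a)\}$ is constrained by white $(k,m)$-regularity if and only if both $a$ and $\iota(a)$ lie in $[k-1,m]$. The substance here is the boundary bookkeeping: when one endpoint equals the always-black last point $m$, the pair is still reached through its other endpoint $\iota(m)=k'-1$, which is an admissible value of $i$; and any pair meeting the complementary interval has its partner below $k-1$, hence is excluded. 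With this, white $(k,m)$-regularity of $S$ reads: no pair with both endpoints in $[k-1,m]$ is entirely contained in $S^{+}$.

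The conclusion then follows from two observations. Because $\iota$ is an order-reversing involution with $\iota(k-1)=m'$ and $\iota(m)=k'-1$, it maps $[k-1,m]$ bijectively onto $[k'-1,m']$ while fixing each pair $\{a,\iota(a)\}$ setwise; hence the family of pairs with both endpoints in $[k-1,m]$ coincides with the family with both endpoints in $[k'-1,m']$. Thus white $(k,m)$-regularity and white $(k',m')$-regularity constrain the very same family $\mathcal P$ of pairs. Moreover, for any $P=\{a,\iota(a)\}\in\mathcal P$ one has $P\subseteq T^{+}\iff P\subseteq\iota(S^{+})\iff\iota(P)\subseteq S^{+}\iff P\subseteq S^{+}$, using $\iota(P)=P$. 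Therefore no pair of $\mathcal P$ lies in $S^{+}$ if and only if none lies in $T^{+}$, which is exactly the asserted equivalence for the white case. The black case I would deduce formally by applying Lemma \ref{dop} twice, after noting that $\iota$ maps $[k,m)$ bijectively onto $[k',m')$ and so commutes with complementation, giving $\psi(\overline{S})-1=\overline{\psi(S)-1}$. Finally, the degenerate ranges $m\le n$ or $k>n$ are sent by $\psi$ to degenerate ranges ($k'>n$ or $m'\le n$), where every set is both white and black regular, so the statement is trivial there.

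I expect the middle step to be the main obstacle: verifying that the asymmetric quantifier (with $i<m$ but $\psi(i)\le m+1$), together with the forced blackness of $k-1$ and $m$, singles out precisely the pairs contained in $[k-1,m]$, and doing so uniformly across the subcases $m<\psi(k)$ and $m>\psi(k)$ (the case $m=\psi(k)$ being vacuous, since then no set is regular and $(k',m')=(k,m)$). Once this reformulation is secured, the involution argument closes the proof with no further computation.
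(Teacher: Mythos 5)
Your proof is correct, and it is essentially the paper's own argument written out in full: your involution $\iota\colon i\mapsto\psi(i)-1$ is exactly the column pairing of the paper's shifted schemes, your family of pairs with both endpoints in $[k-1,m]$ is the set of its columns, and your boundary bookkeeping $\iota(k-1)=\psi(k)$, $\iota(m)=\psi(m)-1$ is precisely the paper's ``switching rows and changing the color of the first and the last points.'' The only minor deviation is that the paper handles the black case by the same diagram transformation directly (type (\ref{grab2}) goes to type (\ref{grab3})), while you deduce it from Lemma \ref{dop}, which is legitimate since that lemma is established immediately beforehand.
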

\begin{proof} The shifted scheme for $\Phi^{\psi (S)-1}(\psi (m), \psi (k))$ appears from
that for $\Phi^{S}(k,m)$ by switching rows  and changing the color of the first and the last points.
Under that  transformation a scheme of type (\ref{grab}) is
transformed to (\ref{grab1}), while  a scheme of type (\ref{grab2}) is transformed to (\ref{grab3}) and vice versa. 
\end{proof}
\begin{theorem}{\rm (\cite[Corollary 10.4]{Kh08})}. If $q$ is not a root of 1 then
every  right coideal subalgebra of $U_q^+(\mathfrak{so}_{2n+1})$ that contains $G$ is generated as an algebra 
by $G$ and a set of elements $\Phi^{S}(k,m)$ with $(k,m)$-regular sets $S.$
If $q^t=1,$ $t>4,$ then this is the case for every homogeneous right coideal subalgebra of $u_q^+(\mathfrak{so}_{2n+1})$ 
that contains $G.$ 
\label{rig}
\end{theorem}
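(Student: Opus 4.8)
The plan is to produce a generating set whose defining sets are regular by combining the general generation result with a redundancy elimination, and then to pin down the admissible sets $S$ through the coproduct. By \cite[Corollary 5.7]{Kh08} the algebra $U$ is generated by $G$ together with a family of elements $\Phi^{S}(k,m)$. First I would pass to a minimal such family, discarding any member that already lies in the subalgebra generated by $G$ and the remaining elements. Since the possible degrees $D(\Phi^{S}(k,m))=[k:m]$ with $1\leq k\leq m\leq 2n$ form a finite subset of the discretely ordered monoid $\Gamma^{+}$, this elimination terminates and leaves a finite set of minimal generators whose degrees are simple $U$-roots, cf.\ \cite[Claims 7,8]{Kh08}. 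Thus $U=\langle G,\{\Phi^{S}(k,m)\}\rangle$ with the $\Phi^{S}(k,m)$ ranging over minimal generators, and it remains only to show that each minimal generator can be written with a $(k,m)$-regular set $S$.

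The core is this last point, and I would extract it from the coproduct. The inductive correction in (\ref{dhs}) is arranged precisely so that, starting from formula (\ref{co}) for $\Delta(u[k,m])$ and using (\ref{calc}), the only left tensor legs of $\Delta(\Phi^{S}(k,m))$ besides $\Phi^{S}(k,m)$ itself and the group-like $g_{k\rightarrow m}$ are, up to group-like factors in $G$, the strictly shorter elements $\Phi^{S}(1+s,m)$, $s\in S$. Because $U$ is a right coideal, $\Delta(\Phi^{S}(k,m))\in U\otimes U_q^{+}(\mathfrak{so}_{2n+1})$, so every such left leg must already lie in $U$; that is, $\Phi^{S}(1+s,m)\in U$ for all $s\in S$. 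Iterating this recursive constraint and reading off which shorter generators are forced into $U$ --- via the $\sigma$- and $\mu$-values (\ref{mu21})--(\ref{mu4}) and the shifted-scheme criteria (\ref{grb2}) together with their recolorings (\ref{grab})--(\ref{grab3}) --- one finds that the constraint is satisfiable exactly when $S$ is $(k,m)$-regular, which is the content of \cite[Claim 5]{Kh08}. The symmetry Lemmas \ref{dop} and \ref{dop1} then match, under complementation in $[k,m)$ and under the involution $\psi$, the two admissible regular schemes attached to a single simple root.

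To confirm that the regular generators so obtained really recover $U$, I would invoke uniqueness: the chosen regular $\Phi^{S}(k,m)$ lie in $U$ and their degrees are precisely the simple $U$-roots, so the right coideal subalgebra $U'=\langle G,\{\Phi^{S}(k,m)\}\rangle$ they generate satisfies $\Sigma(U')=\Sigma(U)$, whence $U'=U$ by Proposition \ref{lat} (equivalently \cite[Lemma 8.9]{Kh08}); alternatively one may simply run the constructive algorithm of \cite[Definition 10.1 and Eq.~(10.6)]{Kh08} on the root sequence $\theta$ of $U$, which outputs regular generators directly. Both regimes of the statement are then covered at once. The sole difference between $U_q^{+}(\mathfrak{so}_{2n+1})$, $q^{t}\neq1$, and $u_q^{+}(\mathfrak{so}_{2n+1})$, $q^{t}=1$ with $t>4$, is that the PBW-generators $u[k,m]$ pass from infinite height (Proposition \ref{strB}) to finite height (Proposition \ref{strBu}); since generation, minimization and the coproduct analysis use only degrees in $\Gamma^{+}$ and the identities (\ref{co}), (\ref{dhs}), never the height, the argument applies verbatim to homogeneous right coideal subalgebras in the root-of-unity case.

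The main obstacle is the coproduct-to-combinatorics translation of the second paragraph: turning the recursive coideal constraint $\Phi^{S}(1+s,m)\in U$ into the clean statement that $S$ must be $(k,m)$-regular. This demands both the exact form of $\Delta(\Phi^{S}(k,m))$ --- which rests on the delicate inductive normalization (\ref{dhs}) and on the decompositions $u[k,m]=[u[k,i],u[i+1,m]]$ of Proposition \ref{ins2} and Lemma \ref{rww} --- and the matching of the resulting degree conditions against the $B_{n}$-specific shifted-scheme criteria; it is the technical heart underlying \cite[Claim 5]{Kh08}.
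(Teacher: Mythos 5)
The first thing to say is that the paper contains no proof of Theorem \ref{rig}: it is imported verbatim from \cite[Corollary 10.4]{Kh08}, and every surrounding ingredient (generation, \cite[Corollary 5.7]{Kh08}; simple-root degrees, \cite[Claims 7,8]{Kh08}; regularity, \cite[Claim 5]{Kh08}; the root-sequence algorithm) is likewise only cited there. So your proposal has to stand on its own, and on its own it does not: the two hard steps are themselves deferred back to \cite{Kh08}, and the one step you work out in detail is wrong. On the positive side, your closing argument is sound --- if for every simple $U$-root one has a regular generator inside $U$, then the subalgebra $U'$ they generate with $G$ satisfies $\Sigma(U')=\Sigma(U)$, whence $U'=U$ by Proposition \ref{lat}. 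But your reduction step begs the question: discarding redundant generators does not by itself leave only generators of simple-root degree, since a generator of decomposable degree need not lie in the subalgebra generated by the remaining ones; that right coideal subalgebras are generated in simple-root degrees by regular elements is part of what \cite{Kh08} proves (Claims 7,8 there concern the generators produced by the algorithm, not an arbitrary minimal family), so citing them here is circular.

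The concrete error is in your coproduct analysis, and it reverses the combinatorics. The ``final-segment'' left legs $\Phi^{S}(1+i,m)$ of $\Delta(\Phi^{S}(k,m))$ occur for the \emph{white} points $i\notin S$; for a black point $s\in S$ the correction in (\ref{dhs}) pushes $\Phi^{S}(1+s,m)$ into the \emph{right} leg, the corresponding left leg being the initial segment $\Phi^{S}(k,s)$. The smallest case already shows this: for $n\geq 2$ one computes $\Phi^{\{1\}}(1,2)=x_1x_2-p_{21}^{-1}x_2x_1\sim[x_2,x_1]$ and
\[
\Delta([x_2,x_1])=[x_2,x_1]\otimes 1+g_1g_2\otimes[x_2,x_1]+(1-q^{-2})\,g_2x_1\otimes x_2,
\]
so the right coideal property forces $x_1=\Phi^{\{1\}}(1,1)$ into $U$, not $x_2=\Phi^{\{1\}}(2,2)$ as your claim ``$\Phi^{S}(1+s,m)\in U$ for all $s\in S$'' would give. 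Indeed, were your claim correct, $x_2$ would lie in every right coideal subalgebra containing $G$ and $\Phi^{\{1\}}(1,2)$, contradicting Lemma \ref{sig}, by which $\Sigma(U^{\{1\}}(1,2))$ is generated by $[1:1]$ and $[1:2]$ and hence does not contain $[2:2]$. Since your whole ``coproduct-to-combinatorics'' translation rests on exactly which shorter elements are forced into $U$, an iteration built on the reversed legs produces the wrong constraints. Finally, ``the constraint is satisfiable exactly when $S$ is $(k,m)$-regular'' is not even the right formulation: any $\Phi^{S}(k,m)$, regular or not, generates together with $G$ a right coideal subalgebra, and the content of the theorem is that the generators of a given $U$ can be \emph{chosen} (equivalently, replaced) so that the sets are regular --- precisely the point for which you again fall back on \cite[Claim 5]{Kh08} instead of giving an argument.
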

Of course this theorem is valid for negative quantum Borel subalgebra as well.
In this case the generators take up the form $\Phi^{S}_-(k,m)$ with $(k,m)$-regular sets $S$,
where $\Phi^{S}_-(k,m),$ is the element (\ref{dhs})
under the replacement $x_i\leftarrow x_i^-,$ $1\leq i\leq n.$
\begin{proposition}
If $S$ is a $(k,m)$-regular set, then 
$$
\Phi ^{S}(k,m)\sim \Phi ^{T}(\psi (m),\psi (k)),
$$
where ${T}=\overline{\psi (S)-1}$ is a $(\psi (m),\psi (k))$-regular set
and $\psi (S)-1$ denotes the set $\{ \psi (s)-1\, |\, s\in S\} ,$
while  the complement is related to the interval  $[\psi (m),\psi (k)).$
\label{xn0}
\end{proposition}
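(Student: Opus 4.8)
The plan is to prove both assertions by induction on the common length $\ell=m-k$, observing first that $\ell=\psi(k)-\psi(m)$, so the interval $[\psi(m),\psi(k)]$ has the same length and
$D(\Phi^{S}(k,m))=x_k+\cdots+x_m=x_{\psi(m)}+\cdots+x_{\psi(k)}=D(\Phi^{T}(\psi(m),\psi(k)))$
under the identification $x_i=x_{\psi(i)}$. Hence the two elements are $\Gamma^+$-homogeneous of one and the same positive degree, and it suffices to produce a single scalar $c$ with $\Phi^{S}(k,m)=c\,\Phi^{T}(\psi(m),\psi(k))$. The regularity of $T$ is the easy half: by Lemma \ref{dop1} the set $\psi(S)-1$ is white (black) $(\psi(m),\psi(k))$-regular exactly when $S$ is white (black) $(k,m)$-regular, and then by Lemma \ref{dop} its complement $T=\overline{\psi(S)-1}$ is black (white) $(\psi(m),\psi(k))$-regular. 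In either case $T$ is $(\psi(m),\psi(k))$-regular, with the colour interchanged.

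For the projective equality I would work inside the subalgebra $A$ generated by $x_1,\dots,x_n$, where every $\Phi^{S}(k,m)$ lives, and exploit the faithfulness of the coordinate calculus (\ref{defdif}): by (\ref{calc}) the partial derivatives $\partial_i$ separate $\Gamma^+$-homogeneous elements of $A$ up to scalars, so a positive-degree element with all $\partial_i$ vanishing is zero. Thus it is enough to exhibit one scalar $c$ for which $\partial_i(\Phi^{S}(k,m))=c\,\partial_i(\Phi^{T}(\psi(m),\psi(k)))$ for every $i$. Differentiating the defining recursion (\ref{dhs}) by the Leibniz rule (\ref{defdif}) and using the derivative formula (\ref{pdee}) for the leading terms $u[\,\cdot\,,\,\cdot\,]$, one checks that each $\partial_i(\Phi^{S}(k,m))$ is either $0$ or a scalar multiple of a shorter element obtained by deleting the variable at one end of the scheme, so that the length drops by one and the surviving set is the evident restriction of $S$. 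The base case $\ell=0$ is the identity $\Phi^{S}(k,k)=x_k=x_{\psi(k)}=\Phi^{T}(\psi(k),\psi(k))$.

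To run the induction I would match these peeled pieces across the duality. Under the reflection $\psi$ the end $k$ of $[k,m]$ goes to the end $\psi(k)$ of $[\psi(m),\psi(k)]$, and since $\psi(k+1)=\psi(k)-1$ the reduced interval $[k+1,m]$ is carried to $[\psi(m),\psi(k)-1]$; the operations ``$\psi(\cdot)-1$'' and ``complement'' commute with restriction to these subintervals, so the reduced sets $S'$ and $T'$ are again related by $T'=\overline{\psi(S')-1}$. The inductive hypothesis then identifies the peeled pieces up to scalar, and feeding this back shows that $\Phi^{S}(k,m)$ and $\Phi^{T}(\psi(m),\psi(k))$ have proportional partial derivatives variable by variable; fixing $c$ by a single peeled coordinate makes the proportionality uniform, and faithfulness yields the assertion.

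The hard part is the precise form of the derivative formula for $\Phi^{S}(k,m)$, and in particular the fact that the passage $S\mapsto\overline{\psi(S)-1}$ interchanges the two ends detected by the left calculus $\partial_i$ and the right calculus $\partial^{*}_i$ of (\ref{pdu}). Already in the length-one case $\Phi^{\emptyset}(k,k+1)=[x_k,x_{k+1}]$ turns out to be proportional to $\Phi^{\{\psi(k+1)\}}(\psi(k+1),\psi(k))$ precisely because of the Cartan relation $p_{k\,k+1}p_{k+1\,k}=q^{-2}$ from (\ref{b1rel}), the correction term of (\ref{dhs}) being exactly what converts the bracket into its reversed form. Carrying this bookkeeping through the general step --- tracking which end is peeled, computing the scalar coefficients from (\ref{pdee}), (\ref{pdu}) and the structure constants (\ref{mu21})--(\ref{mu4}), and checking that the degenerate case $m=\psi(k)$, for which no set is regular and neither side occurs, is never produced by the reductions --- is where the regularity hypothesis enters and where the real work lies.
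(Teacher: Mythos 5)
Your first paragraph (the regularity of $T$) is correct, and it is exactly what the paper itself does: Lemmas \ref{dop1} and \ref{dop} show that $S$ is white (black) $(k,m)$-regular if and only if $T=\overline{\psi(S)-1}$ is black (white) $(\psi(m),\psi(k))$-regular. For the projective equality itself, however, the paper does not argue at all: it simply quotes \cite[Proposition 7.10]{Kh08}. Your plan to reprove that equality by induction through the calculus (\ref{defdif}) therefore has to carry the entire weight of the statement, and it rests on a claim that is false.

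You assert that, differentiating the recursion (\ref{dhs}) by the Leibniz rule, ``each $\partial_i(\Phi^{S}(k,m))$ is either $0$ or a scalar multiple of a shorter element obtained by deleting the variable at one end of the scheme.'' Take $n\geq 3$, $S=\{k\}$ and the interval $[k,k+2]$ with $k+2\leq n$ (every set is $(k,k+2)$-regular here, so this lies within the scope of the proposition). From (\ref{dhs}),
\begin{equation*}
\Phi^{\{k\}}(k,k+2)=u[k,k+2]-(1-q^{-2})\bigl(p_{k+1\, k}\,p_{k+2\, k}\bigr)^{-1}\,u[k+1,k+2]\,x_k,
\end{equation*}
and (\ref{pdee}) with the Leibniz rule gives $\partial_{k+1}(u[k,k+2])=0$ while
\begin{equation*}
\partial_{k+1}\bigl(\Phi^{\{k\}}(k,k+2)\bigr)=-(1-q^{-2})^2\bigl(p_{k+1\, k}\,p_{k+2\, k}\bigr)^{-1}\,x_{k+2}\,x_k\neq 0,
\end{equation*}
an element of degree $x_k+x_{k+2}$: a \emph{middle} variable has been deleted, and the value is a product, not a single shorter $\Phi$-element. (Already in length one, which end the calculus $\partial$ detects depends on $S$: $\Phi^{\{k\}}(k,k+1)\sim[x_{k+1},x_k]$ has $\partial_k=0$ and $\partial_{k+1}\neq 0$.) So the inductive step as you set it up --- peel one end, pass to ``the evident restriction of $S$'' --- does not get off the ground; the derivative structure of $\Phi^{S}(k,m)$ is genuinely more complicated, and determining it amounts to establishing the coproduct formulas for $\Phi^{S}(k,m)$ from \cite{Kh08}, i.e., to redoing the very result the paper cites. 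A smaller point: the faithfulness you invoke --- that a homogeneous element of positive degree killed by all $\partial_i$ is zero --- is true but is nowhere stated or proved in this paper, so it too would need justification. As it stands, the proposal is a plan whose crucial computation is both missing and, in the form stated, incorrect.
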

\begin{proof} The proof follows from \cite[Proposition 7.10]{Kh08}
since due to Lemmas \ref{dop} and \ref{dop1} the set $S$ is white 
(black) $(k,m)$-regular if and only if $T$ is black (white) $(\psi (m),\psi (k))$-regular. 
 \end{proof}
\begin{lemma}
Let  $S$ be a white $(k,m)$-regular set. Assume $s$ is a black point on the scheme $(\ref{grb}),$
and $k-1\leq t<s\leq m.$ Then $S$ is white $(1+t,s)$-regular if and only if either $\psi (t)-1$ is a white point
or  $\psi (t)-1\notin [t,s].$ In particular if either $t$ is black or $t=k-1,$ then $S$ is white $(1+t,s)$-regular. 
\label{si}
\end{lemma}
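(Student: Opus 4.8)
The plan is to compare the defining condition of white $(1+t,s)$-regularity directly with that of white $(k,m)$-regularity, using the hypothesised $(k,m)$-regularity of $S$ to dispose of all but one pair of points. For an interval scheme with endpoints $a-1$ and $b$, call a label $j\in[a-1,b]$ \emph{$(a,b)$-marked} if $j\in S\cup\{a-1,b\}$; then white $(a,b)$-regularity says precisely that no $i\in[a-1,b)$ with $\psi(i)-1\in[a-1,b]$ has both $i$ and $\psi(i)-1$ being $(a,b)$-marked. Since $k-1\le t<s\le m$ we have $[t,s]\subseteq[k-1,m]$, so every pair $\{i,\psi(i)-1\}$ entering the $(1+t,s)$-condition also enters the $(k,m)$-condition; this is what lets the given regularity do the bulk of the work.

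The key comparison I would establish is that for $j\in[t,s]$ the predicates ``$(1+t,s)$-marked'' and ``$(k,m)$-marked'' agree except possibly at $j=t$. Indeed, for $t<j<s$ one has $j>t\ge k-1$ and $j<s\le m$, so $j\ne k-1,m,t,s$ and both predicates reduce to $j\in S$; at $j=s$ both hold, because $s$ is a black point and hence $s\in S$ or $s=m$, so $s\in S\cup\{k-1,m\}$; only at $j=t$ can they differ, where $(1+t,s)$-marked always holds while $(k,m)$-marked holds iff $t=k-1$ or $t\in S$. Consequently, for any pair $\{i,\psi(i)-1\}$ in the $(1+t,s)$-range that does not contain $t$, the two markings coincide and $(k,m)$-regularity already forbids it from being bad; hence white $(1+t,s)$-regularity can fail only on the single pair $\{t,\psi(t)-1\}$, and only when $\psi(t)-1\in[t,s]$.

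It then remains to analyse this one pair. First I would clear the degenerate ranges $1+t>n$ or $s\le n$, where $(1+t,s)$-regularity holds by convention: there one checks $\psi(t)-1\notin[t,s]$, the only exception being $t=n$, where $\psi(t)-1=n$ is forced white because $n\notin S$ (itself a consequence of white $(k,m)$-regularity, noted in the text). In the remaining range $t\le n-1$ and $s\ge n+1$ one has $\psi(t)-1>t$, so the pair lies in range iff $t<\psi(t)-1\le s$; since $t$ is always $(1+t,s)$-marked, the pair is bad iff $\psi(t)-1$ is $(1+t,s)$-marked. Splitting the interior case $t<\psi(t)-1<s$, where $\psi(t)-1\in[k,m-1]$ so that $(1+t,s)$-marked $\Leftrightarrow \psi(t)-1\in S \Leftrightarrow \psi(t)-1$ black, from the endpoint case $\psi(t)-1=s$, where the point is always marked and $s$ is black by hypothesis, yields exactly the asserted equivalence: $S$ is white $(1+t,s)$-regular iff $\psi(t)-1$ is a white point or $\psi(t)-1\notin[t,s]$.

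The ``in particular'' clause I would deduce by feeding $(k,m)$-regularity into this criterion: if $t$ is black or $t=k-1$, then $t$ is $(k,m)$-marked, so whenever $\psi(t)-1\in[t,s]\subseteq[k-1,m]$ the $(k,m)$-condition applied to $\{t,\psi(t)-1\}$ forces $\psi(t)-1\notin S\cup\{k-1,m\}$, i.e.\ $\psi(t)-1$ is white; together with the trivial alternative $\psi(t)-1\notin[t,s]$ this verifies the right-hand side, hence white $(1+t,s)$-regularity. I expect the main obstacle to be purely bookkeeping rather than conceptual: one must keep straight the two clashing endpoint conventions --- the left endpoint is white in the scheme yet counts as marked, while the right endpoint is forced black --- so that the comparison of the two marking predicates and the boundary subcases $\psi(t)-1\in\{t,s\}$ are handled without endpoint slips; once that is set up, the mathematics collapses to the single-pair analysis above.
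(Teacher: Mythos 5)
Your proof is correct and takes essentially the same route as the paper: your pairs $\{i,\psi(i)-1\}$ with the ``marked'' predicate are exactly the columns of the paper's shifted scheme with the left endpoint painted black, and both arguments come down to the observation that only the column $\{t,\psi(t)-1\}$ changes its marking when the interval $[k-1,m]$ is cut down to $[t,s]$. The paper leaves the general equivalence to the diagram interpretation (\ref{grab}), (\ref{grab1}) and spells out only the ``in particular'' clause, so your version is the same argument made explicit; the one negligible slip is the degenerate case $k=n+1$, $t=n$, where $\psi(t)-1=n$ is white because it is the first point $k-1$ of the scheme, not because white $(k,m)$-regularity forces $n\notin S$ (that implication needs $k\leq n$).
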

\begin{proof} The general statement follows from interpretation of regular sets given on diagrams 
(\ref{grab}), (\ref{grab1}).  The points $t,$ $\psi (t)-1$ form a column 
on the shifted scheme. Hence if either $t$ is black or $t=k-1,$ then 
 $\psi (t)-1$ is white or it does not appear on the scheme at all, that is  $\psi (t)-1\notin [k-1,m]\supseteq [t,s].$
 \end{proof}
Similarly we have the following statement.
\begin{lemma}
Let  $S$ be a black $(k,m)$-regular set. Assume $t$ is a white point on the scheme $(\ref{grb}),$
and $k-1\leq t<s\leq m.$ Then $S$ is black $(1+t,s)$-regular if and only if either $\psi (s)-1$ is a black point
or  $\psi (s)-1\notin [t,s].$ In particular if either $s$ is white or $s=m,$ then $S$ is black $(1+t,s)$-regular. 
\label{si1}
\end{lemma}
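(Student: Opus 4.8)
The plan is to run the proof of Lemma \ref{si} verbatim with the two colors interchanged, reading the black--regularity diagrams (\ref{grab2}), (\ref{grab3}) in place of the white--regularity diagrams (\ref{grab}), (\ref{grab1}). Only the range $1\le k\le n<m$ carries content, since otherwise every set is regular by definition and the claim is empty; so I would assume $k\le n<m$ and recall two facts extracted just above. First, a set is black $(k',m')$-regular exactly when the shifted scheme of the corresponding generator, with its last point $m'$ repainted white, has no column carrying two white points. Second, in the shifted scheme an upper label $j$ occupies the same column as the lower label $\psi(j)-1$, so that the last point $s$ of the subinterval shares its column with $\psi(s)-1$. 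I would also record that black $(k,m)$-regularity forces $n\in S$, which neutralizes the degenerate central point.

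First I would restrict the shifted scheme (\ref{grb2}) of $\Phi^{S}(k,m)$ to the labels of $[t,s]$. When $t<n<s$ this fragment is again a genuine two-line scheme whose first point is $t$ and whose last point is $s$, and the column $\{s,\psi(s)-1\}$ appears in it precisely when $\psi(s)-1\ge t$, i.e.\ when $\psi(s)-1\in[t,s]$, since there one always has $\psi(s)-1<n<s$. When instead $s\le n$ or $t\ge n$ the fragment is a single line with no columns and is therefore trivially black regular; here I would check that the stated condition holds automatically, the only boundary value needing attention being $s=n$, where $\psi(s)-1=n$ lands in $[t,s]$ but is black because $n\in S$.

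Next I would compare the fragment with the ambient scheme. Every interior label keeps the color dictated by $S$, and the only repainting that distinguishes the two tests is that the ambient test whitens $m$ while the fragment test whitens $s$. Hence every column of the fragment that avoids $s$ carries exactly its ambient colors and, by black $(k,m)$-regularity, is not two-white; in particular the column through the white first point $t$ is harmless, because if its partner $\psi(t)-1$ were white and in range the ambient scheme would already display a two-white column. Thus the sole column that can newly turn two-white upon whitening $s$ is $\{s,\psi(s)-1\}$, and this occurs exactly when $\psi(s)-1$ is white and lies in $[t,s]$, which gives the desired equivalence. For the final clause I would argue that if $s$ is white then $s\neq n$ (as $n\in S$), so its ambient column is already tested and forces $\psi(s)-1$ to be black or out of range; and if $s=m$ the fragment test whitens the same point as the ambient test, so the absence of two-white columns is inherited outright.

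The step I expect to demand the most care is the boundary bookkeeping near the doubled point labeled $n$ in (\ref{grb2}): one must treat the self-paired central column and the transition between the one-line and two-line fragments consistently, and invoke $n\in S$ at exactly the place where the naive interval condition $\psi(s)-1\in[t,s]$ would otherwise misfire. A cleaner but less transparent alternative, which sidesteps the diagrams entirely, is to combine Lemma \ref{dop} (complementation interchanges the two regularities and is compatible with restriction to a subinterval) with the $\psi$-duality of Lemma \ref{dop1}, whose composite swaps the roles of the two endpoints and the two colors, thereby transporting Lemma \ref{si} directly onto the present statement.
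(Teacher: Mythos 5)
Your proposal is correct and follows essentially the same route as the paper: the paper disposes of Lemma \ref{si1} with the single word ``Similarly,'' meaning the color-swapped rerun of the proof of Lemma \ref{si} via the shifted-scheme interpretation of black regularity in (\ref{grab2}), (\ref{grab3}) and the observation that $s$ and $\psi(s)-1$ occupy one column, which is exactly your argument, only spelled out in full (including the $s=n$ boundary case and the $\psi$-duality alternative via Lemmas \ref{dop}, \ref{dop1} that the paper itself invokes elsewhere, e.g.\ in Lemma \ref{sig3}). The only slight imprecision is calling the claim ``empty'' when $m\le n$ or $k>n$ --- the equivalence still requires checking that $\psi(s)-1$ is black or out of range there --- but that check is the same trivial one you already perform for the degenerate subinterval cases.
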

\begin{lemma} {\rm (\cite[Corollaries 7.7, 7.13]{Kh08})} Let $k\leq t<m.$ The decomposition 
\begin{equation}  
\Phi^{S}(k,m)\sim \left[ \Phi^{S}(k,t),\Phi^{S}(1+t,m)\right] 
\label{desc1}
\end{equation}
is valid if either $S \cup \{ t\} $ is white $(k,m)$-regular and $t\notin S$, or
$S$ is black $(k,m)$-regular and $t\notin S\setminus \{ n\} .$
\label{xn1}
\end{lemma}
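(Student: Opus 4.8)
The plan is to verify the decomposition by substituting the inductive definition (\ref{dhs}) of the three elements $\Phi^S(k,m)$, $\Phi^S(k,t)$, $\Phi^S(1+t,m)$ and comparing the two sides, proceeding by induction on the length $m-k$. First I note that the hypothesis is not vacuous in the dangerous way: since no set is $(k,m)$-regular when $m=\psi(k)$, both alternatives of the lemma force $m\neq\psi(k)$, so Proposition \ref{ins2} becomes available for the \emph{leading} split $[u[k,t],u[1+t,m]]\sim u[k,m]$. Because $t\notin S$ (in the black case unless $t=n$, which I track as a separate summand), the black points of $S$ in $[k,m)$ separate cleanly into those in $[k,t)$, governing $\Phi^S(k,t)$, and those in $[1+t,m)$, governing $\Phi^S(1+t,m)$; this clean separation is what makes the bookkeeping feasible.

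Next I would expand $[\Phi^S(k,t),\Phi^S(1+t,m)]$ using the two ad-identities (\ref{br1f}) and (\ref{br1}) to push the bracket across the products $\Phi^S(1+s_i,t)\,u[k,s_i]$ and $\Phi^S(1+s_j,m)\,u[1+t,s_j]$ appearing in (\ref{dhs}). The contribution of the two leading summands is $[u[k,t],u[1+t,m]]\sim u[k,m]$, which supplies the leading term of the target. In every remaining term one argument of the bracket is supported on variables $x_a$ whose indices lie in a left block and the other in a disjoint right block, so I would annihilate or straighten these using Lemma \ref{sepp} (separated elements skew-commute) and Corollary \ref{ruk3}, reducing each surviving cross term to a product $\Phi^S(1+s,m)\,u[k,s]$ with an explicit scalar.

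The heart of the matter is that these surviving products reassemble into exactly the sum $-(1-q^{-2})\sum_i\alpha_{km}^{s_i}\Phi^S(1+s_i,m)\,u[k,s_i]$ of (\ref{dhs}). For the black points $s_j\in[1+t,m)$ the inductive hypothesis on shorter intervals identifies the relevant contractions as $u[k,s_j]$ and $\Phi^S(1+s_j,m)$; for $s_i\in[k,t)$ the factor $\Phi^S(1+s_i,t)$ must be recombined with $u[1+t,m]$ into $\Phi^S(1+s_i,m)$, and here I would again apply the inductive decomposition $\Phi^S(1+s_i,m)\sim[\Phi^S(1+s_i,t),\Phi^S(1+t,m)]$ on the shorter interval $[1+s_i,m]$ split at $t$. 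This is legitimate because Lemmas \ref{si} and \ref{si1} guarantee that $S$ stays regular on all of these subintervals. Matching the constants $\alpha_{km}^{s}=\tau_s\,p(u(1+s,m),u(k,s))^{-1}$ then reduces, via the bimultiplicativity (\ref{sqot}) of $p$, to a factorization of $p(u(1+s,m),u(k,s))$ across the split point $t$.

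The step I expect to be the main obstacle is precisely this coefficient reconciliation together with control of the exceptional split values $t=\psi(k)$ and $t=\psi(m)-1$ of Proposition \ref{ins2}: since $x_i=x_{\psi(i)}$, indices that are far apart in $[1,2n]$ may collide under the folding $\psi$, producing extra contractions from (\ref{kom1})--(\ref{kom2}) that a naive length count overlooks. I would dispose of these by reading off, from the shifted diagrams (\ref{grab})--(\ref{grab3}), that white regularity of $S\cup\{t\}$ (respectively black regularity of $S$) forbids $t$ from being the offending column partner, so that either the exceptional contraction does not arise or it arises with a coefficient that regularity forces to agree. The white and black regularity alternatives of the hypothesis correspond exactly to the two cases of the lemma.
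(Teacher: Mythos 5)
First, a point of comparison: the paper does not prove this lemma at all --- it is imported verbatim from \cite[Corollaries 7.7, 7.13]{Kh08} --- so there is no internal proof to measure your argument against, and it must be judged on its own merits. The parts of your plan that can be checked are correct: both hypotheses do force $m\neq\psi(k)$ (no set is $(k,\psi(k))$-regular), and a direct check against Definition \ref{reg1} shows they also exclude the two exceptional split values $t=\psi(k)$ and $t=\psi(m)-1$ of Proposition \ref{ins2}, so the bracket of the leading words does give $u[k,m]$; moreover, since $t\notin S$ (apart from the tracked case $t=n$), the index sets of the lower-order terms of $\Phi^{S}(k,t)$ and $\Phi^{S}(1+t,m)$ do partition $S\cap[k,m)$.

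The genuine gap is in the treatment of the cross terms, which you yourself call ``the heart of the matter'' but only assert. Your claim that in every non-leading term the two arguments of the bracket sit in disjoint index blocks, to be killed by Lemma \ref{sepp}, is false precisely because of the folding $x_i=x_{\psi(i)}$: when $m>n$, the elements $u[1+t,s_j]$ and $\Phi^{S}(1+s_j,m)$ involve the genuine generators $x_{\psi(m)},\ldots,x_n$, whose indices re-enter the left block $[k,t]$, so separation in the sense of Definition \ref{sep} simply fails. The correct vanishing tool inside the positive Borel subalgebra is Proposition \ref{NU}, whose exceptional configurations ($b=\psi(t)-1$, $a=\psi(k)+1$, $b=\psi(k)$, \dots) must be ruled out for \emph{every} word $u[a,b]$ occurring in the expansion of \emph{every} $\Phi^{S}(1+s_j,m)$, not merely for the split point $t$; this is exactly the case analysis your sketch omits. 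The references you actually invoke for this step --- Corollary \ref{ruk3} and formulas (\ref{kom1})--(\ref{kom2}) --- concern mixed brackets $[u,v^-]$ between the positive and negative parts and say nothing about a bracket of two positive elements, so they cannot do this work (in the positive part there are no ``contractions'' at all; the defining relations (\ref{rela3}) never enter). Finally, the coefficient reconciliation is not a routine appeal to bimultiplicativity: the constants $\alpha^{s}_{km}$ contain $\tau_s$, which jumps at $s=n$, and the products $\mu_k^{m,i}$ take the distinct values $q^{-2}$, $1$, $q^{-4}$ depending on the position of $i$ relative to $n$ and $\psi$ (see (\ref{mu2})--(\ref{mu4})), so the scalars generated by the Jacobi rearrangements must be tracked case by case. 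As it stands your proposal is a plausible program whose decisive steps are unproved and partly supported by inapplicable lemmas, not a proof.
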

\begin{lemma} {\rm (\cite[Corollaries 7.5, 7.14]{Kh08})} Let $k\leq s<m.$ The decomposition 
\begin{equation}  
\Phi^{S}(k,m)\sim [\Phi^{S}(1+s,m),\Phi^{S}(k,s)]
\label{desc2}
\end{equation}
is valid if either $S$ is white $(k,m)$-regular and $s\in S\cup \{ n\} ,$ or
$S \setminus \{ s\}$ is black $(k,m)$-regular and $s\in S$.
\label{xn2}
\end{lemma}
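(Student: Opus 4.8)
The plan is to prove (\ref{desc2}) by induction on $m-k$, handling the white case ($S$ white $(k,m)$-regular, $s\in S\cup\{n\}$) and the black case ($S\setminus\{s\}$ black $(k,m)$-regular, $s\in S$) in parallel and reducing, at the bottom, to the $u[k,m]$-decomposition of Proposition \ref{ins2}. First, though, I would record why the two cheap routes fail, since this explains where the real work lies. The antipode duality of Proposition \ref{xn0} (equivalently (\ref{ant1})) reverses a bracket \emph{and} the underlying interval, so it carries a ``low interval on the left'' decomposition (the shape of Lemma \ref{xn1}) to another decomposition of the same shape; it never produces the ``high interval on the left'' shape of (\ref{desc2}). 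The antisymmetry identity (\ref{bri}) would reverse the order directly, but it is licensed only when $p(u,v)p(v,u)=1$, whereas here $p(\Phi^S(k,s),\Phi^S(1+s,m))\,p(\Phi^S(1+s,m),\Phi^S(k,s))=\mu_k^{m,s}$, which by (\ref{mu2})--(\ref{mu4}) almost never equals $1$. Hence (\ref{desc2}) is genuinely independent of Lemma \ref{xn1} and must be established from scratch.

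The heart of the matter is a direct computation in the one--black--point situation, which I would isolate as the base case. If $S\cap[k,m)=\{s\}$ then $\Phi^S(1+s,m)=u[1+s,m]$ and $\Phi^S(k,s)=u[k,s]$, while (\ref{dhs}) gives $\Phi^S(k,m)=u[k,m]-(1-q^{-2})\alpha_{km}^{s}\,u[1+s,m]\,u[k,s]$. Writing $a=u[k,s]$, $b=u[1+s,m]$, $p=p(a,b)$, $p'=p(b,a)$, and using $[a,b]=u[k,m]$ from Proposition \ref{ins2}, one eliminates $ab$ to get $[b,a]=-p'\,u[k,m]+(1-pp')\,ba$. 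Comparing with $\Phi^S(k,m)$ and recalling $\alpha_{km}^{s}=\tau_s(p')^{-1}$ and $pp'=\mu_k^{m,s}$, the two are proportional (with constant $-p'$) exactly when $\mu_k^{m,s}=1-(1-q^{-2})\tau_s$; for $s\neq n$ this is the generic value $q^{-2}$ of (\ref{mu2})--(\ref{mu4}), so the base case holds. I would then point out that the aberrant $\mu$-values $q^{-4}$ and $1$ occur precisely at $s=\psi(k)$, $s=\psi(m)-1$ and $s=n$ (and in the degenerate case $m=\psi(k)$ barred by Proposition \ref{ins2}) --- that is, exactly the indices excluded by $(k,m)$-regularity or singled out by the hypothesis $s\in S\cup\{n\}$ --- which is why the regularity assumptions are indispensable.

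For the inductive step I would expand one factor of $[\Phi^S(1+s,m),\Phi^S(k,s)]$ through its own instance of (\ref{dhs}) and distribute the bracket by the ad-identities (\ref{br1f}), (\ref{br1}). Each inner bracket that results is either a bracket of two $u[\,\cdot\,,\,\cdot\,]$'s, resolved by Proposition \ref{ins2} together with the base computation, or a bracket involving a $\Phi^S$ on a strictly shorter interval, to which the induction hypothesis applies. To invoke that hypothesis I must know the truncated sets stay regular of the correct colour at the new cut points, and this is exactly what Lemmas \ref{si} and \ref{si1} provide: a white- (resp.\ black-) regular $S$ remains white- (resp.\ black-) regular on the relevant subintervals once $s$ (resp.\ a white point) is used as the cut. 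Collecting everything, I would verify that the coefficients $\alpha$, the $\tau$'s, and the $p$-factors reassemble into the single series (\ref{dhs}) defining $\Phi^S(k,m)$; the bimultiplicativity (\ref{sqot}) of $p(-,-)$ turns this into a finite, if tedious, rearrangement.

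The hard part will be the bookkeeping around the distinguished index $n$, where $\tau_n=q\neq1$ breaks the symmetry between the two halves and is responsible for the asymmetric hypotheses ($s\in S\cup\{n\}$ in the white case versus $S\setminus\{s\}$ black-regular with $s\in S$ in the black case). When the cut falls on $n$ --- which for white-regular $S$ forces $n\notin S$, so that the $n$-split is \emph{not} one of the terms of (\ref{dhs}) and surfaces only through the expansion --- the powers of $q$ carried by $\tau_n$ and by the value $\mu=1$ of (\ref{mu3}) at $i=n$ must be tracked carefully for the proportionality to survive; this is precisely the computation that forced $q^{2}\neq1$ in the base-case identity. The secondary delicate point is the pair of exceptional split indices $\psi(k)$ and $\psi(m)-1$ of Proposition \ref{ins2}: here I would use the shifted-scheme reading of regularity in (\ref{grab})--(\ref{grab3}) to show the cut cannot land on a configuration where that proposition fails, or else that the offending $u$-terms cancel against the correction terms of (\ref{dhs}), restoring (\ref{desc2}).
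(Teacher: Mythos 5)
First, a remark on the comparison itself: the paper contains no proof of Lemma \ref{xn2} --- it is imported verbatim from \cite[Corollaries 7.5, 7.14]{Kh08} --- so your argument has to stand entirely on its own. Its opening diagnosis is sound: you are right that the antipode duality preserves the ``low interval on the left'' shape of Lemma \ref{xn1} and cannot produce (\ref{desc2}), and right that antisymmetry (\ref{bri}) is unavailable since $\mu_k^{m,s}\neq 1$ generically. The single-black-point computation is also correct as algebra, including the identification of the exceptional $\mu$-values with the indices $\psi(k)$, $\psi(m)-1$, $n$ governed by regularity (with the small caveat that for $s=n$ the correct criterion is not your unified formula $\mu_k^{m,s}=1-(1-q^{-2})\tau_s$, which fails at $\tau_n=q$, but rather $\mu_k^{m,n}=1$ together with the absence of the correction term, since $n\notin S$ in that situation --- a point you essentially concede in your last paragraph).

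The genuine gap is in the inductive step, and specifically in the claimed dichotomy: ``each inner bracket that results is either a bracket of two $u[\cdot,\cdot]$'s \dots or a bracket involving a $\Phi^S$ on a strictly shorter interval, to which the induction hypothesis applies.'' This is false. Expanding one factor of $[\Phi^{S}(1+s,m),\Phi^{S}(k,s)]$ by (\ref{dhs}) and distributing with (\ref{br1f}), (\ref{br1}) produces, besides adjacent-interval brackets of inductive shape, cross-brackets over \emph{non-adjacent} intervals: $[\Phi^{S}(1+t_j,m),\Phi^{S}(k,s)]$ with $t_j\in S\cap[1+s,m)$ if you expand the left factor, or $[\Phi^{S}(1+s,m),u[k,s_i]]$ with $s_i\in S\cap[k,s)$ if you expand the right one. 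The induction hypothesis concerns only brackets of adjacent intervals sharing the cut point, so it says nothing about these, and neither does Proposition \ref{ins2}. Nor do they vanish for free: because of the identification $x_i=x_{\psi(i)}$, the two intervals share letters whenever $m\geq\psi(s_i)$, so Lemma \ref{sepp} is unavailable, and one needs a $\Phi$-analogue of the vanishing Proposition \ref{NU}, whose hypotheses break down precisely at the $\psi$-related indices around which regularity is built. Proving that in every regular configuration these cross terms vanish or cancel, and then carrying out the coefficient reassembly that you defer with ``I would verify that the coefficients \dots reassemble,'' is the actual substance of the proofs in \cite{Kh08}; in your write-up it is replaced by statements of intent (``show the cut cannot land on a configuration where that proposition fails, or else that the offending $u$-terms cancel''). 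That missing core, not the base case, is the proof.
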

We stress that due to Lemmas \ref{si}, \ref{si1} in these lemmas the set $S$ 
appears to be  both $(k,t)$-regular and $(1+t,m)$-regular;
that is, the multiple use of the lemmas is admissible. 
\begin{lemma} If $S$ is $(k,m)$-regular set, then we have
\begin{equation}  
g_{k\rightarrow m}\, \sigma (\Phi^{S}(k,m))\sim \Phi^{\psi (S)-1}(\psi(m), \psi (k))\sim \Phi^{\overline{S}}(k, m),
\label{desc3}
\end{equation}
where $\overline{S}$ is the complement of $S$ with respect to $[k,m),$
and $\sigma $ is the antipode.
\label{xn21}
\end{lemma}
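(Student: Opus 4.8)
The statement is a conjunction of two projective equalities, and I would dispose of the second one, $\Phi^{\psi(S)-1}(\psi(m),\psi(k))\sim\Phi^{\overline{S}}(k,m)$, first, since it needs no antipode at all. By Lemma \ref{dop} the complement $\overline{S}$ is again $(k,m)$-regular, so Proposition \ref{xn0} gives $\Phi^{\overline{S}}(k,m)\sim\Phi^{\overline{\psi(\overline{S})-1}}(\psi(m),\psi(k))$, where the outer complement is taken in $[\psi(m),\psi(k))$. Now $j\mapsto\psi(j)-1$ is a bijection of $[k,m)$ onto $[\psi(m),\psi(k))$ carrying $S$ to $\psi(S)-1$ and therefore $\overline{S}$ to $\overline{\psi(S)-1}$; hence $\overline{\psi(\overline{S})-1}=\psi(S)-1$, which is exactly the desired index set. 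Thus the real content is the antipode equality $g_{k\to m}\,\sigma(\Phi^S(k,m))\sim\Phi^{\psi(S)-1}(\psi(m),\psi(k))$, and I would prove it by induction on the length $m-k$, writing $T=\psi(S)-1\subseteq[\psi(m),\psi(k))$ throughout.

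If $S\cap[k,m)=\emptyset$ (in particular in the base case $k=m$), the recursion (\ref{dhs}) degenerates to $\Phi^S(k,m)=u[k,m]$, and likewise $T\cap[\psi(m),\psi(k))=\emptyset$ forces $\Phi^T(\psi(m),\psi(k))=u[\psi(m),\psi(k)]$; the claim is then precisely the antipode formula (\ref{ant2}) for the bracketed words. For the inductive step ($S\cap[k,m)\ne\emptyset$, so $m\ne\psi(k)$ since $S$ is regular) I would split $\Phi^S(k,m)$ into a skew bracket of two strictly shorter generators and transport that bracket through the antipode via (\ref{ant1}). If $S$ is white $(k,m)$-regular I pick a black point $s\in S\cap[k,m)$ and use Lemma \ref{xn2} to write $\Phi^S(k,m)\sim[\Phi^S(1+s,m),\Phi^S(k,s)]$; if $S$ is not white $(k,m)$-regular it is black $(k,m)$-regular, which forces $k\le n<m$, so $n\in[k,m)$ and I use Lemma \ref{xn1} with $t=n$ to write $\Phi^S(k,m)\sim[\Phi^S(k,n),\Phi^S(n+1,m)]$ (the hypothesis $t\notin S\setminus\{n\}$ being automatic). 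Writing $u,v$ for the two factors, one has $g_ug_v=g_{k\to m}$, so (\ref{ant1}) yields $g_{k\to m}\sigma([u,v])\sim[g_v\sigma(v),g_u\sigma(u)]$.

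Each factor sits on an interval strictly shorter than $[k,m]$ and is $(k',m')$-regular (for the white split this is the content of the remark following Lemma \ref{xn2}, obtained from Lemmas \ref{si}, \ref{si1}; for the black split the two subintervals $[k,n]$, $[n+1,m]$ lie in the trivially regular regime). Applying the inductive hypothesis to each factor therefore turns the right side into a skew bracket of two $\Phi^T$'s. Using $\psi(1+s)=\psi(s)-1$ this is exactly $[\Phi^T(1+s',\psi(k)),\Phi^T(\psi(m),s')]$ with $s'=\psi(s)-1$ in the white case, and $[\Phi^T(\psi(m),t'),\Phi^T(1+t',\psi(k))]$ with $t'=\psi(n)-1=n$ in the black case. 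It then remains to recognize these as the decompositions of $\Phi^T(\psi(m),\psi(k))$ supplied by the same lemmas applied to $T$: here Lemma \ref{dop1} is the pivot, since it guarantees that $T$ has the same colour-regularity on $[\psi(m),\psi(k))$ as $S$ has on $[k,m)$, so that Lemma \ref{xn2} (white) resp. Lemma \ref{xn1} (black) indeed applies to $T$. The distinguished point matches automatically under $j\mapsto\psi(j)-1$: a black $s\in S$ gives $s'\in T$ (and $s=n$ gives $s'=n$), i.e.\ the condition $s'\in T\cup\{n\}$ of Lemma \ref{xn2}; dually $t=n$ gives $t'=n$, i.e.\ the condition $t'\notin T\setminus\{n\}$ of Lemma \ref{xn1}.

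The step I expect to be the main obstacle is exactly this last bookkeeping: confirming that the point chosen for the split on the $S$-side lands, under $j\mapsto\psi(j)-1$, on a point meeting the hypotheses of the dual decomposition lemma on the $T$-side, and checking along the way that the chosen split point is never one of the forbidden values $\psi(k)$ or $\psi(m)-1$, so that no degenerate subinterval with $m'=\psi(k')$ (for which no set is regular) ever arises and the inductive hypothesis is genuinely available. For white-regular $S$ this is ensured because its black points avoid $\psi(k)$ and $\psi(m)-1$ (as read off diagrams (\ref{grab}), (\ref{grab1})), and for black-regular $S$ the choice $t=n$ trivially avoids both. Finally, all scalars — the factor $p_{vu}^{-1}$ in (\ref{ant1}) and the normalizing constants in (\ref{dhs}) and (\ref{ant2}) — can be ignored, since the entire assertion is made only up to projective equality $\sim$.
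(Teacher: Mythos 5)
Your proof is correct, and its core mechanism is the same as the paper's: induction, split $\Phi^S(k,m)$ by Lemma \ref{xn2}, transport the bracket through $\sigma$ via (\ref{ant1}), and reassemble on the $\psi$-side using Lemma \ref{dop1} plus the dual decomposition; your bracket orderings and the matching of split points under $j\mapsto\psi(j)-1$ all check out. The one genuine divergence is the black-regular case: the paper dispatches it in one line by invoking the duality of Proposition \ref{xn0} to reduce to the white case (having already proved the second proportion as a corollary of that same proposition), whereas you run the induction directly, splitting at $t=n$ via Lemma \ref{xn1} --- which works precisely because $t=n$ automatically satisfies the hypothesis $t\notin S\setminus\{n\}$, the two sub-intervals $[k,n]$ and $[n+1,m]$ fall into the trivially regular regime, and Lemma \ref{dop1} lets you reassemble with Lemma \ref{xn1} on the $T$-side. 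Your route costs a second parallel case but avoids stacking two applications of the duality; the paper's is shorter but leans on Proposition \ref{xn0} twice. Two smaller differences are harmless: you induct on the length $m-k$ rather than on $|S\cap[k,m)|$ (both decrease under the splits), and you make explicit the check that a black point of a white-regular set is never $\psi(k)$ or $\psi(m)-1$, so no degenerate interval $m'=\psi(k')$ arises --- a point the paper leaves implicit in the remark following Lemmas \ref{xn1}, \ref{xn2}.
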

\begin{proof} Assume $S$ is white $(k,m)$-regular.
We use induction on the number $r$ of elements in the intersection $S \cap [k,m).$
If $r=0,$ then the left hand side equals $g_{k\rightarrow m}\, \sigma (u[k,m])\sim u[\psi (m),\psi (k)]$
due to (\ref{ant2}). Proposition \ref{xn0} with $S \leftarrow [k,m)$ implies 
$u[\psi (m),\psi (k)]\sim \Phi^{[k,m)}(k, m),$ which is required. If $r>0$ then we choose $s\in S,$
$k\leq s<m.$ By Lemma \ref{xn2} we have decomposition (\ref{desc2}).  Using 
(\ref{ant1}) and  the inductive supposition, we have
\begin{equation}
g_{k\rightarrow m}\, \sigma (\Phi^{S}(k,m))\sim [\Phi^{\psi (S)-1}(\psi(s), \psi (k)),\Phi^{\psi (S)-1}(\psi(m), \psi (1+s))].
\label{desc4}
\end{equation}
At the same time Lemma \ref{dop1} implies that 
$\psi (S)-1$ is a white $(\psi(m), \psi (k))$-regular set, and $ \psi (1+s)=\psi (s)-1\in \psi (S)-1.$
Hence we may apply Lemma \ref{xn2}, that shows that the right hand side of (\ref{desc4})
is proportional to $\Phi^{\psi (S)-1}(\psi(m), \psi (k)).$ This proves the first proportion in (\ref{desc3}).
The second one follows from Proposition \ref{xn0}.

If $S$ is black $(k,m)$-regular, then Lemma  \ref{xn0} reduces 
the consideration to white regular case. 
 \end{proof}

\begin{lemma} Let $U^S(k,m)$ be the right coideal subalgebra generated by $G$ and by an element $\Phi ^S(k,m)$
with a $(k,m)$-regular set $S.$ In this case the monoid $\Sigma (U^S(k,m))$ defined in the above section
coincides with the monoid $\Sigma $ generated by all $[1+t:s]$ with $t$ being 
a white point and $s$ being a black point on the scheme $(\ref{grb}).$
\label{sig}
\end{lemma}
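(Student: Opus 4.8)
The plan is to pin down $U^{S}(k,m)$ explicitly as the subalgebra generated over ${\bf k}[G]$ by the family of elements $\Phi^{S}(1+t,s)$, where $t$ ranges over the white points and $s$ over the black points of the scheme (\ref{grb}) with $1+t\le s\le m$, and then to read $\Sigma$ off from this generating set. First I would record that each such $\Phi^{S}(1+t,s)$ is a bona fide element of degree $D(\Phi^{S}(1+t,s))=[1+t:s]$: the restriction of $S$ to $[1+t,s]$ is $(1+t,s)$-regular by Lemmas \ref{si} and \ref{si1}, and the choice $t=k-1,\ s=m$ recovers $\Phi^{S}(k,m)$ itself, since $k-1$ is always white and $m$ always black.

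The heart of the argument is a coproduct computation. I claim that for every white $t$ and black $s$,
\[
\Delta(\Phi^{S}(1+t,s))\in \Phi^{S}(1+t,s)\otimes 1+{\bf k}[G]\otimes\Phi^{S}(1+t,s)+\sum {\bf k}[G]\,\Phi^{S}(1+t',s')\otimes U_q^{+}(\mathfrak{so}_{2n+1}),
\]
where the sum ranges over white $t'\ge t$ and black $s'\le s$ with $[1+t':s']\subsetneq[1+t:s]$, and where the whole nested family of such pairs is reached after finitely many applications of $\Delta$. I would prove this by induction on the number $r=|S\cap[k,m)|$ of interior black points, feeding the defining recursion (\ref{dhs}) together with the coproduct (\ref{co}) of $u[k,m]$ and the inductively known coproducts of the factors $\Phi^{S}(1+s_i,m)$ and $u[k,s_i]$ into $\Delta$. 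The white points contribute the ``start-shifting'' terms already visible in (\ref{co}), whose left factors end at the same black point $s$; each correction term $\Phi^{S}(1+s_i,m)\,u[k,s_i]$ in (\ref{dhs}) contributes the ``end-shrinking'' terms, whose left factors are $\Phi^{S}(k,s_i)$ and their descendants. The crucial point — and the step I expect to be the main obstacle — is that the coefficients $\alpha^{s}_{km}$ in (\ref{dhs}) are calibrated, via the bimultiplicativity (\ref{sqot}) and the explicit $\sigma$'s and $\mu$'s of (\ref{mu21})--(\ref{mu4}), exactly so that every term whose left tensor factor has degree ending at a \emph{white} interior point cancels, leaving only left factors of the form $\Phi^{S}(1+t',s')$ with $s'$ black. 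Verifying this cancellation uniformly in $r$ is the computational core.

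Granting this formula, the inclusions follow cleanly. Set $U'=\langle\,{\bf k}[G],\ \Phi^{S}(1+t,s):t\text{ white},\,s\text{ black}\,\rangle$. The displayed coproduct shows that all left tensor factors of $\Delta$ on the generators of $U'$ again lie in $U'$, so $U'$ is a right coideal subalgebra; since it contains $\Phi^{S}(k,m)$ and ${\bf k}[G]$ while $U^{S}(k,m)$ is the smallest such, $U^{S}(k,m)\subseteq U'$, whence $\Sigma(U^{S}(k,m))\subseteq\Sigma(U')$. Because $U'$ is generated over ${\bf k}[G]$ by homogeneous elements of degrees $[1+t:s]$, every homogeneous element of $U'$ has degree a sum of such, hence in $\Sigma$, while each generator realizes its own degree $[1+t:s]$; therefore $\Sigma(U')=\Sigma$.

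For the reverse inclusion I would use that $U=U^{S}(k,m)$ is itself a right coideal containing $\Phi^{S}(k,m)$. Applying $\Delta$ to $\Phi^{S}(k,m)$ and invoking the coproduct formula, the right coideal property forces every left factor $\Phi^{S}(1+t,s)$ with $[1+t:s]\subsetneq[k:m]$ into $U$; a downward induction on the degree $[1+t:s]$ (peeling off one white start or one black end at a time) then places every $\Phi^{S}(1+t,s)$ in $U$. Hence each $[1+t:s]\in\Sigma(U)$, so $\Sigma\subseteq\Sigma(U)$. Combining this with the previous paragraph — or, more economically, with Proposition \ref{lat} applied to the pair $U,\,U'$ — yields $\Sigma(U^{S}(k,m))=\Sigma$, as required. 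The same bookkeeping is valid verbatim in the root-of-unity case $q^{t}=1,\ t>4$, the only change being that the PBW-generators now have finite heights, which does not affect the set of degrees.
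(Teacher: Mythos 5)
Your proposal rests on two claims that do not survive scrutiny. The first is your opening assertion that for every white point $t$ and black point $s$ the set $S$ is $(1+t,s)$-regular ``by Lemmas \ref{si} and \ref{si1}.'' Those lemmas give regularity only under an extra hypothesis (for white regularity, that $\psi(t)-1$ is white or off the interval), and it can fail: take $n=3$, $(k,m)=(1,5)$, $S=\{4\}$ (a white $(1,5)$-regular set) and the pair $t=2$, $s=4$. Then $S$ is neither white nor black $(3,4)$-regular: for $i=2$ both $i$ and $\psi(i)-1=4$ lie in $S\cup\{k-1,m\}=\{2,4\}$, while for $i=3$ neither $i$ nor $\psi(i)-1=3$ lies in $S\setminus\{2,4\}=\emptyset$. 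For the same reason your underlying generating claim --- that every $\Phi^{S}(1+t,s)$ lies in $U^{S}(k,m)$ --- is exactly what the paper cannot assert in general: Lemmas \ref{sig1} and \ref{sig2} prove membership only under $\psi$-conditions, Lemma \ref{sig3} only ``unless $t<n<s$,'' and the minimality argument in the case $t<n<s$ of the proof of Theorem \ref{bale} exists precisely to restore the missing hypothesis. In the example above, $\Phi^{S}(3,4)=u[3,4]\sim x_3^2$ does lie in $U^{S}(1,5)$, but only because the degree $[3:4]=2x_3=[4:4]+[4:4]$ is decomposable and $x_3\in U^{S}(1,5)$; it is reached as a product, not by your scheme-restriction mechanism.

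Second, the coproduct formula carrying your whole argument is not proved (you yourself defer it as ``the computational core''), and its crucial half is false. The nontrivial left tensor factors of $\Delta(\Phi^{S}(k,m))$ form a one-parameter, not a two-parameter, family: for a white $(k,m)$-regular set they are group-like multiples of the suffix elements $\Phi^{S}(1+i,m)$ with $i$ white, all ending at the \emph{same} black point $m$, while for a black-regular set they are prefix elements $\Phi^{S}(k,s)$ with $s$ black (the dual picture of Proposition \ref{xn0}; e.g.\ for $n=2$, $S=\{1,2\}$, $\Phi^{S}(1,3)\sim u[2,4]$ and (\ref{co}) yields left factors $\sim\Phi^{S}(1,2)$ and $\Phi^{S}(1,1)$). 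The ``end-shrinking'' terms you attribute to the corrections in (\ref{dhs}) enter iterated coproducts as \emph{products}, not as single generators, so iterating $\Delta$ does not reach the whole nested family, and your downward induction for $\Sigma\subseteq\Sigma(U^{S}(k,m))$ collapses. The paper needs none of this: by \cite[Proposition 9.3]{Kh08} every homogeneous element of $U^{S}(k,m)$ has degree in $\Sigma$, and by \cite[Lemma 9.7]{Kh08} every \emph{indecomposable} element of $\Sigma$ is a simple $U^{S}(k,m)$-root; since $\Sigma$ is generated by its indecomposables, both inclusions follow. The insight your route misses is that only the indecomposable degrees $[1+t:s]$ must be realized inside $U^{S}(k,m)$ --- decomposable ones are automatically sums of realized degrees --- so one never has to (and, by your method, cannot) place every $\Phi^{S}(1+t,s)$ in $U^{S}(k,m)$.
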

\begin{proof}
Proposition  9.3 \cite{Kh08} implies that degrees of all homogeneous elements from $U^S(k,m)$
belong to $\Sigma .$ Hence $\Sigma (U^S(k,m))\subseteq \Sigma .$ At the same time Lemma 9.7 \cite{Kh08}
says that every indecomposable in $\Sigma $ element $[1+t:s]$ 
 is a simple $U^S(k,m)$-root. Since certainly $\Sigma $ is generated by its indecomposable elements,
we have $\Sigma \subseteq \Sigma (U^S(k,m)).$
\end{proof}
\begin{lemma} Let $S$ be a white $(k,m)$-regular set, $t<s$ be respectively white and black
 points on the scheme $(\ref{grb}).$ If $\psi (1+t)$ is not a black point 
$($it is white or does not appear on the scheme at all$)$ then $[1+t:s]$ is a simple
$U^S(k,m)$-root, and $\Phi ^S(1+t,s)\in U^S(k,m).$
\label{sig1}
\end{lemma}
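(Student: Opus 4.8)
The plan is to derive the lemma from the explicit description of $\Sigma(U^S(k,m))$ given in Lemma \ref{sig} together with the inclusion criterion of Proposition \ref{lat}, after first reinterpreting the hypothesis on $\psi(1+t)$. By Definition \ref{fis} we have $\psi(1+t)=\psi(t)-1$, so the assumption that $\psi(1+t)$ be white or absent from the scheme is exactly the condition appearing in Lemma \ref{si} for the interval $[1+t,s]$ (recall $s$ is black and $t<s$). Hence Lemma \ref{si} shows that $S$ is white $(1+t,s)$-regular. In particular $\Phi^S(1+t,s)$ is a legitimate generator of the kind studied in Section 6, its degree is $D(\Phi^S(1+t,s))=[1+t:s]$, and by the properties of such generators recalled in Section 6 (see \cite[Claims 7,8]{Kh08}) the root $[1+t:s]$ is simple in the right coideal subalgebra $U^S(1+t,s)$ generated by $G$ and $\Phi^S(1+t,s)$; equivalently, $[1+t:s]$ is indecomposable in the submonoid $\Sigma(U^S(1+t,s))$.

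Next I would establish the membership $\Phi^S(1+t,s)\in U^S(k,m)$. Applying Lemma \ref{sig} to $\Phi^S(1+t,s)$, the monoid $\Sigma(U^S(1+t,s))$ is generated by the roots $[1+a:b]$ with $t\le a<b\le s$, where $a$ is a white and $b$ a black point of the scheme of $\Phi^S(1+t,s)$. Since $t\ge k-1$ and $s\le m$, and since the colours of the interior points $i\in(t,s)$ are inherited from $S$ unchanged, while $t$ is white and $s$ is black on both schemes, every such pair $(a,b)$ is also a white/black pair on the scheme of $\Phi^S(k,m)$. Thus each generator of $\Sigma(U^S(1+t,s))$ is among the generators of $\Sigma(U^S(k,m))$ described by Lemma \ref{sig}, whence $\Sigma(U^S(1+t,s))\subseteq\Sigma(U^S(k,m))$. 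Proposition \ref{lat} then gives $U^S(1+t,s)\subseteq U^S(k,m)$, and in particular $\Phi^S(1+t,s)\in U^S(k,m)$, which is the second assertion of the lemma.

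It remains to prove that $[1+t:s]$ is simple in $U^S(k,m)$ itself, i.e. indecomposable in the larger monoid $\Sigma(U^S(k,m))$. Here I cannot merely quote the first paragraph, since indecomposability need not persist when the monoid is enlarged. Instead I would argue that any expression $[1+t:s]=\sum_i[1+a_i:b_i]$ in terms of the generators of $\Sigma(U^S(k,m))$ must actually use only generators confined to $[t,s]$, that is with $t\le a_i<b_i\le s$; such generators already lie in $\Sigma(U^S(1+t,s))$, so the indecomposability established in the first paragraph forces a single summand, contradicting a genuine decomposition. To prove confinement I would pass from the linear index to the multiset of variables $x_1,\dots,x_n$ via the identification $x_i=x_{\psi(i)}$ and examine the variable of smallest index occurring in $[1+t:s]$. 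This is precisely where the hypothesis enters: because $\psi(1+t)$ is not black, the case $s=\psi(1+t)$, which would double every variable at the left end, is excluded, and a direct computation of the multiplicity vector of $[1+t:s]$ shows that its smallest-index variable occurs with multiplicity one. Hence a unique summand can account for it; that summand is forced to be aligned at the left end $t$ and cannot fold back from the right beyond $s$, and peeling it off and iterating (with the black endpoint $s$ controlling the right end) leaves a single generator.

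I expect this last paragraph to be the main obstacle. The reduction via Lemma \ref{si}, Lemma \ref{sig} and Proposition \ref{lat} is essentially formal, but the bookkeeping of the multiplicity vectors of the folded intervals $[1+a:b]$, and the verification that no generator extending beyond $[t,s]$ can fold back into the support of $[1+t:s]$, is the only genuinely computational point, and it is exactly the step for which the condition that $\psi(1+t)$ be not black is indispensable.
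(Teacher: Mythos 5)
Your second paragraph, the membership $\Phi^S(1+t,s)\in U^S(k,m)$, is correct and goes by a genuinely different route than the paper: the paper simply quotes \cite[Theorem 9.8]{Kh08} for this half, whereas you derive it from in-paper results. Your reduction is sound at every step: the hypothesis on $\psi(1+t)=\psi(t)-1$ does imply the condition of Lemma \ref{si} (``does not appear on the scheme'' implies $\psi(t)-1\notin[t,s]$), so $S$ is white $(1+t,s)$-regular; every white/black pair of the scheme of $\Phi^S(1+t,s)$ is a white/black pair of the scheme of $\Phi^S(k,m)$, so Lemma \ref{sig}, applied to both subalgebras, gives $\Sigma(U^S(1+t,s))\subseteq\Sigma(U^S(k,m))$; and Proposition \ref{lat} converts the monoid inclusion into $U^S(1+t,s)\subseteq U^S(k,m)$. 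This is a nice, more self-contained substitute for the paper's citation.

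The simplicity half is where the genuine gap sits, and it is exactly the point the paper outsources to \cite[Lemma 9.5]{Kh08} (indecomposability of $[1+t:s]$ in $\Sigma$, which together with Lemma \ref{sig} gives simplicity). Your plan --- confine any decomposition to generators with labels in $[t,s]$ and then quote indecomposability inside the small monoid $\Sigma(U^S(1+t,s))$ --- fails as sketched because confinement by labels is the wrong notion. A generator $[1+a:b]$ of $\Sigma(U^S(k,m))$ with $a\geq n$ has the same degree as the interval $[\psi(b):\psi(a)-1]$, so generators whose labels lie in the mirror interval $[\psi(s)-1,\psi(1+t)]$ can have degree sitting inside the support of $[1+t:s]$; such degrees are certified by colours \emph{outside} $[t,s]$ and need not belong to $\Sigma(U^S(1+t,s))$ at all. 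This is not a phantom: with $n=4$, $k=1$, $m=7$, $S=\{2,5\}$ one has the decomposition $[2:5]=[5:5]+[5:7]$ through the label-unconfined generator $[5:7]$ --- possible precisely because $\psi(2)=7$ is black, i.e.\ the hypothesis fails for $[2:5]$. Ruling out such summands is the real content of the lemma: at the left end it uses the hypothesis (a mirror generator covering $x_{1+t}$ would have to terminate at the black point $\psi(1+t)$), and after peeling off the first summand $[1+t:b]$ it uses the white $(k,m)$-regularity of $S$ (for a black $b\in S$ the point $\psi(b)-1$ is never black, so nothing can cover $x_{1+b}$, forcing $b=s$). Your sketch never invokes the regularity of $S$, and without it the ``peeling and iterating'' step cannot be closed.

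Two further flaws in the same part: your alignment claim is false in the folded case --- if $t<n<s$ and $\psi(s)<1+t$, the smallest-index variable of $[1+t:s]$ is $x_{\psi(s)}$, contributed by the \emph{right} end $s$, not by $1+t$; and the indecomposability in the small monoid that you extract from ``Claims 7,8'' presupposes that $\Phi^S(1+t,s)$ is one of the canonical generators produced by the classification algorithm for $U^S(1+t,s)$, which you have not justified --- justifying it by the present lemma applied to the small scheme would be circular unless the whole argument is recast as an induction on $s-t$.
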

\begin{proof}
By \cite[Lemma 9.5]{Kh08} the element $[1+t:s]$ is indecomposable in $\Sigma .$ Hence
by Lemma \ref{sig} it is a simple $U^S(k,m)$-root. At the same time \cite[Theorem 9.8]{Kh08}
implies $\Phi ^S(1+t,s)\in U^S(k,m).$ 
\end{proof}
\begin{lemma} Let $S$ be a black $(k,m)$-regular set, $t<s$ be respectively white and black
 points on the scheme $(\ref{grb}).$ If $\psi (1+s)$ is not a white point 
then $[1+t:s]$ is a simple $U^S(k,m)$-root, and $\Phi ^S(1+t,s)\in U^S(k,m).$
\label{sig2}
\end{lemma}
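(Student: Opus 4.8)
The plan is to reduce Lemma~\ref{sig2} to its white counterpart, Lemma~\ref{sig1}, by passing to the $\psi$-mirror picture supplied by Proposition~\ref{xn0}. Since $S$ is black $(k,m)$-regular, Proposition~\ref{xn0} (together with the color interchange recorded in Lemmas~\ref{dop} and \ref{dop1}) produces a \emph{white} $(\psi(m),\psi(k))$-regular set $T=\overline{\psi(S)-1}$ with $\Phi^S(k,m)\sim\Phi^T(\psi(m),\psi(k))$. Proportional generators span the same right coideal subalgebra, so $U^S(k,m)=U^T(\psi(m),\psi(k))$; in particular the two subalgebras have identical root monoids. It therefore suffices to run Lemma~\ref{sig1} on $T$ and translate its conclusion back through $\psi$.

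First I would set $t'=\psi(1+s)=\psi(s)-1$ and $s'=\psi(1+t)=\psi(t)-1$. Then $t'<s'$ because $1+t<1+s$, and under the identification $x_i=x_{\psi(i)}$ of Definition~\ref{fis} the root $[1+t':s']$ is literally $[1+t:s]$. Moreover, applying Proposition~\ref{xn0} once more to the restriction of $S$ to the interval $[1+t,s)$ --- which is again black regular by Lemma~\ref{si1}, the very point at which the hypothesis on $\psi(1+s)$ is consumed --- gives $\Phi^T(1+t',s')\sim\Phi^S(1+t,s)$. The colors match up through the elementary dictionary behind $T=\overline{\psi(S)-1}$: an intermediate point $j$ of the $T$-scheme is black exactly when the point $\psi(j)-1$ of the $S$-scheme is white. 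Reading this at $t'$ and $s'$ converts the hypotheses ``$s$ black'' and ``$t$ white'' on the $S$-scheme into ``$t'$ white'' and ``$s'$ black'' on the $T$-scheme, which are two of the three hypotheses of Lemma~\ref{sig1}.

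It remains to match the last hypothesis. Lemma~\ref{sig1} applied to $T$ requires that $\psi(1+t')$ be not a black point of the $T$-scheme. Since $1+t'=\psi(s)$, one has $\psi(1+t')=s$, and the same color dictionary shows that $s$ is white-or-absent on the $T$-scheme precisely when $\psi(1+s)$ is black-or-absent on the $S$-scheme, i.e. precisely when $\psi(1+s)$ is not a white point --- which is exactly the hypothesis of Lemma~\ref{sig2}. With all three hypotheses verified, Lemma~\ref{sig1} gives that $[1+t':s']$ is a simple $U^T(\psi(m),\psi(k))$-root and that $\Phi^T(1+t',s')\in U^T(\psi(m),\psi(k))$; rewriting both statements under $U^S(k,m)=U^T(\psi(m),\psi(k))$, $[1+t':s']=[1+t:s]$, and $\Phi^T(1+t',s')\sim\Phi^S(1+t,s)$ yields the assertion.

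The main obstacle is the bookkeeping in this last matching. Because the labels of the $S$- and $T$-schemes run over overlapping ranges, one must handle separately the boundary cases where $t'$ or $s'$ is forced to be an endpoint (so $t'=\psi(m)-1$ when $s=m$, or $s'=\psi(k)$ when $t=k-1$) and the cases where $\psi(1+s)$ lies outside $[k-1,m]$, so that the corresponding point is simply absent from the $T$-scheme. Checking that ``$\psi(1+s)$ not white'' and ``$\psi(1+t')$ not black'' coincide uniformly over the intermediate, endpoint, and absent situations is the only delicate step; the rest is the formal duality already packaged in Proposition~\ref{xn0} and Lemmas~\ref{dop}, \ref{dop1}, \ref{si1}.
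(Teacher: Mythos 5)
Your proof is correct, but it is not the argument the paper uses. The paper disposes of this lemma exactly as it did the white case (Lemma \ref{sig1}): it quotes the black-regular indecomposability result \cite[Lemma 9.6]{Kh08} to see that $[1+t:s]$ is indecomposable in $\Sigma$, passes to simplicity via Lemma \ref{sig}, and gets the membership $\Phi^S(1+t,s)\in U^S(k,m)$ from \cite[Theorem 9.8]{Kh08}. You instead derive the black case \emph{from} the white case by the $\psi$-duality of Proposition \ref{xn0}: since $T=\overline{\psi(S)-1}$ is white $(\psi(m),\psi(k))$-regular (Lemmas \ref{dop}, \ref{dop1}) and $\Phi^S(k,m)\sim\Phi^T(\psi(m),\psi(k))$, you have $U^S(k,m)=U^T(\psi(m),\psi(k))$, and your dictionary correctly translates the data: $t'=\psi(1+s)$, $s'=\psi(1+t)$ are white/black on the $T$-scheme, $[1+t':s']=[1+t:s]$ in $\Gamma^+$, and --- the one delicate point --- ``$\psi(1+t')=s$ not black on the $T$-scheme'' is equivalent, uniformly over the intermediate, endpoint, and absent configurations, to the hypothesis ``$\psi(1+s)$ not white on the $S$-scheme''; I checked all cases and your claim holds. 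Your use of Lemma \ref{si1} to get $(1+t,s)$-regularity of $S$ (needed so that Proposition \ref{xn0} may be applied on $[1+t,s)$ to give $\Phi^T(1+t',s')\sim\Phi^S(1+t,s)$) is also sound, since ``$\psi(1+s)$ black or absent from $[k-1,m]$'' implies the disjunction required there. Your route is in fact the alternative the author himself gestures at in the proof of Lemma \ref{sig3} (``or just use the duality given in Proposition \ref{xn0}''): it buys independence from the black-regular citations \cite[Lemma 9.6]{Kh08}, at the cost of the boundary-case bookkeeping, while the paper's proof buys brevity by outsourcing both conclusions to \cite{Kh08}.
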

\begin{proof}
Similarly by \cite[Lemma 9.6]{Kh08} the element $[1+t:s]$ is indecomposable in $\Sigma .$ Hence
by Lemma \ref{sig} it is a simple $U^S(k,m)$-root, while \cite[Theorem 9.8]{Kh08}
implies $\Phi ^S(1+t,s)\in U^S(k,m).$ 
\end{proof}
\begin{lemma} Let $S$ be a $(k,m)$-regular set. If $t<s$ are respectively white and black
 points on the scheme $(\ref{grb}),$ then  $\Phi ^S(1+t,s)\in U^S(k,m)$ unless $t<n<s.$
\label{sig3}
\end{lemma}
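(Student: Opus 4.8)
The plan is to reduce to white-regular sets and then separate the situation already covered by Lemma~\ref{sig1} from the genuinely new one, which I would settle by induction on length using the differential closedness of Corollary~\ref{lat2}. If $S$ is black $(k,m)$-regular, then by Proposition~\ref{xn0} and Lemmas~\ref{dop},~\ref{dop1} it is carried to a white $(\psi(m),\psi(k))$-regular set $T=\overline{\psi(S)-1}$ with $U^S(k,m)=U^{T}(\psi(m),\psi(k))$, and $\Phi^S(1+t,s)$ is carried to $\Phi^{T}(\psi(s),\psi(1+t))$, whose initial (white) point is $\psi(s)-1$ and whose terminal (black) point is $\psi(1+t)$. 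The excluded configuration is invariant under this passage: $\psi$ fixes the middle ($\psi(n)-1=n$) and sends $t<n<s$ to $\psi(s)-1<n<\psi(1+t)$. Hence I may assume $S$ is white $(k,m)$-regular.

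If $\psi(1+t)$ is not a black point, Lemma~\ref{sig1} applies and gives $\Phi^S(1+t,s)\in U^S(k,m)$ at once. So the one remaining case is that $\psi(1+t)$ is black. Here the hypothesis $\neg(t<n<s)$ means precisely that $[t,s]$ does not straddle the middle: either $s\le n$ or $t\ge n$ (and in fact $s<n$, resp.\ $t>n$, since in the white-regular case $n$ is white while $\psi(n+1)=n$). Consequently every subinterval $[1+t',s']\subseteq[1+t,s]$ has $s'\le n$ or $1+t'>n$, so by Definition~\ref{reg1} the regularity needed below is automatic.

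In this case I would induct on the length $s-t$. The degree hypothesis of Corollary~\ref{lat2} holds, because $D(\Phi^S(1+t,s))=[1+t:s]$ is a generator of $\Sigma(U^S(k,m))$ by Lemma~\ref{sig}, $t$ being white and $s$ black. It remains to verify $\partial_i(\Phi^S(1+t,s))\in U^S(k,m)$ for all $i$. Differentiating the defining recursion (\ref{dhs}) with the Leibniz rule and (\ref{pdee}), I would show that each $\partial_i(\Phi^S(1+t,s))$ is either zero---the correction terms of (\ref{dhs}) are tailored so that the derivative in the black ``start'' direction cancels, via the normalisation $\alpha_{km}^{s}=\tau_s\,p(u(1+s,m),u(k,s))^{-1}$---or a ${\bf k}[H]$-linear combination of \emph{products} of elements $\Phi^S(1+t',s')$ of strictly smaller length, again with $t'$ white and $s'$ black on the scheme (\ref{grb}). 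These shorter factors lie in $U^S(k,m)$ by the induction hypothesis, hence so do the products and so does $\partial_i(\Phi^S(1+t,s))$. Corollary~\ref{lat2} then yields $\Phi^S(1+t,s)\in U^S(k,m)$; the base case $s-t=0$ is the letter $x_s=\Phi^S(s,s)$, whose derivatives are scalars.

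The main obstacle is exactly this derivative bookkeeping: one must prove in general that $\partial_i(\Phi^S(1+t,s))$ either vanishes or splits as a sum of products of strictly shorter generators of the correct white-start/black-end shape. I expect this to come most cleanly from the coproduct of $\Phi^S(k,m)$, and it is here that the assumption $s\le n$ or $t\ge n$ is used: in these ranges all the subintervals that occur are regular by Definition~\ref{reg1}, so the cancellations go through, whereas when $t<n<s$ this regularity can fail, a derivative whose degree is not in $\Sigma(U^S(k,m))$ survives, and $\Phi^S(1+t,s)$ need not belong to $U^S(k,m)$. Finally, since partial derivatives never enlarge the index range, every shorter factor stays in the same half of $[1,2n]$ as $[t,s]$, so non-exceptionality is inherited automatically and the induction closes.
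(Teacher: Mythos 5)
Your reduction to the white $(k,m)$-regular case via Proposition \ref{xn0} and your disposal of the easy case (where $\psi(1+t)$ is not black, so Lemma \ref{sig1} applies directly) both match the paper. But the remaining case, $\psi(1+t)$ black, is where your proposal stops being a proof. Everything hinges on the claim that each $\partial_i(\Phi^S(1+t,s))$ either vanishes or splits as a ${\bf k}[H]$-linear combination of products of strictly shorter generators $\Phi^S(1+t',s')$ with $t'$ white and $s'$ black; you never establish this, you only say you ``would show'' it and ``expect'' it to come from the coproduct, and you yourself label this derivative bookkeeping ``the main obstacle.'' Corollary \ref{lat2} cannot be invoked until that claim is proved, so your induction on $s-t$ never closes. (There is also a small slip: in the white-regular case $t\geq n$ does not force $t>n$, since $t$ and $n$ are both white points, so $t=n$ is allowed.)

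The gap is avoidable, and the paper's own argument shows how: apply Lemma \ref{sig1} \emph{twice}, the second time inside a smaller right coideal subalgebra rather than inside $U^S(k,m)$ itself. If $s\leq n$, then since $\psi(k)$ is not black (see diagrams (\ref{grab}), (\ref{grab1})), Lemma \ref{sig1} with $t\leftarrow k-1$ gives $\Phi^S(k,s)\in U^S(k,m)$; now apply Lemma \ref{sig1} again, this time to $U^S(k,s)$, whose scheme lies entirely in $[k-1,s]\subseteq[1,n]$, so that the obstruction point $\psi(1+t)>n$ simply does not appear on it and the hypothesis of Lemma \ref{sig1} holds vacuously. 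Hence $\Phi^S(1+t,s)\in U^S(k,s)\subseteq U^S(k,m)$ with no derivative computation at all. Symmetrically, if $t\geq n$, the point $n=\psi(n+1)$ is white, so Lemma \ref{sig1} gives $\Phi^S(1+n,m)\in U^S(k,m)$, and applying Lemma \ref{sig1} to $U^S(1+n,m)$, whose scheme lies in $[n,m]$, yields $\Phi^S(1+t,s)\in U^S(1+n,m)\subseteq U^S(k,m)$. For black $(k,m)$-regular $S$ one argues with Lemma \ref{sig2} or passes to the dual by Proposition \ref{xn0}, as you did. The idea you are missing is that the troublesome point $\psi(1+t)$ can be made to vanish from the scheme by first truncating to a sub-coideal-subalgebra lying on one side of $n$.
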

\begin{proof}
Let $S$ be white $(k,m)$-regular. Assume $s\leq n.$ 
The point $\psi (k)$ is not black on the schemes (\ref{grab}), (\ref{grab1}). Hence 
Lemma \ref{sig1} with $t\leftarrow k-1,$ $s\leftarrow s$ implies  $\Phi ^S(k,s)\in U^S(k,m).$
Again by Lemma \ref{sig1} applied to $U^S(k,s)$ we get $\Phi ^S(1+t,s)\in U^S(k,s)\subseteq U^S(k,m).$

Assume $t\geq n.$ The point $n=\psi (n+1)$ is white on the schemes (\ref{grab}), (\ref{grab1}). Therefore 
Lemma \ref{sig1} with $t\leftarrow n,$ $s\leftarrow m$ implies  $\Phi ^S(1+n,m)\in U^S(k,m).$
Again by Lemma \ref{sig1} applied to $U^S(1+n,m)$ we get $\Phi ^S(1+t,s)\in U^S(1+n,m)\subseteq U^S(k,m).$

If $S$ is black $(k,m)$-regular, then we may apply Lemma \ref{sig2} in a similar way or  just use
the duality given in Proposition \ref{xn0}.
\end{proof}

\section{Necessary condition} 

Let $U^-\supseteq F$ and $U^+\supseteq G$ be right coideal subalgebras of respectively 
negative and positive quantum Borel subalgebras. As we mentioned in the above section 
$U^+$ is generated  as algebra by $G$ and elements of the form $\Phi ^{S}(k,m)$
with $(k,m)$-regular sets $S.$ Respectively $U^-$ is generated 
as algebra by $F$ and  elements of the form $\Phi ^{T}_-(i,j)$ with $(i,j)$-regular sets $T.$
Here $\Phi ^{T}_-(i,j)$  appears from $\Phi ^{T}(i,j)$
given in (\ref{dhs}) under the substitutions $x_t\leftarrow x_t^-,$ $1\leq t\leq 2n.$

To state a necessary condition for tensor product (\ref{tru}) to be a subalgebra  
we display the regular generators  $\Phi ^{S}(k,m)$ and $\Phi ^{T}_-(i,j)$ 
graphically as defined in (\ref{grb}):
\begin{equation}
\begin{matrix} S \ \ 
\stackrel{k-1}{\circ } \ & \cdots \ & \stackrel{i-1}{\bullet } 
\ & \stackrel{i}{\bullet }\ \ & \stackrel{i+1}{\circ }\ & \cdots &
\ & \stackrel{m}{\bullet } \ & \ & \  \cr
T \ \ \ \ \ \ 
\ \ & \  \ & \circ  
\ & \circ \ \ & \bullet \ & \cdots &
\ & \bullet  \ & \cdots  \ & \stackrel{j}{\bullet } 
\end{matrix}\ \ \ .
\label{grr1}
\end{equation} 
We shall call this scheme  a $S_k^mT_i^j$-{\it scheme}. Sometimes in this notation we omit those of the indices 
that are fixed in the context. For example if $k,m,i,j$ are fixed, this is a $ST$-scheme.
Lemma \ref{xn0} shows that the element $\Phi ^{S}(k,m)$ up to a scalar factor equals the element   
$\Phi ^{\overline{\psi(S)-1}}(\psi (m),\psi (k))$ that has essentially  different representation (\ref{grb}). 
By this reason to the pair 
$\Phi ^{S}(k,m),$ $\Phi ^{T}_-(i,j)$ we may associate three more schemes:
\begin{equation}
\begin{matrix} S \ \ 
\stackrel{k-1}{\circ } \ & \cdots \ & \stackrel{\psi(j)-1}{\bullet } 
\ & \stackrel{\psi (j)}{\bullet }\ \ & \stackrel{\psi (j)+1}{\circ }\ & \cdots &
\ & \stackrel{m}{\bullet } \ & \ & \ \cr
{T^*} \ \ \ \ \ 
\ \ & \  \ & \circ  
\ & \bullet \ \ & \bullet \ & \cdots &
\ & \circ  \ & \cdots  \ & \stackrel{\psi (i)}{\bullet }
\end{matrix}\ \ \ .
\label{grr2}
\end{equation}
Here $T^*$ is the set $\overline{\psi ({T})-1},$ the complement 
of $ \{ \psi (t)-1\, |\, t\in {T}\} $ with respect to $[\psi (j),\psi (i)).$ 
By definition this is the $S_k^mT^{*\psi (i)}_{\psi (j)}$-scheme,
or shortly the $ST^*$-scheme.
\begin{equation}
\begin{matrix} {S^*} \ \ \ \ \ \ & \ &
\stackrel{\psi (m)-1}{\circ } & \cdots \ &\stackrel{j-2}{\bullet } & \stackrel{j-1}{\circ  } 
& \stackrel{j}{\circ  }&  \cdots
& \stackrel{\psi (k)}{\bullet } \cr
{T} \ \  \stackrel{i-1}{\circ } & \cdots 
& \bullet &  \cdots &\circ  & \circ & \bullet 
\end{matrix}\ \ \ .
\label{grr3}
\end{equation}
Here  $S^*$ is the set $\overline{\psi (S)-1},$ the complement 
of $ \{ \psi (s)-1\, |\, s\in {S}\} $ with respect to $[\psi (j),\psi (i)).$
By definition this is the $S_{\psi (m)}^{*\psi (k)}T^j_i$-scheme, or shortly the $S^*T$-scheme.
\begin{equation}
\begin{matrix} { S^*} \ \ \ \ \ \ \  \ \ \  \ & \ & \stackrel{\psi (m)-1}{\circ } & \cdots &
 \stackrel{\psi (i)-1}{\circ }\ 
\ & \stackrel{\psi (i)}{\circ } \ & \cdots & \stackrel{\psi (k)}{\bullet } \cr
{T^*} \ \ \stackrel{\psi (j)-1}{\circ }& \cdots & 
 \circ  & \cdots & \bullet \ \ & \bullet \
\end{matrix}\ \ \ .
\label{grr4}
\end{equation}
Again by definition this is the $S_{\psi (m)}^{*\psi (k)}T^{*\psi (i)}_{\psi (j)}$-scheme, or shortly the $S^*T^*$-scheme.

\begin{definition} \rm
A scheme  is said to be {\it balanced} if it has no fragments of the form  
\begin{equation}
\begin{matrix}
\stackrel{t}{\circ } \ & \cdots & \stackrel{s}{\bullet } \cr
\circ  
\ & \cdots  & \bullet 
\end{matrix}\ \ \ .
\label{gra2}
\end{equation}
\label{bal}
\end{definition}
\begin{theorem} 
Consider the triangular decomposition of a right coideal subalgebra given in Theorem $\ref{raz2}$
\begin{equation}
 U=U^-\otimes _{{\bf k}[F]} {\bf k}[H] \otimes _{{\bf k}[G]}U^+.
\label{trus}
\end{equation}
If  $\Phi ^{S}(k,m),$  $\Phi ^{T}_-(i,j)$ are the regular generators respectively of $U^+$ and $U^-$ defined by simple 
roots $[k:m]$ and $[i:j]^-,$ then either  
all four schemes $(\ref{grr1}-\ref{grr4})$ defined by this pair are balanced, 
or  one of them has the form
\begin{equation}
\begin{matrix}
\stackrel{t}{\circ } \ & \cdots & \circ & \cdots & \bullet & \cdots & \stackrel{s}{\bullet } \cr
\circ  
\ & \cdots  &  \bullet & \cdots & \circ & \cdots &  \bullet 
\end{matrix}\ \ \ ,
\label{gra3}
\end{equation}
where no one intermediate column has points of the same color.
\label{bale}
\end{theorem}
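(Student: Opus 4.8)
The plan is to reduce the statement that (\ref{trus}) is a subalgebra to the requirement that every skew bracket between generators of $U^+$ and of $U^-$ stays in $U$, and then to read off the combinatorics from the way such brackets \emph{shorten} the generators. Since (\ref{trus}) is a subalgebra precisely when $[a,b]\in U$ for all algebra generators $a\in U^+$, $b\in U^-$, and since by Theorem \ref{rig} these may be taken to be the regular elements $\Phi^{S}(k,m)$ and $\Phi^{T}_-(i,j)$, it suffices to control the brackets of such pairs together with the brackets of the sub-generators that the right-coideal property forces into $U^{\pm}$.

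First I would record which short elements are automatically present. By Lemma \ref{sig3}, every white-to-black sub-range of a regular scheme yields a generator lying in $U^{\pm}$; in particular a fragment $\circ\bullet$ at positions $a-1,a$ of $S$ forces the single variable $x_a=\Phi^{S}(a,a)\in U^+$, and symmetrically a fragment $\circ\bullet$ in $T$ forces some $x_b^-\in U^-$. The decisive computation is then the bracket of such a forced variable against a generator of the opposite sign. Using the differentiation formulas (\ref{pdee}), (\ref{pdu}) through Lemmas \ref{ruk1} and \ref{ruk2}, bracketing $x_a$ with $\Phi^{T}_-(i,j)$ deletes the variable $x_a^-$ and returns, up to a group factor and a nonzero scalar, a shortened element $\Phi^{T'}_-$; this shortened element lies in $U^-$ exactly when its new endpoints are again of white-start/black-end type in $T$. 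The scalars that appear are of the form $1-q^{-2}$ or $1-p_{uv}p_{vu}$, nonzero by the standing hypotheses $q^2\neq\pm1$, $q^3\neq1$, so no term is accidentally annihilated.

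With this dictionary the dichotomy is forced. If all four schemes (\ref{grr1}--\ref{grr4}) are balanced, then there is no matched white-to-black range, a forced variable of one sign cannot shorten a generator of the other, and the relevant brackets vanish by Corollary \ref{ruk3}; hence $U$ closes under multiplication. If instead some scheme contains a bad fragment (\ref{gra2}), I would locate the forced variable it produces and compute the shortened element: when the two rows are complementary on the interior, that is configuration (\ref{gra3}), every such shortening telescopes column by column into a purely Cartan element $\varepsilon\,(1-g_{\,\cdot}f_{\,\cdot})\in{\bf k}[H]\subseteq U$, exactly as in the model computations of Lemmas \ref{suu1} and \ref{suu2}; but if no interior column is complementary, one of these shortenings yields a purely positive or purely negative element whose root escapes $\Sigma(U^{+})$ or $\Sigma(U^{-})$. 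By the uniqueness of the triangular decomposition (\ref{trus}) this would place in $U$ an element whose positive or negative component lies outside $U^{\pm}$, a contradiction. The duality $\Phi^{S}(k,m)\sim\Phi^{\,\overline{\psi(S)-1}}(\psi(m),\psi(k))$ of Proposition \ref{xn0} and Lemma \ref{xn21} is what lets me pass freely among the four schemes and guarantees that it is enough for a single one of them to exhibit the complementary form.

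The main obstacle is the shortening calculus for the genuine generators $\Phi^{S}$ rather than the monomial brackets $u[k,m]$: because $\Phi^{S}$ is built by the inductive correction (\ref{dhs}), bracketing it with a forced variable must be followed through that induction, and one must check both that the leading shortened term is the expected $\Phi^{T'}_-$ with nonzero coefficient and that the lower correction terms do not conspire to re-enter $U$. A second delicate point is the fold at the index $n$: when a matched range straddles $n$, Lemma \ref{sig3} no longer supplies the forced variable directly, and I would resolve this by passing to the mirror generators of Section 2 and to the dual scheme, where the same fragment reappears on one side of $n$ and the previous argument applies verbatim.
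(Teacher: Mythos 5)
Your proposal reproduces the paper's strategy only at the most general level (force brackets of $U^+$- and $U^-$-elements into $U$ and extract a root-theoretic contradiction), but the mechanism you give for the contradiction does not work as stated, and the two technical obstacles you name are real obstacles that your sketch does not overcome. The decisive gap is this: you claim that a bad shortening ``yields a purely positive or purely negative element whose root escapes $\Sigma(U^{+})$ or $\Sigma(U^{-})$,'' but you offer no reason why that root should escape, and in general it need not. The paper's actual contradiction is of a different nature and hinges on the hypothesis, which you never invoke, that $[k:m]$ and $[i:j]^-$ are \emph{simple} roots: choosing a counterexample pair whose bad fragment $(\ref{gra2})$ has $s-t$ \emph{minimal} (so that all interior columns are mixed and Theorem \ref{des1} yields $[\Phi^{S}(1+t,s),\Phi^{T}_-(1+t,s)]\sim 1-h_{1+t\rightarrow s}$), one shows via the Jacobi identities that $[\Phi^{S}(k,s),\Phi^{T}_-(1+t,s)]\in U$ is proportional to $\Phi^{S}(k,t)$ with nonzero coefficient; hence $[k:t]$ is a $U^+$-root, and since $[1+t:m]\in\Sigma(U^+)$ by Lemma \ref{sig}, the equality $[k:m]=[k:t]+[1+t:m]$ is a forbidden decomposition of a simple root unless $t=k-1$. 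Iterating this (and its mirror) forces $k=i=1+t$, $m=j=s$, i.e.\ form $(\ref{gra3})$. Without the minimality choice and without simplicity, your ``shortening'' argument has no way to close.

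The two difficulties you flag at the end are also not resolved by the remedies you propose. First, bracketing a single forced variable $x_a$ against $\Phi^{T}_-(i,j)$ is exactly what the inductive corrections in $(\ref{dhs})$ make intractable; the paper never does this, and instead brackets matched \emph{blocks} $\Phi^{S}(k,s)$ against $\Phi^{T}_-(1+t,s)$, which requires the whole apparatus of Theorems \ref{des1} and \ref{str} and Lemmas \ref{ed}, \ref{ed1}, \ref{xic} — none of which your single-variable calculus via Lemmas \ref{ruk1}, \ref{ruk2} (valid only for $u[k,m]$) replaces. Second, your fix for a fragment straddling $n$ fails: the duality $\rho\colon a\mapsto\psi(a)-1$ of Lemma \ref{bal2} sends a fragment with $t<n<s$ to one with $\psi(s)-1<n<\psi(t)-1$, which still straddles $n$, and passing to mirror generators merely exchanges $S$ and $T$ without moving the fragment; the paper needs a genuinely separate case analysis for $t<n<s$ (and, for $s=n$, the counit-projection argument of Lemma \ref{xic}). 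Finally, note that the converse implication you begin with (all schemes balanced $\Rightarrow$ $U$ closed under multiplication) is not part of the theorem and is explicitly left open in the paper; your one-line justification via Corollary \ref{ruk3} would not establish it, since that corollary concerns $u[k,m]$, $u[i,j]^-$ rather than the general regular generators.
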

 The next lemma shows that to see
that a given pair satisfies the conclusion of the theorem it is sufficient to check
 just two first schemes  (\ref{grr1}), (\ref{grr2}).
\begin{lemma} 
$ST$-Scheme $(\ref{grr1})$ is balanced if and only if  so is 
$S^*T^*$- scheme $(\ref{grr4}).$ Similarly $ST^*$-scheme
$(\ref{grr2})$ is balanced if and only if  so is $S^*T$-scheme $(\ref{grr3}).$
$ST$-Scheme $(\ref{grr1})$ has the form $(\ref{gra3})$
 if and only if  so does $S^*T^*$-scheme $(\ref{grr4}).$ Respectively  $ST^*$-scheme
$(\ref{grr2})$ has the form $(\ref{gra3})$ if and only if  so does $S^*T$-scheme $(\ref{grr3}).$
\label{bal2}
\end{lemma}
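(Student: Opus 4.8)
The plan is to reduce all four equivalences to one structural fact about how $\psi$-duality acts on the schemes. The first step is to determine precisely how the scheme $(\ref{grb})$ of a generator changes when it is replaced by its dual. By Proposition $\ref{xn0}$ we have $\Phi^{S}(k,m)\sim\Phi^{S^*}(\psi(m),\psi(k))$ with $S^*=\overline{\psi(S)-1}$ (and the analogous statement for the minus generators). Tracking indices through the relation $\psi(\ell+1)=\psi(\ell)-1$, the position $\ell\in[k-1,m]$ of the original scheme corresponds to the position $\psi(\ell+1)\in[\psi(m)-1,\psi(k)]$ of the dual scheme, and a short computation using $\psi(\psi(\ell+1)+1)=\ell$ together with $S^*=\overline{\psi(S)-1}$ shows that the dual point at $\psi(\ell+1)$ is black exactly when the point at $\ell$ is white. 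This holds for the forced endpoints as well: the white point $k-1$ goes to the black endpoint $\psi(k)$ and the black point $m$ to the white endpoint $\psi(m)-1$. Hence the dual scheme is obtained from the original by \emph{reversing the left-to-right order of the points and complementing the colour of every point}.

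Next I would pass to two-row schemes. Since the position map $\ell\mapsto\psi(\ell+1)$ depends on $\ell$ alone and not on the row or the defining set, dualizing \emph{both} rows sends the column at index $\ell$ to the column at index $\psi(\ell+1)$ while recolouring both of its points; thus the combined two-row scheme is transformed by reversing the column order and complementing all colours simultaneously, and the overlap region (columns carrying points in both rows) is carried bijectively onto the overlap region of the dual. It then suffices to check that this operation preserves the two configurations of interest. A forbidden fragment $(\ref{gra2})$ is a $(\circ,\circ)$ column lying to the left of a $(\bullet,\bullet)$ column; complementation interchanges the two column types and the reversal interchanges their positions, so the image is again a $(\circ,\circ)$ column to the left of a $(\bullet,\bullet)$ column. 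Hence fragments correspond bijectively and balancedness is preserved in both directions. The form $(\ref{gra3})$ is characterized by a leftmost $(\circ,\circ)$ column, a rightmost $(\bullet,\bullet)$ column, and bichromatic intermediate columns; the transformation interchanges and recolours the two extreme columns into one another while keeping every bichromatic column bichromatic, so this form is preserved as well.

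Finally I would match the four schemes. The $S^*T^*$-scheme $(\ref{grr4})$ arises from the $ST$-scheme $(\ref{grr1})$ by dualizing both rows, and the $S^*T$-scheme $(\ref{grr3})$ arises from the $ST^*$-scheme $(\ref{grr2})$ by dualizing both rows, using the involutivity $(S^*)^*=S$ and $(T^*)^*=T$ from Proposition $\ref{xn0}$. By the preceding paragraph each of these passages preserves balancedness and the form $(\ref{gra3})$, which is exactly the four asserted equivalences.

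The step I expect to be the crux is the first one: arranging the index bookkeeping so that the duality is recognized as complementing \emph{every} colour, including the conventionally forced white first point and black last point, rather than only the intermediate ones. This is precisely where the complement $S\mapsto\overline{S}$ (which fixes the endpoints) and the shift $S\mapsto\psi(S)-1$ must be combined correctly into the net map $S\mapsto S^*$, with the identity $\psi(\psi(\ell+1)+1)=\ell$ doing the essential work. Once the uniform ``reflect and complement-all'' picture is established, the remaining steps are routine bookkeeping.
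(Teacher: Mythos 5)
Your proof is correct and is essentially the paper's own argument: the paper's proof introduces exactly your ``reflect and complement'' map as a transformation $\rho$ sending a point $a$ to $\psi(a)-1$ while changing its color, notes that $\rho$ carries the $ST$-scheme to the $S^*T^*$-scheme and the $ST^*$-scheme to the $S^*T$-scheme while reversing the column order, and observes that a fragment of the form (\ref{gra2}) goes to a fragment of the same form with $t\leftarrow \psi(s)-1$, $s\leftarrow \psi(t)-1$. Your write-up just spells out the index bookkeeping (including the endpoints) that the paper leaves implicit.
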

\begin{proof}
Consider a transformation $\rho $ of schemes  that moves a point $a$ to $\psi (a)-1$ and changes the color.
This transformation maps  $ST$-scheme to $S^*T^*$-scheme 
and $ST^*$-scheme to $S^*T$-scheme. At the same time it changes the order
of columns. 
In particular  the fragment of the form (\ref{gra2}) transforms to a fragment of the same form 
with $t\leftarrow \psi (s)-1,$ $s\leftarrow \psi (t)-1.$
\end{proof}

\section{Additional relations}
In this and the next  technical sections 
we are going to describe  two important cases when $\left[ \Phi^{S}(k,m),\Phi^{T}_-(i,j)\right ]$
belongs to ${\bf k}[H].$ The first one (Theorem \ref{des1}) is the case 
when $ST$-scheme has the form (\ref{gra3}),
while the second one (Theorem \ref{str}) provides conditions when this bracket equals zero.

We fix the following notations. Let $h_i$ denote  
$g_if_i\in H,$ while  $g_{k\rightarrow m}$ is the product $g_kg_{k+1}\ldots g_{m},$
respectively $f_{k\rightarrow m}$ $=f_kf_{k+1}\ldots f_{m},$ and
$h_{k\rightarrow m}$ $=g_{k\rightarrow m}f_{k\rightarrow m}.$
In the same way $\chi ^{k\rightarrow m}$ $=\chi ^{k}\chi ^{k+1}\ldots \chi ^{m}.$
Similarly $P_{k\rightarrow m,i\rightarrow j}$ is
$\chi ^{k\rightarrow m}(g_{i\rightarrow j})$ $=\chi ^{i\rightarrow j}(f_{k\rightarrow m}).$
Of course we have
$P_{k\rightarrow m,i\rightarrow j}$ $=P_{\psi(m)\rightarrow \psi (k),\psi (j)\rightarrow \psi (i)}.$
In these notations Definition 4.4 takes the form 
$\sigma _k^m$ $=P_{k\rightarrow m,k\rightarrow m};$ 
$\mu _k^{m,i}$ $=P_{k\rightarrow i,i+1\rightarrow m}$ $\cdot P_{i+1\rightarrow m,k\rightarrow i}.$

\begin{theorem} 
If $S$ is a $(k,m)$-regular set then
$$
\left[ \Phi^{S}(k,m), \Phi^{\overline{S}}_-(k,m)\right] \sim 1-h_{k\rightarrow m},
$$
where  $\overline{S}$ is a complement of $S$ with respect to the interval $[k,m).$
\label{des1}
\end{theorem}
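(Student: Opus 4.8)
The plan is to argue by induction on the length $m-k$, the base computations being exactly Lemmas \ref{suu1} and \ref{suu2}. For $m=k$ both generators reduce to a single variable, $\Phi^{S}(k,k)=x_k$ and $\Phi^{\overline S}_-(k,k)=x_k^-$, so the defining relation (\ref{rela31}) gives $[x_k,x_k^-]=1-g_kf_k=1-h_{k\rightarrow k}$ and the base case holds with an honest equality. Before the inductive step I would normalize to $m<\psi(k)$. Since $S$ is $(k,m)$-regular we automatically have $m\neq\psi(k)$ (no set is $(k,\psi(k))$-regular, as recorded before Lemma \ref{dop}), so either $m<\psi(k)$ or $m>\psi(k)$; in the latter case Proposition \ref{xn0} replaces $\Phi^{S}(k,m)$ by the proportional generator $\Phi^{T}(\psi(m),\psi(k))$ with $\psi(m)<\psi(k)$, and Lemma \ref{xn21} shows the companion negative generator transforms compatibly, so the whole statement is invariant under this duality. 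The point of assuming $m<\psi(k)$ is that then $\psi(k)>m$, i.e. neither $k$ nor its mirror index $\psi(k)$ lies in $(k,m]$.

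For the inductive step I would peel off one end. Choosing a split index $t$, $k\le t<m$, dictated by the colour pattern of the scheme (\ref{grb}) and the regularity type of $S$, I would invoke Lemmas \ref{xn1} and \ref{xn2} (and their negative analogues) to write
\[
\Phi^{S}(k,m)\sim[\Phi^{S}(k,t),\Phi^{S}(1+t,m)],\qquad \Phi^{\overline S}_-(k,m)\sim[\Phi^{\overline S}_-(1+t,m),\Phi^{\overline S}_-(k,t)],
\]
the sub-regularity needed to iterate being guaranteed by Lemmas \ref{si} and \ref{si1}. Setting $u=\Phi^{S}(k,t)$, $v=\Phi^{S}(1+t,m)$, $w^-=\Phi^{\overline S}_-(1+t,m)$, $t^-=\Phi^{\overline S}_-(k,t)$, the bracket becomes $[[u,v],[w^-,t^-]]$, which is precisely the shape expanded by identity (\ref{fo1}). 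Its two cross terms carry the factors $[v,t^-]$ and $[u,w^-]$; because the index sets $[1+t,m]$ and $[k,t]$ are disjoint and (thanks to $m<\psi(k)$) remain disjoint after applying $\psi$, Lemma \ref{suu} and Corollary \ref{ruk3} force $[v,t^-]=[u,w^-]=0$, killing those summands exactly as in the proof of Lemma \ref{suu1}.

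What survives are the terms built from $[v,w^-]=[\Phi^{S}(1+t,m),\Phi^{\overline S}_-(1+t,m)]\sim 1-h_{1+t\rightarrow m}$ and $[u,t^-]=[\Phi^{S}(k,t),\Phi^{\overline S}_-(k,t)]\sim 1-h_{k\rightarrow t}$, both supplied by the inductive hypothesis. I would then finish as in Lemmas \ref{suu1} and \ref{suu2}: substitute these group-valued inner brackets into (\ref{fo1}) and develop the outer brackets by the group-element rules (\ref{cuq1})--(\ref{cuq4}) and the antisymmetry (\ref{dos}), using $h_{k\rightarrow m}=h_{k\rightarrow t}\,h_{1+t\rightarrow m}$ to recombine. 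The two contributions then assemble into a scalar multiple of $1-h_{k\rightarrow m}$.

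The main obstacle is twofold. First, choosing the split index $t$ uniformly: when $k\le n<m$ the scheme folds about $n,n+1$, so the admissible $t$ must be read from the colours of $S$ (a white point feeds Lemma \ref{xn1}, a black point feeds Lemma \ref{xn2}), and one must verify against Definition \ref{reg1} that the positive decomposition and the complementary negative decomposition are simultaneously legal and that the two halves remain regular; this is the combinatorial heart of the argument, and it is where Lemmas \ref{dop}, \ref{dop1}, \ref{si}, \ref{si1} and, when the split is taken at the central index $n$, Lemma \ref{rww} do the bookkeeping. Second, as in the verification $\varepsilon_1+\varepsilon_2=\varepsilon$ at the end of Lemma \ref{suu2}, I must confirm that the scalar emerging from the two surviving terms does not vanish; here it is a product of factors of the form $1-q^{\,r}$ with exponents $r$ controlled by (\ref{mu21})--(\ref{mu4}), which are nonzero because $q$ is not a root of unity (respectively because $t>4$), so the proportionality $\sim$ is genuine rather than trivial.
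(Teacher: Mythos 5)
Your induction scheme, base case, and the reduction to $m<\psi(k)$ via Proposition \ref{xn0} and Lemma \ref{xn21} all match the paper, and your argument is essentially complete in the unfolded situation $n\notin[k,m)$ (there the paper splits at $t=m-1$ and invokes Lemma \ref{suu1} exactly as you describe). The genuine gap is in the folded case $k\le n<m$, which is the heart of the theorem. Your key claim --- that the cross terms $[v,t^-]$ and $[u,w^-]$ vanish by Lemma \ref{suu} and Corollary \ref{ruk3} because the index sets $[k,t]$ and $[1+t,m]$ ``remain disjoint after applying $\psi$'' thanks to $m<\psi(k)$ --- is false. Because of the identification $x_i=x_{\psi(i)}$, disjointness of the \emph{actual} variable sets of the two halves requires $t<\psi(m)$, and the hypothesis $m<\psi(k)$ only gives $k<\psi(m)$; it does not control the split point, which is dictated by the colour pattern and by Lemmas \ref{xn1}, \ref{xn2}. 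Concretely, take $n=3$, $k=1$, $m=5$, $S=\emptyset$ (white $(1,5)$-regular, and $m<\psi(k)=6$): the splits for which the positive and the complementary negative decompositions are simultaneously legal are $t=2$, $t=3$, $t=4$, and in every one of these the two halves share a variable ($x_2$ or $x_3$), so Lemma \ref{suu} is inapplicable; Corollary \ref{ruk3} does not help either, since it concerns only the elements $u[k,m]$, $u[i,j]^-$ rather than general $\Phi$'s, and its index hypotheses are violated here as well.

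The vanishing of these cross terms in the folded case is precisely the hard part of the paper's proof, and it is not a consequence of variable separation: the paper splits at $\psi(t)-1$, where $t$ is the first white point following $\psi(m)-1$, and then proves $[u,y^-]=0$, $[v,w^-]=0$ by a further decomposition (\ref{no2}), the coincidence $a_2\sim v$ of (\ref{gb2}), (\ref{gb3}), and Jacobi/separation manipulations that use the inductive hypothesis itself inside the brackets. Moreover, when that first white point is $t=n$, the cross terms cannot be made to vanish at all: there $\Phi^{S}(k,m)\sim[[a_1,v],v]$ is quadratic in $v$, its negative partner is quadratic in $y^-$, and the paper must invoke the cubic identity of Lemma \ref{suu2}. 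You list Lemma \ref{suu2} among your ``base computations,'' but a binary split of the form $[[u,v],[w^-,t^-]]$ never produces the triple brackets that lemma handles, so this case is simply outside your framework. In short: your outline is correct where no folding occurs, but for $k\le n<m$ both the choice of split and the vanishing of the cross terms require a substantially different argument that your proposal does not supply.
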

\begin{proof}
We use induction on $m-k.$ If $m=k,$ the statement is clear. 

Suppose firstly that $n\notin [k,m).$ In this case each set is both black and white $(k,m)$-regular. 
Hence by Lemma \ref{xn1} and Lemma \ref{xn2} with $t=m-1$ we have
$$
\Phi^{S}(k,m)\sim \left\{ 
\begin{matrix}
[ \Phi^{S}(k,m-1),x_m]\hfill & \mbox{ if } m-1\notin {S};  \hfill \cr
[x_m,\Phi^{S}(k,m-1)] \hfill & \mbox{ if } m-1\in {S},\hfill 
\end{matrix}
\right. 
$$
and
$$
\Phi^{\overline{S}}_-(k,m)\sim \left\{ 
\begin{matrix}
[x_m^-,\Phi^{\overline{S}}_-(k,m-1)]
\hfill & \mbox{ if } m-1\notin {S};  \hfill \cr
[ \Phi^{\overline{S}}_-(k,m-1),x_m^-] \hfill & \mbox{ if } m-1\in {S}.\hfill 
\end{matrix}
\right. 
$$
Let us fix for short the following designations:  $u=\Phi^{S}(k,m-1),$ $v^-=\Phi^{\overline{S}}_-(k,m-1).$
By the inductive supposition we have $[u,v^-]=\alpha (1-h_{k\, m-1}),$ $\alpha \neq 0.$
Consider the algebra ${\mathfrak F}_2$ defined by the quantum variables $z_1, z_2$ with
$g_{z_1}={\rm gr}(u)=g_{k\rightarrow m-1},$ $\chi ^{z_1}=\chi ^u,$ $g_{z_2}=g_m,$ $\chi ^{z_2}=\chi ^m,$
and respectively
$g_{z_1^-}={\rm gr}(v^-)=f_{k\rightarrow m-1},$ $\chi ^{z_1^-}=(\chi ^u)^{-1},$ $g_{z_2^-}=f_m,$ 
$\chi ^{z_2^-}=(\chi ^m)^{-1}.$
Since due to Lemma \ref{suu} we have $[u,x_m^-]=[x_m,v^-]=0,$ the map
$z_1\rightarrow u,$ $z_2\rightarrow x_m,$ $z_1^-\rightarrow \alpha ^{-1}v^-,$
$z_2^{-}\rightarrow x_m^-$ has an extension up to a homomorphism 
of algebras.  Hence by Lemma \ref{suu1} we have 
$[[u,x_m],[x_m^-,v^-]]=\varepsilon (1-h_{k\rightarrow m}),$ where the coefficient 
$\varepsilon =(1-p(z_1,z_2)p(z_2,z_1))$  equals $1-q^{-2},$
for $p(z_1,z_2)$ $=p(u,x_m)$ $=p_{km}p_{k+1\, m}\ldots p_{m-1\, m}$
and $p(z_2,z_1)$ $=p(x_m, u)$ $=p_{mk}p_{m\, k+1}\ldots p_{m\, m-1}.$
Since conditions of Lemma \ref{suu1} are invariant under the substitution $i\leftrightarrow j,$
we have also $[[x_m,u],[v^-,x_m^-]]=\varepsilon (1-h_{k\rightarrow m}),$ which is required.

Now consider the case $n\in [k,m).$ Suppose that $S$ is white $(k,m)$-regular and 
$m<\psi (k).$ In this case $\overline{S}$ is black $(k,m)$-regular. Let $t$ denote
the first white point next in order to $\psi (m)-1.$ Since $n$ is a white point, we have $t\leq n.$
\begin{equation}  
\begin{matrix}
 \ \ \ \ \ \ \ & \stackrel{m}{\bullet }&\circ &\circ &\stackrel{\psi (t)}{\circ }& \stackrel{\psi (t)-1}{*}  & \cdots & \stackrel{n}{\circ }   \Leftarrow \cr 
 \stackrel{k-1}\circ \ldots & \stackrel{\psi (m)-1}{\circ }& \bullet & \bullet &\stackrel{t-1}{\bullet } & \stackrel{t}{\circ } & \cdots & \stackrel{n}{\circ }   \hfill
\end{matrix}
\label{gb1}
\end{equation}
The set $S\cup \{ \psi (t)-1\} $ is white $(k,m)$-regular, unless $\psi (t)-1=n.$
 Hence by Lemma \ref{xn1} and Lemma \ref{xn2} we have 
$$
\Phi^{S}(k,m)\sim \left\{ 
\begin{matrix}
[ \Phi^{S}(k,\psi (t)-1),\Phi^{S}(\psi (t),m)]\hfill & 
\mbox{ if } \psi (t)-1\notin {S}\cup \{ n\};  \hfill \cr
[\Phi^{S}(\psi (t),m),\Phi^{S}(k, \psi (t)-1)] \hfill & 
\mbox{ if } \psi (t)-1\in {S}\cup \{ n\} .\hfill 
\end{matrix}
\right. 
$$
Similarly  $\overline{S}\setminus \{ \psi (t)-1\}$ is black $(k,m)$-regular, unless
$\psi (t)-1=n.$
The condition  $\psi (t)-1\notin \overline{S}\setminus \{ n\}$ is equivalent 
to $\psi (t)-1\in {S}\cup \{ n\}.$ Hence these lemmas imply also
$$
\Phi^{\overline{S}}_-(k,m)\sim \left\{ 
\begin{matrix}
[\Phi^{\overline{S}}_-(\psi (t),m),\Phi^{\overline{S}}_-(k,\psi (t)-1)]
\hfill & 
\mbox{ if } \psi (t)-1\notin {S}\cup \{ n\};  \hfill \cr
[ \Phi^{\overline{S}}_-(k,\psi (t)-1),\Phi^{\overline{S}}_-(\psi (t),m)] \hfill & 
\mbox{ if } \psi (t)-1\in {S}\cup \{ n\} .\hfill 
\end{matrix}
\right. 
$$
Let us fix for short the following designations:  $u=\Phi^{S}(k,\psi (t)-1),$ $v=\Phi^{S}(\psi (t),m),$
$w^-=\Phi^{\overline{S}}_-(k,\psi (t)-1),$ $y^-=\Phi^{\overline{S}}_-(\psi (t),m).$
By the inductive supposition we have 
\begin{equation}  
[u,w^-]=\alpha (1-h_{k\rightarrow \psi (t)-1}), \ \ [v,y^-]=\beta (1-h_{\psi (t)\rightarrow m}),
\label{no1}
\end{equation}
where $\alpha\neq 0,$ $\beta \neq 0.$

{\bf Assume $t\neq n$} (equivalently, $\psi (t)-1\neq n$). In this case $u$ and $w^-$ have 
further decompositions according to Lemmas \ref{xn1}, \ref{xn2}:
\begin{equation}  
u=[\Phi^{S}(n+1,\psi (t)-1),\Phi^{S}(k, n)],
w^-=[ \Phi^{\overline{S}}_-(k,n),\Phi^{\overline{S}}_-(n+1,\psi (t)-1)].
\label{no2}
\end{equation}
Moreover, $S$ and $\overline{S}$ are both black and white $(k,n)$-regular.
Since $\psi (m)-1,$ $t$ are white points for $S$ and black points for $\overline{S}$, we have
$$
\Phi^{S}(k, n)=[[a_1,a_2],a_3], \ \ 
\Phi^{\overline{S}}_-(k,n)=[b_3^-,[b_2^-,b_1^-]],
$$
where $a_1=\Phi^{S}(k, \psi (m)-1),$ $a_2=\Phi^{S}(\psi (m), t),$
$a_3=\Phi^{S}(t+1, n),$ and similarly 
$b_1^-=\Phi^{\overline{S}}_-(k, \psi (m)-1),$  
$b_2^-=\Phi^{\overline{S}}_-(\psi (m), t),$ $b_3^-=\Phi^{\overline{S}}_-(t+1, n).$
All points of the interval $[\psi (m),t)$ are black for $S$
(of course if $t=\psi (m),$ then this interval is empty). Hence all points of the interval
$[\psi (t),m)$ are white (otherwise $S$ is not white $(k,m)$-regular). In particular 
\begin{equation} 
v=\Phi^{S}(\psi (t),m)=\Phi^{\emptyset }(\psi (t),m)=u[\psi (t),m].
\label{gb2}
\end{equation}
At the same time, using Lemma \ref{xn0}, we have
\begin{equation}  
a_2=\Phi^{S}(\psi (m),t)=
\Phi^{[\psi (m),t) }(\psi (m),t)\sim \Phi^{\emptyset }(\psi (t),m)=u[\psi (t),m].
\label{gb3}
\end{equation}
Hence by (\ref{no1}) we have $[a_2,y^-]\sim [v,y^-]\sim 1-h_{\psi (t)\rightarrow m}.$
Lemma \ref{suu} implies
$$
0=[a_1,y^-]=[a_3,y^-]=[\Phi^{S}(n+1,\psi (t)-1),y^-].
$$
Therefore 
$$
[\Phi^{S}(k, n),y^-]=[[[a_1,a_2],a_3],y^-]\stackrel{(\ref{uno})}{\sim }
[[[a_1,a_2],y^-],a_3]
$$
$$
\stackrel{(\ref{jak3})}{=}[[a_1,[a_2,y^-]],a_3]
\stackrel{(\ref{cuq3})}{\sim } [a_1,a_3]=0,
$$
for $a_1,a_3$ are separated in $U_q^+(\mathfrak{so}_{2n+1}).$ Thus,
(\ref{no2}) implies $[u,y^-]=0.$

In perfect analogy we have $[v,w^-]=0.$ 
Consider the algebra ${\mathfrak F}_2$ defined by quantum variables $z_1, z_2$ with
$g_{z_1}={\rm gr}(u)=g_{k\rightarrow \psi (t)-1},$ $\chi ^{z_1}=\chi ^u,$ 
$g_{z_2}={\rm gr}(v)=g_{\psi (t)\rightarrow m},$ $\chi ^{z_2}=\chi ^v,$
and respectively
$g_{z_1^-}={\rm gr}(w^-)=f_{k\rightarrow \psi (t)-1},$ $\chi ^{z_1^-}=(\chi ^u)^{-1},$ 
$g_{z_2^-}={\rm gr}{(y^-)}=f_{\psi (t)\rightarrow m},$ 
$\chi ^{z_2^-}=(\chi ^v)^{-1}.$
Due to (\ref{no1}) and  $[u,y^-]=[v,w^-]=0,$ the map
$z_1\rightarrow u,$ $z_2\rightarrow v,$ $z_1^-\rightarrow \alpha ^{-1}w^-,$
$z_2^{-}\rightarrow \beta ^{-1}y^-$ has an extension up to a homomorphism 
of algebras.  Hence by Lemma \ref{suu1} we have 
$[[u,v],[y^-,w^-]]=\varepsilon (1-h_{k\rightarrow m}),$ where the coefficient 
$\varepsilon $  equals $1-q^{-2},$
for $p(z_1,z_2)p(z_2,z_1)$ $=p(u,v)p(v,u)$ $=\mu^{m,\psi (t)-1}_k=q^{-2}$ due to (\ref{mu2}).
Conditions of Lemma \ref{suu1} are invariant under the substitution $i\leftrightarrow j.$ Hence
we have also $[[v,u],[w^-,y^-]]=\varepsilon (1-h_{k\rightarrow m}),$ which proves the required relation 
for $t\neq n.$

{\bf Assume $t=n.$} In this case $\Phi^{S}(k,m)=[v,u],$ 
$\Phi^{\overline{S}}_-(k, m)=[w^-,y^-]$ and we have
$$
u= \Phi^{S}(k,n)=[a_1,b_1], \ \ w^-=\Phi^{\overline{S}}_-(k,n)=[b_2^-,b_1^-],
$$
where $a_1=\Phi^{S}(k,\psi (m)-1),$ $a_2=\Phi^{S}(\psi (m),n),$
and $b_1^-=\Phi^{\overline{S}}_-(k, \psi (m)-1),$ 
$b_2^-=\Phi^{\overline{S}}_-(\psi (m),n).$ 
Equalities (\ref{gb2}) and (\ref{gb3}) with $t\leftarrow n$ show that $a_2\sim v.$
Hence $\Phi^{S}(k,m)=[v,u]\sim [v,[a_1,v]],$ while $[v,[a_1,v]]\sim [[a_1,v],v]$
due to conditional identity (\ref{bri}), for $p(a_1v,v)p(v,a_1v)=\mu ^{m,n}_k=1,$ see (\ref{mu2}).
Similarly 
$$
\Phi^{\overline{S}}_-(k, m)=[w^-,y^-]\sim [[b_2^-,b_1^-],y^-]\sim [[y^-,b_1],y^-]
\sim [y^-,[y^-,b_1^-]].
$$

Consider the algebra ${\mathfrak F}_2$ defined by quantum variables $z_1, z_2$ with
$g_{z_1}={\rm gr}(a_1)=g_{k\rightarrow \psi (m)-1},$ $\chi ^{z_1}=\chi ^{a_1},$ 
$g_{z_2}={\rm gr}(v)=g_{n+1\rightarrow m},$ $\chi ^{z_2}=\chi ^v,$
and respectively
$g_{z_1^-}={\rm gr}(b_1^-)=f_{k\rightarrow \psi (m)-1},$ $\chi ^{z_1^-}=\chi ^{b_1^-}$ 
$=(\chi ^{a_1})^{-1},$ $g_{z_2^-}={\rm gr}{(y^-)}=f_{n+1\rightarrow m},$ 
$\chi ^{z_2^-}=\chi ^{y^-}$ $=(\chi ^v)^{-1}.$
By the considered above case $``n\notin [k,m)"$ we have $[a_1,b_1^-]=\gamma (1-h_{k,\psi (m)-1}).$
Since Lemma \ref{suu} implies $[a_1,y^-]$ $=[v,b_1^-]=0,$ the map
$z_1\rightarrow a_1,$ $z_2\rightarrow v,$ $z_1^-\rightarrow \gamma ^{-1}b_1^-,$
$z_2^{-}\rightarrow \beta ^{-1}y^-$ has an extension up to a homomorphism 
of algebras.  Hence by Lemma \ref{suu2} we have 
$[[[a_1,v],v], [y^-,[y^-,b_1^-]]]=\varepsilon (1-h_{k\rightarrow m}).$

It remains to note that $\varepsilon \neq 0.$ Definition (\ref{mu11})  implies
$p(z_2,z_2)=p(v,v)=\sigma _{n+1}^m,$ while (\ref{mu21}) shows that 
$\sigma _{n+1}^m=q.$
Further, $p(z_1,z_2)p(z_2,z_1)=p(a_1,v)p(v,a_1)=\mu _k^{n,\psi (m)-1}=q^{-2},$
see (\ref{mu1}), (\ref{mu2}).
Hence $\varepsilon =(1+q)(1-q^{-2})(1-q^{-1})\neq 0.$ This completes the proof of
the case ``$m<\psi (k),$ $S$ is white $(k,m)$-regular".

If $S$ is black $(k,m)$-regular and still $m<\psi (k),$ 
then by Lemma \ref{dop} the set $\overline{S}$ is white $(k,m)$-regular.
Hence $\left[ \Phi^{\overline{S}}(k,m), \Phi^{S}_-(k,m)\right] \sim 1-h_{k\rightarrow m}.$
Let us apply $h_{k\rightarrow m}\sigma ,$ where $\sigma $ is the antipode, to this equality.
By (\ref{ant1}) and (\ref{desc3}) we have
$\left[ \Phi^{\overline{S}}_-(k,m), \Phi^{S}(k,m)\right] \sim 1-h_{k\rightarrow m}.$
It remains to apply antisymmetry (\ref{dos}).

If $m>\psi (k)$ then Lemma \ref{xn0} reduces consideration to the case ``$m<\psi (k).$"
\end{proof}

\begin{corollary} If $k\leq m\neq \psi (k),$ then 
in the algebra $U_q(\mathfrak{so}_{2n+1})$ we have 
\begin{equation}  
[u[k,m],u[\psi(m),\psi (k)]^-]\sim \, 1-h_{k\rightarrow m}.
\label{kgb1}
\end{equation}
\label{dus1}
\end{corollary}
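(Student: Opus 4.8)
The plan is to obtain the corollary as the special case $S=\emptyset$ of Theorem \ref{des1}. Recall from the defining recursion (\ref{dhs}) that when $S\cap[k,m)=\emptyset$ the correcting sum is empty, so $\Phi^{\emptyset}(k,m)=u[k,m]$; moreover the complement of $\emptyset$ with respect to $[k,m)$ is all of $[k,m)$. Hence, as soon as we know that $\emptyset$ is a $(k,m)$-regular set, Theorem \ref{des1} gives at once
$$[u[k,m],\Phi^{[k,m)}_-(k,m)]\sim 1-h_{k\rightarrow m}.$$
What then remains is to identify $\Phi^{[k,m)}_-(k,m)$ with $u[\psi(m),\psi(k)]^-$ up to a nonzero scalar.

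First I would verify regularity, and this is precisely where the hypothesis $m\neq\psi(k)$ is used (in agreement with the observation that no set is $(k,\psi(k))$-regular). If $n\notin[k,m)$ then every set is $(k,m)$-regular by Definition \ref{reg1}, so assume $k\le n<m$. For $S=\emptyset$ one has $S\cup\{k-1,m\}=\{k-1,m\}$, and in the range $k-1\le i<m$ occurring in Definition \ref{reg1} the only index lying in $\{k-1,m\}$ is $i=k-1$, since the value $m$ is excluded from that range; for every intermediate $i$ the regularity condition holds trivially. Using $\psi(k-1)-1=\psi(k)$, white $(k,m)$-regularity of $\emptyset$ is therefore guaranteed once $\psi(k)\notin\{k-1,m\}$. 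But $\psi(k)=k-1$ forces $k=n+1$, contrary to $k\le n$, while $\psi(k)=m$ is excluded by hypothesis; hence $\emptyset$ is white $(k,m)$-regular, and by Lemma \ref{dop} its complement $[k,m)$ is black $(k,m)$-regular.

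Next I would compute the negative generator. Applying Proposition \ref{xn0} to the (regular) full set $[k,m)$, one has $\psi([k,m))-1=[\psi(m),\psi(k))$, whose complement with respect to $[\psi(m),\psi(k))$ is empty; thus in $U_q^+(\mathfrak{so}_{2n+1})$
$$\Phi^{[k,m)}(k,m)\sim\Phi^{\emptyset}(\psi(m),\psi(k))=u[\psi(m),\psi(k)].$$
Since all statements of the preceding section hold verbatim for the negative quantum Borel subalgebra, the same computation yields $\Phi^{[k,m)}_-(k,m)\sim u[\psi(m),\psi(k)]^-$. As the skew commutator scales linearly in a scalar multiple of its second argument by (\ref{sqo}), the projective equality survives this replacement, and substituting into the displayed identity of the first paragraph produces $[u[k,m],u[\psi(m),\psi(k)]^-]\sim 1-h_{k\rightarrow m}$, which is the assertion.

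The only delicate point is the regularity bookkeeping of the second paragraph; the genuinely hard bracket computation has already been absorbed into the proof of Theorem \ref{des1}, so I anticipate no further computational obstacle here.
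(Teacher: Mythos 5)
Your proof is correct and follows essentially the same route as the paper: the paper's own argument is exactly to apply Proposition \ref{xn0} with $S=\emptyset$ (via the mirror generators) to identify $u[\psi(m),\psi(k)]^-$ with $\Phi^{[k,m)}_-(k,m)$, and then invoke Theorem \ref{des1}. Your additional verification that $\emptyset$ is $(k,m)$-regular (using $m\neq\psi(k)$ and $k\leq n$) is left implicit in the paper but is a legitimate and correctly executed detail, not a deviation in method.
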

\begin{proof}
Proposition \ref{xn0} with $S=\emptyset $ applied to the mirror generators 
implies $u[\psi(m),\psi (k)]^-\sim \Phi ^{[k,m)}_-(k,m).$
Hence Theorem \ref{des1} works.
\end{proof}

\section{Pairs with strong schemes}
In this section we determine when $\left[ \Phi^{S}(k,m),\Phi^{T}_-(i,j)\right ]$ equals zero.
Let us consider firstly the case $S=T=\emptyset .$
\begin{proposition} Let $i\neq k,$ $j\neq m,$ $k\leq m,$ $i\leq j.$
If $\psi (m), \psi (k) \notin [i,j]$ or, equivalently, $k,m\notin [\psi (j),\psi (i)],$ 
then in $U_q(\mathfrak{so}_{2n+1})$ we have
$$
[u[k,m],u[i,j]^-]=0.
$$
\label{ruk4}
\end{proposition}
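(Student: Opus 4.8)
The plan is to prove the vanishing by a double induction on $(m-k)+(j-i)$, splitting one of the two words and using the Jacobi identities to reduce to strictly shorter brackets of the same shape, the base of the induction being supplied by Lemmas \ref{ruk1} and \ref{ruk2}. Before inducting I dispose of several configurations. If $j=\psi(i)$ then $[i,j]$ is a $\psi$-invariant set, so $\psi(k),\psi(m)\notin[i,j]$ is equivalent to $k,m\notin[i,j]$, and the first alternative of Corollary \ref{ruk3} gives $[u[k,m],u[i,j]^-]=0$; the same corollary settles every case in which $k,m,\psi(k),\psi(m)\notin[i,j]$ or $i,j,\psi(i),\psi(j)\notin[k,m]$. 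Furthermore, applying the antipode via (\ref{ant1}), (\ref{ant2}) and the antisymmetry (\ref{dos}) shows that the assertion for $(k,m,i,j)$ is equivalent to the one for $(\psi(m),\psi(k),\psi(j),\psi(i))$, a symmetry I use to normalize the relative position of $[k,m]$ and $[i,j]$.

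The base cases are $i=j$ and $k=m$. If $i=j$ then, because $i\neq k$, $i\neq m$ (the latter from $j\neq m$) and $i\neq\psi(k),\psi(m)$, we have $i\notin\{k,m,\psi(k),\psi(m)\}$, so Lemma \ref{ruk1} gives $[u[k,m],x_i^-]=0$; the case $k=m$ follows the same way from Lemma \ref{ruk2}. For the inductive step assume $j\neq\psi(i)$ (otherwise we are already done) and pick a split point
$$
s\in[i,j-1]\setminus\{\,m,\ k-1,\ \psi(j)-1,\ \psi(i)\,\}.
$$
By Proposition \ref{ins2} this yields $u[i,j]^-=[u[i,s]^-,u[s+1,j]^-]$, and substituting into (\ref{cua}) gives
$$
[u[k,m],u[i,j]^-]=\big[[u[k,m],u[i,s]^-],u[s+1,j]^-\big]+p_{vu}\big[u[i,s]^-,[u[k,m],u[s+1,j]^-]\big].
$$
Both inner brackets are again instances of the proposition: the constraint $\psi(k),\psi(m)\notin[i,j]$ passes to every subinterval, while $s\neq m$ and $s+1\neq k$ are guaranteed by the choice of $s$ and supply the remaining requirements $j'\neq m$, $i'\neq k$. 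Hence both vanish by the induction hypothesis and so does the whole bracket. I emphasize that no subinterval of $[i,j]$ can equal $[\psi(m),\psi(k)]$, since $\psi(m)\notin[i,j]$; thus the matched configuration of Corollary \ref{dus1}, whose bracket is the nonzero group element $1-h_{k\rightarrow m}$, never arises along the induction, which is precisely why the present bracket is $0$ rather than a group element.

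The main difficulty is the existence of an admissible split point: when $j-i\le 4$ the four forbidden values may cover all of $[i,j-1]$. In that situation I would instead try to split the positive word, writing $u[k,m]=[u[k,\ell],u[\ell+1,m]]$ (legitimate by Proposition \ref{ins2} when $m\neq\psi(k)$) and applying the Jacobi identity (\ref{uno}) with Lemma \ref{ruk2}, which reduces $m-k$; the values of $\ell$ obstructed here occupy $\psi$-reflected positions, so a split blocked on the negative side is frequently free on the positive side. The genuinely rigid configurations, where neither word admits a clean split, are the heart of the matter and must be handled directly: one expands the bracket through Lemmas \ref{ruk1} and \ref{ruk2}, observing that when a dangerous generator $x_k^-$ or $x_m^-$ fires it replaces $u[k,m]$ by $u[k+1,m]$ or $u[k,m-1]$, whose set of active indices is then disjoint from the remaining negative factor, so the surviving term vanishes as well; the antipode--$\psi$ symmetry moves the few leftover shapes onto cases already treated.
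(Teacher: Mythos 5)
Your overall skeleton matches the paper's: induction on $(m-k)+(j-i)$, base cases from Lemmas \ref{ruk1} and \ref{ruk2}, preliminary elimination of $j=\psi(i)$ (and $m=\psi(k)$) via Corollary \ref{ruk3}, and the key observation that splitting the \emph{negative} word is safe because the hypothesis $\psi(m),\psi(k)\notin[i,j]$ is inherited by subintervals. That part of your argument is sound. The genuine gap is exactly where you wave your hands: the configurations in which no admissible split point exists. Your fallback of splitting the positive word does not inherit the hypotheses --- the pieces $u[k,\ell]$, $u[\ell+1,m]$ impose the \emph{new} conditions $\psi(\ell)\notin[i,j]$, $\psi(\ell+1)\notin[i,j]$ --- and there are configurations where both splits fail simultaneously. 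Concretely, take $(k,m,i,j)=(n-2,\,n+1,\,n+1,\,n+2)$ with $n\geq 3$: the hypotheses of the proposition hold ($\psi(m)=n$, $\psi(k)=n+3$ avoid $[n+1,n+2]$), Corollary \ref{ruk3} does not apply (since $m\in[i,j]$ and $\psi(i),\psi(j)\in[k,m]$), the only negative split point $s=n+1=m$ is forbidden, and every positive split point $\ell\in\{n-2,n-1,n\}$ is excluded either by Proposition \ref{ins2} or by the induction hypotheses of one of the two pieces. Moreover, in this configuration the ``matched'' pair really appears: $[u[n-1,n],u[n+1,n+2]^-]\sim 1-h_{n-1\rightarrow n}\neq 0$ by Corollary \ref{dus1}, so no argument that forces all inner brackets to vanish can succeed; the nonzero group-like contribution must be confronted head on.

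This is precisely what the paper's proof spends most of its length doing. In its case A ($\psi(m)<i<j<\psi(k)$) it decomposes $u[k,m]=[u,[v,w]]$ with $v=u[\psi(j),\psi(i)]$ deliberately matched to $z^-=u[i,j]^-$, uses $[v,z^-]\sim 1-h_v$, and kills the resulting term via $[u,w]=0$ (Proposition \ref{NU}); the boundary subcases $j=n$ and $i=n+1$ (your counterexample above is one of them) require the auxiliary expansion formulas (\ref{zz2}) and (\ref{zz3}), each proved by a separate induction, in which the intermediate brackets are \emph{not} zero but sums $\sum_b \alpha_b\,u[i,b]^-\cdot u[k,b]$ that must be carried through and then annihilated term by term. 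Your closing claim --- that after a ``dangerous'' generator fires, the surviving word has active indices disjoint from the remaining negative factor --- is false in these interlocked cases (the pieces overlap through the identification $x_t=x_{\psi(t)}$), and no mechanism in your sketch produces or controls the nonzero intermediate expansions. So the proposal covers the easy half of the induction but is missing the core of the proof.
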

\begin{proof}  
If $m=\psi (k),$ then conditions $\psi (m), \psi (k)\notin [i,j]$ certainly imply 
$k,$ $m,$ $\psi (m),$ $\psi (k)\notin [i,j],$ and one may use Corollary \ref{ruk3}.
If $j=\psi (i),$ then $\psi (t)\notin [i,j]$ if and only if $t\notin [i,j].$ Hence again
$\psi (m), \psi (k)\notin [i,j]$ implies $k,m,\psi (m), \psi (k)\notin [i,j],$ and Corollary \ref{ruk3} applies.
Thus, further we may assume $m\neq \psi (k),$ $j\neq \psi (i).$

We shall use induction on the parameter $m-k+j-i.$ If either $m=k$ or $j=i,$ then 
the statement follows from (\ref{kom1}) and (\ref{kom2}). Assume $k<m,$ $i<j.$
Condition $\psi (m),\psi (k)\notin [i,j]$ holds if and only if one of the following two
options is fulfilled:

{\bf A}. $\psi (m)<i<j<\psi (k);$

{\bf B}. $\psi (m)<\psi (k)<i< j,$ or $i<j<\psi (m)<\psi (k);$

Let us consider these options separately.

\smallskip
{\bf A}. Since $\psi (k),$ $\psi (m)\notin [i,j],$  by
Corollary (\ref{ruk3}) we may suppose that either $k\in [i,j]$ or $m\in [i,j].$
The option {\bf A} is equivalent to $k<\psi (j)<\psi (i)<m,$ for $\psi $ changes the order. 

By Proposition \ref{ins2}  with  $i\leftarrow \psi (j)-1$
 we have $u[k,m]=[u[k,\psi (j)-1], u[\psi (j),m]].$ Indeed,
 the exceptional equality $\psi (j)-1=\psi (m)-1$ implies a contradiction $j=m.$   
The exceptional equality $\psi (j)-1=\psi (k)$ implies $j=k-1,$  hence
$j<k<m,$ in particular $k,m\notin [i,j].$

Similarly, Proposition \ref{ins2} with $k\leftarrow \psi (j),$ $i\leftarrow \psi (i)$
shows that $u[\psi (j),m]=[u[\psi (j),\psi (i)],u[\psi (i)+1,m]].$
Indeed, we have $m\neq \psi (\psi (j))=j,$ and $\psi (i)\neq \psi (\psi (j))=j.$ 
The remaining condition, $\psi (i)\neq \psi (m)-1,$ is also valid  since otherwise
$i=m+1,$ and again $k,m\notin [i,j],$ and again Corollary \ref{ruk3} applies.

Let us fix the for short following designations:  $u=u[k,\psi (j)-1],$ $v=u[\psi (j),\psi (i)],$ $w=u[\psi (i)+1,m],$
$z^-=u[i,j]^-.$ Corollary \ref{dus1} implies $[v,z^-]\sim 1-h_v.$
Proposition \ref{NU} with $i\leftarrow \psi (j)-1,$ $j\leftarrow \psi (i)$ shows that
$[u,w]=0,$ for $m=\psi (\psi (j)-1)-1$ is equivqlent to $m=j,$ while $\psi (i)=\psi (k)$
is equivalent to $i=k.$ Using (\ref{uno}) we have 
\begin{equation}
[u[k,m],u[i,j]^-]=[[u,[v,w]],z^-]=[u,[[v,w],z^-]]+p_{z,vw}[[u,z^-],[v,w]].
\label{fz}
\end{equation}
If $j\neq n,$ then $\psi (j)-1\neq j,$ and still $\psi (k)\neq \psi (i)-1.$
Hence by the inductive supposition with $m\leftarrow \psi (j)-1$
we have $[u,z^-]=0.$  If $i\neq n+1,$ then $\psi (i)+1\neq i,$ 
and still $m\neq \psi (\psi (i)+1)=i-1.$
Hence by the inductive supposition with $k\leftarrow \psi (i)+1$ we have $[w,z^-]=0.$
Thus for $i\neq n+1,$ $j\neq n$ we may continue (\ref{fz}):
\begin{equation}
\sim [u,[[v,z^-],w]] \stackrel{(\ref{cuq4})}{\sim } [u,h_v\cdot w]\stackrel{(\ref{cuq1})}{\sim } h_v[u,w]=0.
\label{fz1}
\end{equation}

Suppose that $j=n.$ In this case we have $k\in [i,j]=[i,n],$ for $m>\psi (j)=n+1>j.$
Moreover $i\neq n+1,$ for $i<j=n.$ Hence still $[w,z^-]=0,$
and the first addend in (\ref{fz}) is zero (see arguments in (\ref{fz1})).

By additional induction on $t-k$ we shall prove the following equation:
\begin{equation}
[u[k,t],u[i,t]^-]=\sum _{b=k-1}^{t-1} \alpha _bu[i,b]^-\cdot u[k,b],
\label{zz2}
\end{equation}
where $i<k\leq t\leq n,$ $0\neq \alpha _b\in {\bf k},$ and by definition $u[k, k-1]=1.$

If $t=k,$ the formula follows from (\ref{kom2}). In the general case by the main
 inductive supposition  we have 
$[u[k,t-1],u[i,t]^-]=0,$ for $k\neq i,$ $t-1\neq t;$ 
$\psi (k),$ $\psi (t-1)\notin [i,t];$ and $t-1\neq \psi (k),$ $t\neq \psi (i)$ due to 
$i<k\leq t\leq n.$ Therefore 
$$
[u[k,t],u[i,t]^-]=[[u[k,t-1],x_t], u[i,t]^-]\stackrel{(\ref{jak3})}{=}[u[k,t-1],[x_t,u[i,t]^-]].
$$
Here we would like to apply inhomogeneous substitution (\ref{kom2}) to the right 
factor of brackets. To do this we must fix the coefficient:
$$
\sim u[k,t-1]\cdot u[i,t-1]^--\chi ^{k\rightarrow t-1}(g_tf_{i\rightarrow t})u[i,t-1]^-\cdot u[k,t-1]
$$
$$
=[u[k,t-1],u[i,t-1]^-]+\alpha _{t-1}u[i,t-1]^-\cdot u[k,t-1],
$$
where $\alpha _{t-1} =\chi ^{k\rightarrow t-1}(f_{i\rightarrow t-1})(1-\chi ^{k\rightarrow t-1}(h_t))\neq 0.$
Thus by induction on $t$ we get (\ref{zz2}).

Relation (\ref{zz2}) with $t=n$
takes the form  $[u,z^-]=\sum _{b=k-1}^{n-1} \alpha _bu[i,b]^-\cdot u[k,b].$
Sinse the first addend in (\ref{fz}) is zero, we may continue (\ref{fz}):
$$
\sim \left[ \sum _{b=k-1}^{n-1} \alpha _bu[i,b]^-\cdot u[k,b], [v,w]\right] .
$$
We have seen that $[v,w]=u[n+1,m].$ At the same time
$[u[k,b], u[n+1,m]]=0$ by Proposition \ref{NU} with $i\leftarrow b,$
$j\leftarrow n.$ Indeed, $m\neq \psi (b)-1$ since $m>\psi (i)$ and 
$i<k\leq b$ implies $\psi (i)>\psi (b),$ while $n\neq \psi (k)$ since $k\in [i,n].$
It remains to note that  $[u[i,b]^-,u[n+1,m]]\stackrel{(\ref{dos})}{\sim }[u[n+1,m], u[i,b]^-]=0$ 
by the inductive supposition with $k\leftarrow n+1,$ $j\leftarrow b,$
for now $n+1\neq i,$ $m\neq b,$ $\psi (n+1)=n\notin [i,b],$ $\psi (m)\notin [i,b],$
and of course $m\neq \psi (n+1)$ $=n=j,$ $b\neq \psi (i)>n.$

Similarly we consider the case $i=n+1.$ In this case $m\in [i,j]=[n+1,j],$
for $k<\psi (i)=n<i.$ Moreover, $j\neq n,$ for $n+1=i<j.$ Hence still $[u,z^-]=0;$
that is, the second addend in (\ref{fz}) equals zero, and by means of (\ref{uno})
we contimue (\ref{fz}):
\begin{equation}
=[u,[[v,w],z^-]]=[u,[v,[w,z^-]]]+p_{z,w}[u,[[v,z^-],w]].
\label{fz2}
\end{equation}
Arguments in (\ref{fz1}) show that here the second addend is zero. Since $[u,w]=[u,z^-]=0,$
we have $[u,[w,z^-]]=0.$ Hence conditional identity (\ref{jak3}) implies that the first
addend in (\ref{fz2}) equals 
\begin{equation}
[[u,v],[w,z^-]]=[u[k,n],[w,z^-]].
\label{fz3}
\end{equation}

By downward induction on $t$ we shall prove the following equation:
\begin{equation}
[u[t,m],u[t,j]^-]=\sum _{a=t+1}^{\mu +1} \alpha _a h_{t\rightarrow a-1}\, u[a,j]^-\cdot u[a,m],
\label{zz3}
\end{equation}
where $n<t,$ $\mu=\min \{ m,j\} ,$ $0\neq \alpha _a\in {\bf k},$ and by definition $u[\mu +1,\mu]=1.$

If $t=j$ or $t=m$ the equation follows from (\ref{kom1}) and (\ref{kom2}). 
In the general case by the main inductive supposition we have 
$[u[t,m],u[t+1,j]^-]=0,$ for $t\neq t+1,$ $m\neq j,$ $\psi (t), \psi (m)\notin [t+1,j].$
Therefore
$$
[u[t,m],u[t,j]^-]=[u[t,m],[x_t^-,u[t+1,j]^-]]
$$
$$
\stackrel{(\ref{jak3})}{=}[[u[t,m],x_t^-]],u[t+1, j]^-]
\stackrel{(\ref{kom1})}{\sim } [h_t\cdot u[t+1,m],u[t+1,j]^-].
$$
To prove (\ref{zz3}) it remains to apply (\ref{cuq2}) and the inductive supposition
for downward induction. Here the new coefficient $\alpha _{t+1}$ is nonzero
 since $\chi ^{t+1\rightarrow j}(h_t)=q^{-2}\neq 1.$

Equation (\ref{zz3}) with $t=n+1$ implies
\begin{equation}
[u[k,n],[w,z^-]]=\left[ u[k,n], \sum _{a=n+2}^{m+1} \alpha _a h_{n+1\rightarrow a-1}u[a,j]^-\cdot u[a,m] \right] .
\label{fz4}
\end{equation}
By Proposition \ref{NU} with $i\leftarrow n,$
$j\leftarrow a-1$ we have
$[u[k,n], u[a,m]]=0,$  for $m\neq \psi (n)-1=n$ since $m>i=n+1,$ and 
$a-1\neq \psi (k)$ since $a-1\leq m\leq j<\psi (k).$

Due to (\ref{cuq1}) it remains to note that  
$[u[k,n],u[a,j]^-]=0$ 
by the inductive supposition with $m\leftarrow n,$ $i\leftarrow a,$
for now $n\neq j,$ $k\neq a,$ $\psi (k)\notin [a,j],$ $\psi (n)=n+1\notin [a,j],$
and $n\neq \psi (k)>n,$ $j\neq \psi (a)<n.$

\smallskip
{\bf B}. In this case $\psi (k),$ $\psi (m)\notin [i,j],$ hence  by
Corollary (\ref{ruk3}) we may suppose that either $k\in [i,j]$ or $m\in [i,j].$
Application of $\psi $ shows that the option {\bf B} is equivalent to 
\begin{equation}
\psi (j)<\psi (i)<k<m,\, \hbox{ or } \, k<m<\psi (j)<\psi (i).
\label{fz5}
\end{equation}
In particular again due to Corollary (\ref{ruk3})  we may suppose that either 
$i\in [k,m]$ or $j\in [k,m],$ for (\ref{fz5}) implies
$\psi (i), \psi (j)\notin [k,m].$ Since $i\neq k,$ $j\neq m,$
 it remains  to consider two configurations:
$k<i\leq m<j$ and $i<k\leq j<m.$ Moreover, the substitution $i\leftrightarrow k$ $j\leftrightarrow m$
transforms the original conditions {\bf B} to equivalent form (\ref{fz5}). Therefore 
it suffices to consider just one of the above configurations.

Suppose that $k<i\leq m<j.$ In this case Proposition \ref{ins2} with $k\leftarrow i,$
$m\leftarrow j,$ $i\leftarrow i$ shows that $u[i,j]^-=[x_i^-,u[i+1,j]^-],$ unless $i=\psi (j)-1.$
If  $i\neq \psi (j)-1,$ then by the inductive supposition we have $[u[k,m], u[i+1,j]^-]=0,$ for now
$k<i,$ $m\neq j$ and $\psi (k), \psi (m)\notin [i,j]\supset [i+1,j].$ 
Hence by (\ref{jak3}) and (\ref{kom1}) we have
$$
[u[k,m], [x_i^-,u[i+1,j]^-]]]=[u[k,m],x_i^-], u[i+1,j]^-]=\delta _i^m \cdot [u[k,m-1],u[i+1,j]],
$$
for $i\neq k,$ $i\neq \psi (k),$ $i\neq \psi (m),$ see original conditions {\bf B}. At the same time 
if $\delta _i^m\neq 0$ (that is $m=i$), then
$[u[k,m-1],u[i+1,j]]=0$ by Proposition \ref{NU} with $m\leftarrow j,$ $i\leftarrow m-1,$
$j\leftarrow i=m,$ for $j\neq \psi (k),$ $j\neq \psi (m-1)-1=\psi (m),$ $i=m\neq \psi (k)$ due to 
the original conditions {\bf B}. Thus, it remains to check the case $i=\psi (j)-1.$

Equality $i=\psi (j)-1$ with $k<i<m$ imply $k<\psi (j)\leq m,$ this contradicts to (\ref{fz5}).
Hence in this case we have  $i=m.$ Moreover, $k<i$ implies 
$$
\psi (k)>\psi (i)=\psi (\psi (j)-1)=j+1>i=m.
$$
In particular $\psi (k)\neq m-1.$ Hence by Proposition \ref{ins2} with $i\leftarrow m$ we have
$[u[k,m]=[u[k,m-1],x_m].$ 
Corollary \ref{ruk3} implies both $[u[k,m-1],u[i,j]^-]=0$ and $[u[k,m-1],u[i+1,j]^-]=0,$
for $m-1=i-1\notin [i,j]\supset [i+1,j],$ 
$\psi (m-1)$ $=\psi (i-1)$ $=\psi (\psi (j)-2)$ $=j+2\notin [i,j]\supset [i+1,j].$ Thus by (\ref{jak3})
and (\ref{cuq1}) we have
$$
[u[k,m],u[i,j]^-]=[[u[k,m-1],x_m],u[i,j]^-]=[u[k,m-1], [x_i,u[i,j]^-]]
$$
$$
=[u[k,m-1], h_i\cdot u[i+1,j]^-]]\sim h_i \cdot [u[k,m-1], u[i+1,j]^-]]=0.
$$
This completes the proof. 
\end{proof}

\begin{proposition} Let $i\neq k,$ $j\neq m,$ $k\leq m,$ $i\leq j.$
If $\psi (j), \psi (i)\notin [k,m]$ or, equivalently, $i,j\notin [\psi (m),\psi (k)],$ 
then in $U_q(\mathfrak{so}_{2n+1})$ we have
$$
[u[k,m],u[i,j]^-]=0.
$$
\label{ruk5}
\end{proposition}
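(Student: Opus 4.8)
The plan is to deduce Proposition~\ref{ruk5} from Proposition~\ref{ruk4} by interchanging the roles of the positive and negative quantum Borel subalgebras, which is exactly the operation realized by the mirror generators of Section~2. The two hypotheses are mirror images of one another: Proposition~\ref{ruk4} asks that $\psi(m),\psi(k)\notin[i,j]$, a condition on the $\psi$-images of the endpoints of the \emph{positive} interval $[k,m]$, whereas Proposition~\ref{ruk5} asks that $\psi(j),\psi(i)\notin[k,m]$, the same condition with the positive and negative intervals exchanged. So I expect that after swapping $[k,m]\leftrightarrow[i,j]$ and $x_t\leftrightarrow x_t^-$ the statement to be proved becomes a literal instance of Proposition~\ref{ruk4}.

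First I would record the effect of the mirror substitution $x_t\leftarrow p_{tt}^{-1}x_t^-$, $x_t^-\leftarrow -x_t$ on the elements $u[k,m]$. Since the variables $y_t=p_{tt}^{-1}x_t^-$ carry exactly the quantum parameters of $x_t^-$ and the bracketing is assembled from these parameters, the mirror image of $u[k,m]$ is a nonzero scalar multiple of $u[k,m]^-$, and likewise the mirror image of $u[i,j]^-$ is a nonzero scalar multiple of $u[i,j]$. Because the mirror substitution is an algebra isomorphism identifying $U_q(\mathfrak{so}_{2n+1})$ with $U_{q^{-1}}(\mathfrak{so}_{2n+1})$ (and $q^{-1}$ satisfies the same restrictions $q^2\ne\pm1$, $q^3\ne1$), it should carry a vanishing skew commutator to a vanishing skew commutator; granting this (the coefficient check is deferred to the last paragraph), I would obtain
$$[u[k,m],u[i,j]^-]=0\iff[u[k,m]^-,u[i,j]]=0,$$
and then flip the order by the antisymmetry identity~(\ref{dos}) to arrive at
$$[u[k,m],u[i,j]^-]=0\iff[u[i,j],u[k,m]^-]=0.$$

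It then remains to apply Proposition~\ref{ruk4} to the bracket $[u[i,j],u[k,m]^-]$, reading its positive interval as $[i,j]$ and its negative interval as $[k,m]$. The hypotheses $i\ne k$, $j\ne m$, $i\le j$, $k\le m$ are symmetric in the two pairs and hold by assumption, while the remaining hypothesis of Proposition~\ref{ruk4} now reads $\psi(j),\psi(i)\notin[k,m]$, which is precisely the hypothesis of Proposition~\ref{ruk5}. Hence $[u[i,j],u[k,m]^-]=0$, and the equivalence above yields the desired $[u[k,m],u[i,j]^-]=0$.

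The main obstacle I anticipate is the bookkeeping that the mirror substitution genuinely preserves the \emph{vanishing} of the bracket rather than merely the property of commuting up to a scalar: the skew commutation coefficient is character dependent, and these characters are inverted when passing to $U_{q^{-1}}$. The safe check is to verify that the coefficient $p_{U_{q^{-1}}}(u[i,j],u[k,m]^-)$ computed with the mirror parameters equals $p_{U_q}(u[k,m],u[i,j]^-)^{-1}$; both reduce to $p(u[i,j],u[k,m])^{-1}$ by bimultiplicativity~(\ref{sqot}) and the identities $p(v,w^-)=p(w,v)$, so the two skew commutators do vanish simultaneously. I would also stress that the passage through the mirror is essential and that Proposition~\ref{ruk5} is not a special case of Proposition~\ref{ruk4}: when $[k,m]$ lies strictly inside $(\psi(j),\psi(i))$ the hypothesis $\psi(j),\psi(i)\notin[k,m]$ holds while $\psi(m),\psi(k)\in[i,j]$ fails, so only the swapped form of Proposition~\ref{ruk4} applies. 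Should the scalar bookkeeping prove delicate, the fallback is to repeat verbatim the case analysis of the proof of Proposition~\ref{ruk4} with positive and negative generators interchanged.
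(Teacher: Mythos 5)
Your proposal is correct and takes essentially the same route as the paper: the paper likewise applies Proposition \ref{ruk4} with $i\leftrightarrow k$, $j\leftrightarrow m$ to the mirror generators $y_t=p_{tt}^{-1}x_t^-$, $y_t^-=-x_t$, identifies $u[i,j]_y\sim u[i,j]^-$ and $u[k,m]_y^-\sim u[k,m]$, and finishes with the antisymmetry identity (\ref{dos}). The only difference is presentational: the paper uses the single implication obtained by applying Proposition \ref{ruk4} inside the mirror quantification $U_{q^{-1}}$, rather than your chain of equivalences, which makes the coefficient bookkeeping you defer to the last paragraph unnecessary.
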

\begin{proof} 
Substitution  $i\leftrightarrow k,$ $j\leftrightarrow m$  transforms the conditions of 
Proposition \ref{ruk5} to the conditions of Proposition \ref{ruk4}. 
Let us apply Proposition \ref{ruk4} with $i\leftrightarrow k,$ $j\leftrightarrow m$ to the mirror 
generators $y_i=p_{ii}^{-1}x_i^-,$ $y_i^-=-x_i.$ We get $[u[i,j]_y,u[k,m]_y^-]=0.$ 
 However $u[i,j]_y\sim u[i,j]^-,$ $u[k,m]_y^-\sim u[k,m].$ It remains to apply (\ref{dos}).
\end{proof}

\begin{proposition} Let $i\neq k, $ $j\neq m.$  If
 \begin{equation}
\psi (j)\leq k\leq \psi (i)\leq m
\label{rr1} 
\end{equation}
or, equivalently, 
\begin{equation}
\psi (m)\leq i\leq \psi (k)\leq j
\label{rr2} 
\end{equation}
then in $U_q(\mathfrak{so}_{2n+1})$ we have
\begin{equation}
[u[k,m],u[i,j]^-]\, \sim \, h_{k\rightarrow \psi (i)}\, u[\psi (k)+1,j]^-\cdot u[\psi (i)+1,m]
\label{wff1} 
\end{equation}
provided that $\psi (m)\neq i$ or $\psi (k)\neq j.$
Here by definition we set  $u[j+1,j]^-=u[m+1,m]=1.$
\label{fkk1}
\end{proposition}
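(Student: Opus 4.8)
The plan is to prove the identity by induction on $m-k+j-i$, reducing the general ``overlap'' to the diagonal bracket of Corollary \ref{dus1} by splitting each word into a self-dual core and an outer tail. Writing $r=\psi(i)$, hypothesis (\ref{rr1}) reads $\psi(j)\leq k\leq r\leq m$, and the reflected interval $\psi([k,r])=[i,\psi(k)]$ lies inside $[i,j]$ by (\ref{rr2}). I would set
$$
P=u[k,\psi(i)],\quad P'=u[\psi(i)+1,m],\quad N=u[i,\psi(k)]^-,\quad N'=u[\psi(k)+1,j]^-,
$$
so that $P,N$ are the $\psi$-dual cores and $P',N'$ the outer tails that appear on the right-hand side. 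Using Proposition \ref{ins2} and its negative analogue I decompose $u[k,m]\sim[P,P']$ and $u[i,j]^-\sim[N,N']$ whenever the cut points $\psi(i),\psi(k)$ avoid the two exceptional positions of that proposition; those exceptional positions are exactly where the induction and the mirror trick below are required. Note that the proviso ``$\psi(m)\neq i$ or $\psi(k)\neq j$'' says precisely that not both tails are empty, so that we land on a genuine product rather than on $1-h_{k\rightarrow m}$.

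The heart is four bracket relations. First, $[P,N]\sim 1-h_{k\rightarrow\psi(i)}$, which is Corollary \ref{dus1} applied to the pair $u[k,\psi(i)]$, $u[\psi(\psi(i)),\psi(k)]^-=u[i,\psi(k)]^-$ (legitimate since $i\neq k$ forces $\psi(i)\neq\psi(k)$). Second, $[P,N']=0$, $[P',N]=0$, and $[P',N']=0$; each follows from Propositions \ref{ruk4} and \ref{ruk5}, because the relevant reflected intervals are disjoint from the opposing interval by the choice of cut points (for instance, the positive tail $[\psi(i)+1,m]$ reflects to $[\psi(m),i-1]$, which lies strictly below $[\psi(k)+1,j]$ since $i-1<i\leq\psi(k)$). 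Feeding these into the colored Jacobi identity (\ref{fo1}) with $u\leftarrow P$, $v\leftarrow P'$, $w^-\leftarrow N$, $t^-\leftarrow N'$ annihilates three of the four summands and leaves the term $p_{NN',P'}[[[P,N],N'],P']$. Substituting $[P,N]\sim1-h_{k\rightarrow\psi(i)}$ and developing the bracket of $1-h_{k\rightarrow\psi(i)}$ with $N'$ by (\ref{cuq4}), then commuting $h_{k\rightarrow\psi(i)}$ outward by (\ref{cuq2}) and using $[N',P']=0$ (from the third relation and (\ref{dos})), collapses everything to $\sim h_{k\rightarrow\psi(i)}\,N'\,P'$, which is exactly the asserted right-hand side. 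When one tail is empty I would instead use the shorter identities (\ref{uno}) or (\ref{cua}), and the base cases $k=m$, $i=j$ follow at once from Lemmas \ref{ruk2} and \ref{ruk1}.

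Throughout I would track the scalar coefficients and verify nonvanishing: the governing factors are of the form $1-\mu$ with $\mu=q^{\pm2}$ read off from (\ref{mu2})--(\ref{mu4}), exactly as in Theorem \ref{des1}, so they are nonzero under the standing assumptions $q^2\neq\pm1$, $q^3\neq1$.

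The main obstacle I anticipate is the self-dual boundary configurations $m=\psi(k)$ and $j=\psi(i)$, together with the exceptional cut positions $\psi(i)=\psi(m)-1$ and $\psi(k)=\psi(j)-1$. In these cases Proposition \ref{ins2} fails to supply the clean split, so one must instead cut through the central index $n$ via Lemma \ref{rww} and use the relations (\ref{too1}), (\ref{too2}); as in Propositions \ref{ruk4} and \ref{ruk5}, this is where the induction on $m-k+j-i$ genuinely does its work. Finally, passing to the mirror generators $y_i=p_{ii}^{-1}x_i^-$, $y_i^-=-x_i$, which interchanges the roles of $(k,m)$ and $(i,j)$, lets me reduce symmetric orientations to an already-treated one and thereby halve the case analysis.
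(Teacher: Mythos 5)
Your generic-case skeleton is exactly the paper's: the same cut of $u[k,m]$ and $u[i,j]^-$ at the points $\psi(i)$ and $\psi(k)$ (your $P,P',N,N'$ are the paper's $u,v,w^-,t^-$), the same four bracket relations, the same collapse of (\ref{fo1}) to the single term $[[[P,N],N'],P']$, and the same finish via (\ref{cuq4}) and (\ref{cuq2})/(\ref{cuq21}). The gap is in your inventory of exceptional configurations. The vanishing claims $[P,N']=0$ and $[P',N]=0$ need more than the interval-disjointness you check: Propositions \ref{ruk4} and \ref{ruk5} carry the hypothesis ``$i\neq k$'', which after your substitutions becomes $\psi(k)+1\neq k$, i.e.\ $k\neq n+1$ (for $[P,N']$), and $\psi(i)+1\neq i$, i.e.\ $i\neq n+1$ (for $[P',N]$). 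When $k=n+1$ the claim $[P,N']=0$ is simply false in general: then $P=u[n+1,\psi(i)]$ and $N'=u[n+1,j]^-$ both begin with the letter $x_{n+1}=x_n$. A concrete counterexample with $n=3$: take $k=4,\ m=6,\ i=2,\ j=4$; all of your listed exceptions ($m=\psi(k)$, $j=\psi(i)$, $i=m+1$, $k=j+1$) fail, yet $[P,N']=[u[4,5],x_4^-]\sim g_4f_4\,u[5,5]\neq 0$ by Lemma \ref{ruk1}. Your exceptional list (which covers precisely the failures of Proposition \ref{ins2}) therefore leaves the cases $k=n+1$ and, mirror-symmetrically, $i=n+1$ unhandled, and in those cases the proposed (\ref{fo1}) collapse does not go through.

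This omission is not a technicality: in the paper the configuration $k=n+1$ (Case 1, sub-cases 1.1--1.4, with $i=n+1$ then recovered by the mirror trick in Case 3) is the longest part of the proof. It is not dispatched by ``cutting through $n$ via Lemma \ref{rww} and (\ref{too1}), (\ref{too2})'': one needs the expansion (\ref{zz3}) of $[u[t,m],u[t,j]^-]$ for $t>n$ into a sum $\sum_a\alpha_a h_{t\rightarrow a-1}u[a,j]^-\cdot u[a,m]$, a separate treatment of the sub-case $j=n+1$ where $x_{n+1}^-=x_n^-$ occurs in two positions, and coefficient identities such as $\chi^{n+1}(h_{i\rightarrow n})=1$ that kill unwanted terms. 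So while your strategy is the right one (and is the paper's in the general position), the proof as proposed breaks precisely at $k=n+1$/$i=n+1$, and a substantial additional argument is required there.
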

\begin{proof} 
We note that condition (\ref{rr1}) is equivalent to the condition (\ref{rr2})
 since $\psi $ changes the order. 
Let $u=u[k,\psi (i)],$ $v=u[\psi (i)+1,m],$ $w^-=u[i,\psi (k)]^-,$
$t^-=u[\psi (k)+1,j]^-.$ Of course $v=1$ if $m=\psi (i),$ while $t^-=1$ if $j=\psi (k).$
By Lemma \ref{dus1} we have
\begin{equation}
[u,w^-]=[u[k,\psi (i)], u[i,\psi (k)]^-]\sim 1-h_u,
\label{ze1} 
\end{equation}
while Proposition \ref{ruk4} with $k\leftarrow \psi (i)+1,$ $i\leftarrow \psi (k)+1$
shows that 
\begin{equation}
[v,t^-]=[u[\psi (i)+1,m], u[\psi (k)+1,j]^-]=0,
\label{ze2} 
\end{equation}
for $\psi (m),$ $\psi (\psi (i)+1)=i-1\notin [\psi (k)+1, j]$ due to  (\ref{rr2}).
At the same time Proposition \ref{ruk4} with $m\leftarrow \psi (i),$ $i\leftarrow \psi (k)+1$
implies 
\begin{equation}
[u,t^-]=[u[k,\psi (i)], u[\psi (k)+1,j]^-]=0,
\label{ze3} 
\end{equation}
where $k\neq n+1,$ $j\neq \psi (i).$ 
Indeed, $\psi (\psi (i))=i,$ $\psi (k)$ $\notin [\psi (k)+1,j]$ due to   (\ref{rr2}),
while $k\neq \psi (k)+1$ due to $k\neq n+1.$ Similarly Proposition \ref{ruk5} with 
$k\leftarrow \psi (i)+1,$ $j\leftarrow \psi (k)$ shows that
\begin{equation}
[v,w^-]=[u[\psi (i)+1,m], u[i,\psi (k)]^-]=0, \ \hbox{ if } i\neq n+1,\ m\neq \psi (k),
\label{ze4} 
\end{equation}
for $\psi (\psi (k))=k,$ $\psi (i)$ $\notin [\psi (i)+1, m]$ due to (\ref{rr1}),
while $\psi (i)+1\neq i$ due to $i\neq n+1.$

We shall prove firstly the proposition when the parameters are in the {\bf general position};
that is, when $i,k\neq n+1,$ $i\neq m+1,$ $k\neq j+1,$ $m\neq \psi (k),$ $j\neq \psi (i).$

By Proposition \ref{ins2}  with $i \leftarrow \psi (i)$ we have $u[k,m]=[u,v]$
provided that $m>\psi (i)\neq \psi (m)-1,$ for $\psi (i)\neq \psi (k).$ The same proposition
with $k\leftarrow i,$ $m\leftarrow j,$ $i\leftarrow \psi (k)$ shows that $u[i,j]^-=[w^-,t^-]$
provided that $j>\psi (k)\neq \psi (j)-1.$ In particular 
if in the general position we have additionally $\psi (i)\neq m,$ $\psi (k)\neq j,$ then $u[k,m]=[u,v],$
$u[i,j]^-=[w^-,t^-],$ and all relations (\ref{ze1} --- \ref{ze4}) hold.  Hence we have the required proportions
$$ 
[[u,v],[w^-,t^-]]\stackrel{(\ref{fo1})}{\sim }[[[u,w^-],t^-],v]\stackrel{(\ref{cuq4})}{\sim }
[h_u\cdot t^-,v]\stackrel{(\ref{cuq21})}{\sim } h_ut^-\cdot v.
$$
The omitted coefficient after the application of (\ref{cuq4}) is $\chi ^{t^-}(h_u)-1,$ while 
$$
\chi ^{t^-}(h_u)=\chi^{\psi (k)+1\rightarrow j}_-(h_{k\rightarrow \psi (j)})=\chi _-^{\psi (k)+1\rightarrow j}(h_{i\rightarrow \psi (k)})
=(\mu _i^{j,\psi (k)})^{-1}=q^2\neq 1
$$
due to definition (\ref{mu1}) and relations (\ref{mu2}) and (\ref{mu4}).
Similarly the omitted coefficient after the application of (\ref{cuq21}) is $1-\chi ^v(h_u),$ while 
$$
\chi ^v(h_u)=\chi ^{\psi (i)+1\rightarrow m}(h_{k\rightarrow \psi (i)})=\mu _k^{m,\psi (i)}=q^{-2}\neq 1.
$$
If in the general position we have $\psi (i)=m,$ $\psi (k)\neq j,$ then $u[k,m]=u,$
$u[i,j]^-=[w^-,t^-],$ $[u,t^-]=0.$ Hence we again have the required relation
$$ 
[u,[w^-,t^-]]\stackrel{(\ref{cua})}{\sim }[[u,w^-],t^-]\stackrel{(\ref{cuq4})}{\sim } h_u\cdot t^-
$$
with the omitted coefficient $\chi ^{t^-}(h_u)-1=q^2-1.$ Similarly, if in the general position 
we have $\psi (i)\neq m,$ $\psi (k)=j,$ then $u[k,m]=[u,v],$
$u[i,j]^-=w^-,$ $[v,w^-]=0,$ and 
\begin{equation}
[[u,v],w^-]\stackrel{(\ref{uno})}{\sim }[[u,w^-],v]\stackrel{(\ref{cuq4})}{=} (1-q^{-2})h_u\cdot v.
\label{ze6} 
\end{equation} 
This completes the proof if  $k,m,i,j $ are in the general position.
Consider the exceptional cases.

{\bf 1.  $\bf k=n+1.$} In this case $i\neq n+1,$ for $i\neq k.$ In particular by (\ref{ze4})
we have $[v,w^-]=0.$ Moreover $i\neq m+1,$ for $\psi (j)\leq n+1\leq \psi (i)\leq m$ and 
$\psi (m)\leq i\leq n\leq j$ imply $i\leq n<m.$ Hence $u[k,m]=[u,v]$ if $m\neq \psi (i),$
and $u[k,m]=u$ otherwise. 

{\bf 1.1.} If $j=n,$ then $u[i,j]^-=w^-,$ for $j=\psi (k)=n.$ Moreover we may assume 
$m\neq  \psi (i)$ (otherwise one may apply Lemma \ref{dus1}); that is, $u[k,m]=[u,v].$
Now algebraic manipulations (\ref{ze6}) prove the required relation
\begin{equation}
[u[n+1,m],u[i,n]^-]\sim h_{n+1\rightarrow \psi (i)}\cdot u[\psi (i)+1,m], \ \ \ \psi (i)<m.
\label{ze7} 
\end{equation} 

{\bf 1.2.} Let $j=n+1.$ By definition (\ref{ww}) we have $u[i,j]^-=[u[i,n]^-,x_{n+1}^-],$
and of course $x_{n+1}^-=x_n^-.$ Hence Jacobi identity (\ref{cua}) and (\ref{kom1})
show that $[u[k,m],u[i,j]^-]$ is a linear combination of the following two terms
$$
[[u[n+1,m],u[i,n]^-],x_{n+1}^-], \ \ \ [h_{n+1}\cdot u[n+2,m],u[i,n]^-].
$$
 We claim that the former term equals zero. Indeed, if $\psi (i)=m,$ then by Lemma \ref{dus1} 
we have $[u[n+1,m],u[i,n]^-]\sim1-h_{i\rightarrow n}.$ However 
$$
\chi ^{n+1}(h_{i\rightarrow n})=\chi ^{n}(g_{n-1}f_{n-1}g_nf_n)=p_{n\, n-1}p_{n-1\, n}p_{nn}p_{nn}=1.
$$ 
Hence (\ref{cuq4}) shows that the former term equals zero.   If $\psi (i)<m,$ then by (\ref{ze7}) we have
 $[u[n+1,m],u[i,n]^-]\sim h_{n+1\rightarrow \psi (i)}\cdot u[\psi (i)+1,m].$
Since $\psi (i)\geq k=n+1,$ Lemma \ref{suu} implies $[u[\psi (i)+1,m],x_{n+1}^-]=0.$
At the same time $\chi ^{n+1}(h_{n+1\rightarrow \psi (i)})=\chi ^{n+1}(h_{i\rightarrow n})=1.$
Thus  (\ref{cuq21}) reduces the former term to zero.

To find the latter term we note that
$$
\chi ^{i\rightarrow n}(h_{n+1})=\chi ^{n-1}(g_nf_n)\chi ^{n}(g_nf_n)=p_{n\, n-1}p_{n-1\, n}p_{nn}p_{nn}=1.
$$
Hence by (\ref{cuq21}) the latter term is proportional to $h_{n+1}\cdot [u[n+2,m],u[i,n]^-].$
Since the points $k^{\prime }=n+2,$ $m^{\prime }=m,$ $i^{\prime }=i,$ $j^{\prime }=n$ are
in the general position, we may apply (\ref{wff1}):
$$
[u[n+2,m],u[i,n]^-]\sim h_{n+2\rightarrow \psi (i)}\, u[n,n]^-\cdot u[\psi (i)+1,m],
$$ 
which is required, for $u[n,n]^-=u[n+1,n+1]^-=x_n^-,$ and $h_{n+1}\cdot h_{n+2\rightarrow \psi (i)}=h_{n+1\rightarrow \psi (i)}.$

{\bf 1.3.} Let $j>n+1,$ $i<n.$ By definition (\ref{ww}) we have $u[k,m]=[x_{n+1},u[n+2,m]].$
Relation (\ref{kom2}) shows that  $[x_{n+1}, u[i,j]^-]=0.$ Hence conditional identity (\ref{jak3}) implies
\begin{equation}
[u[k,m],u[i,j]^-]=[x_{n+1}, [u[n+2,m], u[i,j]^-] ].
\label{ze8} 
\end{equation} 
At the same time the points  $k^{\prime }=n+2,$ $m^{\prime }=m,$ $i^{\prime }=i,$ $j^{\prime }=j$ are
in the general position. 
Moreover $i<n$ implies $\psi (i)+1>n+2,$ hence $[ x_{n+1}, u[\psi (i)+1,m] ]=0$
by Lemma \ref{sepp}. This allows us to continue (\ref{ze8}) applying (\ref{cuq1}), (\ref{br1}):
$$
\sim [x_{n+1},h_{n+2\rightarrow \psi (i)}\, u[n,j]^-\cdot u[\psi (i)+1,m]]\sim h_{n+2\rightarrow \psi (i)}\,[x_{n+1}, u[n,j]^-]\cdot u[\psi (i)+1,m],
$$
which is required due to (\ref{kom2}).

{\bf 1.4.} Let $j>n+1,$ $i=n.$ In this case by definition (\ref{ww}) we have $u[i,j]^-=[x_n^-,u[n+1,j]^-].$
Jacobi identity (\ref{cua}) and (\ref{kom1}) show that $[u[k,m],u[i,j]^-]$ is a linear combination of the following two terms
$$
[h_{n+1}u[n+2,m],u[n+1,j]^-], \ \ \ [x_n^-,[u[n+1,m],u[n+1,j]^-]].
$$
Proposition \ref{ruk4} implies $[u[n+2,m],u[n+1,j]^-]=0,$ for both $\psi (n+2)=n-1,$ and $\psi (m)$ are
less than $n+1.$ At the same time 
$$
\chi ^{n+1\rightarrow j}(h_{n+1})=\chi ^{\psi (j)\rightarrow n}(h_{n})=\chi ^{n-1}(g_nf_n)\chi ^{n}(g_nf_n)=p_{n-1\, n}p_{n\, n-1}p_{nn}^2=1.
$$
Hence by (\ref{cuq2}) the first term equals zero. Due to (\ref{zz3}) the second term takes the form
\begin{equation}
\left[ x_n^-, \sum _{a=n+2}^{\mu }\alpha _a\, h_{n+1\rightarrow a-1}\, u[a,j]^-\cdot u[a,m]\right] ,
\label{ze9} 
\end{equation} 
where $\mu =\min \{ j,m\} .$ By Lemma \ref{suu} we have $[x_n^-,u[a,m]]=0$ for all $a.$
At the same time $[x_n^-,u[a,j]^-=0$ for all $a>n+2,$ see Lemma \ref{sepp},
 while $[x_n^-,u[n+2,j]^-=u[n+1,j]^-$ since $x_n=x_{n+1}.$ Hence in (\ref{ze9})
remains just one term that corresponds to $a=n+2.$
By (\ref{cuq1}) and ( \ref{br1}) this term is proportional to
$$
h_{n+1}u[n+1,j]^-\cdot u[n+2,m], 
$$
which coincides with the right hand side of (\ref{wff1}) with $k=n+1,$ $i=n.$

{\bf 2. $\bf k=j+1.$} In this case inequality $\psi (j)\leq k$ reads $\psi (j)\leq j+1,$
or, equivalently,  $2n-j+1\leq j+1;$ that is, $j\geq n.$ If  $j=n,$ then we turn to the considered above case $k=n+1.$
Thus we have to consider just the case $j>n.$ In this case $k=j+1>n+1,$ and $j=k-1<\psi (i)$ since 
by the conditions of the proposition we have $k\leq \psi (i).$

We shall prove firstly by downward induction on $i$ with fixed $j,k$  the following  proportion
\begin{equation}
[u,u[i,j]^-]\sim h_{k\rightarrow \psi (i)}\, u[\psi (k)+1,j]^-.
\label{ze10} 
\end{equation} 
If $i=\psi (k)$ then (\ref{ze10}) follows from (\ref{kom2}). Let $i<\psi (k).$ In this case 
by Proposition \ref{ins2} we have $u=[u[k,\psi (i)-1],x_i],$ for $k>n.$
At the same time Proposition \ref{ruk5} implies 
$[u[k,\psi (i)-1],u[i,j]^-]=0$ since $\psi (j)\leq n<j=k-1<k,$ and $\psi (j), \psi (i)\notin [k,\psi (i)-1].$
Hence conditional identity (\ref{jak3}) with (\ref{kom2}) show that
$$
[u,u[i,j]^-]=[u[k,\psi (i)-1], [x_i, u[i,j]^-]]\sim [u[k,\psi (i)-1], h_i\cdot u[i+1,j]^-].
$$
This relation, after application of (\ref{cuq1}), and the inductive supposition 
imply (\ref{ze10}), for $h_i=h_{\psi (i)},$ $\psi (i)-1=\psi (i+1).$

If $m=\psi (i)$ then $u[k,m]=u,$ while (\ref{ze10}) coincides with the required (\ref{wff1}).
Let $m>\psi (i).$ In this case $u[k,m]=[u,v],$ for $k>n,$ see Proposition \ref{ins2}.
Lemma \ref{suu} shows that $[v,u[i,j]^-]=0;$ indeed, $v=u[\psi (i)+1,m]$ depends only
in $x_s$ with $s<i,$ while $u[i,j]^-$ depends only in $x_s^-$ with $i\leq s\leq n,$ for $j<\psi (i).$
We have 
\begin{equation}
[[u,v],u[i,j]^-]\stackrel{(\ref{uno})}{\sim }[[u,u[i,j]^-],v]\stackrel{(\ref{ze10})}{\sim }[h_{k\rightarrow \psi (i)}\, u[\psi (k)+1,j]^-,v].
\label{ze11} 
\end{equation}
Again by Lemma \ref{suu} we get $[u[\psi (k)+1,j]^-,v]=0.$ Therefore  we may continue 
(\ref{ze11}) applying (\ref{cuq2}):
$$
\sim (1-\chi ^v(h_{k\rightarrow \psi (i)}))\, h_{k\rightarrow \psi (i)}\, u[\psi (k)+1,j]^-\cdot v
$$
which is required since by definition $v=u[\psi (i)+1,m],$ and 
$$
\chi ^v(h_{k\rightarrow \psi (i)})=\chi ^{\psi (i)+1\rightarrow m}(h_{k\rightarrow \psi (i)})=\mu _k^{m, \psi (i)}=q^{-2}\neq 1.
$$ 

{\bf 3. $\bf i=n+1$ or $\bf i=m+1.$} Conditions of the proposition are invariant 
under the transformation  $i\leftrightarrow k,$ $j\leftrightarrow m.$  At the same time this
transformation reduce the condition ``$i=n+1$ or $i=m+1$" to the considered above
cases {\bf 1} or {\bf 2}. Hence for the mirror generators 
$y_i=p_{ii}^{-1}x_i^-,$ $y_i^-=-x_i$ we have 
$$
[u[i,j]_y,u[k,m]_y^-]\, \sim \, h_{k\rightarrow \psi (i)}\, u[\psi (i)+1,m]^-_y\cdot u[\psi (k)+1,j]_y.
$$
However $u[a,b]_y\sim u[a,b]^-,$ $u[a,b]_y^-\sim u[a,b].$ It remains to apply (\ref{dos})
and to note that by Proposition \ref{ruk4} the factors in the right hand side of (\ref{wff1})
skew commute each other, for $\psi (m)\leq i\leq \psi (k)\leq j$
implies $\psi (m),\psi (\psi (i)+1)=i-1\notin [\psi (k)+1,j].$ 

{\bf 4. $\bf j=\psi (i).$}
 If also $m=\psi (k)$ then (\ref{rr1}) reads $i\leq k\leq \psi (i)\leq \psi (k),$
where the first and the last inequalities are not consistent provided that $i\neq k.$
Hence we assume $m\neq \psi (k).$ Denote for short
$$
u=u[k,m], \ \ v^-=u[n+1,j]^-,\ \ w^-=u[i,n]^-.
$$
By definition (\ref{ww}) we have $u[i,j]^-\sim [v^-,w^-].$

If $k\leq n$ then $\psi (k)\notin [i,n].$ We have also $\psi (m)\notin [i,n],$
for Eq. (\ref{rr1}) with $m\neq j=\psi (i)$ imply $\psi (m)<i.$ Hence by Proposition \ref{ruk4} with $j\leftarrow n$
we have $[u,w^-]=0.$ At the same time 
$\psi (j)\leq k\leq\psi (n+1)\leq m,$ $\psi (n+1)\neq j.$ Therefore already proved  case of the proposition
 with $i\leftarrow n+1$ implies
$$
[u,v^-]\sim h_{k\rightarrow n}u[\psi (k)+1,j]^-\cdot u[n+1,m].
$$ 
Taking into account Jacobi identity (\ref{cua}) we have 
\begin{equation}
[u,[v^-,w^-]]=[[u,v^-],w^-]\sim [h_{k\rightarrow n}u[\psi (k)+1,j]^-\cdot u[n+1,m],w^-].
\label{ewz2}
\end{equation}
The second statement of Proposition \ref{NU} with $k\leftarrow i,$ $i\leftarrow n,$ $j\leftarrow \psi (k),$ $m\leftarrow j$
implies $[u[\psi (k)+1,j]^-,w^-]=0.$ Indeed, the conditions of Proposition \ref{NU} under that replacement are:
$j\neq \psi (n)-1,$ $\psi (k)\neq \psi (i),$ and $n\neq \psi (\psi (k))-1.$ They  are valid 
since $j=\psi (i)>n,$ $k\neq i,$ and $k\leq n$ respectively. Further, using Definition \ref{slo} and representations 
(\ref{mu21}), (\ref{mu2}), we have also
$$
\chi ^{i\rightarrow n}(h_{k\rightarrow n})=P_{i\rightarrow n,k\rightarrow n}P_{k\rightarrow n,i\rightarrow n}
=(\sigma _k^n)^2\mu _i^{n,k-1}=q^2\cdot q^{-2}=1.
$$
Hence ad-identity (\ref{br1f}) and identity (\ref{cuq2}) imply that the right hand side of (\ref{ewz2}) equals
$$
h_{k\rightarrow n}u[\psi (k)+1,j]^-\cdot [u[n+1,m],w^-].
$$
Here $\psi (n)=n+1\leq \psi (i)\leq m.$ Hence we may again use already proved case of the proposition  with $k\leftarrow n+1,$
$j\leftarrow n.$ This yields $[u[n+1,m],w^-]\sim h_{n+1\rightarrow j}u[1+j,m],$ which proves (\ref{wff1}),
for $h_{k\rightarrow n}\cdot h_{n+1\rightarrow j}=h_{k\rightarrow \psi (i)}$ in the case $j=\psi (i).$

If $k>n$ then in perfect analogy we have $[v^-,u]\sim [u,v^-]=0,$ while $[w^-,u]$ $\sim [u,w^-]$ 
$\sim  h_{k\rightarrow j} u[1+\psi (k),n]^- \cdot u[1+j,m].$ Therefore
$$
[u,[v^-,w^-]]\sim [[v^-,w^-],u]=[v^-,[w^-,u]]
$$ 
$$
\sim h_{k\rightarrow j} [v^-,u[1+\psi (k),n]^-]\cdot u[1+j,m],
$$
since $[v^-,u[1+j,m]]\sim [u[1+j,m],v^-]=0$ according to Lemma \ref{suu}.
We have $[v^-,u[1+\psi (k),n]^-]\sim u[1+\psi (k),j]^-,$ see Lemma \ref{rww}.
This completes the case $j=\psi (i).$

{\bf 5. $\bf m=\psi (k).$} By means of the mirror generators one may reduce the consideration to 
the case $j=\psi (i).$
The proposition is completely proved.
\end{proof}

\begin{definition} \rm  A scheme (\ref{grr1}) is said to be {\it strongly white} provided that
the following three conditions are met:   
first, it has no black-black columns; then, the first from the left column 
is incomplete; and next, if there are at least two complete columns, then 
the first from the left complete column is  a white-white one.

A scheme (\ref{grr1})  is said to be {\it strongly black} provided that the following three
conditions are met: first, it has no white-white columns; 
then, the last column is incomplete; and next,
 if there are at least two complete columns, then  the last complete column is a black-black one.

A scheme is said to be {\it strong} if it is either strongly white or strongly black.
\label{mb1}
\end{definition}

Alternatively we may define a strong scheme as follows. Let  $S^{\prime }$-scheme be a scheme that 
appears from the $S$-scheme (\ref{grb}) by changing colors of the first and the last points.
Then $ST$-scheme is strongly white (black) if  and only if  both $ST$-scheme and 
$S^{\prime } T^{\prime }$-scheme have no black-black (white-white) columns.

We stress that the map $\rho $ defined in Lemma \ref{bal2} transforms strongly white schemes 
to strongly black ones and vice versa. Therefore the $ST$-scheme is strong if  and only if
the $S^*T^*$-scheme is strong. Similarly, the $ST^*$-scheme is strong if and only if
the $S^*T$-scheme is strong.
\begin{theorem}
Suppose that $S$, $T$ are respectively $(k,m)$- and $(i,j)$-regular sets. 
If  $ST$-  and $ST^*$-schemes are strong,
then $[\Phi ^{S}(k,m),\Phi ^{T}_-(i,j)]=0.$
\label{str}
\end{theorem}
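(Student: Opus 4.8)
The plan is to prove $[\Phi^S(k,m),\Phi^T_-(i,j)]=0$ by induction on $(m-k)+(j-i)$, decomposing the two generators with Lemmas \ref{xn1} and \ref{xn2} and expanding the resulting double bracket by the colored Jacobi identities (\ref{uno}), (\ref{cua}), (\ref{fo1}). I would first record the symmetry reductions that keep the induction inside the strong class. Proposition \ref{xn0} gives $\Phi^S(k,m)\sim\Phi^{S^*}(\psi(m),\psi(k))$ and $\Phi^T_-(i,j)\sim\Phi^{T^*}_-(\psi(j),\psi(i))$, so I may freely replace either generator by its dual without changing the bracket up to a nonzero scalar; on schemes this is the transformation $\rho$ of Lemma \ref{bal2}, which interchanges $ST$ with $S^*T^*$ and $ST^*$ with $S^*T$ and sends strongly white to strongly black. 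Passing to the mirror generators $y_i=p_{ii}^{-1}x_i^-$, $y_i^-=-x_i$ of Section 2 interchanges the positive and negative sides. Since all four schemes $ST$, $ST^*$, $S^*T$, $S^*T^*$ are strong (by the equivalences recorded after Definition \ref{mb1}), these moves let me assume at each stage that the scheme I am working with is strongly white and that the generator I wish to split is the positive one.

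Next I would dispose of the separated case. If the intervals are $\psi$-disjoint as in Propositions \ref{ruk4} or \ref{ruk5}, then expanding $\Phi^S(k,m)$ and $\Phi^T_-(i,j)$ through (\ref{dhs}) writes each as a combination of products of words $u[\cdot,\cdot]$ and $u[\cdot,\cdot]^-$ supported on $\psi$-disjoint ranges; Corollary \ref{ruk3} makes every factor commute, so by the ad-identities (\ref{br1f}), (\ref{br1}) the two generators commute and the bracket is zero. In the genuinely overlapping case I would use strong whiteness structurally: because the first column of the $ST$-scheme is incomplete, the first complete column is white-white, and no column is black-black, there is a white index at which both generators admit the decompositions of Lemmas \ref{xn1}, \ref{xn2}, giving $\Phi^S(k,m)\sim[\Phi^S(k,t),\Phi^S(1+t,m)]$ and a split $\Phi^T_-(i,j)\sim[\Phi^T_-(i,s),\Phi^T_-(1+s,j)]$ at a suitable index $s$. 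Substituting these and applying (\ref{fo1}) (or (\ref{uno}), (\ref{cua}) when one factor is a single segment) expands the bracket into iterated brackets of the four shorter pieces $\Phi^S(k,t)$, $\Phi^S(1+t,m)$, $\Phi^T_-(i,s)$, $\Phi^T_-(1+s,j)$.

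It then remains to annihilate each summand. The crossed sub-brackets, pairing a positive piece with a negative piece whose ranges are $\psi$-separated, vanish by Lemma \ref{suu} and Propositions \ref{ruk4}, \ref{ruk5}; the aligned sub-brackets are handled by the inductive hypothesis. To apply induction I must verify that the sub-pairs again have strong schemes: Lemmas \ref{si} and \ref{si1} show that $S$ and $T$ restrict to regular sets on the chosen subintervals, and the white-white split column together with the absence of black-black columns forces the restricted $ST$- and $ST^*$-schemes to inherit strongness. The requirement that both the $ST$- and the $ST^*$-scheme be strong is used exactly here, since after the split I may need either the direct or the starred representation of the shorter negative generator, and only the simultaneous strongness of both keeps the hypothesis of the induction available.

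The step I expect to be the main obstacle is the bookkeeping around the middle index $n$, where the identification $x_\ell=x_{\psi(\ell)}$ activates the exceptional cases $i=\psi(k)$ and $i=\psi(m)-1$ of Proposition \ref{ins2} and Lemma \ref{rww} and makes the structure constants $\sigma_k^m,\mu_k^{m,i}$ of (\ref{mu21})--(\ref{mu4}) jump. As in the five-case analysis of Proposition \ref{fkk1} one must treat $k=n+1$, $k=j+1$, $i=n+1$, $j=\psi(i)$ and $m=\psi(k)$ separately, checking in each that the chosen split does not straddle $n$ in a way that destroys regularity and that the residual scalar coefficients do not vanish or resonate. Crucially, the strongness of both schemes is what prevents the appearance of a surviving term of the resonant type $1-h_{k\rightarrow m}$ produced in Theorem \ref{des1}: wherever such a term would be generated, a black-black column would be required, which the strong hypothesis forbids, so every summand collapses and the total bracket is zero.
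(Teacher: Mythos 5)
Your overall framework (symmetry reductions via Lemma \ref{xn0}, Lemma \ref{bal2} and the mirror generators, reduction to the strongly white case, induction with splittings from Lemmas \ref{xn1}, \ref{xn2}, base case from Proposition \ref{ruk4}) matches the paper's strategy. But there is a genuine gap at the heart of your inductive step: you assert that the restricted $ST$- and $ST^*$-schemes of the sub-pairs ``inherit strongness,'' and that strongness ``prevents the appearance of a surviving term of the resonant type $1-h_{k\rightarrow m}$.'' Both claims are false, and the paper's proof is long precisely because of this. When the negative generator is split at a point $t\in T\cap[i,j)$, the sub-scheme $ST^*_{\psi(t)}$ fails to be strongly white exactly when $\psi(t)-1=k-1$, and the other sub-scheme fails when $\psi(t)-1=m$; so the inductive hypothesis is unavailable whenever $T\cap[i,j)$ meets $\{\psi(k),\psi(m)-1\}$. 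If $|T\cap[i,j)|\geq 3$ one can dodge these points, but the configurations with $T\cap[i,j)=\{t_1,t_2\}$, $\psi(t_2)-1=k-1$, $\psi(t_1)-1=m$, and with $T\cap[i,j)=\{t\}$, $\psi(t)=k$ or $\psi(t)=m+1$, cannot be avoided, and in them resonant terms genuinely appear despite strongness: in the two-point case Lemma \ref{dus1} produces $[a,u_1^-]\sim 1-h_a$, and in configuration A Proposition \ref{fkk1} produces $[a,u_0^-]\sim h_{k\rightarrow\psi(i)}\,u[\psi(i)+1,m]$. These terms are not excluded; they must be killed by hand --- the group-like factor is absorbed as a scalar coefficient by applying the inhomogeneous substitution (\ref{cuq3}) to the \emph{left} factor of the bracket, after which the residual bracket $[u_2^-,u_0^-]$ vanishes by Proposition \ref{NU}, and configuration B is reduced to A by the antipode via (\ref{ant1}), (\ref{ant2}), (\ref{desc3}). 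None of this machinery appears in your plan, so your induction does not close.

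A secondary, smaller issue: your base case is also organized differently from what is needed. The induction should bottom out at $S\cap[k,m)=T\cap[i,j)=\emptyset$, where $\Phi^S=u[k,m]$, $\Phi^T_-=u[i,j]^-$ and strong whiteness of $ST$ and $ST^*$ hands you exactly the hypotheses of Proposition \ref{ruk4} ($i\neq k$, $j\neq m$, $\psi(m),\psi(k)\notin[i,j]$); your proposal instead expands general $\Phi^S$, $\Phi^T_-$ through (\ref{dhs}) and invokes Corollary \ref{ruk3} factorwise, which does not apply to the genuinely overlapping strong configurations the theorem is about. Likewise, the exceptional cases you flag ($k=n+1$, $k=j+1$, $i=n+1$, $j=\psi(i)$, $m=\psi(k)$) are the exceptional cases internal to Proposition \ref{fkk1}, not the ones that obstruct this theorem's induction; the obstruction here is the failure of strongness of sub-schemes at $\psi(k)$ and $\psi(m)-1$ described above.
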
  
\begin{proof} Without loss of generality we may suppose
 that both schemes are strongly white. Indeed, the mirror generators
allow us, if necessary,  to switch the roles of $S$ and $T$, 
while Lemma \ref{xn0} and Lemma  \ref{bal2} allow us to find a pair of strongly white  schemes.
Moreover, once $ST$-  and $ST^*$-schemes are strongly white,
Lemma \ref{xn0} allows one to switch the roles of $T$ and $T^*.$
Thus, without loss of generality, we may suppose also that $T$ is white $(i,j)$-regular.

{\bf 1}. Assume $S$ is white $(k,m)$-regular.
We shall use double induction on numbers of elements in $S\cap [k,m)$
and in $T\cap [i,j).$ If both intersections are empty then $i\neq k,$ $j\neq m,$
for $ST$-scheme is strongly white. 
$$
\begin{matrix}
\ & \circ & \stackrel{k}{\circ } & \cdots & \circ & \circ & \circ & \stackrel{m}{\bullet } \cr 
\circ & \stackrel{i}{\circ }& \circ & \cdots   & \circ & \stackrel{j}{\bullet } & \ & \
\end{matrix} \ \ \ \ \ 
\begin{matrix}
\ & \circ & \stackrel{k}{\circ } & \circ & \circ & \cdots & \circ & \circ &  \stackrel{m}{\bullet } \cr 
\ & \ & \circ & \stackrel{\psi (j)}{\bullet }& \bullet & \cdots   & \bullet & \stackrel{\psi (i)}{\bullet } & \
\end{matrix}
$$
Similarly $k,m\notin [\psi (j), \psi (i)],$  for  $ST^*$-scheme is strongly white. 
Hence Proposition \ref{ruk4} applies.

If $s\in \, {S}\, \cap [k,m),$ then by Lemma \ref{xn2} 
we have $\Phi ^{S}(k,m)$ $\sim [\Phi ^{S}(1+s,m),\Phi ^{S}(k,s)].$
It is easy to see that $S^sT$-  and $S^sT^*$-schemes (the schemes for the pair $\Phi ^{S}(k,s),\Phi ^{T}_-(i,j)$)
 are still strongly white, while 
$S$ is still white $(k,s)$- and $(1+s,m)$-regular.
 Hence  the inductive supposition implies
$[\Phi ^{S}(k,s),\Phi ^{T}_-(i,j)]=0.$ By the same reason
$[\Phi ^{S}(1+s,m),\Phi ^{T}_-(i,j)]=0.$ Now Jacobi identity (\ref{uno}) 
implies the required equality.

It remains to consider the case $S\cap [k,m)=\emptyset ;$ that is, 
$\Phi ^{S} (k,m)=u[k,m].$ If $t\in \, {T}\, \cap [i,j),$
then by Lemma \ref{xn2}  we have 
$\Phi ^{T}_-(i,j)$ $\sim [\Phi ^{T}_-(1+t,j),\Phi ^{T}_-(i,t)].$
In this case $T$ is still $(k,s)$- and $(1+s,m)$-regular, while $ST^t$-scheme is strongly white.
At the same time $ST^*_{\psi (t)}$-scheme is not strongly white only if
$\psi (t)-1=k-1$ (the first from the left column is complete).

 Hence by the inductive supposition we have 
$[\Phi ^{S}(k,m),\Phi ^{T}_-(i,t)]=0$ with one exception being
$\psi (t)=k.$ Similarly $[\Phi ^{S}(k,m),\Phi ^{T}_-(1+t,j)]=0$
with one exception being $\psi (t)-1=m$. Hence, if in the set $T\cap [i,j)$ there exists a point
$t\neq \psi (k),$ $t\neq \psi (m)-1,$ then Jacobi  identity (\ref{cua}) implies the required 
equality. Certainly if $T\cap [i,j)$ has more than two elements then such a point does exist.

If $T\cap [i,j)$ has two points then there is just one exceptional configuration for the main
$ST^*$-scheme:
$$
\begin{matrix}
\ \ & \ & \circ & \stackrel{k}{\circ } & \circ \ \  \cdots & \circ  & \circ & \stackrel{m}{\bullet } & \ &\ \cr 
\circ & \stackrel{\psi (j)}{\bullet }\ \cdots \ \ \ \bullet 
& \circ & \bullet & \bullet \ \ \cdots  & \bullet  & \bullet  & \circ 
& \bullet  & \cdots \ \  \stackrel{\psi (i)}{\bullet }
\end{matrix}
$$
In this case $T\cap [i,j)=\{ t_1,t_2\} ,$ where $\psi (t_2)-1=k-1,$ $\psi (t_1)-1=m.$
Let $a=\Phi ^{S}(k,m)=u[k,m],$ $b^-=\Phi ^{T}_-(i,j),$
$u_0^-=u[i,t_1]^-$ $=u[i,\psi (m)-1]^-,$ $u_1^-=u[1+t_1,t_2]^-$ $=[\psi (m),\psi (k)]^-,$ 
$u_2^-=u[1+t_2,j]^-=u[1+\psi (k),j]^-.$ Using Lemma \ref{xn2} twice,
we have $b^-\sim [[u_2^-,u_1^-],u_0^-].$ Lemma \ref{dus1}  implies $[a,u_1^-]\sim 1-h_a.$ 
Inequality $\psi (m)\leq \psi (k)$ implies both $\psi (m), \psi (k)\notin [i,\psi (m)-1]$ $=[i,t_1]$
and $\psi (m), \psi (k)\notin [1+\psi (k),j]=[1+t_2,j].$ Therefore by Proposition \ref{ruk4} we have
$[a,u_0^-]=0$ unless $m=\psi (m)-1,$ and $[a,u_2^-]=0$ unless  $k=1+\psi (k).$
At the same time $k=1+\psi (k)$ implies $k=n+1,$ and hence $n=\psi (k)=t_2\in {T}\cap [i,j),$
which is impossible, for $T$ is white $(i,j)$-regular. Similarly, $m=\psi (m)-1$
implies $m=n,$ and hence $n=\psi (m)-1=t_1\in {T}\cap [i,j),$
 which is wrong by the same reason.

Taking into account  the proved relations, we may write 
$$
[a,b^-]\sim [a,[[u_2^-,u_1^-],u_0^-]]\stackrel{(\ref{jak3})}{=}[[a,[u_2^-,u_1^-]],u_0^-]
$$
$$
\stackrel{(\ref{cua})}{\sim }[[u_2^-,[a,u_1^-]],u_0^-] \stackrel{(\ref{cuq3})}{\sim }[u_2^-,u_0^-].
$$
Here we have applied inhomogeneous substitution (\ref{cuq3}) to the left factor in the brackets.
 Proposition \ref{NU} with $k\leftarrow i,$ $m\leftarrow j,$ $i\leftarrow t_1,$
$j\leftarrow t_2$ implies  $[u_2^-,u_0^-]=0$ provided that $j\neq \psi (i),$ $j\neq \psi (t_1)-1,$ $t_2\neq \psi (i),$
and $t_1\neq \psi (t_2)-1.$ The first inequality is valid since $T$ is $(i,j)$-regular. The second and third 
inequalities are equivalent to $j\neq m$ and $\psi (k)\neq \psi (i)$ respectively. 
However $j\neq m$ and $k\neq i$ are valid, for the main $ST$-scheme is strongly white.
The equality  $t_1=\psi (t_2)-1$ is equivalent to $m=t_2,$ while in this case 
on the $ST$-scheme we have a black-black column. 

 If ${T}\cap [i,j)=\{ t\} $  then there are just two exceptional configuration for the main
$ST^*$-scheme, where $\psi (t)=k$ in case A, and $\psi (t)=m+1$ in case B:
$$
{\rm A:}\ 
\begin{matrix}
&  & \circ & \stackrel{k}{\circ } & \circ & \circ  & \stackrel{m}{\bullet }  \cr 
\circ & \stackrel{\psi (j)}{\bullet } \bullet 
 & \circ & \bullet & \bullet   
 &\stackrel{\psi (i)}{\bullet } & \
\end{matrix};\ \ \ 
{\rm B:}\ 
\begin{matrix}
  \circ & \stackrel{k}{\circ } & \circ & \circ  & \circ & \stackrel{m}{\bullet } & \ &\ \cr 
\ & \ & \circ & \stackrel{\psi (j)}{\bullet } 
   & \bullet  & \circ 
& \bullet  & \stackrel{\psi (i)}{\bullet }
\end{matrix}
$$
In case A we keep the above notations $a=u[k,m],$ $b^-=\Phi ^{T}_-(i,j),$
$u_0^-=u[i,t]^-,$  $u_1^-=u[1+t,j]^-.$ Lemma \ref{xn2} implies $b^-=[u_1^-,u_0^-].$
We have $\psi (m),\psi (k)\notin [1+t,j]=[1+\psi (k),j].$
Moreover  $k\neq 1+t,$ for otherwise the first from the left complete
column on the main $ST$-scheme is white-black which contradicts the definition 
of a strongly white scheme (here $t\neq j$ and therefore the scheme has at least two 
complete columns).
Hence Proposition \ref{ruk4} implies $[a,u_1^-]=0.$
Since $\psi (t)=k\leq \psi (i)\leq m,$
Proposition \ref{fkk1} shows that $[a,u_0^-]\sim h_{k\rightarrow \psi (i)}u[\psi (i)+1,m].$
Thus we get
$$
[a,b^-]=[a,[u_1^-,u_0^-]]\stackrel{(\ref{cua})}{\sim }[u_1^-,[a,u_0^-]]
\stackrel{(\ref{cuq1})}{\sim }h_{k\rightarrow \psi (i)}[u_1^-,u[\psi (i)+1,m]]=0.
$$
The latter equality follows from antisymmetry identity (\ref{dos}) and Proposition \ref{ruk4}.
Indeed, $\psi (i)+1\neq 1+t=1+\psi (k),$ for $i\neq k,$ while in  configuration A we have
$\psi (m),$ $\psi (\psi (i)+1)$ $\notin [1+t,j]$ since 
$\psi (i)+1, m\notin [\psi (j), \psi (1+t )]=[\psi (j),k-1].$ This allows one to apply Proposition \ref{ruk4}.

In case B we consider the points $k^{\prime }=\psi (m),$ $m^{\prime }=\psi (k),$ 
$i^{\prime }=\psi (j),$ $j^{\prime }=\psi (i),$ and $t^{\prime }=\psi (t)-1=m.$ 
These points are in configuration A. Therefore we have 
$[u[k^{\prime },m^{\prime }], \Phi ^{\{ t^{\prime }\} }_-(i^{\prime },j^{\prime })]=0.$
Let us apply $g_{k\rightarrow m}f_{i\rightarrow j}\sigma ,$ where $\sigma $
is the antipode.
Using properties of the antipode given in (\ref{ant1}), (\ref{ant2}), (\ref{desc3})
we get the required equality.

{\bf 2}. If $S$ is black $(k,m)$-regular, but not white $(k,m)$-regular, then $n\in [k,m),$
and $n$ is a black point on the scheme $S.$
Lemma \ref{xn1} implies  $\Phi ^{S}(k,m)$ $=[\Phi ^{S}(k,n),\Phi ^{S}(n+1,m)].$
By definition $S$, as well as any other set, is white $(k,n)$- and $(n+1,m)$-regular.
Since $ST$- and $ST^*$-schemes are strongly white, the point $n$ is not black
on the schemes $T,T^*.$ At the same time 
$n$ is a white point on $T$ if and only if it is a black point on $T^*.$ Hence $n$
does not appear on $T,T^*$ at all, $n\notin [i-1,j].$ 
In particular  $S^nT$-, and  $S^nT^*$-schemes  (the schemes for the pair 
$\Phi ^{S}(k,n),$  $\Phi ^{T}_-(i,j)$) are still strongly white.
The above considered case  implies $[\Phi ^{S}(k,n),\Phi ^{T}_-(i,j)]=0.$
By the same reason $[\Phi ^{S}(n+1,m),\Phi ^{T}_-(i,j)]=0.$
It remains to apply Jacobi identity (\ref{uno}).
\end{proof}

\section{Proof of the main theorem}
\begin{lemma}
Let $k\leq s<n.$ If $s\in S,$ then
\begin{equation}
\left[ \Phi^{S}(k,n), \Phi^{\overline{S}}_-(k,s)\right] \sim  \Phi^{S}(1+s,n),
\label{but1}
\end{equation}
where $\overline{S}$ is the complement of $S$ with respect to $[k,s).$
\label{ed}
\end{lemma}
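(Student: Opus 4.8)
The statement to prove is Lemma~\ref{ed}: for $k\leq s<n$ with $s\in S$, one has
$$
\left[ \Phi^{S}(k,n), \Phi^{\overline{S}}_-(k,s)\right] \sim \Phi^{S}(1+s,n),
$$
where $\overline{S}$ is the complement of $S$ in $[k,s)$. My plan is to split the left factor $\Phi^{S}(k,n)$ using a decomposition of the form~(\ref{desc2}), so that one of the pieces pairs cleanly with $\Phi^{\overline{S}}_-(k,s)$ via Theorem~\ref{des1}, while the other piece is the sought-after $\Phi^{S}(1+s,n)$.

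**The decomposition step.** Since $s\in S$ and $S$ is $(k,n)$-regular (note $n$ is the right endpoint, so we are in the range $m\le n$ where every set is both white and black regular, and in particular all decomposition lemmas apply freely), Lemma~\ref{xn2} gives
$$
\Phi^{S}(k,n)\sim\bigl[\Phi^{S}(1+s,n),\Phi^{S}(k,s)\bigr].
$$
Here $\Phi^{S}(k,s)$ involves only the variables $x_t$ with $k\le t\le s$, and its set $S$ restricted to $[k,s)$ has complement $\overline{S}$; by Theorem~\ref{des1} applied on the interval $[k,s)$,
$$
\bigl[\Phi^{S}(k,s),\Phi^{\overline{S}}_-(k,s)\bigr]\sim 1-h_{k\rightarrow s}.
$$
The factor $\Phi^{S}(1+s,n)$ depends only on the variables $x_t$ with $1+s\le t\le n$, so by Lemma~\ref{suu} it skew-commutes with $\Phi^{\overline{S}}_-(k,s)$, which depends only on $x_t^-$ with $k\le t\le s$; that is, $[\Phi^{S}(1+s,n),\Phi^{\overline{S}}_-(k,s)]=0$. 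These two facts are exactly the hypotheses needed to expand the triple bracket.

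**Combining via the ad-identity.** Writing $a=\Phi^{S}(1+s,n)$, $b=\Phi^{S}(k,s)$, and $c^-=\Phi^{\overline{S}}_-(k,s)$, I would compute $[[a,b],c^-]$ using the Jacobi identity~(\ref{uno}). Since $[a,c^-]=0$, the identity~(\ref{jak3}) collapses the expression to $[a,[b,c^-]]$, and $[b,c^-]\sim 1-h_{k\rightarrow s}$ is an inhomogeneous group element. Applying the inhomogeneous-substitution rule~(\ref{cuq3}) with the roles $u\leftarrow a$, $g_if_i\leftarrow h_{k\rightarrow s}$, we get
$$
\bigl[a,[b,c^-]\bigr]\sim \bigl(1-\chi^{a}(h_{k\rightarrow s})\bigr)\,a,
$$
so the bracket is proportional to $a=\Phi^{S}(1+s,n)$ as required, \emph{provided} the scalar $1-\chi^{a}(h_{k\rightarrow s})$ is nonzero.

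**The main obstacle.** The step I expect to require genuine care is verifying that $\chi^{a}(h_{k\rightarrow s})\neq 1$, i.e. that the coefficient does not vanish. Here $a=\Phi^{S}(1+s,n)$ has $\Gamma^+$-degree $[1+s:n]$, so $\chi^{a}(h_{k\rightarrow s})=\chi^{1+s\rightarrow n}(g_{k\rightarrow s}f_{k\rightarrow s})$, which by the bimultiplicativity~(\ref{sqot}) and the definitions of $\sigma$ and $\mu$ in Definition~\ref{slo} reduces to a product of the form $P_{1+s\rightarrow n,\,k\rightarrow s}\cdot P_{k\rightarrow s,\,1+s\rightarrow n}=\mu_k^{n,s}$. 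By the evaluation~(\ref{mu2}) (with $m=n$, so the ``$i=n$'' branch does not apply since $s<n$), this equals $q^{-2}$, which is $\neq 1$ because $q^2\neq 1$. This confirms the coefficient is nonzero and pins down the proportionality. The one point to double-check is that the range hypotheses ($s<n$, so $s\ne n$) keep us out of the degenerate branches of~(\ref{mu2}), and that $s\in S$ is genuinely used to license the decomposition~(\ref{desc2}) in Lemma~\ref{xn2}.
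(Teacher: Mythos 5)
Your proof is correct and follows essentially the same route as the paper: decompose $\Phi^{S}(k,n)\sim[\Phi^{S}(1+s,n),\Phi^{S}(k,s)]$ via Lemma \ref{xn2}, kill the cross-term with Lemma \ref{suu}, collapse the triple bracket with (\ref{jak3}), apply Theorem \ref{des1} and (\ref{cuq3}), and check the coefficient $1-\mu_k^{n,s}=1-q^{-2}\neq 0$ via (\ref{mu2}). Your verification of the nonvanishing coefficient and of the regularity hypotheses is exactly the computation the paper records.
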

\begin{proof}
By Lemma \ref{xn2} we have $\Phi^{S}(k,m)=\left[ \Phi^{S}(1+s,n), \Phi^{S}(k,s)\right].$
At the same time $\left[ \Phi^{S}(1+s,n), \Phi^{\overline{S}}_-(s,n)\right] =0$ due to Lemma \ref{suu}.
Taking into account Theorem \ref{des1} we have
$$
\left[\left[ \Phi^{S}(1+s,n), \Phi^{S}(k,s)\right], \Phi^{\overline{S}}_-(k,s)\right] 
$$
$$
\stackrel{(\ref{jak3})}{=}
\left[ \Phi^{S}(1+s,n), \left[ \Phi^{S}(k,s), \Phi^{\overline{S}}_-(k,s)\right] \right]\stackrel{(\ref{cuq3})}{\sim } \Phi^{S}(1+s,n),
$$
where the coefficient of the proportion equals 
$1-\chi ^{1+s\rightarrow n}(h_{k\rightarrow s})$ $=1-\mu _k^{n,s}$ $=1-q^{-2},$  see (\ref{mu2}).
\end{proof}
\begin{lemma}
Let $i\neq k,$ $m\leq n.$ If the $S_k^mT_i^m$-scheme has only one black-black column $($the last one$),$
and the first complete column is white-white then 
\begin{equation}
\left[ \Phi^{S}(k,m), \Phi^{T}_-(i,m)\right] 
=\sum _{b=\nu -1}^{m-1}\alpha _b \, \Phi^{T}_-(i,b)\cdot \Phi^{S}(k,b),
\label{but}
\end{equation}
where $\nu =\max \{ i,k\} ,$ while $\alpha _b\neq 0$ if and only if the column $b$ is white-white.
Here by definition $\Phi^{S}(k,k-1)=\Phi^{T}_-(i,i-1)=1.$ 
\label{ed1}
\end{lemma}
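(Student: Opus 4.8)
The plan is to argue by induction on the common endpoint $m$, peeling off the terminal point and reducing to the $(m-1)$-truncated pair, exactly in the spirit of the computations (\ref{zz2}), (\ref{zz3}) carried out inside the proof of Proposition \ref{ruk4}. Since $m\le n$, every set is simultaneously black and white $(k,m)$-regular (Definition \ref{reg1}), so the decompositions of Lemmas \ref{xn1} and \ref{xn2} are available without any side restriction: $\Phi^{S}(k,m)\sim[\Phi^{S}(k,m-1),x_m]$ when $m-1\notin S$ and $\Phi^{S}(k,m)\sim[x_m,\Phi^{S}(k,m-1)]$ when $m-1\in S$, and symmetrically for $\Phi^{T}_-(i,m)$ with $x_m^-$. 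Because the terminal column at $m$ is the unique black-black column, the column at $m-1$ is white-white, white-black or black-white; in particular at least one of $S,T$ is white at $m-1$, which tells us from which factor it is safe to remove $x_m$ (resp.\ $x_m^-$). The base case is $m=\nu$, where one of the factors degenerates to $x_m^{\pm}$ and the assertion reduces to the terminal formulas (\ref{kom1}), (\ref{kom2}) in their $\Phi$-form discussed next.

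First I would record the terminal brackets in $\Phi$-form: $[\Phi^{S}(k,m),x_l^-]\sim\Phi^{S}(k,m-1)$ and $[x_l,\Phi^{T}_-(i,m)]\sim\Phi^{T}_-(i,m-1)$ for $l\in\{m,\psi(m)\}$, each up to a group-like factor. These are the $\Phi$-analogues of Lemmas \ref{ruk1}, \ref{ruk2} and follow from the differential presentations (\ref{sqi3}), (\ref{sqi4}) once one differentiates (\ref{dhs}) using (\ref{pdee}), (\ref{pdu}); they say precisely that $x_m$ (resp.\ $x_m^-$) strips the last point of a regular generator. With these in hand the inductive step runs as follows. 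Using the peeling dictated by the colour of $m-1$, I would expand the double bracket by the coloured Jacobi identities (\ref{uno}), (\ref{cua}), (\ref{fo1}), collapsing the pieces with (\ref{jak3}) and the vanishing cross-brackets $[\Phi^{S}(k,m-1),x_m^-]=0$ and $[x_m,\Phi^{T}_-(i,m-1)]=0$. The latter hold by Lemma \ref{suu} (equivalently Corollary \ref{ruk3}) since $m,\psi(m)\notin[k,m-1]\cup[i,m-1]$ for $m\le n$. The only non-commuting collision is $[x_m,x_m^-]=1-h_m$ coming from (\ref{rela3}), and after moving the resulting $h_m$ across the remaining factors with (\ref{cuq1})--(\ref{cuq4}) one is left with the reduced bracket $[\Phi^{S}(k,m-1),\Phi^{T}_-(i,m-1)]$ together with a single boundary term.

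Next I would check that the $(m-1)$-truncated pair again satisfies the hypotheses: its endpoints at $m-1$ are both black, so the column at $m-1$ becomes the unique black-black column, while the interior columns $\nu-1,\dots,m-2$ keep their colours and the first complete column $\nu-1$ stays white-white. Hence the induction hypothesis rewrites $[\Phi^{S}(k,m-1),\Phi^{T}_-(i,m-1)]$ as $\sum_{b=\nu-1}^{m-2}\alpha_b'\,\Phi^{T}_-(i,b)\Phi^{S}(k,b)$ with $\alpha_b'\neq0$ exactly for the white-white columns $b\le m-2$. Combining this with the boundary term, whose degree forces it to be a multiple of $\Phi^{T}_-(i,m-1)\Phi^{S}(k,m-1)$, yields the desired expansion; the coefficient of this top term is nonzero precisely when column $m-1$ is white-white, which is where the three colour cases genuinely differ.

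The main obstacle is the coefficient and group-element bookkeeping in the last paragraph. Two things must be controlled simultaneously: that the residual factors $h_m$ telescope away so that the answer is free of group-like elements, as in the clean lower-range identity (\ref{zz2}) in contrast to the $h$-laden upper-range identity (\ref{zz3}); and that the surviving scalars $\alpha_b$ vanish exactly off the white-white columns. Both reduce to evaluating characters of the form $\chi^{\,\cdot}(h_m)$ and the associated $\mu$'s and $\sigma$'s, which by (\ref{b1rel}), (\ref{b1rell}) and the tables (\ref{mu21})--(\ref{mu4}) are explicit powers of $q$; for $m\le n$ these evaluate to scalars such as $1-q^{-2}\ne0$ on a white-white transition and to $1$ (forcing cancellation, hence $\alpha_b=0$) on white-black and black-white transitions. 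Verifying this dichotomy uniformly across the peeling cases is the technical heart of the proof.
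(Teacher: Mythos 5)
Your plan is workable and reaches the correct statement, but it is a genuinely different induction from the paper's. The paper does not peel one letter at a time: it inducts on the \emph{number of white-white columns}, cutting both generators at the \emph{maximal} white-white column $a$ (Lemma \ref{xn1}), so that the two tails $v=\Phi^{S}(1+a,m)$ and $t^-=\Phi^{T}_-(1+a,m)$ have complementary colours on every complete column; Theorem \ref{des1} then collapses $[v,t^-]$ to $1-h_{1+a\rightarrow m}$ in one stroke, and a single application of (\ref{fo1}), with the bracket developed explicitly as in (\ref{vse}), gives $[u,w^-]+p(u,w^-)(1-\mu_k^{m,a})\,w^-\cdot u$ --- the inductive sum plus the new top term, with $1-\mu_k^{m,a}=1-q^{-2}\neq 0$ by (\ref{mu2}). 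The payoff of cutting there is that the mixed (white-black and black-white) columns are never peeled individually, so no case analysis and no group-like cancellation ever arises. Your letter-by-letter peeling must instead treat three cases at every step, and in the mixed cases the intermediate expressions really do contain $h_m$-terms whose cancellation is a nontrivial coefficient identity: e.g.\ for a white-black column at $m-1$ the surviving contributions of (\ref{fo1}) are a multiple $(\chi^{w^-}(h_m)-1)\chi^{u}(h_m)$ of $h_m[u,w^-]$ and a multiple of $[u,w^-]-h_m[u,w^-]$, and one must check these scalars agree (both equal $1-q^{-2}$) before the $h_m$'s drop out; the black-white case similarly rests on an identity of the form $\chi^{uw^-}(f_m)\,\chi^{x_m^-}\!\left(f_{i\rightarrow b}\,g_{k\rightarrow b}\right)=1$. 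I verified that these identities do hold under the relations (\ref{b1rel}), (\ref{b1rell}), so your route closes; but they are precisely the content you defer as ``the technical heart,'' and they constitute most of the work that the paper's choice of cutting point eliminates. Two smaller remarks: the boundary term needs no degree argument --- in the careful development it is literally $w^-\cdot u$, produced by the discrepancy between the constitution-based coefficient $p(u,w^-vt^-)$ and $p(u,w^-)$ (the mechanism of (\ref{vse})), and this is also the one trap in your scheme: a naive use of (\ref{cuq3}) inside (\ref{fo1}), pulling out the scalar and reinterpreting the result as $[u,w^-]$, silently loses the top term and yields a result contradicting the lemma. Finally, your base case $m=\nu$ is sound: since the column $\nu-1$ is white-white, $\partial_m(\Phi^{S}(k,m))=0$ and (\ref{sqi4}) with (\ref{pdu}) gives $[\Phi^{S}(k,m),x_m^-]=(1-q^{-2})\Phi^{S}(k,m-1)$ exactly, though the paper instead disposes of its base case (one white-white column) by Theorem \ref{des1}, Lemma \ref{suu}, (\ref{jak3}) and (\ref{cuq3}) without any $\Phi$-form differentiation formulas.
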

\begin{proof} For the sake of definiteness, assume that $k<i$ (if $i<k$ then the proof is quite similar). 
We use induction on the number of white-white columns on the $S_k^mT_i^m$-scheme.
If there is just one white-white column then this is the first from the left column labeled by $i-1.$
Moreover all intermediate complete columns are white-black or black-white.
Hence Theorem \ref{des1} implies $\left[ \Phi^{S}(i,m), \Phi^{T}_-(i,m)\right] \sim 1-h_{i\rightarrow m}.$
By Lemma \ref{xn1} we have $\Phi^{S}(k,m)=\left[ \Phi^{S}(k,i-1), \Phi^{S}(i,m)\right].$
At the same time $\left[ \Phi^{S}(k,i-1), \Phi^{T}_-(i,m)\right] =0$ due to Lemma \ref{suu}.
Hence 
$$
\left[ \left[ \Phi^{S}(k,i-1), \Phi^{S}(i,m)\right] , \Phi^{T}_-(i,m)\right] 
$$
$$
\stackrel{(\ref{jak3})}{=}\left[ \Phi^{S}(k,i-1), \left[ \Phi^{S}(i,m), \Phi^{T}_-(i,m)\right] \right]
\stackrel{(\ref{cuq3})}{\sim } \Phi^{S}(k,i-1),
$$
which is required, for the coefficient of the proportion equals 
$1-\chi ^{k\rightarrow i-1}(h_{i\rightarrow m})=1-\mu _k^{m,i-1}=1-q^{-2}\neq 0$ (recall that $m\leq n$).

To make the inductive step, let $a$ be the maximal white-white column.
Then all columns between $a$ and $m$ are black-white or white-black.
Hence Theorem \ref{des1} implies 
$\left[ \Phi^{S}(1+a,m), \Phi^{T}_-(1+a,m)\right] \sim 1-h_{1+a\rightarrow m}.$
Let us fix for short the following designations:
$$u=\Phi^{S}(k,a),\  v=\Phi^{S}(1+a,m),\  w^-=\Phi^{T}_-(i,a), \ t^-=\Phi^{T}_-(1+a,m).$$
Then by Lemma \ref{xn1} we have $\Phi^{S}(k,m)=[u,v],$ $\Phi^{T}_-(i,m)=[w^-,t^-].$
Lemma \ref{suu} implies $[u,t^-]$ $=[v,w^-]$ $=0.$
At the same time  $[v,t^-]\sim 1-h_{1+a\rightarrow m},$
while $[u,w^-]$ equals the left hand side of (\ref{but}) with $m\leftarrow a.$
Applying inductive supposition to $[u,w^-]$ we see that $[[u,w^-], t^-]=0.$
Indeed,  we may apply inhomogeneous substitution (\ref{but}) to the left factor of the bracket.
Then  for each $b<a$ we have $\left[ \Phi^{S}(k,b),t^-\right ]=0$ by Lemma \ref{suu}, while 
$\left[ \Phi^{T}_-(i,b),t^-\right ]=0$ due to Lemma \ref{sepp}. Additionally, using (\ref{cuq3})
with $x_i\leftarrow v,$ $x_i^-\leftarrow t^-,$ we have $[w^-,[v,t^-]]\sim w^-,$ 
for $\chi ^{w^-}(g_vf_t)=(\mu _i^{m,a})^{-1}=q^{2}\neq 1$ according to (\ref{mu2}).

All that relations allow us to simplify  (\ref{fo1}):
$$
[[u,v],[w^-,t^-]]\sim [u,[w^-,[v,t^-]]]=u\cdot w^--p(u,w^-vt^-)w^-\cdot u
$$
\begin{equation}
=[u,w^-]+p(u,w^-)(1-p(u,vt^-))w^-\cdot u.
\label{vse}
\end{equation}
Here we apply inhomogeneous substitution to the right factor of the bracket.
By this reason we have to develop the bracket to its explicit form. We have
$p(u,vt^-)$ $=p(u,v)p(t,u)$ $=\mu _k^{m,a}$ $=q^{-2}$ $\neq 1.$
Thus inductive supposition applied to $[u,w^-]$ shows that (\ref{vse})
is the required sum. 
\end{proof}
\begin{lemma}
Let $S$ be a black $(k,m)$-regular set, $k\leq n<m.$ We have
\begin{equation}
\varepsilon ^-\otimes \varepsilon ^0\otimes {\rm id}\left(  \left[ \Phi^{S}(k,m), \Phi^{\overline{S}}_-(k,n)\right] \right)\neq 0.
\label{nac}
\end{equation}
This nonzero element has degree $[\psi (m):n]=[n+1:m].$
Here $\varepsilon ^-,$ $\varepsilon ^0$ are the counits of $ U_q^-$
and ${\bf k}[H]$ respectively,
 the tensor product of maps is related to the triangular decomposition $(\ref{tr}), (\ref{trs});$
while $\overline{S}$ is a complement of $S$ with respect to $[k,n).$
\label{xic} 
\end{lemma}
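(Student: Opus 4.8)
The plan is to separate the routine determination of the degree from the substantive claim of non-vanishing. For the degree I would argue in the $\Gamma$-grading: one has $D(\Phi^{S}(k,m))=[k:m]$ and $D(\Phi^{\overline S}_-(k,n))=-[k:n]$ in $\Gamma$, and under the identification $x_i=x_{\psi(i)}$ the root $[k:m]$ equals $\sum_{k\le i<\psi(m)}x_i+2\sum_{\psi(m)\le i\le n}x_i$, while $[k:n]=\sum_{k\le i\le n}x_i$. Hence the bracket is $\Gamma$-homogeneous of degree $[\psi(m):n]=[n+1:m]$. Since the projection $\varepsilon^-\otimes\varepsilon^0\otimes{\rm id}$ lands in $U_q^+$ and preserves the $\Gamma$-degree, the asserted degree is automatic once I know the image is nonzero.

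To prepare the non-vanishing argument I would first simplify the shape of the bracket. As $S$ is black $(k,m)$-regular we have $n\in S$ and $m\neq\psi(k)$, so Lemma \ref{xn1} with $t=n$ gives $\Phi^{S}(k,m)\sim[u,v]$ with $u=\Phi^{S}(k,n)$, $v=\Phi^{S}(n+1,m)$. Setting $w^-=\Phi^{\overline S}_-(k,n)$ and applying the colored Jacobi identity (\ref{uno}),
\[
[[u,v],w^-]=[u,[v,w^-]]+p_{wv}\,[[u,w^-],v].
\]
By Theorem \ref{des1}, $[u,w^-]\sim 1-h_{k\to n}$, and then $[1-h_{k\to n},v]=(\chi^v(h_{k\to n})-1)h_{k\to n}v=(\mu_k^{m,n}-1)h_{k\to n}v$, which is \emph{identically} zero since $\mu_k^{m,n}=1$ by (\ref{mu2}) and (\ref{mu4}) (the case $i=n$, available because $m\neq\psi(k)$). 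Thus the second summand drops out exactly and $[\Phi^{S}(k,m),\Phi^{\overline S}_-(k,n)]\sim[u,[v,w^-]]$.

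The core is then the projection of $[u,[v,w^-]]$. Using the duality of Lemma \ref{xn21} I would rewrite $v=\Phi^{S}(n+1,m)\sim\Phi^{T}(\psi(m),n)$, a positive generator of upper index $n$; then $[v,w^-]=[\Phi^{T}(\psi(m),n),\Phi^{\overline S}_-(k,n)]$ is a bracket of two generators sharing the upper index $n$, to which Lemma \ref{ed1} applies, giving $[v,w^-]=\sum_b\beta_b\,\Phi^{\overline S}_-(k,b)\,\Phi^{T}(\psi(m),b)$ with $\beta_b\neq0$ exactly at the white-white columns, in particular at $b\in S$. Bracketing on the left with $u$ and expanding by (\ref{br1}), the summand $[u,\Phi^{\overline S}_-(k,b)]\,\Phi^{T}(\psi(m),b)$ is purely positive because Lemma \ref{ed} applies (its complement convention matches, since $[k,n)\setminus S$ meets $[k,b)$ in $[k,b)\setminus S$) and yields $[u,\Phi^{\overline S}_-(k,b)]\sim\Phi^{S}(1+b,n)$; the complementary term retains a genuine negative factor $\Phi^{\overline S}_-(k,b)$ and is killed by $\varepsilon^-$ except in the boundary term $b=\nu-1$. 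Hence $\varepsilon^-\otimes\varepsilon^0\otimes{\rm id}$ carries $[u,[v,w^-]]$ to a ${\bf k}$-combination of the positive elements $\Phi^{S}(1+b,n)\,\Phi^{T}(\psi(m),b)$ (together with one boundary contribution), each weighted by a nonzero $(1-q^{-2})$-type coefficient.

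The main obstacle is exactly this final step: showing that the resulting combination of positive monomials does not cancel, and, on the way, checking that the scheme of the pair $(\Phi^{T}(\psi(m),n),\Phi^{\overline S}_-(k,n))$ really meets the hypotheses of Lemma \ref{ed1} (a single black-black column, the first complete column white-white) as a consequence of $S$ being black $(k,m)$-regular. I expect to settle non-cancellation by a leading-term argument in the $\Gamma^+\oplus\Gamma^-$-filtration, so that the summand of maximal admissible leading word survives and the whole expression is nonzero; a separate bookkeeping of the two geometric cases $k<\psi(m)$ and $k>\psi(m)$, which govern the range of $b$ and the form of the boundary term, would finish the proof.
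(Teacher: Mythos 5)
You are following the paper's own route: the same splitting $\Phi^{S}(k,m)\sim[u,v]$ with $u=\Phi^{S}(k,n)$, $v=\Phi^{S}(1+n,m)$, $w^-=\Phi^{\overline S}_-(k,n)$; the same use of (\ref{uno}) together with $\mu_k^{m,n}=1$ to kill $[[u,w^-],v]$; the same dualization of $v$ (this is Proposition \ref{xn0} rather than Lemma \ref{xn21}, but that is cosmetic); then Lemma \ref{ed1}, Lemma \ref{ed}, the projection, and a leading-term argument. The genuine gap is the step where you substitute $[v,w^-]=\sum_b\beta_b\,\Phi^{\overline S}_-(k,b)\,\Phi^{T}(\psi(m),b)$ into $[u,\,\cdot\,]$ and ``expand by (\ref{br1})''. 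That substitution is inhomogeneous in $H$: the summands $w_b^-v_b$ do not have the $H$-degree $g_vf_w$ with which the outer bracket was formed, so $[u,[v,w^-]]=u\cdot[v,w^-]-p(u,vw^-)\,[v,w^-]\cdot u$ does \emph{not} split term by term into brackets $[u,w_b^-v_b]$. One must compare coefficients: the paper computes $p(u,vw^-)\,p(u,w_b^-v_b)^{-1}=\mu_k^{\psi(b)-1,n}$, which equals $1$ for every $b$ except $b=k-1$, where it equals $q^2$ by (\ref{mu3}). Hence your expansion is legitimate in the case $k<\psi(m)$ (there $b\geq\psi(m)-1\geq k$), but in the case $\psi(m)<k$ the boundary term $b=k-1$ is not $[u,v_{k-1}]$: the $q^2$ discrepancy makes it proportional to the \emph{reversed} bracket $[v_{k-1},u]$, which by (\ref{desc1}) collapses to the single generator $\Phi^{R}(\psi(m),n)$ of (\ref{nac8}). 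Since in that case this is exactly the summand whose leading word $u(\psi(m),n)$ anchors the non-cancellation argument, the oversight breaks the proof for $\psi(m)<k$: the expression you project is simply not equal to the bracket in question.

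Two smaller points. Your non-cancellation mechanism cannot literally be a ``$\Gamma^+\oplus\Gamma^-$-filtration'' argument: all summands of (\ref{nac6}) and (\ref{nac9}) have the same degree $[\psi(m):n]$, so no degree filtration separates them. What works, and what the paper does, is a word comparison inside the PBW decomposition: every monomial except $\alpha\,u[\psi(m),n]$ begins with a factor $u[i,j]$ with $i>\psi(m)$, hence with a lexicographically smaller word, and the PBW straightening only produces smaller words (\cite[Lemma 7]{Kh3}); your phrase ``maximal admissible leading word'' suggests you intend this, but it is the word order, not the degree, that does the work. Finally, the hypotheses of Lemma \ref{ed1} (a single black-black column and a white-white first complete column for the pair $\Phi^{\overline{\psi(S)-1}}(\psi(m),n)$, $\Phi^{\overline S}_-(k,n)$) are not automatic; their verification via the shifted schemes (\ref{grab2}), (\ref{grab3}) is precisely where black $(k,m)$-regularity of $S$ is consumed, so deferring it leaves out an essential, not routine, part of the argument.
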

\begin{proof} Let us fix for short the following designations:
$$
u=\Phi^{S}(k,n), \ \ v=\Phi^{S}(1+n,m), \ \ w^-=\Phi^{\overline{S}}_-(k,n).
$$
By Lemma \ref{xn1} we have $\Phi^{S}(k,m)=[u,v],$ while Jacobi identity (\ref{uno})
implies that $\left[ \Phi^{S}(k,m), \Phi^{\overline{S}}_-(k,n)\right]$ is a linear combination of 
two elements, $[u,[v,w^-]]$ and $[[u,w^-],v].$ The latter one equals zero since 
due to Theorem \ref{des1}  we have $[u,w^-]\sim 1-h_v,$ and  coefficient of (\ref{cuq4})
with $x_i\leftarrow u,$ $x_i^-\leftarrow w^-,$ $u\leftarrow v$
is $\chi ^v(g_uf_w)-1$ $=\mu _k^{m,n}-1$ $=0,$ see (\ref{mu2}), (\ref{mu4}).  
Further, due to Proposition \ref{xn0} we have
\begin{equation}
[v,w^-]\sim \left[ \Phi^{\overline{\psi (S)-1}}(\psi (m),n), \Phi^{\overline{S}}_-(k,n)\right] . 
\label{nac2}
\end{equation}
Let us show that we may apply Lemma \ref{ed1} to this bracket. 
If $a\in [k,n)$ is a black point on $\overline{S}$ then $a$ is a white point
on $S.$ If additionally $a\in [\psi (m),n)$ then $\psi (a)-1\in [n,m).$ Moreover
since $S$ is black $(k,m)$-regular, the point $\psi (a)-1$ is black on $S.$
Hence $a=\psi (\psi (a)-1)-1$ is a white point on $\overline{\psi (S)-1}.$
Thus the $\overline{\psi (S)-1}_{\, \psi (m)}^{\, n}\, \overline{S}_{\, k}^{\, n}$-scheme has no
black-black columns except the last one. The first from the left complete column is labeled by
$\nu -1,$ where $\nu =\max \{ \psi (m), k \} .$ If $\psi (m)<k$ then $\psi (k)$ is black on $S,$
see (\ref{grab3}), hence $k-1=\psi (\psi (k))-1$ is white on $\overline{\psi (S)-1}.$
If $k<\psi (m)$ then $\psi (m)-1$ is black on $S,$ see (\ref{grab2}), hence 
$\psi (m)-1$ is white on $\overline{S}.$ Thus in both cases the first form the left 
complete column is white-white. By Lemma \ref{ed1} we may continue (\ref{nac2}):
\begin{equation}
=\sum _{b=\nu -1}^{n-1}\alpha _b \, \Phi^{\overline{S}}_-(k,b)\cdot \Phi^{\overline{\psi (S)-1}}(\psi (m),b)
\stackrel{\rm df}{=} \sum _{b=\nu -1}^{n-1}\alpha _b\, w_b^-\cdot v_b.
\label{nac3}
\end{equation}
In order to find $[u,[v,w^-]]$ we would like to substitute the found value of $[v,w^-].$
However this is inhomogeneous substitution to the right factor of the bracket.
Therefore we have to develop the brackets to their explicit form and analyze the coefficients. We have 
$$
p(u,vw^-)p(u,w^-_bv_b)^{-1}=p(u,v)p(u,v_b)^{-1}p(w,u)p(w_b,u)^{-1}
$$
$$
=P_{k\rightarrow n, 1+n\rightarrow \psi (b)-1}P_{1+b\rightarrow n,k\rightarrow n}
$$
$$
= P_{k\rightarrow n, 1+n\rightarrow \psi (b)-1}P_{1+n\rightarrow \psi (b)-1, k\rightarrow n}
=\mu _k^{\psi (b)-1,n},
$$ 
see definition (\ref{mu1}). Relations  (\ref{mu2}--\ref{mu4}) show that $\mu _k^{\psi (b)-1,n}=1$
unless $b=k-1.$ If $b=k-1$ then $\mu _k^{\psi (b)-1,n}$ $=\mu _k^{\psi (k),n}$ $=q^2,$ see (\ref{mu3}).
Thus all brackets $[u, w_b^-\cdot v_b]$ have the same coefficient as $[u,vw^-]$ does 
with only one exception being $b=k-1.$

If $k<\psi (m)$ then of course $b\neq k-1,$ for $b\geq \nu -1=\psi (m)-1\geq k.$ Hence 
in this case by ad-identity (\ref{br1}) the element $[u,[v,w^-]]$ splits in linear combination of two sums:
\begin{equation}
\sum _{b=\psi (m) -1}^{n-1}\alpha _b \, \left[ \Phi^{S}(k,n),\Phi^{\overline{S}}_-(k,b)\right] 
\cdot \Phi^{\overline{\psi (S)-1}}(\psi (m),b).
\label{nac4}
\end{equation}
and
\begin{equation}
\sum _{b=\psi (m) -1}^{n-1}\alpha _b \, \Phi^{\overline{S}}_-(k,b)\cdot 
\left[ \Phi^{S}(k,n),\Phi^{\overline{\psi (S)-1}}(\psi (m),b)\right] .
\label{nac5}
\end{equation}
By Lemma \ref{ed} we have $\left[ \Phi^{S}(k,n),\Phi^{\overline{S}}_-(k,b)\right] \sim \Phi^{S}(1+b,n),$
for $b$ is a white point on $\overline{S}$ (otherwise $\alpha _b=0$). In particular 
all terms in (\ref{nac4}) belong to the positive quantum Borel subalgebra, and hence application
of $\varepsilon ^-\otimes \varepsilon ^0\otimes {\rm id}$ does not change this sum.
Application of $\varepsilon ^-\otimes \varepsilon ^0\otimes {\rm id}$
to (\ref{nac5}) kill all terms, for $\varepsilon ^- (\Phi^{\overline{S}}_-(k,b))=0,$ $b\geq k.$
Thus the left hand side
of (\ref{nac}) takes up the form
\begin{equation}
 \alpha \, \Phi^{S}(\psi (m),n)+\sum _{b=\psi (m)}^{n-1}\alpha _b \, 
\Phi^{S}(1+b,n) \cdot \Phi^{\overline{\psi (S)-1}}(\psi (m),b),
\label{nac6}
\end{equation}
where $\alpha =\alpha _{\psi (m)-1}\neq 0.$ 
We may decompose all terms in this expression using definition (\ref{dhs}).
As a result we will get a polynomial, say $F,$ in  $u[i,j],$ $1\leq i\leq j\leq n.$
It is very important to note that all first from the left factors $u[i,j]$ in all monomials 
of $F$ satisfy $i>\psi (m)$ with only one exception, $\alpha \, u[\psi (m),n],$ coming from the first 
term of (\ref{nac6}). In particular $u[i,j]<u[\psi (m),n]$ (recall that $x_1>x_2>\ldots >x_n,$
while  words in $X$ are ordered lexicographically). Hence further diminishing process
of decomposition in PBW-basis (see \cite[Lemma 7]{Kh3}) produces words in 
$u[i,j],$ $j<\psi (i)$ that start with lesser than $u[\psi (m),n]$ elements. This means that 
$\alpha \, u[\psi (m),n]$ is still the leading term of (\ref{nac6}) after the  PBW-decomposition.
In particular (\ref{nac6}) is not zero.

If $\psi (m)<k$ then again  by ad-identity (\ref{br1}) the element $[u,[v,w^-]]$ splits in  sums (\ref{nac4}), (\ref{nac5})
with $\sum\limits_{b=\psi (m)-1}^{n-1}\leftarrow \sum\limits_{b=k}^{n-1}$ and an
additional term that corresponds to the value $b=k-1.$ Since $\alpha _{k-1}\neq 0,$ this term is proportional to  
\begin{equation}
u\cdot v_{k-1}-p(u,vw^-)\, v_{k-1}\cdot u,
\label{nac7}
\end{equation}
where $v_{k-1}=\Phi^{\overline{\psi (S)-1}}(\psi (m),k-1)$ was defined in (\ref{nac3}).
We have already seen that $p(u,vw^-)=p(u,v_{k-1})\, q^2.$ At the same time 
$p(v_{k-1},u)p(u,v_{k-1})$ $=p_{k-1\, k}\, p_{k\, k-1}$ $=q^{-2},$ for $p_{ij}p_{ji}=1,$ $j>i+1,$
see (\ref{b1rell}). Hence $p(u,v_{k-1})\, q^2$ $=p(v_{k-1},u)^{-1}.$ Therefore the term
(\ref{nac7}) is proportional to $[v_{k-1},u]$ with coefficient $-p(v_{k-1},u)^{-1}.$
Taking into account formula (\ref{desc1}), we have
\begin{equation}
[v_{k-1},u]=[\Phi^{\overline{\psi (S)-1}}(\psi (m),k-1), \Phi^{S}(k,n)]=\Phi^{R}(\psi (m),n),
\label{nac8}
\end{equation}
where $R=\left( \overline{\psi (S)-1}\cap [\psi (m),k-1)\right) \cup \left(S\cap [k,n)\right) .$

Certainly the map $\varepsilon ^-\otimes \varepsilon ^0 \otimes {\rm id}$
kills all terms of (\ref{nac5}) with $b\geq k,$ while 
Lemma \ref{ed} implies  $\left[ \Phi^{S}(k,n),\Phi^{\overline{S}}_-(k,b)\right] \sim \Phi^{S}(1+b,n),$
$b\geq k.$ Thus the left hand side of (\ref{nac}) is proportional to the sum 
\begin{equation}
 \Phi^{R}(\psi (m),n)+\sum _{b=k}^{n-1}\alpha _b^{\prime } \, 
\Phi^{S}(1+b,n) \cdot \Phi^{\overline{\psi (S)-1}}(\psi (m),b).
\label{nac9}
\end{equation}
This is a nonzero element precisely by the same reasons as (\ref{nac6}) is.
\end{proof}

{\it Proof of Theorem} \ref{bale}. Suppose that there exists a pair of simple roots 
such that one of  schemes (\ref{grr1}-\ref{grr4}) has fragment (\ref{bal})
and no one of these schemes has form (\ref{gra3}). Among all that pairs we choose 
a pair $[k:m],$ $[i:j]^-$ that has fragment (\ref{bal}) with minimal possible $s-t$ on one of the schemes.
Actually, due to Lemma \ref{bal2}, there are at least two of the schemes that have fragments 
with that minimal value of $s-t.$ Without loss of generality, changing if necessary 
notations $S\leftrightarrow S^*$ or $T\leftrightarrow T^*$ or both, we may assume
that the $ST$-scheme has that fragment. 
Since $s-t$ is minimal, there are no white-white or black-black columns between $t$ and $s.$
Hence due to Theorem \ref{des1} we have
\begin{equation}
\left[ \Phi^{S}(1+t,s), \Phi^{T}_-(1+t,s)\right] \sim 1-h_{1+t\rightarrow s},
\label{bit}
\end{equation}
provided that $S$ or, equivalently, $T$ is $(1+t,s)$-regular.

\smallskip

\underline{Let, first,  $s\leq n.$} In this case by definition $S$ is $(1+t,s)$-regular, while due to Lemma \ref{sig3} 
we have  $\Phi^{S}(k,s)\in U^{S}(k,m)\subseteq  U^+, $ $\Phi^{T}_-(1+t,s)\in U^{S}_-(i,j)\subseteq  U^-.$
Hence we get 
\begin{equation}
\left[ \Phi^{S}(k,s), \Phi^{T}_-(1+t,s)\right] \in [U^+,U^-]\subseteq U.
\label{bit1}
\end{equation}
At the same time by Lemma \ref{xn1} we have a decomposition
\begin{equation}
\Phi^{S}(k,s)\sim \left[ \Phi^{S}(k,t),\Phi^{S}(1+t,s) \right] .
\label{bit2}
\end{equation}
Lemma \ref{suu} implies 
\begin{equation}
\left[ \Phi^{S}(k,t),\Phi^{T}_-(1+t,s) \right] =0. 
\label{bit3}
\end{equation}
Applying first (\ref{jak3}), and then (\ref{cuq3}) with $x_i\leftarrow \Phi^{S}(1+t,s),$ 
$x_i^-\leftarrow \Phi^{T}_-(1+t,s)$ due to  (\ref{bit})  we see that the left hand side 
of (\ref{bit1}) is proportional to $\Phi^{S}(k,t),$ in which case the coefficient
equals $1-\chi ^{k\rightarrow t}(h_{1+t\rightarrow s})$ $=1-\mu _k^{s,t}=1-q^{-2},$
see (\ref{mu1}), (\ref{mu2}).
Thus $\Phi^{S}(k,t)\in U\cap U_q^+(\mathfrak{so}_{2n+1})=U^+;$
that is, $[k:t]$ is an $U^+$-root. According to Lemma \ref{sig} we have
  $[1+t:m]\in \Sigma (U^S(k,m))\subseteq \Sigma (U^+).$
This implies that $t=k-1,$ for otherwise we have a contradiction:  
$[k:m]=[k:t]+[1+t:m]$ is a decomposition of a simple $U^+$-root in $\Sigma (U^+).$
Similarly, due to the mirror symmetry,  we have $t=i-1;$ that is, $k=i=1+t.$

Now we are going to show that $m=s.$ Equality $t=k-1$ implies 
\begin{equation}
\left[ \Phi^{S}(k,m), \Phi^{T}_-(k,s)\right] \in [U^+,U^-]\subseteq U.
\label{bit4}
\end{equation}

\smallskip
{\it Let  $s=n.$} In this case $n$ is black on $S;$ that is, $S$ is black $(k,m)$-regular.
We have $\varepsilon ^-\otimes \varepsilon ^0\otimes {\rm id} (U)\subseteq U^+.$
Hence if $m\neq s=n,$ Lemma \ref{xic} allows us to find in $U^+$ a nonzero element of degree
$[n+1:m].$ In particular $[n+1:m]\in \Sigma (U^+).$ 
Hence $[k:m]=[k:n]+[1+n:m]$ is a decomposition of a simple $U^+$-root in $\Sigma (U^+).$
A contradiction that implies $m=n=s.$
 
\smallskip
{\it Let,  further, $s<n.$} If $S$ is white $(k,m)$-regular, or if $S$ is black $(k,m)$-regular and
$\psi (s)-1$ is not white, then $S$ is still $(1+s,m)$-regular, 
while Lemma \ref{xn2} provides a decomposition
\begin{equation}
\Phi^{S}(k,m)\sim \left[ \Phi^{S}(1+s,m), \Phi^{S}(k,s) \right] .
\label{bit5}
\end{equation}
Let us show that Theorem  \ref{str} implies 
\begin{equation}
\left[ \Phi^{S}(1+s,m),\Phi^{T}_-(k,s) \right] =0. 
\label{bit6}
\end{equation}
To see this we have to check that $S_{1+s}^mT_k^s$-  and $S_{1+s}^mT_{\psi (s)}^{*\psi (k)}$-schemes
are strong. The first one has just one complete column, hence it is both strongly  white and strongly black.
Suppose firstly that $S$ is white $(k,m)$-regular. 
 Let us show that if $s\neq n$ (even if $s>n$),
 then the $S_{1+s}^mT_{\psi (s)}^{*\psi (k)}$-scheme is strongly white.

If $a$ is a black point on $T_{\psi (s)}^{*\psi (k)},$ $\psi (s)\leq a<\psi (k),$
then by definition $\rho (a)=\psi (a)-1$ is white on $T_k^s.$ 
The inequalities  $k\leq \rho (a)<s$ imply that the point $\rho (a)$
 is intermediate for the minimal fragment (\ref{bal}),
recall that now $t=k-1.$ Therefore $\rho (a)$ is black on $S.$
Since $S$ is white $(k,m)$-regular, the point $a=\rho (\rho (a))$ is not black on $S.$
If $a=\psi (k),$ then still $a$ is not black on $S,$ see (\ref{grab1}).
Thus the $S_{1+s}^mT_{\psi (s)}^{*\psi (k)}$-scheme
has no  intermediate complete black-black columns.

Since $s\neq n,$ we have $\psi (s)\neq 1+s.$ Hence the first from the left column is incomplete. 

If there are at least two complete columns, then $m\geq \psi (s).$
In this case the first from the left complete column has the label  $a=\psi (s)-1=\rho (s).$
Since $s$ is black on $S,$ and $S$ is white $(k,m)$-regular, the
point $\rho (s)$ is white on $S.$ Thus the first from the left complete 
column is white-white one, and $S_{1+s}^mT_{\psi (s)}^{*\psi (k)}$-scheme is strongly white.

Similarly we shall show that  if $S$ is black $(k,m)$-regular and $\psi (s)-1$ is not white, $s<n,$
then the $S_{1+s}^mT_{\psi (s)}^{*\psi (k)}$-scheme is strongly black.
If $a$ is a white point on $T_{\psi (s)}^{*\psi (k)},$ $\psi (s)\leq a<\psi (k),$
then by definition $\rho (a)=\psi (a)-1$ is black on $T_k^s,$ and hence it is white on $S.$
Since $S$ is black $(k,m)$-regular, the point $a=\rho (\rho (a))$ is not white on $S.$
Thus the $S_{1+s}^mT_{\psi (s)}^{*\psi (k)}$-scheme
has no  complete white-white columns (recall that now $\psi (s)-1$ is not white on $S,$ 
hence the column  $\psi (s)-1$ is not white-white).

The last column is incomplete, for $\psi (k)\neq m.$  

If there are at least two complete columns; that is, $m\geq \psi (s),$ then
the last complete column is labeled by $m$ or by $\psi (k).$
In the former case $\psi (m)-1$ is black on $S,$ see (\ref{grab2}).
Hence, as an intermediate point for (\ref{bal}), it is white on $T.$
Therefore $m=\psi (\psi (m)-1)-1$ is black  on $T^*.$
It is still black on $T_{\psi (s)}^{*\psi (k)},$ for $m\neq \psi (s)-1.$
In the latter case $\psi (k)$ is black on $S,$ see (\ref{grab3}).
Hence it is black on $S_{1+s}^m$ too, for $\psi (k)\neq s$ (recall that now $k\leq s<n$).
Thus the  $S_{1+s}^mT_{\psi (s)}^{*\psi (k)}$-scheme is strongly black.
This completes the proof of (\ref{bit6}).

Now we show how  (\ref{bit}) with $t=k-1$ and (\ref{bit4}--\ref{bit6}) imply $s=m.$ 
Applying first (\ref{jak3}), and then (\ref{cuq3})  due to  (\ref{bit})  we see that the left hand side 
of (\ref{bit4}) is proportional to $\Phi^{S}(1+s,m),$ in which case 
$\chi ^{1+s\rightarrow m}(h_{k\rightarrow s})$ $=\mu _k^{m,s}\neq 1,$
with only one exception being $s=n,$ see (\ref{mu2}--\ref{mu4}).
Thus $\Phi^{S}(1+s,m)\in U\cap U_q^+(\mathfrak{so}_{2n+1})=U^+;$
that is, $[1+s:m]$ is an $U^+$-root. According to Lemma \ref{sig} we have
  $[k:s]\in \Sigma (U^S(k,m))\subseteq \Sigma (U^+).$
This implies $s=m,$ for otherwise we have a forbidden decomposition of a simple $U^+$-root,
$[k:m]=[k:s]+[1+s:m].$

If $S$ is black $(k,m)$-regular and
$\psi (s)-1$ is white on $S,$ then we may not use (\ref{bit5}). However in this case
$\Phi^{S}(k,n)\in U^S(k,m)\subseteq U^+,$ and certainly $S$ is white $(k,n)$-regular.
Hence instead of (\ref{bit5}) we may consider the decomposition 
$\Phi^{S}(k,n)\sim \left[ \Phi^{S}(1+s,n), \Phi^{S}(k,s)\right] ,$
while instead of (\ref{bit6}) use $\left[ \Phi^{S}(1+s,n),\Phi^{T}_-(k,s) \right] =0,$
which is valid due to Lemma \ref{suu}. Hence  we get $[1+s:n]\in \Sigma (U^+),$ for now $s<n.$
This also provides a forbidden decomposition, $[k:m]=[k:s]+[1+s:n]$ $+[n+1:\psi (s)-1]+[\psi (s):m],$
unless $s=m.$
Here $[n+1:\psi (s)-1]=[1+s:n],$ while $[\psi (s):m]\in \Sigma (U^+)$ due to Lemma \ref{sig}.

Thus in all cases $s=m.$
Due  to the mirror symmetry  we have also $s=j;$ that is, the $ST$-scheme has the form (\ref{gra3}).
This contradiction completes the case ``$s\leq n.$"

\smallskip

\underline{Let, then, $n\leq t.$} By Lemma \ref{bal2} the $S^*T^*$-scheme
also contains a fragment (\ref{bal}) with $t\leftarrow \psi (s)-1,$ $s\leftarrow \psi (t)-1.$
Since $n\leq t$ implies $\psi (t)-1\leq n,$ one may apply already considered case 
to the $S^*T^*$-scheme. 

\smallskip

\underline{Let, next, $t<n<s.$} In this case the $n$th column, as an intermediate one, is either white-black or
black-white. Since the color of the point $n$ defines the color of regularity, 
$S$ and $T$ have different color of regularity. For the sake of definiteness, we assume that 
$S$ is white $(k,m)$-regular, while $T$ is black $(i,j)$-regular (otherwise one may change the roles
of $S$ and $T$ considering the mirror generators).

If $\psi (t)-1$ is a black point on the scheme $S,$ then on the $ST^*$-scheme
we have a new fragment of the form (\ref{bal}) with $t\leftarrow n,$ $s\leftarrow \psi (t)-1,$
for the color of $\rho (t)=\psi (t)-1$ on the scheme $T^*$ is also black. Certainly $\psi (t)-1-n=n-t<s-t,$
for $n<s;$ that is, we have found a lesser fragment.
Hence  $\psi (t)-1$ is not black on the scheme $S.$ Lemma \ref{sig1}
implies  $\Phi^{S}(1+t,s)\in U^+,$ while Lemma \ref{si} shows that 
$S$ is white $(1+t,s)$-regular. In particular (\ref{bit}) is valid. 
Moreover 
 $S\cup \{ t \}$ is still white $(k,m)$-regular, hence we have decomposition 
(\ref{desc1}).
In perfect analogy  $\psi (s)-1$ is not white on the scheme $T.$
Hence Lemma \ref{sig2} implies $\Phi^{T}_-(1+t,s)\in U^-,$ 
and we have decomposition (\ref{desc2}) of $\Phi^{T}_-(i,j).$

By definition of a white regular set the point $\psi (k-1)-1=\psi (k)$ is not black
on the scheme $S,$ see (\ref{grab}), (\ref{grab1}). Hence Lemma \ref{sig1}
implies $\Phi^{S}(k,s)\in U^+.$ Therefore (\ref{bit1}) is still valid. Lemma \ref{xn1}
implies decomposition (\ref{bit2}), for $\psi (t)-1$ is not black on the scheme $S.$
Let us show that Theorem \ref{str} implies (\ref{bit3}). 

Indeed, the $S_k^{t}T_{1+t}^s$-scheme has just one complete column,
hence it is strongly white (and of course it is strongly black too). Let us check the
$S_k^{t}T_{\psi (s)}^{*\psi (t)-1}$-scheme. 
If $a$ is a black point on $S_k^t,$ $k\leq a<t,$ then $\psi (a)-1$ is not black on $S,$
for $(a, \psi (a)-1)$ is a column of the shifted scheme of white $(k,m)$-regular set $S.$
At the same time if $\psi (s)\leq a<\psi (t)-1,$
then $s>\psi (a)-1>t.$ In particular $\psi (a)-1$ appears on the scheme $S,$
and it is a white point on $S$. Further, $\psi (a)-1$ is an intermediate point 
on the minimal fragment (\ref{bal}), hence it is black on the scheme $T.$
Therefore $\psi (\psi (a)-1)-1=a$ is a white point on $T^*.$ Since $a\neq \psi (t)-1$ yet,
it is a white point on $T_{\psi (s)}^{*\psi (t)-1}$ as well. Thus the $S_k^{t}T_{\psi (s)}^{*\psi (t)-1}$-scheme
has no  intermediate complete black-black columns.

Consider the last column, $a=t.$ Since $T$ is black $(i,j)$-regular,
and $(t,\psi (t)-1)$ is a column of the shifted $T$-scheme, the point $\psi (t)-1$
is not white on $T.$ Therefore $t=\psi (\psi (t)-1)-1$ is not black on $T^*.$
It is neither black on $T_{\psi (s)}^{*\psi (t)-1},$ for $t=\psi (t)-1$ implies $t=n,$ while now $t<n.$

Let  $b$ be a label of the first from the left complete column 
of the $S_k^{t}T_{\psi (s)}^{*\psi (t)-1}$-scheme, $b=\max \{ k-1,\psi (s)-1\} .$
In this case $k-1\neq \psi (s)-1,$ for $\psi (k)$ is not black on $S,$ see (\ref{grab1}).
In particular the first from the left  column is incomplete.

If $k<\psi (s),$ $b=\psi (s)-1,$ then $(b,s)$ is a column of the shifted $S$-scheme.
Hence $b$ is white on $S.$ It is still white on $S_k^t,$ for 
$\psi (s)-1$ is not white on $T$ in particular  $b\neq t.$
Thus, the first from the left complete column on the 
$S_k^{t}T_{\psi (s)}^{*\psi (t)-1}$-scheme is white-white one.

If $k>\psi (s),$ $b=k-1,$ then due to (\ref{grab2}) the point $\psi (k)$
is white on $S.$ We have $t<n\leq \psi (k)<s;$ that is, $\psi (k)$
is an intermediate point   of the fragment (\ref{bal}). Hence $\psi (k)$
is black on $T,$ while $k-1=\psi (\psi (k))-1$ is white on $T^*.$
Thus, the first from the left complete column on the 
$S_k^{t}T_{\psi (s)}^{*\psi (t)-1}$-scheme is still white-white one.
This proves that $S_k^{t}T_{\psi (s)}^{*\psi (t)-1}$-scheme is strongly white,
and one may apply Theorem \ref{str} to see that (\ref{bit3}) is valid. 

While considering the case ``$s\leq n$", we have seen how relations 
(\ref{bit}--\ref{bit3}) with $\mu _k^{s,t}\neq 1$ imply $t=k-1.$ Here
$\mu _k^{s,t}\neq 1$ according to (\ref{mu2}--\ref{mu4}), for $t\neq n,$ 
and $s,$ being a black point on $S,$
is not equal to $\psi (k).$ Thus $t=k-1.$

Consider the  $T^*S^*$-scheme that corresponds to the mirror generators.
This scheme  contains a fragment (\ref{bal}) with $t\leftarrow \psi (s)-1,$
$s\leftarrow \psi (t)-1.$ In this case $T^*$ is white $(\psi (j),\psi (j))$-regular.
Therefore we may apply already proved ``$t=k-1$" to that situation. 
We get  $\psi (s)-1=\psi (j)-1;$ that is, $j=s.$

Further,  relations (\ref{bit4}) and (\ref{bit5}) are valid. 
While considering the case ``$s\leq n$", we have seen
 that if $t=k-1,$ then the $S_{1+s}^mT_{\psi (s)}^{*\psi (k)}$-scheme is strongly white even if $s>n.$
Hence Theorem \ref{str} implies (\ref{bit6}).  
At  the same time we know that relations  (\ref{bit4}--\ref{bit6})  imply $s=m.$ 

Applying this result to the $T^*S^*$-scheme that corresponds to the mirror generators
we have $\psi (k)=\psi (i);$ that is $k=i=t-1,$ $m=j=s.$
Thus, $ST$-scheme has the form (\ref{gra3}).
This contradiction completes the proof.

\end{document}